\newtheorem{lemma}{Lemma}
\newtheorem{proposition}{Proposition}
\newtheorem{theorem}{Theorem}
\newtheorem{corollary}{Corollary}
\newtheorem{claim}{Claim}
\newtheorem{mtheorem}{Theorem}
\theoremstyle{definition}
\newtheorem{problem}{Problem}
\newtheorem{conjecture}{Conjecture}
\newcommand{\conv}{\ensuremath{{\rm conv}}}
\newcommand{\St}{\ensuremath{{\rm St}}}
\begin{document}

\thispagestyle{empty}

\centerline{\Large\bf Hypercellular graphs: partial cubes without $Q_3^-$ as partial cube minor}

\vspace{8mm}
\centerline{Victor Chepoi$^{\small 1}$, Kolja Knauer$^{\small 1,2}$, and Tilen Marc$^{\small 3}$}

\medskip
\begin{small}
\medskip
\centerline{$^{1}$Laboratoire d'Informatique et Syst\`emes, Aix-Marseille Universit\'e and CNRS,}
\centerline{Facult\'e des Sciences de Luminy, F-13288 Marseille Cedex 9, France}

\centerline{\texttt{\{victor.chepoi, kolja.knauer\}@lis-lab.fr}}

\medskip
\centerline{$^{2}$
Departament de Matem\`atiques i Inform\`atica, Universitat de Barcelona (UB),}
\centerline{Barcelona, Spain}

\medskip
\centerline{$^{3}$ Faculty of Mathematics and Physics, University of Ljubljana}
\centerline{and}
\centerline{Institute of Mathematics, Physics, and Mechanics, Ljubljana, Slovenia}
\centerline{\texttt{tilen.marc@imfm.si}}

\end{small}

 \bigskip\bigskip\noindent
{\footnotesize {\bf Abstract.} We investigate the structure of isometric subgraphs of hypercubes
(i.e., partial cubes) which do not contain finite convex subgraphs
contractible to the 3-cube minus one vertex $Q^-_3$ (here contraction
means contracting the edges corresponding to the same coordinate
of the hypercube). Extending similar results for median and cellular
graphs, we show that the convex hull of an isometric cycle of such a
graph is gated and isomorphic to the Cartesian product of edges and
even cycles. Furthermore, we show that our graphs are exactly the class
of
partial cubes in which any finite convex subgraph can be obtained from
the
Cartesian products of edges and even cycles via
successive gated amalgams. This decomposition result enables us to
establish a variety
of results. In particular, it yields that our class of graphs
generalizes median and cellular
graphs, which motivates naming our graphs hypercellular. Furthermore, we
show that hypercellular graphs are tope graphs of zonotopal complexes of
oriented matroids.
Finally, we characterize hypercellular graphs as being median-cell -- a
property naturally generalizing the notion of median graphs.
}

\section{Introduction}\label{sec:intro}
{\it Partial cubes} are the graphs which admit an isometric embedding into a hypercube.  They comprise many important and
complex graph classes occurring in metric graph theory and initially arising in completely different areas of research.
Among them there are the graphs of regions of hyperplane arrangements in ${\mathbb R}^d$~\cite{bjedzi-90},
and, more generally, tope graphs of oriented matroids (OMs)~\cite{BjLVStWhZi}, median graphs (alias 1-skeleta of CAT(0) cube
complexes)~\cite{BaCh_survey,ImKl}, netlike graphs~\cite{Po1,Po2,Po3,Po4}, bipartite cellular graphs~\cite{BaCh_cellular},
bipartite graphs with $S_4$ convexity~\cite{Ch_separation},
graphs of lopsided sets~\cite{BaChDrKo,La},  1-skeleta of CAT(0) Coxeter zonotopal complexes~\cite{HaPa}, and tope graphs of
complexes of oriented matroids (COMs)~\cite{BaChKn}. COMs represent a general unifying structure for many of the above  classes of partial cubes: from tope graphs of OMs to median graphs, lopsided sets,  cellular graphs,
and graphs of CAT(0) Coxeter zonotopal complexes. 
Median graphs
are obtained by gluing in a specific way cubes of different dimensions. In particular, they give rise not only to contractible but
also to CAT(0) cube complexes. Similarly, lopsided sets yield contractible cube complexes, while cellular graphs give contractible polygonal complexes whose cells are regular even polygons. Analogously to median graphs, graphs of CAT(0) Coxeter zonotopal
complexes can  be viewed as partial cubes obtained by gluing zonotopes. COMs can be viewed as a common generalization of all these
notions: their tope graphs are the partial cubes obtained by gluing tope graphs of OMs 
in a lopsided (and thus contractible) fashion.

In this paper, we investigate the structure of a subclass of zonotopal COMs, in which all cells are gated subgraphs isomorphic to Cartesian products of edges
and even cycles, see Figure~\ref{fig:C6xC6} for such a cell.  More precisely, we study the partial cubes in which all finite convex subgraphs
can be obtained from Cartesian products of edges and even cycles by successive gated amalgamations. We show that our graphs share
and extend many properties of bipartite cellular graphs of~\cite{BaCh_cellular}; they can be viewed as high-dimensional analogs of
cellular graphs. This is why we call them {\it hypercellular graphs}, see Figure~\ref{fig:exmpl} for an example. There is another way of
describing hypercellular graphs, requiring a few definitions.

\begin{figure}[ht]
\centering
\subfigure[\label{fig:C6xC6}]{\includegraphics[width=.33\textwidth]{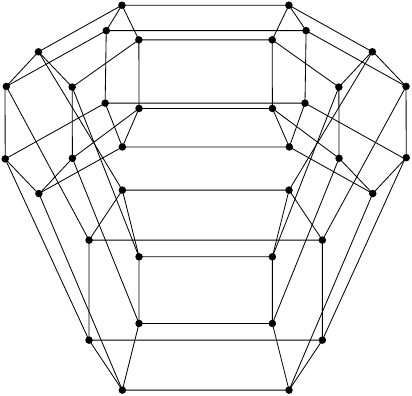}}
\subfigure[\label{fig:exmpl}]{\includegraphics[width=.55\textwidth]{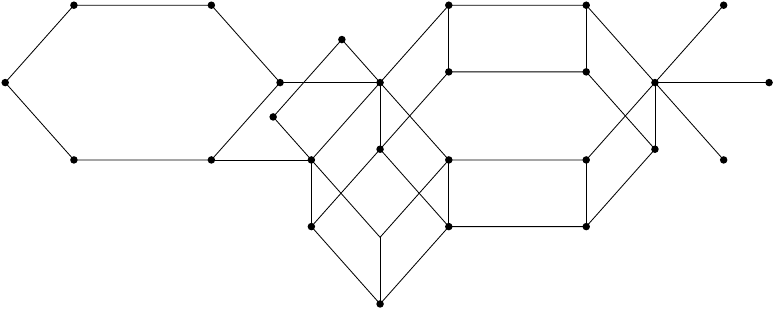}}
\caption{\subref{fig:C6xC6} a four-dimensional cell isomorphic to $C_6\square C_6$. \subref{fig:exmpl}~a hypercellular graph with eight maximal cells: $C_6$, $C_4$, $C_4$, $K_2\square K_2\square K_2$, $C_6\square K_2$, and three $K_2$.}
\end{figure}

Djokovi\'{c}~\cite{Djokovic}
characterized partial cubes in the following simple but pretty way:
{\it a graph $G=(V,E)$ can be isometrically embedded in a hypercube if and only if  $G$ is bipartite and for any edge
$uv$, the sets $W(u,v)=\{ x\in V: d(x,u)<d(x,v)\}$ and $W(v,u)=\{ x\in V: d(x,v)<d(x,u)\}$
are convex.} In this case, $W(u,v)\cup W(v,u)=V$, whence $W(u,v)$ and $W(v,u)$ are complementary convex subsets
of $G$, called {\it halfspaces}. The edges between $W(u,v)$ and $W(v,u)$ correspond to a coordinate in a hypercube embedding of $G$.

Moreover, partial cubes have the separation property $S_3$: any convex subgraph $G'$ of a partial cube $G$ can be represented as an intersection of halfspaces of $G$~\cite{AlKn,Ba_S3,Ch_thesis}. We will call such a representation (or simply the convex subgraph $G'$) a {\it restriction} of $G$.
A \emph{contraction} of $G$ is the partial cube $G'$ obtained from $G$ by contracting all edges
corresponding to a given coordinate in a hypercube embedding. Now, a partial cube $H$ is called a {\it partial cube-minor} (abbreviated, {\it pc-minor}) of $G$ if
$H$ can be obtained by a sequence of contractions from a convex subgraph of $G$.
If $T_1,\ldots,T_m$ are finite
partial cubes, then ${\mathcal F}(T_1,\ldots,T_m)$ is the set of all partial cubes $G$ such that {\it no} $T_i, i=1,\ldots,m,$ can be obtained as a pc-minor
of $G$. We will say that a class of partial cubes ${\mathcal C}$ is
{\it pc-minor-closed} if we have that $G\in {\mathcal C}$ and $G'$ is a pc-minor of $G$ imply that $G'\in {\mathcal C}$. As we will see in Section~\ref{minors}, for any set of partial cubes $T_1,\ldots,T_m$, the class ${\mathcal F}(T_1,\ldots,T_m)$ is pc-minor-closed.

\begin{figure}[ht]
\centering
\subfigure[\label{fig:Q3minus}]{\includegraphics[width=.15\textwidth]{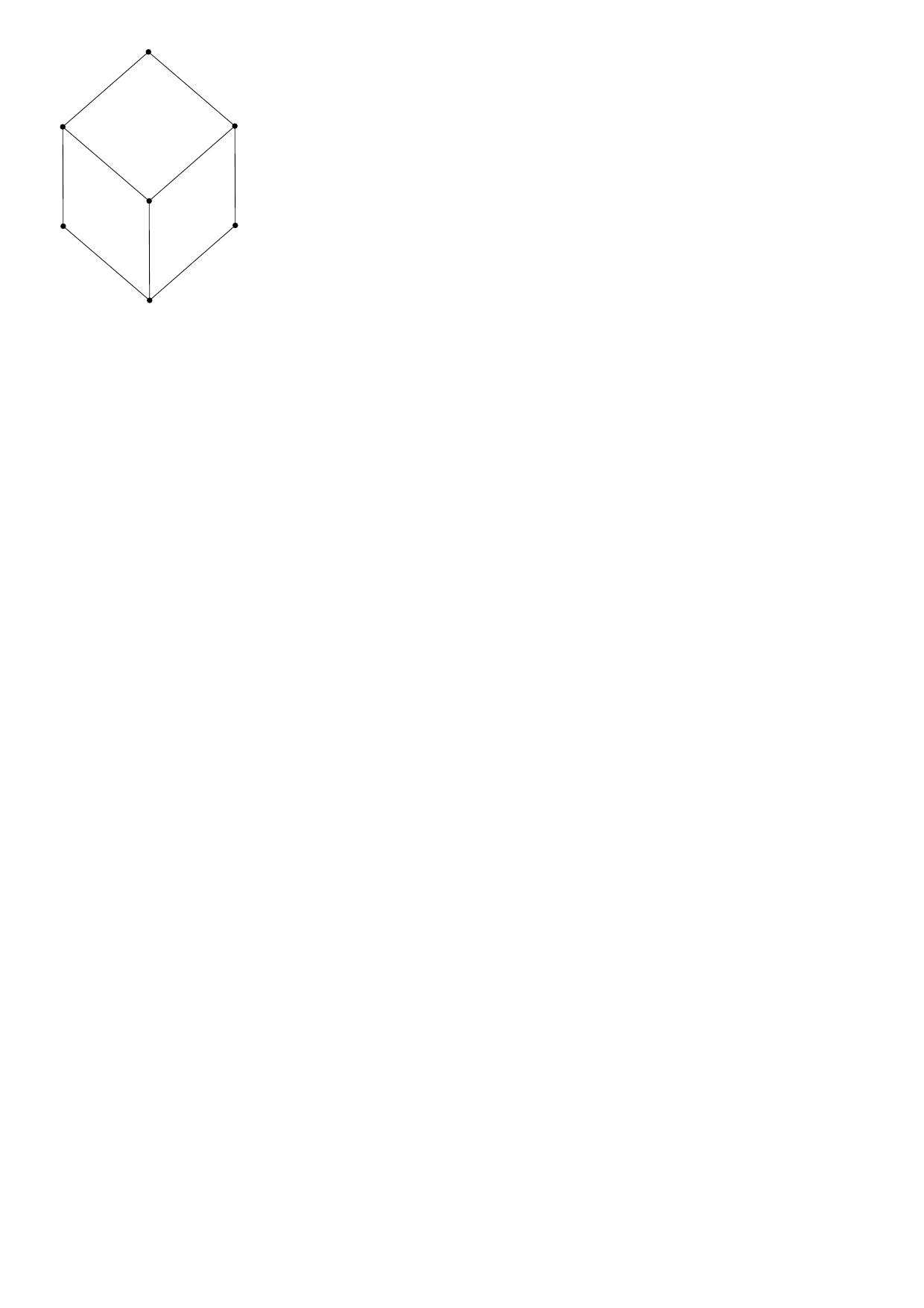}}
\hspace{5em}%
\subfigure[\label{fig:3cubecond}]{\includegraphics[width=.4\textwidth]{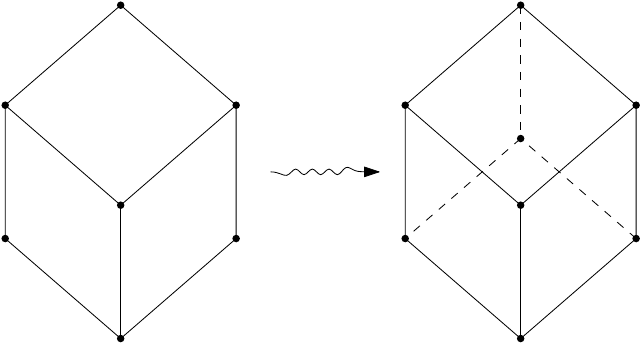}}
\caption{\subref{fig:Q3minus} $Q^-_3$ -- the 3-cube minus one vertex.\subref{fig:3cubecond}~the 3-cube condition.}
\end{figure}

It turns out that the class of hypercellular graphs coincides
with the minor-closed class ${\mathcal F}(Q^-_3)$, where $Q^-_3$ denotes the 3-cube minus one vertex, see Figure~\ref{fig:Q3minus}. In a sense, this is the first nontrivial class ${\mathcal F}(T)$. Indeed, the class
${\mathcal F}(C)$, where $C$ is a $4$-cycle, is just the class of all trees. Also, the classes ${\mathcal F}(T)$ where $T$ is the union of two 4-cycles sharing one vertex or one edge are quite special. Median graphs, graphs of lopsided sets, and tope graphs of COMs are pc-minor closed, whereas tope graphs of OMs are only closed under contractions but not under restrictions.
Another class of pc-minor closed partial cubes is the class ${\mathcal S}_4$ also known as \emph{Pasch graphs}. It consists of bipartite graphs in which the geodesic convexity satisfies
the separation property $S_4$~\cite{Ch_thesis,Ch_separation}, i.e., any two disjoint convex sets can be separated by disjoint half-spaces. It is shown in~\cite{Ch_thesis,Ch_separation} that ${\mathcal S}_4={\mathcal F}(T_1,\ldots,T_m)$,
where all $T_i$ are isometric subgraphs of $Q_4$; see Figure~\ref{fig:forbidden3} for the complete list, from which $T_5$ and $T_7$ were missing in~\cite{Ch_thesis,Ch_separation}.  In particular, $Q^-_3$ is a pc-minor of all of the $T_i$. Thus, ${\mathcal F}(Q^-_3)\subseteq{\mathcal S}_4$.

\begin{figure}[ht]
\begin{center}
\includegraphics[width = .9\textwidth]{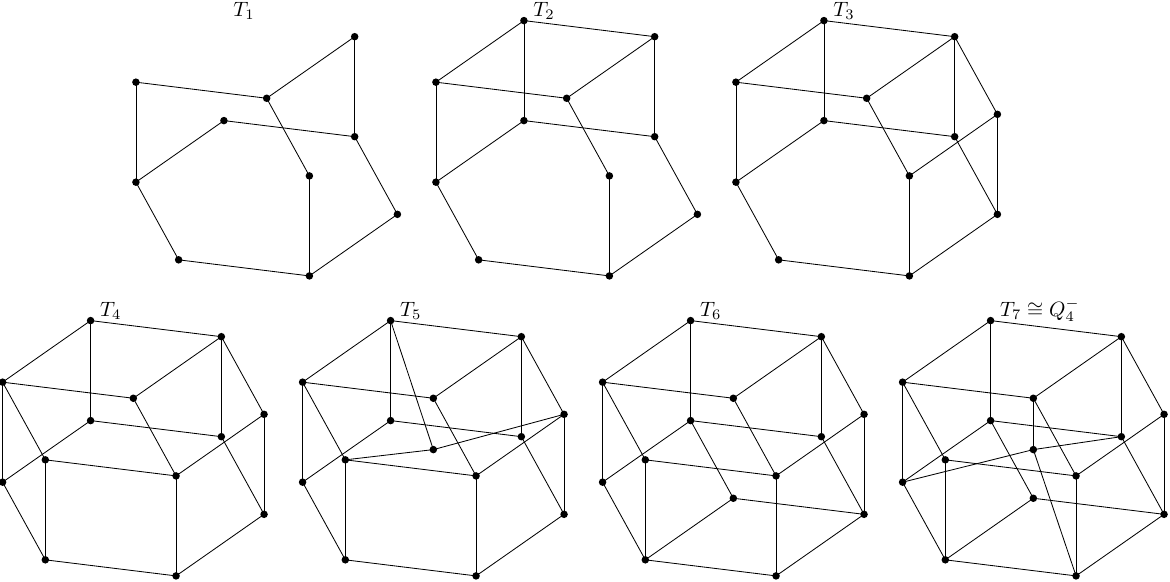}
\caption{The set of minimal forbidden pc-minors of ${\mathcal S}_4$.}
\label{fig:forbidden3}
\end{center}
\end{figure}

Our results mainly concern the cell-structure  of graphs from ${\mathcal F}(Q^-_3)$. It is well-known~\cite{Ba_median} that median graphs
are exactly the graphs in which the convex hulls of isometric cycles are hypercubes; these hypercubes are gated subgraphs. Moreover, any finite median graph can be obtained by
gated amalgams from cubes~\cite{Is,VdV1}. Analogously, it was shown in
\cite{BaCh_cellular} that any isometric cycle of a bipartite cellular graph is a convex and gated subgraph; moreover, the bipartite cellular graphs are exactly the
bipartite graphs  which can be obtained by gated amalgams from even cycles. We extend these results in the following way:

\begin{mtheorem}\label{mthm:cells}
 The convex closure of any isometric cycle of a graph $G\in\mathcal{F}(Q_3^-)$ is a gated subgraph isomorphic to a Cartesian product of edges and even cycles. Moreover, the convex closure of any isometric cycle of a graph $G\in\mathcal{S}_4$ is a gated subgraph, which is isomorphic to a Cartesian product of edges and even cycles if it is antipodal.
\end{mtheorem}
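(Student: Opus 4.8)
The plan is to prove the structural claim by first reducing to $G=\conv(C)$ and to treat gatedness in the ambient graph separately. Since $C$ is an isometric cycle of a bipartite partial cube, $C\cong C_{2k}$ for some $k\ge 2$, and every $\Theta$-class of $G$ crossing $C$ does so in exactly two, antipodal, edges of $C$. Write $H=\conv(C)$. As ${\mathcal F}(Q_3^-)$ and ${\mathcal S}_4$ are pc-minor-closed, $H$ (being convex in $G$) and all of its contractions again lie in the class in question, so for the assertions about the isomorphism type of $H$ we may assume $G=H$. The first observation is that then \emph{every} $\Theta$-class of $H$ crosses $C$ (otherwise $C$, hence $\conv(C)=H$, would lie in one of its halfspaces), so $H$ has exactly $k$ $\Theta$-classes $E_1,\dots,E_k$, each crossing $C$ twice, and $H$ is spanned by them.

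Next I would show that $H$ is antipodal, which is where excluding $Q_3^-$ is essential: if some vertex $v$ of $H$ had no antipode, then following a shortest detour of ``unreversed'' $\Theta$-classes away from $v$ one isolates, at a minimal obstruction, three $\Theta$-classes whose mutual ``corner'' is absent, and the convex hull of the arc of $C$ they cross contracts onto $Q_3^-$, a contradiction. (In ${\mathcal S}_4$ antipodality need not hold — for instance when $H$ is itself $Q_3^-$ — which is why the product structure is claimed only conditionally there.) Granting that $H$ is antipodal, the product decomposition is obtained by showing, with the aid of the 3-cube condition characterising ${\mathcal F}(Q_3^-)$ (see Figure~\ref{fig:3cubecond}) together with antipodality, that $E_1,\dots,E_k$ partition into ``bundles'' so that $H$ is the Cartesian product of its projections onto the individual bundles, each such factor being a single edge $K_2$ (a class crossing every other class compatibly) or an even cycle $C_{2\ell}$ (a maximal bundle of classes that winds around $C$); equivalently one peels off one such factor and inducts on $k$. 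I expect this peeling to be the main obstacle: one must rule out ``tangled'' families of $\Theta$-classes forming no clean Cartesian factor, and control how the isometric cycle $C$ and its convex hull behave under the contractions/projections involved — precisely the situations in which, absent the 3-cube condition, a $Q_3^-$ pc-minor appears.

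For gatedness I argue in $G$ itself. By the separation property $S_3$, $H$ is the intersection of the halfspaces of $G$ containing it, all of which are bounded by $\Theta$-classes \emph{not} crossing $H$; hence for any vertex $v$ of $G$ the only possible gate of $v$ in $H$ is the point $v^{\ast}$ obtained from $v$ by switching the $E$-coordinate for every $\Theta$-class $E$ not crossing $H$ with $v$ on the side opposite $H$, and $v$ has a gate in $H$ if and only if $v^{\ast}\in V(G)$. So $H$ fails to be gated exactly when some such $v^{\ast}$ is missing; I would take a minimal such failure, analyse the two or three $\Theta$-classes responsible together with the arc of $C$ they cross, and, by restricting to the convex hull of that arc and contracting the remaining classes, exhibit one of the excluded minors — $Q_3^-$ when $G\in{\mathcal F}(Q_3^-)$, and one of the graphs $T_1,\dots,T_m$ (each having $Q_3^-$ as a pc-minor) when $G\in{\mathcal S}_4$ — contradicting the hypothesis. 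Hence $H$ is gated in both cases.

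Finally, for the conditional clause in the ${\mathcal S}_4$ statement it remains to note that an antipodal convex hull of an isometric cycle in an ${\mathcal S}_4$-graph already lies in ${\mathcal F}(Q_3^-)$: using antipodality, any hypothetical $Q_3^-$ pc-minor of $H$ can be extended to a pc-minor isomorphic to some $T_i$, which is impossible, so $H\in{\mathcal F}(Q_3^-)$ and the product structure follows from the first part. Overall, I expect the two obstruction analyses — non-antipodality in the ${\mathcal F}(Q_3^-)$ case and non-gatedness in general — to be the most delicate steps, since each requires distilling a bounded forbidden configuration out of an \emph{a priori} unbounded failure.
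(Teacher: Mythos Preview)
Your outline diverges substantially from the paper's proof, and while the overall architecture is not unreasonable, the two load-bearing steps are left unexecuted and it is not clear they can be carried out without machinery equivalent to what the paper builds.

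The paper does \emph{not} prove antipodality first and then peel off Cartesian factors. For the product structure in $\mathcal{F}(Q_3^-)$ it runs an induction through contraction and expansion: assuming $G=\conv(C)$ is a minimal counterexample, one contracts a $\Theta$-class to obtain $G'=\conv(C')$, which by minimality is already a product of edges and even cycles; the heart of the argument is then a detailed case analysis (Proposition~\ref{product_of_cycles_expansion}) of which isometric expansions of such a product avoid creating a convex half-expanded cycle (and hence a $Q_3^-$ minor), forcing $G$ itself to be a product. Gatedness is handled separately, for all of $\mathcal{S}_4$, via Proposition~\ref{thm:closure}: a minimal-under-contraction non-gated $\conv(C)$ is shown to contain an isometric full subdivision of $K_4$ with no common neighbour of the original vertices, and this configuration is then checked directly to violate the $S_4$ separation property. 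Your last paragraph, reducing the antipodal $\mathcal{S}_4$ case to $\mathcal{F}(Q_3^-)$, is the one place where your plan matches the paper's (Proposition~\ref{prop:antipodal}).

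Your proposal replaces the two main arguments by direct obstruction-hunting (``follow a shortest detour of unreversed classes\ldots isolate three classes whose corner is absent'', and ``partition $E_1,\dots,E_k$ into bundles and peel one off''), and in each case you correctly flag the step as the expected difficulty but do not perform it. Proving antipodality of $\conv(C)$ \emph{before} knowing the product structure is not obviously easier than proving the product structure outright --- the paper never attempts it --- and your ``bundles'' of $\Theta$-classes are nowhere defined, so there is no mechanism for showing they exist or behave as Cartesian factors. Your characterisation of the would-be gate as the coordinate-flip $v^{\ast}$ is correct, but the assertion that a missing $v^{\ast}$ can be localised to a bounded forbidden configuration is precisely the nontrivial content of Proposition~\ref{thm:closure}. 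As written, the proposal is a strategy sketch with the proofs of its central claims still owed.
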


\medskip
In view of Theorem~\ref{mthm:cells} we will call a subgraph $X$ of a partial cube $G$ a {\it cell} if $X$ is a convex subgraph of $G$ which is a Cartesian product of edges and
even cycles. Note that since a Cartesian product of edges and even cycles is the convex hull of an isometric cycle, by Theorem~\ref{mthm:cells} the cells of  $\mathcal{F}(Q_3^-)$
can be equivalently defined as convex hulls of isometric cycles. Notice also that  if we replace each cell $X$ of $G$ by
a convex polyhedron $[X]$ which is the Cartesian product of segments and regular polygons (a segment for each edge-factor and a regular polygon
for each cyclic factor), then we associate with $G$ a cell complex ${\bf X}(G)$.

We will say that a partial cube $G$ satisfies the {\it 3-convex cycles condition} (abbreviated, {\it 3CC-condition}) if for any three convex cycles $C_1,C_2,C_3$
that intersect in a vertex and pairwise intersect in three different edges the convex hull of $C_1\cup C_2\cup C_3$ is a cell; see Figure~\ref{fig:3CC} for an example.
Notice that the absence of cycles satisfying the preconditions of the 3CC-condition together with the gatedness of isometric cycles characterizes bipartite cellular graphs~\cite{BaCh_cellular}.

\begin{figure}[ht]
\begin{center}
\includegraphics[width = .45\textwidth]{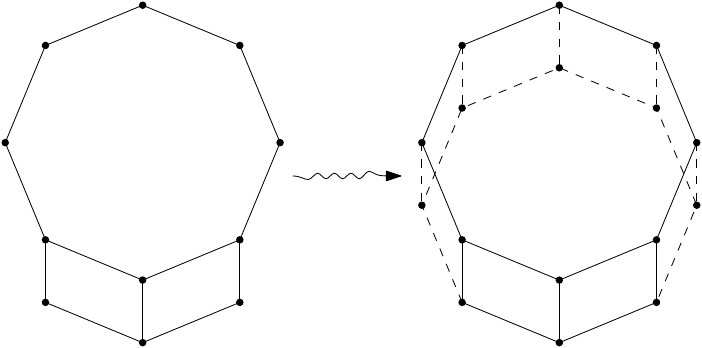}
\caption{The 3-convex cycles condition.}
\label{fig:3CC}
\end{center}
\end{figure}

Defining the dimension of a cell $X$ as the number of edge-factors plus two times the number cyclic factors (which corresponds to the topological dimension of $[X]$) one can give a natural generalization of the 3CC-condition. We say that a partial cube $G$ (or its cell complex ${\bf X}(G)$) satisfies the {\it 3-cell condition} (abbreviated, {\it 3C-condition}) if for any three cells $X_1,X_2,X_3$ of dimension $k+2$
that intersect in a cell of dimension $k$ and pairwise intersect in three different cells of dimension $k+1$ the convex hull of $X_1\cup X_2\cup X_3$ is a cell. In case of cubical complexes
${\bf X}$, the 3-cell condition coincides with Gromov's flag condition~\cite{Gr} (which can be also called cube condition, see Figure~\ref{fig:3cubecond}), which together with simply connectivity of ${\bf X}$ characterize CAT(0) cube complexes. By ~\cite[Theorem 6.1]{Ch_CAT}, median graphs are exactly the 1-skeleta of CAT(0) cube complexes (for other generalizations of these two results, see~\cite{BrChChGoOs,ChChHiOs}).

The following main characterization of graphs from $\mathcal{F}(Q_3^-)$ establishes those analogies with median and cellular graphs, that lead to the name hypercellular graphs.

\begin{mtheorem}\label{mthm:amalgam}
 For a partial cube $G=(V,E)$, the following conditions are equivalent:
\begin{itemize}
\item[(i)] $G\in\mathcal{F}(Q_3^-)$, i.e., $G$ is hypercellular;
\item[(ii)] any cell of $G$ is gated and $G$ satisfies the 3CC-condition;
\item[(iii)] any cell of $G$ is gated and $G$ satisfies the 3C-condition;
\item[(iv)] each finite convex subgraph of $G$ can be obtained by gated amalgams
from cells.
\end{itemize}
\end{mtheorem}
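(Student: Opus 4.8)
The plan is to prove the four conditions equivalent by establishing the cycle (i)$\Rightarrow$(ii)$\Rightarrow$(iv)$\Rightarrow$(i), together with the implication (iv)$\Rightarrow$(iii) and the trivial implication (iii)$\Rightarrow$(ii). The latter needs no work: a cell of dimension $2$ is a convex even cycle, one of dimension $1$ an edge, and one of dimension $0$ a vertex, so the 3CC-condition is exactly the instance $k=0$ of the 3C-condition, while ``every cell is gated'' is the same clause in (ii) and in (iii).

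For (i)$\Rightarrow$(ii): gatedness of cells follows from Theorem~\ref{mthm:cells} --- an edge is trivially gated, and a higher-dimensional cell $X$, being a Cartesian product of edges and even cycles, contains a ``diagonal'' isometric cycle sweeping all of its $\Theta$-classes, whose convex closure is therefore $X$ itself; since $X$ is convex in $G$ this cycle is isometric in $G$, so Theorem~\ref{mthm:cells} makes $X$ gated. For the 3CC-condition, suppose convex cycles $C_1,C_2,C_3$ pairwise meet in distinct edges $e_{12},e_{13},e_{23}$ incident to a common vertex $v$, but $Y:=\conv(C_1\cup C_2\cup C_3)$ is not a cell. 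The classes $\Theta(e_{12}),\Theta(e_{13}),\Theta(e_{23})$ are pairwise distinct (two edges of an even cycle sharing a vertex lie in different $\Theta$-classes), and contracting in $Y$ all other $\Theta$-classes collapses each $C_i$ onto a $4$-cycle and the union of these onto $Q_3$ or $Q_3^-$; using the cell structure behind Theorem~\ref{mthm:cells} one checks that the image must be $Q_3^-$ --- were it $Q_3$, undoing the contractions would exhibit $Y$ as a Cartesian product of cells, hence a cell, contrary to assumption --- so $Q_3^-$ is a pc-minor of $G$, contradicting (i). (The full 3C-condition is recovered afterwards via (iv).)

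The implication (ii)$\Rightarrow$(iv) is the main obstacle. Given $G$ with all cells gated and satisfying the 3CC-condition, one proves by induction on the number of $\Theta$-classes that every finite convex subgraph $H$ is an iterated gated amalgam of cells: if $H$ is not itself a cell one must exhibit a proper gated subgraph $H_0$ of $H$ splitting it into proper gated subgraphs $H_1,H_2$ with $H_1\cap H_2=H_0$ and no edges between $H_1\setminus H_0$ and $H_2\setminus H_0$, and then recurse on $H_1$ and $H_2$. The idea is to single out a suitable $\Theta$-class $E$ of $H$ --- for instance one of whose halfspaces is gated, or whose boundary set (the endpoints, in one of its halfspaces, of the edges of $E$) is a cell, hence gated --- and to build $H_1,H_2$ from this set and the convex closures of the two halfspaces; the 3CC-condition is precisely what excludes the local configurations obstructing such a choice, playing the same role as the cube condition in Mulder's convex-expansion decomposition of median graphs and in Bandelt--Chepoi's decomposition of bipartite cellular graphs, which this argument extends. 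Making the choice of $E$ effective and verifying that the boundary is a cell and the resulting pieces gated is the technical core of this implication, and I expect it to be the hardest part.

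It remains to close the cycle. For (iv)$\Rightarrow$(i), induct on $|V(G)|$: a cell lies in $\mathcal{F}(Q_3^-)$ because its convex subgraphs, and their contractions, are Cartesian products of edges, even cycles and paths, whereas $Q_3^-$ --- having $7$ vertices and a vertex of degree $3$ --- is prime (as $7$ is prime) and not a single edge, cycle or path; and $\mathcal{F}(Q_3^-)$ is closed under gated amalgams since a pc-minor of a gated amalgam $G_1\cup G_2$ is a pc-minor of $G_1$, a pc-minor of $G_2$, or a nontrivial gated amalgam of such, while $Q_3^-$ is not a nontrivial gated amalgam --- its three $\Theta$-classes pairwise cross, whereas a nontrivial gated amalgam always has two non-crossing $\Theta$-classes (equivalently, the only gated subgraphs of $Q_3^-$ are vertices, edges, its three square faces and $Q_3^-$ itself, none of which separates it). Finally, for (iv)$\Rightarrow$(iii): if cells $X_1,X_2,X_3$ of dimension $k+2$ meet in a cell $X_0$ of dimension $k$ and pairwise in three distinct cells of dimension $k+1$, then $Z:=\conv(X_1\cup X_2\cup X_3)$ is finite and convex, hence by (iv) an iterated gated amalgam of cells; if $Z$ were a nontrivial gated amalgam $Z_1\cup_{Z_0}Z_2$, each $X_i$ --- being prime with respect to gated amalgamation --- would lie in $Z_1$ or in $Z_2$, and a short case analysis, using that $Z_0$ and the $X_i$ are gated in $Z$ and that the gated hull inside $X_i$ of the union of two of its codimension-$1$ subcells meeting in a codimension-$2$ subcell is all of $X_i$, forces some $X_i$ into $Z_0$ and then $Z\subseteq Z_2\subsetneq Z$, a contradiction; thus $Z$ is a cell, which is the 3C-condition.
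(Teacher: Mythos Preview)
Your proposal has the right overall shape but two of your implications contain genuine gaps, and you are missing a key shortcut that the paper exploits.

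\textbf{The gap in (i)$\Rightarrow$(ii).} Your argument for the 3CC-condition does not go through as stated. Contracting $Y=\conv(C_1\cup C_2\cup C_3)$ down to its three special $\Theta$-classes indeed yields $Q_3$ (since $Q_3^-$ is excluded), but the claim that ``undoing the contractions would exhibit $Y$ as a Cartesian product of cells'' is false in general. An isometric expansion of a product of edges and cycles, even one staying inside $\mathcal{F}(Q_3^-)$, need not be a product: the paper's Proposition~\ref{product_of_cycles_expansion} shows that the other possibility is a product with a gated subproduct glued on as a flap, which is not a cell. The paper's proof (Lemma~\ref{three_cycles}) avoids this by contracting a \emph{single} carefully chosen class $E_f$ (one crossing the long cycle $C_1$ but not $C_2,C_3$), applying induction to get $G'\cong C\times K_2$, and then analyzing the two expansion cases from Proposition~\ref{product_of_cycles_expansion} using the extra information that $C_1'$ must get re-expanded. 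Your all-at-once contraction discards exactly this information.

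\textbf{The gap in (ii)$\Rightarrow$(iv), and the missing shortcut.} You correctly identify this as the technical core, but your outline (``choose a $\Theta$-class whose boundary is a cell'') is not how the paper proceeds, and it is unclear it can be made to work directly from (ii). The paper instead first proves (ii)$\Rightarrow$(i): the class of partial cubes satisfying (ii) is pc-minor-closed (Proposition~\ref{condition(ii)}), and $Q_3^-$ itself fails 3CC, so any graph satisfying (ii) is hypercellular. This is the key insight you are missing. Once (ii)$\Rightarrow$(i) is in hand, the amalgamation step becomes (i)$\Rightarrow$(iv), and the paper can freely use hypercellularity---in particular that carriers $N(E_f)$ are gated (Proposition~\ref{carrier}), whose proof uses contractions of carriers staying carriers (Lemma~\ref{carrier_projection}), which in turn relies on Theorem~\ref{mthm:cells}. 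Without (ii)$\Rightarrow$(i) you cannot invoke these tools, and your sketch gives no substitute.

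Your (iv)$\Rightarrow$(i) is essentially the paper's argument. Your (iv)$\Rightarrow$(iii), arguing that $\conv(X_1\cup X_2\cup X_3)$ cannot be a nontrivial gated amalgam because each $X_i$ is prime and two of the $(k+1)$-cells already generate $X_i$, is a legitimate alternative to the paper's direct proof of (ii)$\Rightarrow$(iii) (Proposition~\ref{three_cells}); it works but is circuitous since it passes through the full amalgamation theorem.
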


\begin{figure}[ht]
\centering
\subfigure[\label{fig:medianvertex}]{\includegraphics[width=.26\textwidth]{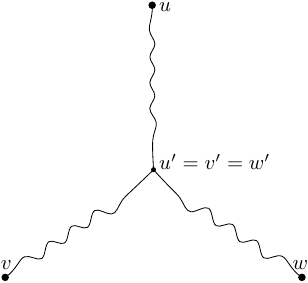}}
\hspace{.6em}
\subfigure[\label{fig:mediancycle}]{\includegraphics[width=.26\textwidth]{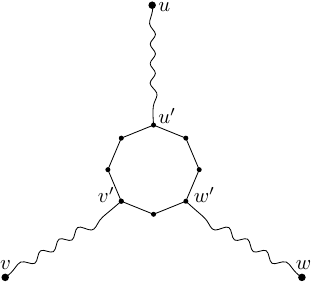}}
\hspace{.6em}
\subfigure[\label{fig:mediancell}]{\includegraphics[width=.26\textwidth]{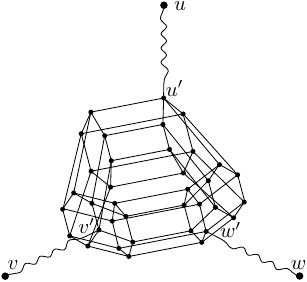}}
\caption{\subref{fig:medianvertex} a median-vertex. \subref{fig:mediancycle} a median-cycle. \subref{fig:mediancell} a median-cell.}
\end{figure}

A further characterization of hypercellular graphs is analogous to median and cellular graphs, see the corresponding properties in Figure~\ref{fig:medianvertex} and~\ref{fig:mediancycle}, respectively. We show that hypercellular graphs satisfy the so-called
{\it median-cell property}, which is essentially defined as follows: for any three vertices $u,v,w$ of $G$ there exists a unique gated cell $X$ of $G$ such that if $u',v',w'$ are the gates of $u,v,w$ in $X$, respectively, then
$u',v'$ lie on a common $(u,v)$-geodesic, $v',w'$ lie on a common $(v,w)$-geodesic, and $w',u'$ lie on a common $(w,u)$-geodesic, see Figure~\ref{fig:mediancell} for an illustration. Namely, we prove:

\begin{mtheorem}\label{mthm:median}
 A partial cube $G$ satisfies the median-cell property if and only if $G$ is hypercellular.
\end{mtheorem}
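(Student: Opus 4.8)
The plan is to prove the two implications separately, using Theorem~\ref{mthm:amalgam} for ``hypercellular $\Rightarrow$ median-cell'' and the pc-minor-closedness of $\mathcal{F}(Q_3^-)$ for the converse. A preliminary observation needed in both halves is that $Q_3^-$ \emph{itself} fails the median-cell property: for the three vertices of $Q_3^-$ at distance $2$ from the deleted corner, a short finite check over the only cells of $Q_3^-$ (a vertex, one of its nine edges, one of its three convex squares) shows that none of them has gates meeting all three geodesic conditions, the obstruction being that the sole candidate common median of this triple is exactly the missing vertex.

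\emph{Hypercellular $\Rightarrow$ median-cell.} Given $u,v,w\in V(G)$, set $H:=\conv(\{u,v,w\})$, a \emph{finite} convex subgraph of $G$ and hence hypercellular. Since $H$ is convex and contains $u,v,w$, geodesics among $u,v,w$ and gates of $u,v,w$ into subgraphs of $H$ are computed identically in $H$ and in $G$; a cell of $H$ is again a cell of $G$, and by Theorem~\ref{mthm:cells} it is gated in $G$. So it suffices to prove the median-cell property for finite hypercellular $G$, which I do by induction on the number of cells in a gated-amalgam decomposition of $G$ (Theorem~\ref{mthm:amalgam}(iv)). If $G=C_1\B\cdots\B C_r$ is a single cell, with each $C_i$ an edge or even cycle, then since intervals, gates and cells of a Cartesian product split factorwise, the median-cell of $u,v,w$ exists, is unique, and equals $D_1\B\cdots\B D_r$, where $D_i$ is the median vertex of the projections $u_i,v_i,w_i$ inside $C_i$ if there is one, and $D_i=C_i$ otherwise (in a cyclic factor with no median vertex no single vertex or edge can meet the factor conditions, so the whole cycle is forced, and moreover $u_i,v_i,w_i$ then have convex hull $C_i$); together this yields $D_1\B\cdots\B D_r=\conv(\{u',v',w'\})$ for the gates $u',v',w'$. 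For the inductive step write $G=G_1\cup_{G_0}G_2$ with $G_1,G_2$ proper gated subgraphs and $G_0:=G_1\cap G_2$; by pigeonhole two of $u,v,w$, say $u$ and $v$, lie in one side, say $G_1$. If $w\in G_1$ too, the inductive hypothesis in $G_1$ gives a cell $X\subseteq G_1$ that works for $u,v,w$ and is gated in $G$ (being a gated subgraph of the gated subgraph $G_1$). If $w\notin G_1$, put $\bar w:=\mathrm{pr}_{G_0}(w)$ and apply the inductive hypothesis to $u,v,\bar w$ in $G_1$, getting $X\subseteq G_1$; then $\mathrm{pr}_X(w)=\mathrm{pr}_X(\bar w)$, and since $\bar w$ lies on every $(u,w)$- and every $(v,w)$-geodesic, each $(u,\bar w)$- resp.\ $(v,\bar w)$-geodesic through the relevant gates prolongs past $\bar w$ into a $(u,w)$- resp.\ $(v,w)$-geodesic, so $X$ works for $u,v,w$ in $G$. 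Carrying the identity $X=\conv(\{u',v',w'\})$ through the induction shows that in general the median-cell satisfies $X=\conv(\{u',v',w'\})\subseteq\conv(\{u,v,w\})$. Uniqueness follows by the same reduction: a cell has no nontrivial gated-amalgam decomposition, so any median-cell of $u,v,w$ is forced into one side $G_i$, where the inductive uniqueness applies; the base case is again factorwise.

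\emph{Median-cell $\Rightarrow$ hypercellular.} I show the median-cell property descends to pc-minors; with the preliminary observation this forces $G\in\mathcal{F}(Q_3^-)$. Under a contraction $\pi$, a cell maps to a cell, $\pi$ intertwines the gate maps of a cell and of its image, and geodesics map onto geodesics, so the $\pi$-image of a median-cell of preimages of $u',v',w'$ is a median-cell of $u',v',w'$. Under a restriction to a convex subgraph $H$ with $u,v,w\in H$: the gates $u',v',w'$ of the $G$-median-cell $X$ of $u,v,w$ lie in $\conv(\{u,v,w\})$, hence by the identity above $X=\conv(\{u',v',w'\})\subseteq\conv(\{u,v,w\})\subseteq H$, so $X$ is a cell of $H$, is gated in $H$, and -- all relevant gates and geodesics agreeing in $H$ and $G$ -- is the median-cell of $u,v,w$ in $H$ (with uniqueness again as above). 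As every pc-minor of $G$ is a restriction of a contraction of $G$, this completes the inheritance.

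\emph{Main obstacle.} The crux is the backward direction, and inside it the identity $X=\conv(\{u',v',w'\})$ for the median-cell: it is what makes the factorwise base case and the behaviour under restriction go through, but proving it takes care because restrictions of cells need not be cells, so one must argue that the inclusion-minimal cell of $G$ satisfying the three geodesic conditions is actually spanned by its own gates, not merely containing them. Tightly coupled with this is the uniqueness clause in the amalgam step -- especially the case where the two ``stable'' gates $u',v'$ land in $G_0$ -- which carries most of the remaining bookkeeping. An alternative route for the backward direction, namely extracting condition~(ii) of Theorem~\ref{mthm:amalgam} (every cell gated, plus the 3CC-condition, which is exactly what the antipodal triple above violates in $Q_3^-$) directly from the median-cell property, runs into the analogous difficulty of pinning down the position of median-cells of suitably chosen triples relative to a given cell, or to three pairwise-intersecting convex cycles.
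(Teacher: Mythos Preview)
Your forward direction is in the same spirit as the paper's: both go through the gated-amalgam decomposition of Theorem~\ref{mthm:amalgam}(iv). The paper first reduces to metric triangles via the apiculate property (Lemma~\ref{Pasch-apiculate}) and then proves, as Proposition~\ref{metric_triangle}, that the gated hull of a metric triangle is a cell; your direct induction on the amalgam decomposition amounts to the same argument reorganised. One small omission: the paper's formal definition of \emph{cell-median} in Section~\ref{sec:median} includes being apiculate, which you do not verify; the paper handles this separately via the Pasch property.

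Your backward direction is genuinely different from the paper's, and it has a real gap. You want to show that the median-cell property is pc-minor-closed and then invoke that $Q_3^-$ fails it. For restrictions you need the median-cell $X$ of $u,v,w$ to lie inside every convex $H\ni u,v,w$, and for this you use the identity $X=\conv(\{u',v',w'\})$. But you establish that identity only in the forward direction, i.e.\ \emph{for hypercellular graphs}, by carrying it through the amalgam induction. Using it in the backward direction, where $G$ is an arbitrary median-cell partial cube, is circular: without already knowing $G\in\mathcal{F}(Q_3^-)$ you have no amalgam decomposition along which to propagate the identity, and from the bare definition (uniqueness of a gated cell with the geodesic property) there is no obvious reason why $\conv(\{u',v',w'\})$ should itself be a cell rather than a subproduct with a path factor of length~$\ge 2$. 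A second, smaller gap is uniqueness: a gated cell of $H$ need not be gated in $G$, so uniqueness in $H$ does not follow from uniqueness in $G$; and for contractions you produce a candidate median-cell in $\pi_f(G)$ but do not argue its uniqueness there.

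The paper takes precisely the ``alternative route'' you mention and set aside: it verifies condition~(ii) of Theorem~\ref{mthm:amalgam} directly from the median-cell property. Gatedness of an arbitrary cell $X$ is obtained by choosing a closest vertex $v$ without a gate, producing a metric triangle $vxy$ with $x,y\in X$, and then exploiting the resulting second cell $Y\ni v,x,y$ together with the key Claim~\ref{claim:convex_gated} (two distinct convex cycles, one of them gated, meet in at most an edge) to derive a contradiction. The 3CC-condition is then checked by a careful but finite case analysis on the antipodes of the common vertex in the three cycles. So the difficulty you anticipated in that route is real, but the paper overcomes it with Claim~\ref{claim:convex_gated}; your minor-closedness route, as it stands, does not close.
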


\medskip
Theorem~\ref{mthm:amalgam} has several immediate consequences, which
we formulate next.

\begin{mtheorem}\label{mthm:COM}
Let $G$ be a locally finite hypercellular graph. Then ${\bf X}(G)$ is a contractible zonotopal complex. Additionally, if $G$ is finite,
then $G$ is a tope  graph of a zonotopal COM.
\end{mtheorem}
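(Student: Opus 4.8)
This theorem is essentially a corollary of Theorem~\ref{mthm:amalgam}, and my plan is to derive everything from the gated-amalgam decomposition in~\ref{mthm:amalgam}(iv) together with standard facts about zonotopes, oriented matroids, and homotopy colimits. First I would check that ${\bf X}(G)$ is a zonotopal complex. Every closed cell $[X]$ is a zonotope: a segment is a zonotope, a regular $2k$-gon is the Minkowski sum of $k$ segments, and a Cartesian product of zonotopes is a zonotope (its generating set being the union of those of the factors); since a cell $X$ is a Cartesian product of edges and even cycles, $[X]$ is a product of segments and regular even polygons, hence a zonotope. By Theorem~\ref{mthm:amalgam} cells are gated, and the intersection of two cells $X_1,X_2$, being an intersection of gated convex subgraphs, is a gated convex subgraph of both; a convex gated subgraph of a Cartesian product of edges and even cycles is again such a product, so $X_1\cap X_2$ is a cell whose realization is a product of faces of the polygon/segment factors of $[X_1]$, i.e.\ a face of $[X_1]$, and likewise of $[X_2]$. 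Thus the closed cells meet along faces and ${\bf X}(G)$ is a polyhedral complex all of whose cells are zonotopes, i.e.\ a zonotopal complex; local finiteness of $G$ makes it a locally finite CW complex.

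For contractibility, each closed cell $[X]$ is a convex polytope, hence contractible. For a finite hypercellular $G$ I would induct on the number of cells using Theorem~\ref{mthm:amalgam}(iv): either $G$ is a single cell, and ${\bf X}(G)=[X]$ is contractible, or $G=G_1\cup G_2$ is a gated amalgam along the gated subgraph $H=G_1\cap G_2$, where $G_1,G_2$ are convex (hence hypercellular) and each has fewer cells. Since the cells of $H$ are precisely the cells of $G$ whose vertex set lies in $G_1\cap G_2$, they are cells both of $G_1$ and of $G_2$, so ${\bf X}(G)={\bf X}(G_1)\cup{\bf X}(G_2)$ with ${\bf X}(G_1)\cap{\bf X}(G_2)={\bf X}(H)$; all three are contractible by induction, so ${\bf X}(G)$ is contractible by the gluing lemma for CW complexes. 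For locally finite $G$, note that in any partial cube the convex hull of a finite vertex set $S$ is finite: it lies in the set of vertices agreeing with a fixed $s_0\in S$ in all but the finitely many coordinates of the hypercube embedding along which two vertices of $S$ differ. Hence $G$ is the directed union of its finite convex subgraphs $G_\alpha$, each hypercellular with ${\bf X}(G_\alpha)$ contractible by the finite case. Any compact subset of the CW complex ${\bf X}(G)$ meets finitely many cells and hence lies in some ${\bf X}(G_\alpha)$, so $\pi_n({\bf X}(G))=\operatorname{colim}_\alpha\pi_n({\bf X}(G_\alpha))=0$ for all $n$, and ${\bf X}(G)$ is contractible by Whitehead's theorem.

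For the last assertion, every cell is the tope graph of a realizable oriented matroid: $K_2$ is the tope graph of $U_{1,1}$, the even cycle $C_{2k}$ is the tope graph of the uniform rank-$2$ oriented matroid $U_{2,k}$ (realized by $k$ generic vectors in $\mathbb{R}^2$, with regular $2k$-gon as zonotope), a Cartesian product of tope graphs is the tope graph of the direct sum of the corresponding oriented matroids, and a direct sum of realizable oriented matroids is realizable. Thus each cell is the tope graph of a zonotopal oriented matroid whose zonotope is $[X]$. Invoking that the class of tope graphs of COMs contains all tope graphs of oriented matroids and is closed under gated amalgamation~\cite{BaChKn}, Theorem~\ref{mthm:amalgam}(iv) shows that a finite hypercellular $G$ is the tope graph of a COM $\mathcal{M}$; the COM-complex of $\mathcal{M}$ is built by the same sequence of gated amalgams from the zonotopes $[X]$, hence coincides with ${\bf X}(G)$, which is zonotopal by the first step. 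Therefore $\mathcal{M}$ is a zonotopal COM.

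The genuine content here is light, since the statement is a consequence of Theorem~\ref{mthm:amalgam}; the work is in the bookkeeping. The two places I would expect to need the most care are: in the locally finite case, verifying that ${\bf X}(G)$ really is the union of the subcomplexes ${\bf X}(G_\alpha)$ and that homotopy groups pass to this filtered colimit; and in the finite case, quoting the precise form of the COM characterization of~\cite{BaChKn} (gated amalgams of tope graphs of oriented matroids) and checking that zonotopality is inherited via the identification of the COM-complex of $\mathcal{M}$ with ${\bf X}(G)$.
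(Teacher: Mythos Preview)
Your proposal is correct and follows essentially the same route as the paper: both derive the COM statement from Theorem~\ref{mthm:amalgam}(iv) by noting that cells are tope graphs of realizable oriented matroids and that gated amalgamation is a (strengthening of) COM amalgamation in the sense of~\cite{BaChKn}, and both handle the locally finite case by writing $G$ as a directed union of finite convex subgraphs and invoking Whitehead. The one mild difference is that for contractibility in the finite case the paper quotes the general result~\cite[Proposition~14]{BaChKn} that cell complexes of COMs are contractible, whereas you prove it directly by induction along the gated-amalgam decomposition and the gluing lemma; your argument is slightly more self-contained, the paper's slightly shorter, but they amount to the same thing.
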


Theorem~\ref{mthm:amalgam} also immediately implies that median graphs and bipartite cellular graphs are hypercellular. Furthermore, a subclass of netlike partial cubes, namely partial cubes which are gated amalgams of even cycles and cubes~\cite{Po3}, are hypercellular.
In particular, we obtain that these three classes coincide with $\mathcal{F}(Q_3^-, C_6)$, $\mathcal{F}(Q_3^-, Q_3)$, and $\mathcal{F}(Q_3^-, C_6\square K_2)$, respectively.
Other direct consequences of Theorem~\ref{mthm:amalgam} concern
convexity invariants (Helly, Caratheodory, Radon, and partition numbers) of hypercellular graphs which are shown to be either a constant or bounded by the
topological dimension of ${\bf X}(G)$.

Let $G$ be a hypercellular graph. For an equivalence class $E_f$ of edges of $G$ (i.e., all edges
corresponding to a given coordinate $f$ in a hypercube embedding of $G$), we denote by $N(E_f)$ the \emph{carrier} of $f$, i.e., subgraph of $G$
which is the union of all cells of $G$ crossed by $E_f$. It was shown in~\cite[Proposition 5]{BaChKn} that carriers of COMs are also COMs.
A \emph{star} $\St(v)$ of a vertex $v$ (or a star $\St(X)$ of a cell $X$) is
the union of all cells of $G$ containing $v$ (respectively, $X$). The {\it thickening} $G^{\Delta}$ of $G$ is a graph having the same set of vertices as $G$ and two vertices
$u,v$ are adjacent in $G^{\Delta}$ if and only if $u$ and $v$ belong to a common cell of $G$. Finally, a graph $H$ is called a {\it Helly graph} if any collection
of pairwise intersecting balls has a nonempty intersection. Helly graphs play an important role in metric graph theory as discrete analogs of injective spaces: any graph embeds
isometrically into a smallest Helly graph (for this and other results, see the survey~\cite{BaCh_survey} and the recent paper~\cite{ChChHiOs}). It was shown in~\cite{BavdV} that
the thickening of median graphs are finitely Helly graphs (for a generalization of this result, see~\cite[Theorem 6.13]{ChChHiOs}).

\begin{mtheorem}\label{mthm:stars}
 Let $G$ be a hypercellular graph. Then all carriers $N(E_f)$  and stars $\St(X)$ of $G$ are gated. If additionally  $G$ is locally-finite,
then the thickening $G^{\Delta}$ of $G$ is a  Helly graph.
\end{mtheorem}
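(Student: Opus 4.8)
The plan is to prove Theorem~\ref{mthm:stars} in three stages, leaning heavily on the amalgam decomposition of Theorem~\ref{mthm:amalgam} and on the already-established gatedness of cells.

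\textbf{Gatedness of carriers.} First I would show that each carrier $N(E_f)$ is gated. Fix an equivalence class $E_f$ and a vertex $v$ of $G$; I want to produce the gate of $v$ in $N(E_f)$. The natural candidate is obtained by taking the gate of $v$ in one suitable cell crossed by $E_f$ and then propagating it. Concretely, let $G_f^+$ be the halfspace containing $v$ (or, if $v\in N(E_f)$, there is nothing to prove); then $N(E_f)$ meets the boundary $\partial G_f^+$ in a gated subgraph, since the boundary of a halfspace is always gated in a partial cube and the intersection of $N(E_f)$ with it can be analyzed cell-by-cell. The cleanest route, however, is to invoke the fact from~\cite[Proposition 5]{BaChKn} that carriers of COMs are COMs together with Theorem~\ref{mthm:COM}, which gives that a finite hypercellular graph is the tope graph of a zonotopal COM; but since we want gatedness rather than just the COM property, I would instead argue directly: using Theorem~\ref{mthm:amalgam}(iv), write a finite convex subgraph containing $v$ and enough of $N(E_f)$ as an iterated gated amalgam of cells, observe that $N(E_f)$ restricted to each amalgam factor is a carrier there, and check that gatedness is preserved under gated amalgamation (a standard fact: if $G=G_1\cup_{G_0}G_2$ is a gated amalgam and $H_i=H\cap G_i$ are gated in $G_i$ with $H_1\cap G_0=H_2\cap G_0$ gated, then $H$ is gated in $G$). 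The base case is that in a single cell $X=\B_{i}K_2\,\B\,\B_{j}C_{2k_j}$, a carrier is a product of a sub-collection of the factors with the full cell, hence a subproduct, hence gated. This inductive scheme reduces everything to the amalgam-stability lemma plus the trivial product case.

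\textbf{Gatedness of stars.} The star $\St(X)$ of a cell $X$ is the union of all cells containing $X$. I would reduce to the vertex case $\St(v)$: indeed $\St(X)=\bigcap_{v\in X}\St(v)$ would be too naive, but one can note that a cell contains $X$ iff it contains all vertices of $X$, and more usefully that $\St(X)$ is a carrier-like object inside $\St(v)$ for any $v\in X$. For $\St(v)$ itself, the same amalgam induction works: in a single cell $\St(v)=X$ is the whole cell, which is gated; and under a gated amalgam $G=G_1\cup_{G_0}G_2$ with $v\in G_1$, every cell through $v$ lies in $G_1$ unless it meets $G_0$, in which case one uses that $G_0$ is itself convex and that cells crossing $G_0$ decompose compatibly — so $\St_G(v)=\St_{G_1}(v)$ or a controlled gated amalgam thereof, and gatedness is inherited. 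Alternatively, and perhaps more slickly, one observes $\St(v)=\bigcap\{N(E_f): E_f\ni$ an edge at $v\}\cap(\text{ball of appropriate radius})$ is too crude; cleaner is to show directly that $\St(v)$ is convex (any geodesic between two vertices of $\St(v)$ stays in $\St(v)$, using the 3CC/3C-condition to ``fill in'' the path with cells through $v$) and then that a convex subgraph all of whose points see $v$ through a cell is automatically gated by exhibiting the gate of an external vertex $u$ as the gate of $u$ in the unique cell provided by the median-cell property of Theorem~\ref{mthm:median} applied to $u,v,$ and a neighbor.

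\textbf{Helly property of the thickening.} For the last assertion, assume $G$ locally finite and consider a family of pairwise intersecting balls of $G^\Delta$. A ball of radius $r$ in $G^\Delta$ around $v$ equals $\St^{r}(v)$, the $r$-fold iterated star, which by the first two parts is built from gated pieces; in particular balls of $G^\Delta$ of radius one are exactly the stars $\St(v)$, which are gated (hence convex) in $G$. The strategy is the standard one for proving Helly-type results via gatedness: gated sets in any graph have the Helly property for the family of gated sets, so it suffices to show that each $G^\Delta$-ball is a gated subgraph of $G$ and that pairwise $G^\Delta$-intersection of balls implies pairwise $G$-intersection of these gated hulls. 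The first follows by iterating the star construction (a ball of radius $r$ in $G^\Delta$ is $\St(\cdots\St(v)\cdots)$, and a star of a gated set is gated — which itself needs the cell case plus amalgam induction, a mild extension of the $\St(X)$ argument). Then one invokes the known fact that a family of pairwise intersecting gated sets in a graph has total nonempty intersection provided the graph satisfies a suitable local finiteness/compactness hypothesis (this is where local finiteness is used, exactly as in~\cite{BavdV} and \cite[Theorem~6.13]{ChChHiOs}); in fact one can cite \cite[Theorem~6.13]{ChChHiOs} almost verbatim once gatedness of stars is in hand.

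\textbf{Main obstacle.} The crux is the amalgam-stability of gatedness for carriers and stars, i.e., checking that when $G$ is a gated amalgam $G_1\cup_{G_0}G_2$, a carrier (or star) of $G$ genuinely restricts to a carrier (or star) of $G_1$ and $G_2$ whose intersection with $G_0$ is again of the same type — one must rule out pathologies where a cell of $G$ crossing $E_f$ straddles $G_0$ in a way not visible inside either factor. Here the $Q_3^-$-freeness, via the 3CC-condition of Theorem~\ref{mthm:amalgam}(ii), is what forbids such straddling cells from being ``broken'' by the amalgam, and making this precise — likely by showing every cell of a gated amalgam lies in one of the two factors, which should follow since $G_0$ is convex and a cell is the convex hull of an isometric cycle — is the step that needs the most care.
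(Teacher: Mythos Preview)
Your approach differs substantially from the paper's in all three parts, and while the carrier and star arguments via amalgam induction are plausible in outline, the Helly argument has a genuine gap.

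For carriers and stars, the paper does \emph{not} use the amalgam decomposition of Theorem~\ref{mthm:amalgam}(iv). Proposition~\ref{carrier} proves gatedness of $N(E_f)$ by a direct minimal-counterexample argument: one first shows $N^+(E_f)$ is convex by taking a closest pair $y,z$ witnessing non-convexity, proving the resulting cycle $C=P\cup R$ is isometric and then convex, and deriving a contradiction from the cell structure along $P$ via the 3CC-condition; gatedness then follows from Proposition~\ref{thm:closure}. In fact Proposition~\ref{carrier} is proved \emph{before} and \emph{used in} the proof of Theorem~\ref{mthm:amalgam}(iv), so your intended order of dependence is reversed relative to the paper (though not circular, since Theorem~\ref{mthm:amalgam} is already available to you). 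For stars, the paper uses the local-to-global criterion of Proposition~\ref{cell_intersect} (a connected subgraph is gated iff its intersection with every cell is gated) and verifies that $\St(X)\cap Y$ is gated for each cell $Y$ via Lemma~\ref{lem:subcells} together with the 3C-condition. Your amalgam-induction route could perhaps be made to work --- your observation that every cell of a gated amalgam lies entirely in one factor is correct, essentially by Claim~\ref{connected_H'} --- but the ``carrier restricts to carrier'' step is more delicate than you indicate: a cell $X\subseteq G_2$ crossed by $E_f$ may meet $G_0$ in a face not itself crossed by $E_f$, so $N_G(E_f)\cap G_1$ need not equal $N_{G_1}(E_f)$ on the nose.

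The real gap is in the Helly argument. Your claim that balls of arbitrary radius in $G^\Delta$ are gated subgraphs of $G$, via iterated stars, is nowhere established, and extending ``$\St(X)$ is gated for cells $X$'' to ``$\St(S)$ is gated for arbitrary gated $S$'' is a nontrivial strengthening that your sketch does not supply. The paper sidesteps this entirely: it only uses that radius-$1$ balls of $G^\Delta$ coincide with stars $\St(v)$, hence are gated; the finite Helly property of gated sets then makes $G^\Delta$ $1$-Helly, hence clique-Helly by \cite[Remark~3.6]{ChChHiOs}. Contractibility of $\mathbf{X}(G)$ from Theorem~\ref{mthm:COM} yields simple connectivity of the clique complex of $G^\Delta$, and \cite[Theorem~3.7]{ChChHiOs} (clique-Helly plus simply connected clique complex implies Helly) finishes. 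This is both shorter and avoids the unproven iterated-star claim; your closing appeal to \cite[Theorem~6.13]{ChChHiOs} is closer in spirit, but that result is cited in the paper only as context, not invoked in the proof.
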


Finally, we generalize fixed box theorems for median graphs to hypercellular graphs and prove that in this case the fixed box is a cell. More precisely, we conclude the paper with the following:

\begin{mtheorem}\label{mthm:fixed}
Let $G$ be a hypercellular graph.
\begin{itemize}
\item[(i)] if $G$ does not contain infinite isometric rays, then $G$ contains a cell $X$ fixed by every automorphism of $G$;
\item[(ii)] any non-expansive map $f$ from $G$ to itself fixing a finite set of vertices (i.e., $f(S)=S$ for a finite set $S$)
also fixes a finite cell $X$ of $G$. In particular, if $G$ is finite, then any non-expansive map $f$ from $G$ to itself fixes a cell of $G$;
\item[(iii)] if $G$ is finite and regular, then $G$ is a single cell, i.e., $G$ is isomorphic to a Cartesian product of edges
and cycles.
\end{itemize}
\end{mtheorem}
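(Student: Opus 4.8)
The plan is to handle all three parts by a single device: locate a nonempty gated subgraph invariant under the relevant transformations, and then use the gated-amalgam decomposition of Theorem~\ref{mthm:amalgam}(iv), together with the gatedness of cells, carriers and stars (Theorems~\ref{mthm:cells} and~\ref{mthm:stars}), to force a \emph{minimal} such subgraph to be a single cell. The structural input I would isolate first is that a finite convex subgraph $H$ of $G$ carries a canonical \emph{amalgamation tree} $\mathcal{T}_H$ — the analogue of the block-cut tree — whose leaves are the maximal cells of $H$, whose internal nodes are the convex subgraphs arising in a gated-amalgam decomposition, and in which every node is gated in $H$ (hence in $G$); moreover $\mathcal{T}_H$ is finite and is respected by every automorphism of $H$. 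The point is then that a group acting on a finite tree fixes a node or an edge, and a fixed node (or the intersection subgraph corresponding to a fixed edge) that is not the root gives a proper invariant gated subgraph — contradicting minimality unless $\mathcal{T}_H$ is a single point, i.e.\ $H$ is a cell.

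For~(i), let $\mathcal{H}$ be the family of nonempty $\mr{Aut}(G)$-invariant gated subgraphs of $G$; it contains $G$ and is closed under nonempty intersection, since an intersection of gated subgraphs is gated and the class of invariant subgraphs is intersection-closed. Using the hypothesis that $G$ has no infinite isometric ray, I would show that $\mathcal{H}$ has a minimal element $H$ and, further, that minimality forces $H$ to be finite — a minimal invariant gated subgraph cannot properly contain a ``core'' such as an invariant ball, the gated hull of a proper invariant subset, or a non-root node of its amalgamation tree. Then the device above applies to $H$ with the group $\mr{Aut}(G)|_{H}$ acting on $\mathcal{T}_H$, and yields that $H$ is a cell, proving~(i).

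For~(ii), first observe that $f(S)=S$ with $S$ finite forces $f|_S$ to be a permutation, so $f^{n}(S)=S$ for all $n$ and $S$ is itself $f$-invariant; hence $H:=\conv(S)$ is a \emph{finite} convex — thus hypercellular — subgraph of $G$. The delicate step, where I expect the main obstacle to lie, is that a non-expansive map need not respect convex hulls, so one cannot immediately restrict $f$ to $H$. I would resolve this through the lemma that in a hypercellular graph $\conv(S)$ is the closure of $S$ under the median-cell operation of Theorem~\ref{mthm:median}, and that, after replacing $f$ by a suitable power, $f$ maps $H$ into itself; this reduces everything to $G=H$ finite with $f(G)\subseteq G$. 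For such $G$ one passes to an idempotent power $g=f^{k}$, obtaining a non-expansive retraction onto its image $R:=g(G)$; $R$ is isometric in $G$ and, being a retract of a hypercellular graph, should again be hypercellular (a fact I would need to establish, as for median graphs), and $f$ restricts to a bijection, hence to an automorphism, of $R$. Applying~(i) to $R$ and the cyclic group $\langle f|_{R}\rangle$ then produces a finite cell fixed by $f$; the ``in particular'' for finite $G$ is the instance $S=V(G)$. A back-up route, in case the reduction to an automorphism is problematic, is to use that for locally finite $G$ the thickening $G^{\Delta}$ is a Helly graph (Theorem~\ref{mthm:stars}): if $f$ can be shown to induce a non-expansive self-map of $G^{\Delta}$, then it fixes a clique, which is exactly a cell of $G$.

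For~(iii), suppose $G$ is finite, hypercellular and $r$-regular but not a single cell. Any maximal cell $X$ is gated and, being convex and properly contained in $G$, has an incident edge leaving it; the tail of that edge then has degree at least $\deg(X)+1$, where $\deg(X)$ is the common degree of the cell $X$, so $r\geq\deg(X)+1$, i.e.\ $\deg(X)<r$ for \emph{every} maximal cell. On the other hand I would show, using the amalgamation tree $\mathcal{T}_G$ together with Theorems~\ref{mthm:cells} and~\ref{mthm:stars}, that some maximal cell $X^{*}$ has a vertex with no incident edge leaving $X^{*}$ — equivalently, a ``deep'' vertex $v$ with $\deg_{G}(v)=\deg(X^{*})$; establishing the existence of such a $v$ (controlling how higher amalgamations can add edges to vertices of a leaf cell, and showing a leaf cell lies inside a maximal cell retaining this property) is the technical heart of~(iii). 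Combining the two bounds gives $r=\deg_{G}(v)=\deg(X^{*})<r$, a contradiction; hence $G$ is a single cell. Since a Cartesian product of edges and even cycles is vertex-transitive and in particular regular, the converse is trivial, and this completes~(iii).
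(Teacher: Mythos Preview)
Your overall strategy---find a minimal invariant gated subgraph and argue it must be a cell---is sound in spirit, but the execution has real gaps, and the paper takes a different route that sidesteps them.

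The central gap is in~(i): there is no ``canonical amalgamation tree'' $\mathcal{T}_H$. Gated-amalgam decompositions are highly non-unique (already for ordinary trees there are many ways to split), so the structure you obtain is not a tree respected by $\mr{Aut}(H)$; at best you get a poset of gated separators, and arguing invariance of a specific node requires work you have not done. Separately, the passage from ``no infinite isometric ray'' to ``$\mathcal{H}$ has a minimal element, and it is finite'' is not justified: intersections along chains of gated subgraphs need not be nonempty without compactness, and you give no substitute. The paper avoids both issues by constructing an explicit canonical invariant: it defines $Z(G)$ as the intersection of all inclusion-maximal proper halfspaces whose complement is not maximal, iterates transfinitely to $Z^{\infty}(G)$, and proves directly (using Theorem~\ref{mthm:amalgam} and the carrier analysis) that $Z^{\infty}(G)$ is a finite cell. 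Since automorphisms permute maximal halfspaces, $Z^{\infty}(G)$ is automatically fixed.

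For~(ii) your outline (pass to an idempotent power, obtain a retract, show the retract is hypercellular, apply~(i)) matches the paper's shape, but you underestimate the key step. The paper does \emph{not} try to show $f(\conv(S))\subseteq\conv(S)$ via a ``median-cell closure'' description of convex hulls; instead it proves a delicate coordinate-wise lemma (Lemma~\ref{lem:fix_for_nonexp}): if a non-expansive $f$ fixes three vertices, then it fixes their median cell pointwise-as-a-set and acts on it as an automorphism. This is what makes the set of $f$-periodic points in $\conv(S)$ hypercellular (via Theorem~\ref{mthm:median}) and lets~(i) apply. Your Helly backup also does not close the gap: a fixed clique of $G^{\Delta}$ is only a set of vertices lying in a common cell, not an $f$-invariant cell.

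For~(iii) your degree-counting idea is a genuinely different approach, but the ``deep vertex'' step---a maximal cell with a vertex having no edge leaving it---can fail exactly when amalgamations pile up around every leaf, and you give no mechanism to rule this out. The paper instead reuses the $Z^{\infty}$ machinery: it shows, using regularity and an explicit carrier automorphism swapping the two sides of $E_{ab}$, that if some halfspace is inclusion-minimal then so is its complement; hence every halfspace is minimal, $F(G)=\emptyset$, $Z(G)=G$, and Lemma~\ref{lem:fixed_cell} forces $G$ to be a single cell.
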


\subsection*{Structure of the paper:} In Section~\ref{sec:prel} we introduce preliminary definitions and results needed for this paper. In particular,
we discuss convex and gated subgraphs in partial cubes, the notion of partial cube minors and their relation with convexity and gatedness. We also
briefly discuss the properties of Cartesian products central to our work. Section~\ref{sec:cells} is devoted to the structure of cells in hypercellular
graphs and graphs from $\mathcal{S}_4$; in particular, we prove Theorem~\ref{mthm:cells}. Section~\ref{sec:amalgam} is devoted to amalgamation and
decomposition of hypercellular graphs; we prove Theorem~\ref{mthm:amalgam}. In Section~\ref{sec:median} we discuss the median cell property
of hypercellular graphs and prove Theorem~\ref{mthm:median}. Section~\ref{sec:properties} provides a rich set of properties of hypercellular graphs.
In Subsection~\ref{subsec:otherclasses} we expose relations to other classes of partial cubes and in particular prove Theorem~\ref{mthm:COM}.
Subsection~\ref{subsec:convex} gives several properties with respect to convexity parameters. Subsection~\ref{subsec:stars} is devoted to the
proof of Theorem~\ref{mthm:stars}. In Subsection~\ref{subsec:fixedcells} we prove several fixed cell results for hypercellular graphs,
in particular, we prove Theorem~\ref{mthm:fixed}.  We conclude the paper with several problems and conjectures in Section~\ref{sec:concl}.

\section{Preliminaries}\label{sec:prel}
\subsection{Metric subgraphs and partial cubes}
All graphs $G=(V,E)$ occurring in this paper  are simple, connected,
without loops or multiple edges, but not necessarily finite.
The {\it distance}
$d(u,v):=d_G(u,v)$ between two vertices $u$ and $v$ is the length
of a shortest $(u,v)$-path, and the {\it interval} $I(u,v)$
between $u$ and $v$ consists of all vertices on shortest
$(u,v)$--paths, that is, of all vertices (metrically) {\it
between} $u$ and $v$:
$$I(u,v):=\{ x\in V: d(u,x)+d(x,v)=d(u,v)\}.$$
An induced subgraph of $G$ (or the corresponding vertex set $A$)
is called {\it convex} if it includes the interval of $G$ between
any two of its vertices.  Since the intersection of convex subgraphs
is convex, for every subset $S\subseteq V$ there exists the
smallest convex set $\conv(S)$ containing
$S$, referred to as the {\it convex hull} of $S$.
An induced subgraph $H$ of $G$ is {\it
isometric} if the distance between any pair of vertices in $H$ is
the same as that in $G.$  In particular, convex subgraphs are
isometric.

A subset $W$ of $V$ or the
subgraph $H$ of $G$ induced by $W$ is called {\it gated} (in $G$)~\cite{DrSch}
if for every vertex $x$ outside $H$ there exists a vertex $x'$
(the {\it gate} of $x$) in $H$ such that each vertex $y$ of $H$ is
connected with $x$ by a shortest path passing through the gate
$x'$. It is easy to see that if $x$ has a gate in $H$, then it is unique
and that gated sets are convex. Gated sets enjoy the finite {\it Helly property}
\cite[Proposition 5.12 (2)]{VdV},  that
is, every finite family of gated sets that pairwise intersect has
a nonempty intersection. Since the intersection of gated subgraphs
is gated, for every subset $S\subseteq V$ there exists the
smallest gated set $\langle\langle S\rangle\rangle$ containing
$S,$ referred to as the {\it gated hull} of $S$. A graph $G$ is a
{\it gated amalgam} of two graphs $G_1$ and $G_2$ if $G_1$ and
$G_2$ constitute  two intersecting gated subgraphs of $G$ whose
union is all of $G.$

A graph $G=(V,E)$ is
{\it isometrically embeddable} into a graph $H=(W,F)$ if there
exists a mapping $\varphi : V\rightarrow W$ such that $d_H(\varphi
(u),\varphi (v))=d_G(u,v)$ for all vertices $u,v\in V$,
 i.e., $\varphi(G)$ is  an isometric subgraph of $H$. A graph $G$ is called a {\it partial cube} if it admits an isometric embedding into some hypercube
$Q(\Lambda)=\{-1,+1\}^{\Lambda}$.
From now on, we will always suppose that a partial cube $G=(V,E)$ is an isometric subgraph of the hypercube
$Q(\Lambda)=\{-1,+1\}^{\Lambda}$ (i.e., we will identify $G$ with its image under the isometric embedding). If this causes no confusion, we will
denote the distance function of $G$ by $d$ and not $d_G$.

For an edge $e=uv$ of $G$, define the sets $W(u,v)=\{ x\in V: d(x,u)<d(x,v)\}$ and $W(v,u)=\{ x\in V: d(x,v)<d(x,u)\}$. By Djokovi\'c's theorem~\cite{Djokovic}, a graph $G$ is a partial cube if and only if $G$ is bipartite and for any edge $e=uv$
the sets $W(u,v)$ and $W(v,u)$ are convex. The sets of the form $W(u,v)$ and $W(v,u)$ are called {\it complementary halfspaces} of $G$.
To establish an isometric embedding of $G$ into a hypercube,  Djokovi\'{c}~\cite{Djokovic}
introduces the following binary relation $\Theta$ -- called \emph{Djokovi\'{c}-Winkler relation} -- on the edges of $G$:  for two edges $e=uv$ and $e'=u'v'$ we set $e\Theta e'$ if and only if
$u'\in W(u,v)$ and $v'\in W(v,u)$. Under the conditions of the theorem, it can be shown that $e\Theta e'$ if and only if
$W(u,v)=W(u',v')$ and $W(v,u)=W(v',u')$, whence $\Theta$ is an equivalence relation. Let ${\mathcal E}=\{ E_i: i\in \Lambda\}$ be the equivalence classes
of $\Theta$ and let $b$ be an arbitrary fixed vertex taken as the base point of $G$. For an equivalence class $E_i\in {\mathcal E}$,
let $\{ H^-_i,H^+_i\}$ be the pair of complementary
convex halfspaces  of $G$ defined by setting $H^-_i:=W(u,v)$ and $H^+_i:=W(v,u)$ for an arbitrary edge $uv\in E_i$ with $b\in W(u,v)$.

Two subfamilies of partial cubes are particularly important for our work. \emph{Bipartite cellular graphs} are by one of their  characterizations provided in~\cite{BaCh_cellular} the bipartite graphs in which all isometric cycles are gated. Equivalently, they are precisely the graphs obtained from even cycles by gated amalgamation. \emph{Median graphs} are graphs in which for every three vertices $u,v,w$ there exists a unique \emph{median-vertex} $x$ that simultaneously lies on $(u,v)$-, $(u,w)$-, and $(v,w)$-geodesic. They are precisely the graphs obtained by gated amalgamation of hypercubes~\cite{Ch_CAT}, or equivalently graphs in which the convex closure of every isometric cycle is isomorphic to a hypercube~\cite{Ba_median}.

\subsection{Partial cube minors}\label{minors}

Let $G=(V,E)$ be an isometric subgraph of the hypercube $Q(\Lambda)=\{ -1,+1\}^{\Lambda}$.
Given $f\in \Lambda$,  an {\it elementary restriction} consists in taking one of the subgraphs $G(H^-_f)$  or $G(H^+_f)$ induced by the
complementary halfspaces $H^-_f$ and $H^+_f$, which we will denote by $\rho_{f^-}(G)$ and $\rho_{f^+}(G)$, respectively. These graphs are isometric subgraphs of the hypercube $Q(\Lambda\setminus \{ f\}).$
Now applying twice the elementary restriction to two different coordinates $f,g$, independently of the order of $f$ and $g$,
we will obtain one of the four (possibly empty) subgraphs induced by the $H^-_f\cap H^-_g,H^-_f\cap H^+_g,H^+_f\cap H^-_g,$ and $H^+_f\cap H^+_g$. Since the intersection of convex subsets is convex, each of these
four sets is convex in $G$ and consequently induces an isometric subgraph of the hypercube $Q(\Lambda\setminus\{ f,g\})$. More generally, a {\it restriction} is a subgraph of $G$ induced by the
intersection of a set of (non-complementary) halfspaces of $G$. We denote a restriction by $\rho_{A}(G)$, where $A\in\Lambda^{\{+,-\}}$ is a signed set of halfspaces of $G$. For subset $S$ of the vertices of $G$, we denote $\rho_{A}(S):=\rho_{A}(G)\cap S$. The following is well-known:

\begin{lemma}[\hspace*{-0.15cm}\cite{AlKn,Ba_S3,Ch_thesis}] \label{restriction}
 The set of restrictions of a partial cube $G$ coincides with its set of convex subgraphs. In particular, the class of partial cubes is closed under taking restrictions.
\end{lemma}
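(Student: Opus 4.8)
The plan is to establish the two containments between ``restrictions'' and ``convex subgraphs'' separately. The containment ``every restriction is convex'' is immediate: a restriction $\rho_{A}(G)$ is induced by an intersection of halfspaces of $G$, each halfspace $W(u,v)$ is convex by Djokovi\'c's theorem, and an intersection of convex subgraphs is convex. The ``in particular'' assertion then follows as well, since a restriction, being convex, is an isometric subgraph of $G$ and hence, via the isometric embedding of $G$ into $Q(\Lambda)$, an isometric subgraph of a hypercube; equivalently, as already observed before the lemma, $\rho_{A}(G)$ embeds isometrically into $Q(\Lambda\setminus F)$, where $F$ is the set of coordinates that are constant on $\rho_{A}(G)$.

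The substance is the converse, i.e.\ the separation property $S_3$: every nonempty convex subgraph $A$ of $G$ is an intersection of halfspaces. Fix $x\in V\setminus A$; I will produce a halfspace $H$ of $G$ with $A\subseteq H$ and $x\notin H$. Pick $a\in A$ at minimum distance $k:=d(x,a)\ge 1$ from $x$ (such an $a$ exists since graph distances are non-negative integers and $A\neq\emptyset$), and let $p$ be the neighbour of $a$ on a fixed shortest $(x,a)$-path. Then $d(x,p)=k-1<k$, so minimality of $a$ forces $p\notin A$. Let $f$ be the unique coordinate on which the adjacent vertices $p$ and $a$ differ, and let $H$ be the halfspace of $G$ corresponding to $f$ that contains $a$; since $d(x,p)<d(x,a)$ we have $x\in W(p,a)$, that is, $x$ lies in the complement of $H$.

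It remains to verify $A\subseteq H$. Suppose some $b\in A$ lies outside $H$; then $b$ and $a$ differ in coordinate $f$, so $b_f=-a_f=p_f$. Consequently every coordinate of $p$ agrees with $a$ or with $b$ (with $a$ off $f$ and with $b$ at $f$), so $p\in I_{Q(\Lambda)}(a,b)$; as $p\in V(G)$ and $G$ is isometric in $Q(\Lambda)$, this yields $p\in I_G(a,b)$. Convexity of $A$ gives $I_G(a,b)\subseteq A$, contradicting $p\notin A$. Hence $A\subseteq H$, and therefore $A=\bigcap\{H : H \text{ a halfspace of } G,\ A\subseteq H\}$; since $A\neq\emptyset$, this family contains no complementary pair, so $A$ is a restriction. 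The only point requiring care is the identity $I_G(a,b)=I_{Q(\Lambda)}(a,b)\cap V(G)$ for partial cubes, which is exactly what makes the ``inward'' neighbour $p$ of $a$ land inside the interval $I(a,b)$, hence inside $A$; everything else is routine manipulation of sign vectors.
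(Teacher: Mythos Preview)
Your argument is correct. The paper does not supply a proof of this lemma; it is stated with external references \cite{AlKn,Ba_S3,Ch_thesis} and used as a known fact. Your proof is the standard one for the separation property $S_3$ in partial cubes: use Djokovi\'c's theorem for the easy inclusion, and for the converse separate an outside vertex $x$ from a convex set $A$ by taking the nearest-point projection $a$, the penultimate vertex $p$ on a geodesic from $x$ to $a$, and the halfspace of the edge $pa$ containing $a$. The only nontrivial step---that $p\in I_G(a,b)$ whenever $b\in A$ lies on the $p$-side---you correctly derive from the identity $I_G(a,b)=I_{Q(\Lambda)}(a,b)\cap V(G)$, which is an immediate consequence of $G$ being isometric in $Q(\Lambda)$.
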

%
%

For $f\in \Lambda$, we say that the graph  $G/E_f$ obtained from
$G$ by contracting the edges of the equivalence class $E_f$ is an ($f$-){\it contraction} of $G$. For a vertex $v$ of $G$, we will denote by $\pi_f(v)$ the image of $v$ under the $f$-contraction in $G/E_f$, i.e., if $uv$ is an
edge of $E_f$, then $\pi_f(u)=\pi_f(v)$, otherwise $\pi_f(u)\ne \pi_f(v)$. We will apply $\pi_f$ to subsets $S\subset V$, by setting $\pi_f(S):=\{\pi_f(v): v\in S\}$. In particular we denote the $f$-{\it contraction} of $G$ by $\pi_f(G)$.

It is well-known and easy to prove and in particular follows from the proof of the first part of ~\cite[Theorem 3]{Ch_hamming} that $\pi_f(G)$ is an isometric subgraph of $Q(\Lambda\setminus \{ f\})$. Since  edge contractions in graphs commute, i.e., the resulting graph does not depend on the order in which a set of edges is contracted, we have:

\begin{lemma}\label{commut_contraction}
Contractions commute in partial cubes, i.e., if $f,g\in \Lambda$ and $f\ne g$, then $\pi_g(\pi_f(G))=\pi_f(\pi_g(G))$. Moreover, the class of partial cubes is closed under contractions.
\end{lemma}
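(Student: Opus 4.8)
The plan is to reduce both claims to two ingredients: the statement quoted just above that every single contraction $\pi_f(G)$ is an isometric subgraph of $Q(\Lambda\setminus\{f\})$, and the elementary fact that contracting a set of edges of a graph does not depend on the order in which the individual edges are contracted. The second assertion of the lemma --- closure of the class of partial cubes under contractions --- is then immediate: by the proof of the first part of~\cite[Theorem 3]{Ch_hamming}, $\pi_f(G)$ embeds isometrically into a hypercube and hence is a partial cube.

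For the commutativity, I would first record that, since the $\Theta$-class $E_f$ is a matching consisting of exactly the edges of $G$ changing coordinate $f$, passing from $G$ to $\pi_f(G)$ is on the vertex level the coordinate projection $\{-1,+1\}^\Lambda\to\{-1,+1\}^{\Lambda\setminus\{f\}}$ restricted to $V$, and the edge set of $\pi_f(G)$ is precisely the image of $E\setminus E_f$ (an edge of $G$ collapses to a loop exactly when it lies in $E_f$, because adjacent vertices of $G$ differ in a single coordinate). Consequently, for $f\ne g$ no edge of $E_g$ collapses, and --- reading $\pi_f(G)$ inside $Q(\Lambda\setminus\{f\})$ --- the $\Theta$-class of $\pi_f(G)$ belonging to coordinate $g$ is exactly the image $\{\pi_f(e)\colon e\in E_g\}$ of $E_g$, since an edge of $\pi_f(G)$ changes coordinate $g$ if and only if its preimage in $G$ does. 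Therefore $\pi_g(\pi_f(G))$ is obtained from $G$ by contracting $E_f$ and then the image of $E_g$, that is, by contracting the edge set $E_f\cup E_g$; symmetrically $\pi_f(\pi_g(G))$ is obtained by contracting $E_g\cup E_f=E_f\cup E_g$. Since graph contraction is order-independent, the two resulting graphs coincide. (Every graph occurring here is simple and loopless, being an isometric subgraph of a hypercube, so the multigraph and simple-graph notions of contraction agree.)

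I expect no real obstacle; the only point needing care is the bookkeeping of the preceding paragraph --- confirming that $\pi_f(G)$ is the vertex projection with edge set the image of $E\setminus E_f$, so that its coordinate-$g$ class is the image of $E_g$. This uses only the standard fact that $\Theta$-classes of partial cubes are matchings together with the cited isometric embedding $\pi_f(G)\hookrightarrow Q(\Lambda\setminus\{f\})$; given these, the identity $\pi_g(\pi_f(G))=\pi_f(\pi_g(G))$ is purely formal.
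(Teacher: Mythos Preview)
Your proposal is correct and follows exactly the paper's approach: the paper itself gives no formal proof of this lemma, merely stating in the preceding paragraph that $\pi_f(G)$ is an isometric subgraph of $Q(\Lambda\setminus\{f\})$ (citing~\cite{Ch_hamming}) and that ``edge contractions in graphs commute, i.e., the resulting graph does not depend on the order in which a set of edges is contracted.'' Your write-up simply spells out the bookkeeping that the paper leaves implicit, in particular the observation that the $\Theta$-class of $\pi_f(G)$ corresponding to coordinate $g$ is the image of $E_g$.
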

%

Consequently, for a set $A\subset \Lambda$, we can denote by $\pi_A(G)$ the isometric subgraph of $Q(\Lambda\setminus A)$ obtained from $G$ by contracting the classes $A\subset \Lambda$ in $G$.

A partial cube $G$ is an \emph{expansion} of a partial cube $G'$ if $G'=\pi_f(G)$ for some equivalence class $f$ of $\Theta(G)$. More generally, let $G'$ be a graph containing two isometric subgraphs $G'_1$ and $G'_2$ such that $G'=G'_1\cup G'_2$, there are no edges from $G'_1\setminus G'_2$ to $G'_2\setminus G'_1$, and $G'_0:=G'_1\cap G'_2$ is nonempty.  A graph $G$ is an {\it isometric expansion}
of  $G'$ with respect to $G_0$ (notation $G=\psi(G')$) if $G$ is obtained from $G'$ by replacing each vertex $v$ of $G'_1$ by a vertex $v_1$ and each vertex $v$ of  $G'_2$ by a vertex $v_2$ such that $u_i$ and $v_i$, $i=1,2$ are adjacent in $G$ if and only if $u$ and $v$ are adjacent vertices of $G'_i$ and $v_1v_2$ is an edge of $G$ if and only if $v$ is a vertex of $G'_0$. The following is well-known:

\begin{lemma} [\hspace*{-0.13cm}\cite{Ch_thesis,Ch_hamming}]
A graph $G$ is a partial cube if and only if $G$ can be obtained by a sequence of isometric expansions from a single vertex.
\end{lemma}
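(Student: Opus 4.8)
The plan is to prove the two implications separately, treating contraction of a $\Theta$-class as the operation inverse to isometric expansion. For the direction ``expansions $\Rightarrow$ partial cube'' it suffices to observe that a single vertex is a ($0$-dimensional) partial cube and that an isometric expansion of a partial cube is again a partial cube; the claim then follows by induction on the length of the expansion sequence. For the direction ``partial cube $\Rightarrow$ expansions'' I would induct on the number of $\Theta$-classes of $G$ (equivalently, on the dimension of a smallest hypercube containing $G$ isometrically), the base case being a single vertex.

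For the expansion step, let $G=\psi(G')$ be an isometric expansion of a partial cube $G'\subseteq Q(\Lambda)$ with respect to $G'_0=G'_1\cap G'_2$. I would extend the embedding to $Q(\Lambda\cup\{f\})$ by keeping the $\Lambda$-coordinates on every copy of a vertex of $G'$ and assigning the new coordinate $f$ the value $-1$ on the copies coming from $G'_1$ and $+1$ on those coming from $G'_2$; the edges $v_1v_2$ introduced by the expansion are then exactly the pairs that differ only in coordinate $f$. That this map is isometric splits into two cases: for two copies lying on the same side, the distance in $G$ equals the distance in $G'$ because $G'_1$ (resp.\ $G'_2$) is isometric in $G'$; for a copy $v_1$ on the $G'_1$-side and a copy $w_2$ on the $G'_2$-side, every $(v_1,w_2)$-path in $G$ must use at least one edge of the form $x_1x_2$ with $x\in G'_0$ (the expansion adds no edge between $G'_1\setminus G'_2$ and $G'_2\setminus G'_1$), while conversely a shortest $(v,w)$-path of $G'$ already meets $G'_0$; together these give $d_G(v_1,w_2)=d_{G'}(v,w)+1$, matching the Hamming distance of the images. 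Bipartiteness and connectedness are inherited, so $G$ is a partial cube.

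For the contraction step, choose a $\Theta$-class $E_f$ of $G$ and put $G':=\pi_f(G)$, which is a partial cube with one fewer $\Theta$-class by Lemma~\ref{commut_contraction}. Let $G'_1:=\pi_f(G(H^-_f))$ and $G'_2:=\pi_f(G(H^+_f))$. Since adjacent vertices of a partial cube differ in exactly one coordinate, every edge of $G$ outside $E_f$ has both endpoints in the same halfspace; this immediately gives $G'=G'_1\cup G'_2$, shows that $\pi_f$ is injective on each halfspace (so it restricts to isomorphisms $G(H^-_f)\cong G'_1$ and $G(H^+_f)\cong G'_2$), and shows that $G'$ has no edge between $G'_1\setminus G'_2$ and $G'_2\setminus G'_1$. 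Convexity of the halfspaces (Djokovi\'{c}'s theorem) ensures that $\pi_f$ does not shorten distances within a halfspace, so $G'_1$ and $G'_2$ are isometric subgraphs of $G'$, and $G'_0:=G'_1\cap G'_2$ is precisely the set of $\pi_f$-images of $E_f$-edges, hence nonempty. One then checks that the isometric expansion $\psi(G')$ of $G'$ with respect to $G'_0$ is isomorphic to $G$: the doubled vertices are exactly the images of $E_f$-edges, so the two copies of such a vertex correspond to the two endpoints of that edge, which recovers $E_f$; all remaining adjacencies coincide by the definition of $\psi$ together with the isomorphisms $G(H^-_f)\cong G'_1$ and $G(H^+_f)\cong G'_2$. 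Applying the induction hypothesis to $G'$ and prepending this expansion yields the desired sequence for $G$.

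The points I expect to require care are the bookkeeping with the contraction map $\pi_f$ in the contraction step --- notably the injectivity of $\pi_f$ on each halfspace and the fact that it preserves distances there, both resting on convexity of halfspaces --- and the distance computation across the two sides in the expansion step. A minor caveat concerns graphs with infinitely many $\Theta$-classes: there the induction should be replaced by a transfinite recursion after well-ordering the $\Theta$-classes, or, following the original references \cite{Ch_thesis,Ch_hamming}, the statement is read for finite partial cubes.
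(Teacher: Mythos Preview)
The paper does not actually prove this lemma; it is stated as a known result with citations to \cite{Ch_thesis,Ch_hamming} and no proof is given. Your argument is correct and follows the standard approach from those references: one direction shows that isometric expansion preserves the partial-cube property by extending the hypercube embedding with one new coordinate, and the other direction inducts on the number of $\Theta$-classes by contracting a class $E_f$ and verifying that $G$ is recovered as the isometric expansion of $\pi_f(G)$ along the pair $(\pi_f(H^-_f),\pi_f(H^+_f))$. Your caveat about the infinite case is apt: as written the lemma (and the induction) applies to partial cubes with finitely many $\Theta$-classes, which is how the cited sources treat it.
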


\begin{lemma}\label{commut_rest_contaction}
Contractions and restrictions commute in partial cubes, i.e., if $f,g\in \Lambda$ and $f\ne g$, then $\rho_{g^+}(\pi_f(G))=\pi_f(\rho_{g^+}(G))$.
\end{lemma}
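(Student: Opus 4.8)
The plan is to verify the identity $\rho_{g^+}(\pi_f(G))=\pi_f(\rho_{g^+}(G))$ directly, comparing vertex sets and edge sets in coordinates. Fix a hypercube embedding $G\subseteq Q(\Lambda)=\{-1,+1\}^{\Lambda}$, chosen so that $H^+_g=\{v\in V(G):v_g=+1\}$; thus $\rho_{g^+}(G)$ is the subgraph of $G$ induced by this halfspace. The one structural observation driving the whole argument is that, since $f\ne g$, the two endpoints of any edge of $E_f$ differ only in coordinate $f$ and therefore agree in coordinate $g$. Consequently every $E_f$-edge lies entirely in $H^+_g$ or entirely in $H^-_g$, and the contraction map $\pi_f$ (which simply forgets coordinate $f$) leaves the $g$-coordinate of every vertex unchanged. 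In particular $E_g$ remains nonempty after the $f$-contraction, so by Lemma~\ref{commut_contraction} the partial cube $\pi_f(G)\subseteq Q(\Lambda\setminus\{f\})$ still has a $\Theta$-class indexed by $g$, with halfspace $\{w\in V(\pi_f(G)):w_g=+1\}$.

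First I would match the vertex sets. On one hand $V(\pi_f(\rho_{g^+}(G)))=\pi_f(\{v\in V(G):v_g=+1\})$. Since $\pi_f$ preserves the $g$-coordinate, a vertex $w\in\pi_f(V(G))$ has $w_g=+1$ if and only if $w=\pi_f(v)$ for some $v\in V(G)$ with $v_g=+1$; hence this set equals $\{w\in V(\pi_f(G)):w_g=+1\}=V(\rho_{g^+}(\pi_f(G)))$. Then I would match the edge sets by the same reasoning: an edge of $\pi_f(G)$ joining two vertices with $g$-coordinate $+1$ is, by definition of contraction, the image of some edge $u'v'$ of $G$; then $u'_g=v'_g=+1$, so $u'v'$ is an edge of $\rho_{g^+}(G)$ and the given edge is its $\pi_f$-image. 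Conversely, the $\pi_f$-image of an edge of $\rho_{g^+}(G)$ is an edge of $\pi_f(G)$ whose endpoints both have $g$-coordinate $+1$. Thus the two graphs have the same vertices and the same edges, which gives the claim. The argument for $\rho_{g^-}$ is identical, and this uniform treatment also covers the degenerate cases (for instance when $E_f$ has no edge inside the chosen $g$-halfspace, where one or both operations contract nothing).

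I do not anticipate a genuine obstacle. The only point requiring a little care is precisely the structural observation above: one must be sure that the $g$-cut behaves well under the $f$-contraction, i.e. that no $E_f$-edge crosses it and that $E_g$ survives contraction — both of which are immediate from $f\ne g$ together with the hypercube embedding. Everything else is bookkeeping with the definitions of restriction and contraction, supported by Lemma~\ref{restriction} (restrictions are exactly convex subgraphs, so $\rho_{g^+}(G)$ is a partial cube on which $\pi_f$ is well defined) and Lemma~\ref{commut_contraction} (the relevant contractions yield partial cubes with the expected $\Theta$-classes).
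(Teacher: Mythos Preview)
Your proposal is correct and follows essentially the same approach as the paper's proof: both argue directly in hypercube coordinates, both hinge on the observation that for $f\ne g$ no $E_f$-edge crosses the $g$-cut (so $\pi_f$ preserves the $g$-coordinate), and both then match vertex sets and adjacency. The paper's version is a bit terser, handling edges simply by noting that both graphs are the induced subgraphs of the ambient hypercube on the common vertex set, but the content is the same.
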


\begin{proof}
Let $f,g\in \Lambda$ and $f\ne g$. The crucial property is that $E_g$ is an edge-cut of $G$ and $E_g\cap E_f=\emptyset$. If we see vertices as sign vectors in the hypercube, the vertex set of $\pi_f(\rho_{g^+}(G))$ can be described as $\{x\in V(G): x_g=+\}/E_f=\{x\in V(G)\setminus V(E_f): x_g=+\}\cup \{xy\in E_f:  x_g=y_g=+\}$. The vertex set of $\rho_{g^+}(\pi_f(G))$ is $\{x\in V(G)\setminus V(E_f)\}\cup \{xy\in E_f\}\setminus H^-_g$ which again equals $\{x\in V(G)\setminus V(E_f): x_g=+\}\cup \{xy\in E_f:  x_g=y_g=+\}$. Furthermore, identifying a vertex of the form $\{x,y\}\in E_f$ with the vector $z$ arising from $x$ or $y$ by omitting the $f$-coordinate, adjacency is defined the same way in both graphs, namely by taking the induced subgraph of the hypercube. This concludes the proof.
\end{proof}

The previous lemmas show that any set of restrictions and any set of contractions of a partial cube $G$ provide the same result, independently of the order in which we perform the restrictions and contractions. The resulting graph $G'$ is also a partial cube, and $G'$ is called a {\it partial cube-minor} (or {\it pc-minor}) of $G$. In this paper we will study classes of partial cube excluding a given set of minors.


\subsection{Partial cube minors versus metric subgraphs}

In this section we present conditions under which contractions and restrictions preserve metric properties of subgraphs.

Let $G=(V,E)$ be an isometric subgraph of the hypercube
$Q(\Lambda)$ and let $S$ be a subgraph of $G$. Let $f$ be any coordinate of $\Lambda$.
We will say that $E_f$ {\it crosses} $S$ if and only if $S\cap H^-_f\ne \emptyset$ and $S\cap H^+_f\ne\emptyset$. We will say that $E_f$ {\it osculates} $S$ if and only if $E_f$ does not cross $S$ and there exists an edge
$e=uv\in E_f$ such that $\{ u,v\}\cap S\ne\emptyset$. Otherwise, we will say that $E_f$ is {\it disjoint} from $S$.
%

\begin{lemma}\label{contraction_convex}
If $S$ is a convex subgraph of $G$ and $f\in\Lambda$, then $\rho_{f^+}(S)$ is a convex subgraph of $\rho_{f^+}(G)$. If $E_f$ crosses $S$ or is disjoint from $S$, then also $\pi_f(S)$ is a convex subgraph of $\pi_f(G)$.
\end{lemma}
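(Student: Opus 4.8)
The plan is to work entirely with the sign-vector description of vertices in the hypercube $Q(\Lambda)$ and to reduce both claims to the statement that a subgraph $H$ of a partial cube is convex if and only if no induced path between two of its vertices leaves $H$; equivalently (since partial cubes are bipartite and geodesics correspond to sign-changes), $H$ is convex iff for every pair $x,y\in H$ every shortest $(x,y)$-path in the ambient hypercube that stays in $G$ stays in $H$. I will also use the elementary fact that in a partial cube the interval $I(x,y)$ consists exactly of those vertices $z$ with $z_g\in\{x_g,y_g\}$ for all $g$ — in particular a coordinate is ``used'' on a geodesic from $x$ to $y$ precisely when $x$ and $y$ differ in it.

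\medskip

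For the first assertion, let $S$ be convex in $G$ and fix $f\in\Lambda$. We have $\rho_{f^+}(S)=S\cap H^+_f$, which is an intersection of convex sets, hence convex in $G$; it remains to see it is convex in the smaller partial cube $\rho_{f^+}(G)=G(H^+_f)$. But $G(H^+_f)$ is itself convex in $G$ (it is a halfspace), so an interval computed inside $G(H^+_f)$ coincides with the corresponding interval computed in $G$, and thus $\rho_{f^+}(S)$ being interval-closed in $G$ is interval-closed in $\rho_{f^+}(G)$. This half is essentially immediate from Lemma~\ref{restriction} together with the transitivity of convexity, and I do not expect any obstacle here.

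\medskip

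The substantive part is the statement about $\pi_f(S)$, and this is where I expect the main difficulty. Take two vertices $\bar x=\pi_f(x)$ and $\bar y=\pi_f(y)$ of $\pi_f(S)$ and a vertex $\bar z\in I_{\pi_f(G)}(\bar x,\bar y)$; I must produce a representative $z'$ of $\bar z$ lying in $S$. The idea is to lift $\bar z$ to a vertex $z\in G$ on a geodesic from a suitable lift of $\bar x$ to a suitable lift of $\bar y$, using that $f$-contraction is an isometry-like operation (it decreases distances by at most, and on the right pairs exactly, the contribution of coordinate $f$). Here the hypothesis on $E_f$ enters decisively. \textbf{Case 1: $E_f$ is disjoint from $S$.} Then all of $S$ lies in one halfspace, say $H^+_f$, no vertex of $S$ is incident to an $E_f$-edge, so $\pi_f$ is injective on $V(S)$ and restricts to a graph isomorphism $S\to\pi_f(S)$; moreover for $x,y\in S$ the coordinate $f$ is constant, hence never used on an $(x,y)$-geodesic, so $\pi_f$ induces an isometry between $S$ (with the $G$-metric, which on $S$ is the intrinsic metric since $S$ is convex) and $\pi_f(S)$ (with the $\pi_f(G)$-metric). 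Then $\bar z\in I(\bar x,\bar y)$ lifts to the unique $z\in G$ with $\pi_f(z)=\bar z$, this $z$ lies on a geodesic of $G$ from $x$ to $y$, hence $z\in S$ by convexity, hence $\bar z\in\pi_f(S)$. \textbf{Case 2: $E_f$ crosses $S$.} Now pick lifts $x,y\in S$ of $\bar x,\bar y$; one of the two lifts of each of $\bar x,\bar y$ lies in $S$, but possibly in different halfspaces. If $x,y$ lie in the same halfspace of $E_f$, argue as in Case 1 (the coordinate $f$ is again unused on any $(x,y)$-geodesic in $G$, and every such geodesic avoids the cut $E_f$, so it projects isometrically). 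If $x\in H^-_f$ and $y\in H^+_f$, then since $E_f$ crosses $S$ and $S$ is convex, every $(x,y)$-geodesic in $G$ passes through an edge of $E_f$ inside $S$; a vertex $z\in G$ with $\pi_f(z)=\bar z$ then lies on such a geodesic, hence in $S$. The key point to nail down carefully in each subcase is that the chosen lift of $\bar z$ actually sits on a shortest $x$–$y$ path of $G$ — i.e. that $d_G(x,z)+d_G(z,y)=d_G(x,y)$ — which follows by comparing sign-vector coordinates: $\bar z_g\in\{\bar x_g,\bar y_g\}$ for all $g\neq f$ by assumption, and the $f$-coordinate of $z$ can be chosen to match whichever of $x_g$'s side keeps $z$ between $x$ and $y$, using that $E_f$ is a cut so that ``between $x$ and $y$'' forces passing from $H^-_f$ to $H^+_f$ exactly once. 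The one thing to guard against is the degenerate situation where $\bar z=\bar x$ or $\bar z=\bar y$, which is trivial, and the bookkeeping when $\bar z$ is the contraction of an $E_f$-edge $\{z^-,z^+\}$: then one of $z^-,z^+$ must be the one lying on the geodesic and hence in $S$ — this is exactly guaranteed by $S$ being convex and $E_f$ crossing $S$. Assembling these subcases gives convexity of $\pi_f(S)$ in $\pi_f(G)$, completing the proof.
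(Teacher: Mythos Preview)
Your argument for the first assertion is fine. The lifting argument for the second assertion, however, has a genuine gap in Case~1 and in the same-halfspace subcase of Case~2. When $x,y\in H^+_f$ you claim that a vertex $\bar z\in I_{\pi_f(G)}(\bar x,\bar y)$ lifts to $z\in G$ with the $f$-coordinate ``chosen'' to equal $x_f=y_f$, so that $z\in I_G(x,y)$. But $G$ is only an isometric subgraph of $Q(\Lambda)$, not a convex one: you cannot freely set $z_f$; it may be that the only preimage of $\bar z$ in $G$ lies in $H^-_f$, in which case your own distance bookkeeping gives $d_G(x,z)+d_G(z,y)=d_G(x,y)+2$ and $z\notin I_G(x,y)$. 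What you actually need---that $\bar z$ has a preimage in $H^+_f$, hence in $I_G(x,y)\subset S$---is equivalent to $\bar z\in\pi_f(S)$, which is exactly the statement you are proving, so the argument is circular at this point. (Your Subcase~2b, with $x$ and $y$ in opposite halfspaces, is correct: there \emph{any} preimage of $\bar z$ lands in $I_G(x,y)$.)

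The paper avoids this difficulty by not lifting vertices at all. By Lemma~\ref{restriction} one writes $S=\rho_A(G)$ where $A$ is the signed set of $\Theta$-classes osculating $S$; the hypothesis ``$E_f$ crosses $S$ or is disjoint from $S$'' says precisely $f\notin A$. Then Lemma~\ref{commut_rest_contaction} gives $\pi_f(S)=\pi_f(\rho_A(G))=\rho_A(\pi_f(G))$, an intersection of halfspaces of $\pi_f(G)$ and hence convex. If you want to salvage your direct approach, this halfspace description is exactly the missing ingredient: from $\bar z_g\in\{x_g,y_g\}$ for $g\neq f$ and $x_g=y_g=\epsilon_g$ for every $g\in A$ you get $\bar z\in\rho_A(\pi_f(G))=\pi_f(S)$ without ever needing a particular lift to exist in $G$.
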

\begin{proof}
 Let $S$ be convex. Then by Lemma~\ref{restriction}, $S$ can be written as $\rho_{A}(G)$, where $A$ is a signed set of those $\Theta$-classes that osculate with $S$. Again by Lemma~\ref{restriction}, $\rho_{f^+}(S)=\rho_{f^+}(\rho_{A}(G))$ is a convex subgraph of $\rho_{f^+}(G)$, proving the first assertion. Now, if $f\in\Lambda\setminus A$, then $\pi_f(S)=\pi_f(\rho_{A}(G))$, which by Lemma~\ref{commut_rest_contaction} equals $\rho_{A}(\pi_f(G))$, i.e., $\pi_f(S)$ is a convex subgraph of $\pi_f(G)$.
\end{proof}

\begin{lemma}\label{convex_expansion} If $S'$ is a convex subgraph of $G'$ and $G$ is obtained from $G'$ by an isometric expansion $\psi$, then $S:=\psi(S')$ is a convex subgraph of $G$.
\end{lemma}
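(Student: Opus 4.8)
The plan is to reduce convexity in $G$ to convexity in $G'$ via the natural projection, using Djokovi\'c's characterization. First I would recall how the $\Theta$-classes of $G$ relate to those of $G'$: if $G = \psi(G')$ is the isometric expansion of $G'$ with respect to $G'_0$, then $G'$ has a distinguished new $\Theta$-class, namely the set $E$ of all edges $v_1v_2$ with $v \in V(G'_0)$, and contracting $E$ recovers $G'$, i.e.\ $\pi_E(G) = G'$. Every other $\Theta$-class $E_g$ of $G$ is the ``lift'' of a $\Theta$-class of $G'$ — concretely, the halfspaces of $G$ are $H^\pm_g = \pi_E^{-1}(H'^\pm_g)$ for $g \ne E$, together with the two halfspaces $H^-_E = V(G'_1) \times \{1\}$ and $H^+_E = V(G'_2)\times\{2\}$ (identifying vertices of $G$ with pairs as in the definition of $\psi$). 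This is standard and follows from the fact that $G$ is a partial cube obtained from $G'$ by a single isometric expansion; I would cite~\cite{Ch_thesis,Ch_hamming} or spell it out in one line.

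Given this, I would argue as follows. Since $S'$ is convex in $G'$, by Lemma~\ref{restriction} we may write $S' = \rho_{A'}(G')$ for a signed set $A'$ of $\Theta$-classes of $G'$, i.e.\ $S'$ is an intersection of halfspaces of $G'$. Pulling back through $\pi_E$, each halfspace $H'^\pm_g$ of $G'$ pulls back to the halfspace $H^\pm_g$ of $G$, so $\psi(S') = S$ is exactly the intersection of the corresponding halfspaces $\bigcap_{g \in A'} H^{\epsilon(g)}_g$ of $G$ — here one uses that $S$, as the image of $S'$ under the expansion, consists precisely of those vertices of $G$ projecting into $S'$, which is the pullback of the intersection. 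An intersection of halfspaces of a partial cube is convex (again Lemma~\ref{restriction}, or directly: halfspaces are convex and intersections of convex sets are convex). Hence $S$ is convex in $G$.

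The one point requiring care — and the main obstacle — is the precise identification $S = \psi(S') = \pi_E^{-1}(S') \cap V(G)$, i.e.\ checking that the expansion operator $\psi$ applied to the subgraph $S'$ produces exactly the pullback set, with no spurious or missing vertices on the ``doubled'' part $G'_0$. This is where the hypothesis that $S'$ is convex (hence isometric, hence an induced subgraph behaving well under $\psi$) is used: a vertex $v \in V(G'_0) \cap V(S')$ gets split into $v_1, v_2$ both lying in $S$, and both lie in $\pi_E^{-1}(S')$; conversely a vertex of $G'_1 \setminus G'_0$ or $G'_2 \setminus G'_0$ in $S'$ has a unique lift, again in the pullback. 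Once this set-level identity is nailed down, the halfspace description above gives convexity immediately, and no metric estimate beyond Lemma~\ref{restriction} is needed.
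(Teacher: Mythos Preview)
Your proposal is correct and follows essentially the same route as the paper's proof: both write $S' = \rho_A(G')$ as an intersection of halfspaces using Lemma~\ref{restriction}, then identify $S = \psi(S')$ with $\pi_f^{-1}(S') = \rho_A(G)$, the intersection of the lifted halfspaces in $G$, and conclude convexity again by Lemma~\ref{restriction}. The paper phrases the pullback step via the commutation Lemma~\ref{commut_rest_contaction} ($\rho_A(\pi_f(G)) = \pi_f(\rho_A(G))$) rather than speaking of preimages directly, but the content and the ``point requiring care'' you isolate (the two inclusions $\rho_A(G) \subseteq S$ and $S \subseteq \rho_A(G)$) are handled the same way.
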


\begin{proof}
Let $f\in\Lambda$ be such that $G'=\pi_f(G)$ and $S'$ a convex subgraph of $G'$. By Lemma~\ref{restriction}, there exists a signed set of $\Theta$-classes $A\subset \Lambda \setminus\{f\}$, such that $S'=\rho_{A}(G')=\rho_{A}(\pi_f(G))$. By Lemma~\ref{commut_rest_contaction}, $S'=\pi_f(\rho_{A}(G))$, thus $ \rho_{A}(G)\subset S$. For every $g^+\in A$, we have $\pi_f(\rho_{g^-}(G))= \rho_{g^-}(G')$, thus it is disjoint with $S$. Then  $S=\rho_{A}(G)$, which is convex by Lemma~\ref{restriction}.
\end{proof}

\begin{lemma}\label{convex_hull} If $S$ is a subset of vertices of $G$ and $f\in \Lambda$, then $\pi_f(\conv(S))\subseteq \conv(\pi_f(S))$. If $E_f$ crosses $S$, then $\pi_f(\conv(S))= \conv(\pi_f(S))$.
\end{lemma}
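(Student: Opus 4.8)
The plan is to prove the inclusion $\pi_f(\conv(S))\subseteq\conv(\pi_f(S))$ first, and then upgrade it to equality under the hypothesis that $E_f$ crosses $S$. For the inclusion, I would argue that $\conv(\pi_f(S))$ pulls back to a convex superset of $S$. More precisely, let $T:=\conv(\pi_f(S))$, a convex subgraph of $\pi_f(G)$. By Lemma~\ref{restriction} applied in $\pi_f(G)$, we can write $T=\rho_A(\pi_f(G))$ for a signed set $A\subseteq(\Lambda\setminus\{f\})^{\{+,-\}}$. By Lemma~\ref{commut_rest_contaction}, $\rho_A(\pi_f(G))=\pi_f(\rho_A(G))$, so the preimage $\pi_f^{-1}(T)=\rho_A(G)$ is a convex subgraph of $G$. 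Since $\pi_f(S)\subseteq T$, every vertex of $S$ maps into $T$, hence $S\subseteq\pi_f^{-1}(T)=\rho_A(G)$; by convexity, $\conv(S)\subseteq\rho_A(G)$, and applying $\pi_f$ gives $\pi_f(\conv(S))\subseteq\pi_f(\rho_A(G))=T=\conv(\pi_f(S))$, as desired.

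For the reverse inclusion under the assumption that $E_f$ crosses $S$, I would show $\conv(\pi_f(S))\subseteq\pi_f(\conv(S))$ by proving that $\pi_f(\conv(S))$ is convex and contains $\pi_f(S)$. Containment of $\pi_f(S)$ is immediate from $S\subseteq\conv(S)$. For convexity: since $S$ meets both $H^-_f$ and $H^+_f$, so does its convex hull, hence $E_f$ crosses $\conv(S)$; now $\conv(S)$ is a convex subgraph of $G$, so by the second part of Lemma~\ref{contraction_convex} (the case where $E_f$ crosses $S$), $\pi_f(\conv(S))$ is convex in $\pi_f(G)$. A convex set containing $\pi_f(S)$ contains $\conv(\pi_f(S))$, giving $\conv(\pi_f(S))\subseteq\pi_f(\conv(S))$. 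Combined with the first part, this yields equality.

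The only point that needs a little care — and the step I expect to be the mildest obstacle — is the manipulation $\pi_f^{-1}(\rho_A(\pi_f(G)))=\rho_A(G)$, i.e., making sure that pulling back a restriction of $\pi_f(G)$ along the contraction returns exactly the corresponding restriction of $G$, with no spurious vertices. This is precisely what Lemma~\ref{commut_rest_contaction} gives, once one notes that each halfspace $H^\pm_g$ with $g\neq f$ of $\pi_f(G)$ is the $\pi_f$-image of the halfspace $H^\pm_g$ of $G$ (the edge-class $E_g$ survives contraction of $E_f$ since $g\neq f$), so that a vertex $v$ of $G$ lies in $\rho_A(G)$ if and only if $\pi_f(v)$ lies in $\rho_A(\pi_f(G))$. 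Everything else is a formal consequence of Lemmas~\ref{restriction}, \ref{commut_rest_contaction}, and~\ref{contraction_convex}, together with the elementary fact that convex hulls commute with taking the smallest convex superset.
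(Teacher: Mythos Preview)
Your proof is correct. The second half (equality when $E_f$ crosses $S$) is exactly the paper's argument: $\pi_f(S)\subseteq\pi_f(\conv(S))$, $E_f$ crosses $\conv(S)$, so $\pi_f(\conv(S))$ is convex by Lemma~\ref{contraction_convex}, hence contains $\conv(\pi_f(S))$.

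For the first inclusion the approaches diverge. The paper argues elementwise via geodesics: a vertex of $\conv(S)$ lies on a shortest path between two vertices of $S$, and contracting $E_f$ sends such a path to a shortest path in $\pi_f(G)$ between vertices of $\pi_f(S)$ (with an implicit iteration to pass from one round of interval-taking to the full convex hull). You instead pull back: writing $\conv(\pi_f(S))=\rho_A(\pi_f(G))$ with $f\notin A$, you use Lemma~\ref{commut_rest_contaction} together with the observation that halfspaces $H^\pm_g$, $g\neq f$, are $\pi_f$-saturated, so that $\pi_f^{-1}(\conv(\pi_f(S)))=\rho_A(G)$ is a convex subgraph of $G$ containing $S$, whence it contains $\conv(S)$. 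Your route sidesteps the iteration hidden in the paper's ``i.e.''\ and makes the argument purely structural in terms of the restriction/contraction machinery already developed; the paper's route is shorter to state but leans on the fact that contractions send geodesics to geodesics. Both are fine; yours is the more self-contained of the two given the lemmas available.
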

\begin{proof}
Let $y'\in  \pi_f(\conv(S))$, i.e., there is a $y\in\pi_f^{-1}(y')$ on a shortest path $P$ in $G$ between vertices $x,z\in S$. Contracting $f$ yields a shortest path $P_f$ in $G_f$ between two vertices on $\pi_f(S)$ containing $y'$. This proves $\pi_f(\conv(S))\subseteq \conv(\pi_f(S))$.

For the second claim note that since $\conv(S)\supseteq S$, we have $\pi_f(\conv(S))\supseteq \pi_f(S)$ and $\conv(\pi_f(\conv(S)))\supseteq \conv(\pi_f(S))$. Finally, since $E_f$ crosses $S$ it also crosses $\conv(S)$ and by Lemma~\ref{contraction_convex} we have that $\conv(\pi_f(\conv(S)))=\pi_f(\conv(S))$, yielding the claim.
\end{proof}

We call a subgraph  $S$ of a graph $G=(V,E)$ {\it antipodal} if for every vertex $x$ of $S$ there is a vertex $x^-$ of $S$ such that $S=\conv(x,x^-)$ in $G$. Note that antipodal graphs are sometimes defined in a different but equivalent way (graphs satisfying our definition are also called symmetric-even, see~\cite{BKo}). By definition, antipodal subgraphs are convex.

\begin{lemma}\label{antipodal_minor}
 Let $S$ be an antipodal subgraph of $G$ and $f\in\Lambda$. If $E_f$ is disjoint from $S$, then $\rho_{f^+}(S)$ is an antipodal subgraph of $\rho_{f^+}(G)$. If $E_f$ crosses $S$ or is disjoint from $S$, then $\pi_f(S)$ is an antipodal subgraph of $\pi_f(G)$.
\end{lemma}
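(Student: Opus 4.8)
The plan is to deduce everything from the already-established interaction between convex hulls and pc-minors, namely Lemmas~\ref{restriction},~\ref{contraction_convex} and~\ref{convex_hull}. Recall that $S$ being antipodal means that each vertex $x$ of $S$ has an \emph{antipode} $x^-\in S$ with $\conv_G(x,x^-)=S$; so in each case it will be enough to fix an arbitrary vertex of the image of $S$, write it as the image of some $x\in S$, and check that the image of a chosen antipode $x^-$ of $x$ again serves as an antipode inside the image of $S$.

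For the restriction statement I would first note that, since $E_f$ is disjoint from $S$, all of $S$ lies in one of the halfspaces $H^+_f,H^-_f$; after possibly swapping the two signs we may assume $S\subseteq H^+_f$, so that $\rho_{f^+}(S)=S$ (in the opposite orientation $\rho_{f^+}(S)=\emptyset$ and the assertion is vacuous). Next, by Lemma~\ref{restriction} the restriction $\rho_{f^+}(G)$ is a convex, hence isometric, subgraph of $G$; therefore intervals between vertices of $S$ are computed the same way in $\rho_{f^+}(G)$ and in $G$, and iterating this observation gives $\conv_{\rho_{f^+}(G)}(x,x^-)=\conv_G(x,x^-)$ for all $x,x^-\in S$. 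Taking for each $x\in S$ an antipode $x^-$, this yields $\conv_{\rho_{f^+}(G)}(x,x^-)=S=\rho_{f^+}(S)$, i.e.\ $\rho_{f^+}(S)$ is antipodal in $\rho_{f^+}(G)$.

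For the contraction statement I would argue as follows. Fix a vertex of $\pi_f(S)$ and write it as $\pi_f(v)$ with $v\in S$; let $v^-\in S$ be an antipode of $v$, so $\conv_G(v,v^-)=S$. On the one hand, the hypothesis that $E_f$ crosses $S$ or is disjoint from $S$ lets us invoke Lemma~\ref{contraction_convex} to conclude that $\pi_f(S)$ is convex in $\pi_f(G)$, whence $\conv_{\pi_f(G)}(\pi_f(v),\pi_f(v^-))\subseteq\pi_f(S)$ since both endpoints lie in $\pi_f(S)$. On the other hand, applying the first (unconditional) inclusion of Lemma~\ref{convex_hull} to the two-element set $\{v,v^-\}$ gives $\pi_f(S)=\pi_f(\conv_G(v,v^-))\subseteq\conv_{\pi_f(G)}(\pi_f(v),\pi_f(v^-))$. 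The two inclusions together force $\conv_{\pi_f(G)}(\pi_f(v),\pi_f(v^-))=\pi_f(S)$, so $\pi_f(v^-)$ is an antipode of $\pi_f(v)$; as $\pi_f(v)$ was an arbitrary vertex of $\pi_f(S)$, the image $\pi_f(S)$ is antipodal in $\pi_f(G)$.

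I do not expect a genuine obstacle here: both parts are short once the helper lemmas are available. The one thing to be careful about in the contraction case is the direction of the two containments — convexity of $\pi_f(S)$ (from Lemma~\ref{contraction_convex}) supplies one, and the weak direction of Lemma~\ref{convex_hull} applied to the pair $\{v,v^-\}$ rather than to $S$ supplies the other — while in the restriction case it is worth spelling out that a restriction is convex, hence isometric, so that convex hulls of subsets of $S$ are unchanged upon restricting.
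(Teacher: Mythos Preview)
Your proof is correct and follows essentially the same approach as the paper. For the restriction part, both you and the paper observe that $\rho_{f^+}(S)=S$ and that convexity of $S$ persists in the convex subgraph $\rho_{f^+}(G)$, so antipodes are preserved. For the contraction part, the paper also combines Lemma~\ref{contraction_convex} (giving $\conv_{\pi_f(G)}(\pi_f(v),\pi_f(v^-))\subseteq\pi_f(S)$) with Lemma~\ref{convex_hull} (giving the reverse inclusion); you are simply more explicit than the paper in separating the two containments and in noting that only the unconditional direction of Lemma~\ref{convex_hull} is needed, which makes your argument handle the ``disjoint'' case more transparently.
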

\begin{proof}
 If $E_f$ is disjoint from $S$, then $S\rho_{f^+}(S)=S$ and by Lemma~\ref{contraction_convex} is convex. This yields the first assertion.
 For the second assertion, again by Lemma~\ref{contraction_convex}, $\pi_f(S)$ is convex. Moreover, by Lemma~\ref{convex_hull} if $\conv(x,x^-)=S$, then $\pi_f(S)=\pi_f(\conv(x,x^-))=\conv(\pi_f(\{x,x^-\}))$. Since every vertex in $\pi_f(S)$ is an image under the contraction, $\pi_f(S)$ is antipodal.
\end{proof}

\begin{lemma}\label{antipodal_cycle}
If $S$ is an antipodal subgraph of $G$, then $S$ contains an isometric cycle $C$ such that $\conv(C)=S$.
\end{lemma}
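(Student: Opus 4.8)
The plan is to exploit the natural embedding of $S$ and to show that the abstract antipodal relation on $S$ is realized by coordinate-wise complementation in the ambient hypercube; this complementation is an automorphism of $S$, and the cycle we seek is obtained by gluing a geodesic between an antipodal pair to its image under this automorphism.

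First I would record the structural input. Since $S$ is a convex, hence isometric, subgraph of a hypercube and $S=\conv(v,v^-)$ for every $v$, the number $N$ of $\Theta$-classes of $S$ is finite and equals $d(v,v^-)$ for every vertex $v$: by Lemma~\ref{restriction} (equivalently, the separation property $S_3$) together with the elementary fact that a $\Theta$-class crosses $\conv(A)$ iff it crosses $A$, one gets that $\conv(v,w)=S$ holds precisely when $v$ and $w$ differ in all $N$ coordinates of the smallest subcube containing $S$, i.e. when $w=\bar v$ is the coordinate-wise complement of $v$ in that subcube. Thus $v^-=\bar v$ is unique, and antipodality of $S$ says exactly that $\bar v\in S$ for all $v\in S$. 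Consequently $\sigma\colon v\mapsto\bar v$ is a well-defined involution of $S$; since $S$ is isometric in the hypercube and complementation maps an edge (a pair differing in one coordinate) to an edge, $\sigma$ is an automorphism of $S$ which reverses every $\Theta$-class, i.e. maps $H^-_f\cap S$ onto $H^+_f\cap S$.

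Next, fix $v\in S$; we may assume $N\geq2$ (for $N\leq1$, $S$ is a single vertex or edge and the statement is degenerate). Pick a geodesic $P=(v=u_0,u_1,\dots,u_N=v^-)$ with $u_{i-1}u_i$ in the class $f_i$, and set $C:=P\cup\sigma(P)$, i.e. the closed walk $u_0,u_1,\dots,u_N=\sigma(u_0),\sigma(u_1),\dots,\sigma(u_N)=u_0$; here $\sigma(P)$ is a geodesic from $v^-$ to $v$ because $\sigma$ is an isometry. I would then verify that $C$ is a genuine $2N$-cycle: an equality $u_i=\sigma(u_j)$ with $0<i,j<N$ would force the set $\{f_1,\dots,f_i\}$ of classes in which $u_i$ differs from $v$ to coincide with the set $\{f_{j+1},\dots,f_N\}$ of classes in which $\sigma(u_j)$ differs from $v$ (using that $\sigma$ reverses all classes and $v^-$ differs from $v$ in all of them), which is impossible for a prefix and a suffix of the list $f_1,\dots,f_N$ unless $j=0$. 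Moreover the two edges of $C$ belonging to a class $f_i$ occupy cycle-positions $i-1$ and $N+i-1$, hence are antipodal on $C$; it follows that for any two vertices of $C$ the shorter $C$-arc between them meets each class at most once and is therefore a geodesic of $S$, so $C$ is an isometric cycle. Finally, as $v,v^-\in C$ and $\conv(v,v^-)=S$ by antipodality, $\conv(C)\supseteq S$, while $\conv(C)\subseteq S$ is immediate; hence $\conv(C)=S$.

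The main obstacle is precisely the structural step of the second paragraph — identifying the abstract antipode $v^-$ with the hypercube complement $\bar v$ so that the antipodal map becomes a $\Theta$-class–reversing automorphism of $S$; once this is available, checking that $C=P\cup\sigma(P)$ is a cycle, is isometric, and has convex hull $S$ is routine bookkeeping with sign vectors. An alternative route is induction on $N$: contract a class $E_f$, use Lemma~\ref{antipodal_minor} to obtain an antipodal graph with $N-1$ classes, apply the induction hypothesis to get an isometric cycle $C'$ there with full convex hull, and lift $C'$ through the isometric expansion $S=\psi(\pi_f(S))$ to an isometric cycle of $S$; in that approach the delicate point becomes showing that $C'$ meets the gluing set of the expansion in exactly two $C'$-antipodal vertices, which again uses antipodality rather than merely that $\conv(C')$ is everything.
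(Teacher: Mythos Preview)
Your proof is correct and follows essentially the same strategy as the paper's: both take a geodesic $P$ from a vertex to its antipode, form $C=P\cup\sigma(P)$ where $\sigma$ is the antipodal map, argue that $C$ is an isometric cycle, and conclude $\conv(C)=S$ because $C$ contains an antipodal pair. The only difference is that the paper simply invokes as ``well-known'' that $x\mapsto x^-$ is an automorphism of $S$ and that consecutive arcs of $C$ of length at most $k$ are geodesics, whereas you prove these facts explicitly by identifying the antipode with the coordinate-wise complement and tracking $\Theta$-classes along $C$; your version is thus more self-contained but not a different argument.
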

\begin{proof}
 Let $x\in S$ and let $P=(x=x_0,x_1,\ldots,x_k=x^-)$ be a shortest path in $S$ to the antipodal vertex $x^-$ of $x$. It is well-known that the mapping $x\mapsto x^{-}$ is a graph automorphism of $S$, thus $C=(x=x_0,x_1,\ldots,x_k=x^-_0,x^-_1,\ldots,x^-_k=x)$ is a cycle. Furthermore, by the properties of the map $x\mapsto x^{-}$ every subpath of $C$ of length at most $k$ is a shortest path. Thus, $C$ is an isometric cycle of $S$. Since $C$ contains antipodal vertices of $S$, we have $\conv(C)=S$.
\end{proof}

%
%
%

\begin{lemma}\label{contraction_gated}
If $S$ is a gated subgraph of $G$, then $\rho_{f^+}(S)$ and $\pi_f(S)$ are  gated subgraphs of $\rho_{f^+}(G)$ and $\pi_f(G)$, respectively.
\end{lemma}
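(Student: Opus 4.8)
The plan is to treat the restriction and the contraction separately, and in both cases to take, for a vertex lying outside the smaller subgraph, its gate in $S$ (which exists because $S$ is gated in $G$) and argue that the obvious image of this gate is a gate in the minor. Throughout I would use that $G$ is an isometric subgraph of $Q(\Lambda)=\{-1,+1\}^{\Lambda}$, so that $d_G(u,v)$ is the Hamming distance of the sign vectors of $u$ and $v$ and, in particular, every vertex on a shortest $(u,v)$-path flips only coordinates in which $u$ and $v$ differ. For the contraction I would also use the standard fact (a consequence of $\pi_f(G)$ being an isometric subgraph of $Q(\Lambda\setminus\{f\})$) that $d_{\pi_f(G)}(\pi_f(u),\pi_f(v))$ equals $d_G(u,v)$ if $u$ and $v$ lie in the same halfspace of the pair $\{H^-_f,H^+_f\}$, and $d_G(u,v)-1$ otherwise.

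For the restriction, write $H:=H^+_f$, so that $\rho_{f^+}(G)=G(H)$ and $\rho_{f^+}(S)=S\cap H$, and recall that $G(H)$ is convex, hence isometric, in $G$. If $S\cap H=\emptyset$ there is nothing to do. Otherwise I would fix a vertex $x$ of $G(H)$ with $x\notin S\cap H$; then $x\notin S$, so let $g$ be the gate of $x$ in $S$. Choosing any $y\in S\cap H$, the gate property gives $g\in I_G(x,y)$, and since $H$ is convex and contains both $x$ and $y$ we get $g\in H$, i.e. $g\in\rho_{f^+}(S)$. Then for arbitrary $y\in S\cap H$ the equality $d_G(x,y)=d_G(x,g)+d_G(g,y)$ carries over verbatim to $G(H)$, since $G(H)$ is isometric in $G$ and contains $x,g,y$; hence $g\in I_{G(H)}(x,y)$ for every $y\in\rho_{f^+}(S)$, so $g$ is the gate of $x$ in $\rho_{f^+}(S)$.

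For the contraction, write $\bar v:=\pi_f(v)$, $\bar G:=\pi_f(G)$, $\bar S:=\pi_f(S)$. I would fix a vertex $\bar x$ of $\bar G$ with $\bar x\notin\bar S$, pick a preimage $x$ of $\bar x$ (necessarily $x\notin S$), and let $g$ be the gate of $x$ in $S$; the claim is that $\bar g$ is the gate of $\bar x$ in $\bar S$. Given $\bar y\in\bar S$, pick a preimage $y\in S$ of $\bar y$. Applying the contraction distance formula to the pairs $(x,y)$, $(x,g)$, $(g,y)$ and using $d_G(x,y)=d_G(x,g)+d_G(g,y)$, the desired identity $d_{\bar G}(\bar x,\bar y)=d_{\bar G}(\bar x,\bar g)+d_{\bar G}(\bar g,\bar y)$ reduces to: among $x,g,y$ the number of halfspace-changes with respect to $\{H^-_f,H^+_f\}$ is additive, a one-coordinate fact that can fail only when $x$ and $y$ lie in the same halfspace of this pair while $g$ lies in the other. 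But $g\in I_G(x,y)$, and a shortest $(x,y)$-path flips only coordinates where $x$ and $y$ differ, so if $x$ and $y$ lie on the same side of $E_f$ then so does $g$; hence the identity holds. Thus $\bar g\in I_{\bar G}(\bar x,\bar y)$ for all $\bar y\in\bar S$, and since $\bar g\in\bar S$ it is the gate of $\bar x$, so $\bar S$ is gated.

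I expect the argument to be mostly bookkeeping; the one place that genuinely needs attention is the final step of the contraction case, namely ruling out that the gate $g$ of $x$ in $S$ lies strictly on the far side of $E_f$ from some target vertex $y\in S$ that is on $x$'s side. That is precisely the configuration that would destroy additivity of distances under $\pi_f$, and it is excluded exactly because $g$ lies on a shortest $(x,y)$-path in $G$; everything else simply transfers from $G$ by isometry of the restricted subgraph or by the contraction distance formula.
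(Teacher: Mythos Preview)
Your proof is correct and follows essentially the same approach as the paper's: for each part you take the gate $g$ of $x$ in $S$ and show that its image is a gate in the minor, using convexity of the halfspace for the restriction and the fact that shortest paths project to shortest paths for the contraction. The paper phrases the contraction step slightly more compactly---it fixes a single shortest $(x,z)$-path through the gate and observes that its $\pi_f$-image is again a shortest path---whereas you unpack the same content as a case analysis on which side of $E_f$ each of $x,g,y$ lies; the substantive point (that $g\in I_G(x,y)$ forces $g$ to share a halfspace with $x,y$ whenever they agree) is identical.
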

\begin{proof}
 Let $x\in G$ with gate $y\in S$, $z\in S$, and let $P$ be a shortest path from $x$ to $z$ passing via $y$. To prove  that $\rho_{f^+}(S)$ is gated, suppose that $x,z\in \rho_{f^+}(G)$. This
 implies $y\in \rho_{f^+}(G)$, thus $y$ is also the gate of $x$ in $\rho_{f^+}(S)$ in the graph $\rho_{f^+}(G)$.

To prove the second assertion, notice that the distance in $\pi_f(G)$ between $x$ and $z$ decreases by one if and only if $P$ crosses $E_f$ and remains unchanged otherwise, thus $\pi_f(P)$ is a shortest path in $\pi_f(G)$. This shows that  $\pi_f(y)$ is the gate of $\pi_f(x)$ in $\pi_f(S)$ in the graph $\pi_f(G)$.
\end{proof}

%
%

\subsection{Cartesian products}
The \emph{Cartesian product} $F_1\square F_2$ of two graphs $F_1=(V_1,E_1)$ and $F_2=(V_2,E_2)$ is the graph defined on $V_1\times V_2$ with an edge $(u,u')(v,v')$ if and only if $u=v$ and $u'v'\in E_2$ or $u'=v'$ and $uv\in E_1$. This definition generalizes in a straightforward way to products of sets of graphs. If $G=F_1\square\cdots\square F_k$, then each $F_i$ is called a \emph{factor} of $G$. A \emph{subproduct} of such $G$ is a product $F'_1\square\cdots\square F'_k$, where $F'_i$ is a subgraph of $F_i$ for all $1\leq i\leq k$. A \emph{layer} is a subproduct, where all but one of the $F'_i$ consist of a single vertex and the remaining $F'_i$ coincides with $F_i$.

It is well-known that products of partial cubes are partial cubes, and thus products of even cycles and edges are partial cubes, which we will be particularly interested in. It is easy to see that any contraction of a product of even cycles and edges is a product of even cycles and edges. Furthermore, any Cartesian product of even cycles and edges is antipodal, since taking the antipode with respect to all factors gives the antipode with respect to the product. By Lemma~\ref{antipodal_cycle} any such product is the convex hull of an isometric cycle.
We will use the following properties of these graphs frequently (and sometime without an explicit reference):

\begin{lemma} \label{product_cycles_gated} Let $G\cong F_1\square\cdots\square F_k$ be a Cartesian product of edges and even cycles and let $H$ be an induced subgraph of $G$. Then $H$ is a convex subgraph if and only if $H$ is a Cartesian product $F'_1\square\cdots\square F'_m$, where each $F'_i$ either coincides with $F_i$ or is a  convex subpath of $F_i$. Furthermore, $H$ is a gated subgraph of $G$ if and only if $H$ is a Cartesian product $F'_1\square\cdots\square F'_m$, where each $F'_i$ either coincides with $F_i$ or is a  vertex or an edge  of $F_i$.
\end{lemma}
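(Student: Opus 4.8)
The tool that drives everything is the distance formula in Cartesian products: for $u=(u_1,\dots,u_k)$ and $v=(v_1,\dots,v_k)$ in $G=F_1\times\cdots\times F_k$ one has $d_G(u,v)=\sum_{i=1}^k d_{F_i}(u_i,v_i)$, and a vertex $w$ lies on a shortest $(u,v)$-path if and only if $\pi_i(w)\in I_{F_i}(u_i,v_i)$ for every $i$, where $\pi_i$ denotes the projection of $G$ onto its $i$-th factor. My plan is to first prove the structural fact that every convex subgraph $H$ is a subproduct $\pi_1(H)\times\cdots\times\pi_k(H)$ with convex factors, and then read off which subproducts are convex and which are gated from a short case analysis on the factors $K_2$ and $C_{2\ell}$.

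For the convexity statement the direction ``$\Leftarrow$'' is immediate: if $H=F'_1\times\cdots\times F'_k$ with each $F'_i$ convex in $F_i$, then for $u,v\in H$ and $w\in I_G(u,v)$ the formula above gives $\pi_i(w)\in I_{F_i}(u_i,v_i)\subseteq F'_i$, so $w\in H$. For ``$\Rightarrow$'' I would use the \emph{box property}: if $u,v\in H$ and $w$ is obtained from $u$ by replacing its $i$-th coordinate with that of $v$, then $d_G(u,w)+d_G(w,v)=d_G(u,v)$, so $w\in I_G(u,v)\subseteq H$; iterating over coordinates (with varying reference vertices in $H$) yields $H=\pi_1(H)\times\cdots\times\pi_k(H)$. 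Next, fixing a basepoint $p\in H$, for each $i$ and each $z_i\in I_{F_i}(x_i,y_i)$ with $x_i,y_i\in\pi_i(H)$, the vertex agreeing with $p$ off coordinate $i$ and equal to $z_i$ on coordinate $i$ lies on a shortest path between two vertices of $H$, hence in $H$; so $\pi_i(H)$ is convex in $F_i$. Since a convex subgraph of $K_2$ is a vertex or $K_2$, and a convex subgraph of $C_{2\ell}$ is a vertex, a path of length at most $\ell-1$, or $C_{2\ell}$ itself, each factor $\pi_i(H)$ is $F_i$ or a convex subpath.

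For the gatedness statement, ``$\Leftarrow$'': if each $F'_i$ is $F_i$, a vertex, or an edge, then each $F'_i$ is gated in $F_i$ — a single edge of $C_{2\ell}$ is gated because $C_{2\ell}$ is bipartite, so any vertex sits at distances $d$ and $d+1$ from the two endpoints and the nearer one is its gate — and then, given $x\notin H$, the vertex $x'=(x'_1,\dots,x'_k)$ whose $i$-th coordinate is the gate of $x_i$ in $F'_i$ satisfies $d_G(x,y)=d_G(x,x')+d_G(x',y)$ for all $y\in H$ by the distance formula, so $x'$ is the gate of $x$ in $H$. For ``$\Rightarrow$'', a gated $H$ is convex, hence a subproduct $\prod F'_i$ by the previous part, and it remains only to exclude the possibility that some factor $F'_i$ is a subpath $P=(a_0,\dots,a_r)$ of length $r\ge2$ of a cycle factor $F_i=C_{2\ell}$. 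This is the step I expect to be the real obstacle. The plan is: pick an interior vertex $a_j$ of $P$ (so $1\le j\le r-1$) and let $z$ be its antipode in $C_{2\ell}$; since $r\le\ell-1$ one checks $z\notin P$ and $d_{F_i}(z,a_l)=\ell-|l-j|$ for $0\le l\le r$, so the distance sequence along $P$ is strictly increasing up to $a_j$ and strictly decreasing afterwards. Take any $x\in G$ with $\pi_i(x)=z$; if $x$ had a gate $x'$ in $H=\prod F'_i$, then comparing $d_G(x,y)=d_G(x,x')+d_G(x',y)$ over the vertices $y\in H$ that run through all of $P$ on coordinate $i$ while matching $x'$ on the other coordinates, the common term cancels and $\pi_i(x')$ turns out to be a gate of $z$ in $P$; but the $\Lambda$-shaped distance sequence forces such a gate to equal both $a_0$ and $a_r$, impossible for $r\ge2$. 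This contradiction shows every factor over a cycle is a vertex or an edge (and over $K_2$ it trivially is), completing the proof.
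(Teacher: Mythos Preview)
Your argument is correct and follows the same route as the paper: reduce the question to the factors via the product structure, and then identify the convex (resp.\ gated) subgraphs of $K_2$ and of an even cycle. The only difference is that the paper simply cites the general fact (from van de Vel's book) that convex, respectively gated, subsets of a Cartesian product are exactly the subproducts of convex, respectively gated, subsets of the factors, whereas you prove this fact from scratch using the distance formula and the box property; your treatment is thus more self-contained but not genuinely different in strategy.
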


\begin{proof} It is well known (see for example~\cite{VdV} that convex subsets (respectively, gated subsets) of Cartesian products of metric spaces are exactly the Cartesian products of convex (respectively, gated) subsets of factors. Now, the proper convex subsets of an even cycle $C$ are exactly the convex paths, while the proper gated subsets of $C$ are the vertices and the edges of $C$.
\end{proof}

\begin{lemma}\label{lem:subcells}
 Let $G\cong F_1\square\cdots\square F_k$ be a Cartesian product of edges and even cycles  and let $G'$ be a connected induced subgraph of $G$. If for every 2-path $P$ of $G'$ its gated hull $\langle\langle P\rangle\rangle$ is included in $G'$, then $G'$ is a gated subgraph of $G$.
\end{lemma}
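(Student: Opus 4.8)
The plan is to reduce everything to showing that $G'$ is convex, and then to prove convexity in two steps: first that $G'$ is isometric in $G$, and then that an isometric $G'$ satisfying the hypothesis is convex. The final passage from convexity to gatedness is where the extra strength of the hypothesis (gated hulls, not just convex hulls, of $2$-paths) is used.

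\emph{Reduction.} Suppose $G'$ is convex. By Lemma~\ref{product_cycles_gated}, $G'=F'_1\times\dots\times F'_k$ where each $F'_i$ is either $F_i$ or a convex subpath of $F_i$; in particular $G'$ is a subproduct and $F'_i$ equals the projection of $G'$ to the $i$-th factor. Assume some $F'_i$ is a convex subpath of a cycle factor $F_i$ containing three consecutive vertices $a_0\sim a_1\sim a_2$. Pick any $v\in G'$; since $G'$ is a subproduct, the three vertices obtained from $v$ by resetting the $i$-th coordinate to $a_0,a_1,a_2$ form a $2$-path $P$ of $G'$ lying in the $F_i$-layer through $v$. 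The gated hull of $P$ in $G$ is that whole $F_i$-layer, since the only gated subset of an even cycle with at least three vertices is the cycle itself; hence this layer lies in $G'$, so the $i$-th projection of $G'$ is all of $F_i$, contradicting $F'_i\subsetneq F_i$. Thus every $F'_i$ is $F_i$, an edge, or a vertex, and $G'$ is gated by Lemma~\ref{product_cycles_gated}.

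\emph{$G'$ is isometric.} Suppose not, and let $P=(p_0,\dots,p_\ell)$ be a shortest path of $G'$ between two of its vertices that is not a geodesic of $G$, with $\ell$ minimum. Standard properties of the Djokovi\'c--Winkler relation $\Theta$ give $\ell\geq 3$, that the first and last edges of $P$ lie in a common $\Theta$-class $E_f$ crossed exactly twice by $P$, and that $p_1,\dots,p_{\ell-1}$ is a geodesic of $G$ avoiding $E_f$. One then folds $P$ over $E_f$ inside $G'$. If $E_f$ is the class of an edge-factor, let $\varphi$ be the mirror automorphism exchanging the two halfspaces of $E_f$; then $\varphi(p_1)=p_0$, $\varphi(p_{\ell-1})=p_\ell$, and applying the hypothesis successively to the $2$-paths $(\varphi(p_i),p_i,p_{i+1})$, whose gated hulls are $4$-cycles containing $\varphi(p_{i+1})$, puts $\varphi(p_1),\dots,\varphi(p_{\ell-1})$ in $G'$ and yields a strictly shorter $G'$-path from $p_0$ to $p_\ell$, a contradiction. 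If $E_f$ is a $\Theta$-class of a cycle factor $C$, one argues in the same spirit, pushing the first $E_f$-crossing of $P$ one step at a time toward the last one: at a step outside $C$ the relevant $2$-path's gated hull is a $4$-cycle, while at a step inside $C$ its gated hull is a whole $C$-layer, which provides the room to continue along the cycle; exhausting the middle of $P$ again produces a shorter $G'$-path, a contradiction. Hence $G'$ is isometric.

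\emph{$G'$ is convex.} Let $x,y\in G'$; we show $I_G(x,y)\subseteq G'$. Recall that $I_G(x,y)$ is the union of all $x$--$y$ geodesics of $G$, and that in a Cartesian product of edges and even cycles any two $x$--$y$ geodesics are joined by a sequence of elementary moves of two kinds: (1) transposing two consecutive steps lying in different factors, which replaces a $2$-path of the geodesic by the opposite $2$-path of its $4$-cycle gated hull; and (2) replacing the arc of the geodesic inside an antipodal cycle factor by the complementary arc. By isometry there is an $x$--$y$ geodesic $\gamma$ contained in $G'$; starting from $\gamma$, each move keeps us inside $G'$ by the hypothesis -- a move of type (1) stays in a $4$-cycle gated hull contained in $G'$, and a move of type (2) stays in a cycle-layer gated hull, pulled into $G'$ by the $2$-paths along the arc traversed by $\gamma$. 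Therefore every $x$--$y$ geodesic lies in $G'$, so $I_G(x,y)\subseteq G'$, and combined with the Reduction this proves $G'$ is gated.

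The main obstacle is the cycle-factor case of the isometry step: the folding of $P$ must be carried out while tracking the projection to the cycle factor, and one must check that whenever a step of $P$ lies in that factor the corresponding $2$-path really has the whole cycle-layer as gated hull, and that this suffices to keep the process going; the $\Theta$-class bookkeeping there is the delicate part. Once $G'$ is isometric, the passage to convexity via connectivity of the space of geodesics of $G$, and then to gatedness via Lemma~\ref{product_cycles_gated}, are comparatively routine.
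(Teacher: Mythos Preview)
Your route is genuinely different from the paper's. The paper never separates the argument into isometric/convex/gated layers; instead it picks a maximal gated subgraph $H$ of $G'$ (necessarily a subproduct with vertex/edge/full factors by Lemma~\ref{product_cycles_gated}), takes an edge $ab$ with $a\in H$, $b\in G'\setminus H$, and then propagates the $2$-path hypothesis layer by layer to show that all of $(F'_1\cup a_1b_1)\times F'_2\times\dots\times F'_k$ lies in $G'$, contradicting maximality. This bypasses entirely the $\Theta$-class bookkeeping that you flag as delicate.

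Your Reduction and your convexity step (via connectivity of the geodesic graph under square-flips and arc-flips) are sound. The gap is exactly where you locate it: the cycle-factor case of isometry. The pushing you describe works cleanly when $E_f$ comes from an edge-factor, but when $E_f$ lies in a cyclic factor $C$ the process stalls as soon as you meet an internal edge of $P$ inside $C$: after that step the vertex $p_{j+1}$ need not be incident to any $E_f$-edge at all, so there is no $q_j$ with $q_jp_{j+1}\in E_f$ to continue with. Saying that the whole $C$-layer ``provides the room to continue along the cycle'' is not yet an argument. What actually closes the gap is a propagation lemma: once one $C$-layer lies in $G'$, every $C$-layer reachable from it along non-$C$ edges of $P$ also lies in $G'$ (each such step uses a $4$-cycle gated hull). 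With all these layers in hand you can reroute: follow the Case~A prefix $p_0,q_1,\dots,q_{j-1}$, cross inside the $C$-layer from $C$-coordinate $c_0$ to $c_\ell$ along the short arc (length $n-1$ in $C_{2n}$), and then follow the non-$C$ edges of $p_j,\dots,p_\ell$ at fixed $C$-coordinate $c_\ell$. A count gives total length $(j-1)+(n-1)+(\ell-n-j)=\ell-2$, the desired contradiction. So your plan is salvageable, but the missing piece is precisely this layer-propagation plus the length computation; the sentence you wrote does not supply it. The paper's maximal-gated-subgraph argument is shorter because it never has to track $\Theta$-classes along a bad path.
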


\begin{proof}
Let $H$ be a maximal gated subgraph of $G'$. By Lemma~\ref{product_cycles_gated} $H$ is a subproduct $F'_1\square\cdots\square F'_k$ of $G$, such that for all $1\leq i\leq k$ we either have $F'_i= F_i$ or $F'_i$ is a vertex or an edge of $F_i$. Suppose by way of contradiction that  $H\neq G'$. Since $G'$ is connected, there exists an edge $ab$ in $G'$ such that $a\in H$ and $b\in G'\backslash H$. Without loss of generality, assume that $ab$ is an edge arising from the factor $F_1$.  Thus $ab$ can be represented as $a_1b_1 \square v_2 \square \cdots \square v_k$, where $a_1b_1$ is an edge of $F_1$, $a_1\in F'_1$ and $b_1\notin F_1'$. Consider the subgraph $H'=(F'_1 \cup a_1b_1) \square F'_2 \square \cdots  \square F'_k$. We assert that $H'$ is a subgraph of $G'$. For any $i>1$, consider the layer $L'_i$ of $H'$ passing via the vertex $a$. Let $L''_i$ be the subgraph of $G$ obtained by shifting $L'_i$ along the edge $ab$ (thus both $L'_i$ and $L''_i$ are isomorphic to $F'_i$). We assert that $L''_i$ is also included in $G'$. This is trivial if $L''_i$ is a vertex, because then $L''_i=b$. Otherwise, using that the gated hull of any 2-path of $G'$ is included in $G'$, $L''_i$ is connected and $L'_i$ is  in $G'$, one can easily conclude that
$L''_i$  is also included in $G'$. Propagating this argument through the graph, we obtain that $H'$ is a subgraph of $G'$. However, either its factor $(F'_1 \cup a_1b_1)$ is an edge and $H'$ is gated by Lemma~\ref{product_cycles_gated} or it is a 2-path and thus the gated subgraph $\langle\langle F'_1 \cup a_1b_1\rangle\rangle \square F'_2 \square \cdots  \square F'_k$ is contained in $G$. This contradicts the maximality of $H$ and shows that $H=G'$. The proof is complete. 
\end{proof}

\section{Cells in hypercellular graphs and graphs from $\mathcal{S}_4$}\label{sec:cells}


Let $\mathcal{F}(Q_3^-)$  be the class of all partial cubes not containing the 3-cube minus one vertex $Q^-_3$ as a $pc$-minor.
Our subsequent goal will be to establish a cell-structure of such graphs in the following sense.
We show that for $G\in \mathcal{F}(Q_3^-)$, the convex hull of any isometric cycle $C$ of $G$ is gated in $G$ and furthermore isomorphic to a Cartesian product of edges and even cycles. Using these results we establish that a finite partial cube $G$ belongs to $\mathcal{F}(Q_3^-)$ if and only if $G$ can be obtained by gated amalgams from Cartesian
products of edges and even cycles.  Throughout this paper, we will call a subgraph $H$ of a graph $G$ a {\it cell}, 
if $H$ is convex and isomorphic to a Cartesian product of edges and even cycles. 

Some of the results of this section extend to bipartite graphs satisfying the {\it separation property} $S_4$. This is, any two disjoint convex sets $A,B$  can be separated by complementary convex sets $H',H''$, i.e., $A\subseteq H', B\subseteq H''$. By~\cite{Ch_thesis} and \cite[Theorem 7]{Ch_separation}, the $S_4$ separation property is equivalent to the Pasch axiom: for any triplet of vertices $u,v,w\in V$ and $x\in I(u,v), y\in I(u,w)$, we have $I(v,y)\cap I(w,x)\ne \emptyset$.
The bipartite graphs with $S_4$ convexity have been characterized in~\cite{Ch_thesis} and \cite[Theorem 10]{Ch_separation}: these are the partial cubes without any pc-minor among six isometric subgraphs of $Q_4$ five of which were listed in~\cite{Ch_separation} plus $Q^-_4$ -- the cube $Q_4$ minus one vertex. Note that we correct here the result in~\cite{Ch_thesis,Ch_separation}, where $Q^-_4$ was missing from the list. All these six forbidden graphs can be obtained from $Q^-_3$ by an isometric expansion and thus if we denote by $\mathcal{S}_4$ the class of bipartite graphs with $S_4$, then hypercellular graphs are in $\mathcal{S}_4$.


The {\it full subdivision} $H'$ of a graph $H$ is the graph obtained by subdividing every edge of $H$ once. The vertices of $H$ in $H'$
are called the {\it original} vertices of the full subdivision.

\begin{proposition}\label{thm:closure}
Let $G=(V,E)$ be a partial cube and $S\subseteq V$. If $\conv(S)$ is not gated, then either there exists
$f\in \Lambda$ such that $\conv(\pi_f(S))$ is not gated in $\pi_f(G)$ or there is an $m\geq 2$ such that:

\begin{itemize}
 \item[(i)] $G$ contains a full subdivision $H$ of $K_{m+1}$ as an isometric subgraph, and                                                                                                                                                                                                                                                       \item[(ii)] $H$ contains a full subdivision $H'$ of $K_m$, such that no vertex of $G$ is adjacent to all                                                                                                                                                                                                                                                    original vertices of $H'$.                                                                                                                                                                                                                                           \end{itemize}
Furthermore, if $S$ is an isometric cycle of $G$, then $m\ge 3$.
 \end{proposition}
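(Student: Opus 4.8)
The plan is to establish the dichotomy directly: assume that $\conv(S)$ is not gated while $\conv(\pi_f(S))$ \emph{is} gated in $\pi_f(G)$ for every $f\in\Lambda$, and from this produce the full subdivisions in (i)--(ii). The key preliminary is the elementary fact that a convex subgraph $Y$ of a partial cube is gated if and only if every vertex of $G$ has a \emph{unique} closest vertex in $Y$. The forward direction is clear; for the converse, if $y$ is the unique closest vertex to $x$ but $y\notin I(x,z)$ for some $z\in Y$, then some $\Theta$-class separates $x$ from $y$ and $y$ from $z$, and the neighbour of $y$ on a geodesic to $z$ -- which lies in $Y$ by convexity -- is then strictly closer to $x$, a contradiction. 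Consequently the non-gatedness of $Y:=\conv(S)$ is witnessed by a vertex $x\notin Y$ with at least two closest vertices in $Y$; I fix such an $x$ with $d:=d(x,Y)$ minimum and note $d\ge2$, since a vertex adjacent to $Y$ with two neighbours in $Y$ would lie on a geodesic between two vertices of $Y$, hence inside the convex set $Y$.

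Next I would reduce to the case $d=2$. By the separation property $S_3$ (Lemma~\ref{restriction}) there is a $\Theta$-class $f$ with $Y$ contained in one halfspace and $x$ in the complementary one; then $f$ does not cross $S$ and separates $x$ from every vertex of $Y$, so the $f$-contraction decreases $d(\pi_f(x),\pi_f(y))$ by exactly one for all $y\in Y$. Using Lemma~\ref{convex_hull} (which gives $\conv(\pi_f(S))\supseteq\pi_f(\conv(S))$) together with Lemma~\ref{contraction_gated}, I would check that if $d>2$ then $\pi_f(x)$ still has at least two closest vertices inside $\conv(\pi_f(S))$, so $\conv(\pi_f(S))$ is not gated -- contradicting our assumption -- whence $d=2$. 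I expect this to be the main obstacle: when $f$ merely osculates $\conv(S)$ the hull $\conv(\pi_f(S))$ can be strictly larger than $\pi_f(\conv(S))$, and one must verify that the additional vertices do not pull $\pi_f(x)$ closer and spoil the configuration of two closest vertices; this is careful bookkeeping rather than a new idea.

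With $d=2$, let $y_1,\dots,y_m$ ($m\ge2$) be the closest vertices of $x$ in $Y$, choose for each $i$ a common neighbour $u_i$ of $x$ and $y_i$, and for each pair $i<j$ a common neighbour $w_{ij}$ of $y_i$ and $y_j$. By minimality of $d$ the only neighbour of $u_i$ in $Y$ is $y_i$, so $u_i\notin Y$ and the $u_i$ are pairwise distinct; each $w_{ij}$ lies in $Y$ by convexity; and $d(y_i,y_j)=2$ because otherwise $x\in I(y_i,y_j)\subseteq Y$. I would then verify that $x,y_1,\dots,y_m,u_1,\dots,u_m$ and the $w_{ij}$ are pairwise distinct and induce an isometric full subdivision $H$ of $K_{m+1}$ with branch vertices $x,y_1,\dots,y_m$. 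This comes down to the distance identities $d(v_p,v_q)=2$, $d(v_p,w_{pq})=1$, $d(v_p,w_{qr})=3$, $d(w_{pq},w_{pr})=2$, $d(w_{pq},w_{rs})=4$ for distinct indices $p,q,r,s$, which follow from convexity of $Y$, from $d(x,y_i)=2$, and -- to rule out degeneracies such as a vertex adjacent to three of the $y_i$'s -- once more from the assumption that no contraction detects non-gatedness. Taking $H'$ to be the full subdivision of $K_m$ on $y_1,\dots,y_m$ (which sits inside $Y$) gives (i), and for (ii) I would show that a vertex adjacent to all of $y_1,\dots,y_m$ would let us either shrink the configuration or, after contracting the $\Theta$-class of one of its edges to a $y_i$, violate the assumption that no contraction detects non-gatedness; hence no such vertex exists outside $H'$.

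Finally, for an isometric cycle $S=C$ I would rule out $m=2$. In that case the construction produces an isometric $6$-cycle $H$ together with a convex $2$-path $\{y_1,w_{12},y_2\}\subseteq\conv(C)$; since every contraction of an isometric cycle is again an isometric cycle or a single edge, one can then exhibit a $\Theta$-class whose contraction keeps $\conv(\pi_f(C))$ non-gated, or argue directly that $\conv(C)$ is gated -- either way contradicting the standing assumptions. Hence $m\ge3$ whenever $S$ is an isometric cycle.
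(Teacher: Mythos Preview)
Your overall set-up (minimal non-gated-witnessing vertex $x$, its metric projection $\{y_1,\dots,y_m\}$, the neighbours $u_i$ with gate $y_i$) matches the paper's, and your ``unique closest vertex'' characterisation of gatedness is correct. The argument goes wrong, however, at the reduction to $d=2$.

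You contract a $\Theta$-class $f$ separating $x$ from $Y$ and claim that, if $d>2$, the image $\pi_f(x)$ still has at least two closest vertices in $\conv(\pi_f(S))$, so the contracted hull is not gated --- contradicting the standing hypothesis. But the standing hypothesis says precisely that $\conv(\pi_f(S))$ \emph{is} gated, so $\pi_f(x)$ has a \emph{unique} closest vertex, namely its gate $x'$. Nothing prevents $x'$ from lying at distance $d-2$ from $\pi_f(x)$ (strictly closer than all the $\pi_f(y_i)$), and indeed this is exactly what happens. This is not bookkeeping about the hull growing; it is the heart of the proof. The paper does the opposite of what you propose: it \emph{uses} the gate $x'$ in the contraction, notes that $d_{G'}(x',\pi_f(y_i))=1$ for every $i$, and lifts $x'$ back to a vertex $x\in G$ which is either adjacent to every $y_i$ (then $x\in Y$ is strictly closer to the witness, a contradiction) or at distance $2$ from every $y_i$ (then $x$ --- not the original witness --- provides the extra branch vertex of the full subdivision). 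So one does not always get $d=2$ for the chosen witness; one may instead get the subdivision directly via the lifted gate.

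Your treatment of the ``furthermore'' clause has a related gap. For an isometric cycle $S=C$ you need $m\ge 3$, and your sketch (``exhibit a $\Theta$-class whose contraction keeps $\conv(\pi_f(C))$ non-gated'') again runs against the hypothesis that every contraction is gated. The paper's route is different and requires an additional ingredient you do not have: it first proves (by contracting classes that \emph{cross} $X$, not classes separating the witness from $X$) that \emph{every} $\Theta$-class crossing $X$ coincides with one of the classes $E_{g_i}$ of the edges $xu_i$. Since an isometric cycle of length $\ge 6$ contributes at least three classes crossing $X=\conv(C)$, this forces $m\ge 3$. Without this ``Claim~1'' step, the bound $m\ge 3$ does not follow.
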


\begin{proof}
Suppose that $G$ contains a subset $S$ such that $X:=\conv(S)$ is not gated. We can assume that $G$ is selected in a such a way that for any element $f\in \Lambda$, the convex hull of $\pi_f(S)$ is gated in $\pi_f(G)$. Since any $f$-contraction of
an isometric cycle $C$ of size at least $6$ is an isometric cycle $\pi_f(C)$ of $\pi_f(G)$ , this assumption is also valid for proving the claim in the case that $S=C$, because $4$-cycles are always gated.  

Let $v$ be a vertex of $G$ that has no gate in $X$ and is as close as possible to $X$, where $d_G(v,X)=\min \{ d_G(v,z): z\in X\}$ is the distance from $v$ to $X$. Let $P_v:=\{ x\in X: d_G(v,x)=d_G(v,X)\}$ be the \emph{metric projection} of $v$ to $X$. Let also $Q_v:=\{ x\in X: I(v,x)\cap X=\{ x\}\}.$ Obviously, $P_v\subseteq Q_v$. Notice that $u\in V$ has a gate in $X$  if and only if $Q_u=P_u$ and $P_u$ consists of a single vertex. We will denote the vertices of $P_v$ by $x_1,\ldots,x_m$. For any vertex $x_i\in P_v$, let $\gamma_i$ be a shortest path from $v$ to $x_i$. Let $v_i$ be the neighbor of $v$ in $\gamma_i$. From the choice of $v$ we conclude that each vertex $v_i$ has a gate in $X$. From the definition of $P_v$ it follows that $x_i$ is the gate of $v_i$ in $X$. Notice that this implies that the vertices $v_1,\ldots,v_m$ are pairwise distinct. Since $x_i\in\gamma_i$ we have $x_i\in W(v_i,v)$. Furthermore, for any $y\in Q_v\setminus \{x_i\}$ we have $y\in W(v,v_i)$ since otherwise $x_i, y\in Q_{v_i}$, which contradicts that $v_i$ has a gate in $X$. Denote the equivalence classes of $\Theta$ containing the edges $vv_1,\ldots,vv_m$ by $E_{g_1},\ldots, E_{g_m}$, respectively. Then each $E_{g_1},\ldots,E_{g_m}$ crosses $X$.
For any edge $zz'$ of $\gamma_i$ comprised between $v_i$ and $x_i$ with $z$ closer to $v_i$ than $z'$, we have $X\subseteq W(z',z)$ and $v\in W(z,z')$. Thus any such edge $zz'$ belongs to an equivalence class $E_f$ which separates $X$ from $v$. Therefore such $E_f$ crosses any shortest path between $v$ and a vertex $y\in Q_v$. Denote the set of all such $f\in \Lambda$ by $A$. Notice that $d_G(v,X)=d_G(v,x_i)=|A|+1$ for any $x_i\in P_v$.

We continue with a claim:

\begin{claim} Each equivalence class $E_e$ crossing $X$ coincides with one of the equivalence classes $E_{g_i}, i=1,\ldots, m$.
\end{claim}

\begin{proof} Assume that $vv_1 \notin E_e$. Denote $X':=\conv(\pi_e(S))$ in $\pi_e(G)$. Let also $v':=\pi_e(v)$ and $x_1':=\pi_e(x_1)$. By Lemma~\ref{convex_hull}, $X'$ coincides with $\pi_e(X)$. Let $x' \in X'$ be the gate of $v'$ in $X'$. Since for each $f \in A$, $E_f$  separates $v$ and $X$, $E_f$ also separates $v'$ and $x'$. Thus $d_{G'}(v',X')\geq |A|$. On the other hand, since $vv_1 \notin E_e$, $d_{G'}(v',x_1')=|A|+1$. But $x_1'$ cannot be the gate in $X'$, thus $d_{G'}(v',x')=d_{G'}(v',X')=|A|$. Let $y$ be such that $x'=\pi_e(y)$. Since $E_e$ crosses $X$ and $X'=\pi_e(X)$, we have $y\in X$. Applying the expansion, the distance between two vertices can only increase by one, thus $d_G(v,y)\leq |A|+1$. On the other hand, it holds that $d_G(v,y)\geq |A|+1$ since $d_G(v,X)= |A|+1$. Thus $y\in P_v,$ say $y=x_i$, and every shortest  $(v,y)$-path traverses  an edge in $E_e$. Since $E_e$ crosses $X$, we have $e\notin A$, whence $E_e=E_{g_i}$.
\end{proof}

First we prove that $P_v=Q_v$. Suppose by way of contradiction that $z\in Q_v\setminus P_v$. Then $d_G(v,z)>d_G(v,X)=|A|+1.$ For any $f\in A$, $E_f$ separates $v$ from $z$. On the other hand, since $z\in W(v,v_i)$, neither of the equivalence classes $E_{g_i}, i=1,\ldots,m$, separates $v$ from $z$. Hence there exists an equivalence class $E_e$ with $e\notin A$ separating $v$ from $z$. Since $e\notin A$, $E_e$ does not separate $v$ from any vertex $x_i$ of $P_v$. Hence $E_e$ crosses $X$, contrary to Claim 1. This shows that $P_v=Q_v$.

Now we will prove that $d_G(x_i,x_j)=2$ for any two vertices $x_i,x_j\in P_v$, which will later yield the existence of $H'$ as claimed. Indeed, since $x_i$ is the gate of $v_i$ in $X$ and $x_j$ is the gate of $v_j$ in $X$, $$d_G(v_i,x_j)=d_G(v_i,x_i)+d_G(x_i,x_j)\le d_G(v_i,v_j)+d_G(v_j,x_j)=2+d_G(v_j,x_j).$$
Analogously, $d_G(v_j,x_i)=d_G(v_j,x_j)+d_G(x_i,x_j)\le 2+d_G(v_i,x_i)$. Summing up the two inequalities we deduce that $d_G(x_i,x_j)\le 2$. Since $G$ is bipartite, $x_i$ and $x_j$ cannot be adjacent, thus $d_G(x_i,x_j)=2$.

Finally, we will show that $d_G(v,X)=2$, yielding $H$ as claimed. Suppose by way of contradiction that $d_G(v,X)\ge 3$. Pick any $f\in A$.  Consider the graph $G':=\pi_f(G)$ and denote the convex hull of the set $S':=\pi_f(S)$ in $G'$ by $X'$. By Lemma~\ref{convex_hull}, $\pi_f(X)\subseteq X'$. Since any class $E_{f'}$ with $f'\in A$, separates $v$ from $X$ (and therefore from $S$) in $G$, any equivalence class $E_{f'}$ with $f'\in A\setminus \{ f\}$ separates $v':=\pi_f(v)$ from $S'$ in $G'$. Therefore, $X'$ is contained in the intersection of the halfspaces defined by
$f'\in A\setminus \{ f\}$ that contain $S'$. This implies that $d_{G'}(v',X')\ge |A|-1$. On the other hand, since $d_{G'}(v',\pi_f(x_i))=d_G(v,x_i)-1$ for any $i=1,\ldots,m$ and $\pi_f(x_i)\in X'$,
we conclude that $d_{G'}(v',X')\le d_G(v,X)-1=|A|+1-1=|A|$. From the choice of the graph $G$, in the graph $G'$ the vertex $v'$ must have a gate $x'$ in the set $X'$. Since $d_{G'}(v',\pi_f(x_i))=d_G(v,x_i)-1$, $i=1,\ldots,m$,  
the vertex $x'$ cannot be one of the vertices of $\pi_f(P_v)$. Thus $d_{G'}(v',x')=d_{G'}(v',X')=|A|-1=d_G(v,X)-2$.

Let $x'_i:=\pi_f(x_i),$ $i=1,\ldots, m$. Since $x'$ is the gate of $v'$ in $X'$ and $x'_i\in X'$, we have $d_{G'}(v',x'_i)=d_{G'}(v',x')+d_{G'}(x',x'_i)$. On the other hand, since $d_{G'}(v',x'_i)=d_G(v,X)-1$ and $d_{G'}(v',x')=d_{G}(v,X)-2$, this implies that $d_{G'}(x',x'_i)=1$ for any $i=1,\ldots,m$. Since $G'$ is obtained
by $f$-contraction of a partial cube $G$, we conclude that $G$ contains a vertex $x$ such that $\pi_f(x)=x'$ and for any $i=1,\ldots, m$ either $d_G(x,x_i)=1$ or $d_G(x,x_i)=2$.
Since $d_G(x_i,x_j)=2$ and $G$ is bipartite, either $x$ is adjacent to all $x_i$, $i=1,\ldots, m$ or $d_G(x,x_i)=2$ for all $i=1,\ldots, m$. First assume $m\geq 2$. In the second case, the vertices
$x_1,\ldots, x_m$ and $x$ together with their common neighbors define the required full subdivision $H$ of $K_{m+1}$. In the first case, we conclude that
$x\in I(x_i,x_j)\subset X$ and since $d_G(v,x)=d_G(v,x_i)-1$, we obtain a contradiction with the choice of $x_i$ from the metric projection $P_v$ of $v$ on $X$. So, assume that $m=1$. Since $P_v=Q_v$, this implies that $X$ is gated, contrary to our assumption.

Finally suppose that $S$ is an isometric cycle $C$ of $G$ whose convex hull $X$ is not gated.
Then the length of $C$ is at least 6 (if $C$ is a 4-cycle, then $X=C$ is gated). Hence there exist at least three different equivalence classes $E_{e_1},E_{e_2},E_{e_3}$ crossing $C$ and $X$. By Claim 1, each of these classes coincides with a class $E_{g_i}, i=1,\ldots,m$. Hence $m\ge 3$.
\end{proof}

\begin{proposition} \label{convex_hull_cycle} Let $C$ be an isometric cycle of $G\in \mathcal{S}_4$. Then the convex hull $\conv(C)$ of $C$ in $G$ is gated.
\end{proposition}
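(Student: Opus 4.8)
The plan is to argue by contradiction using Proposition~\ref{thm:closure}, which is the main tool available. Suppose that $C$ is an isometric cycle of some $G \in \mathcal{S}_4$ whose convex hull $X = \conv(C)$ is not gated. Since $\mathcal{S}_4$ is pc-minor-closed (it is one of the classes $\mathcal{F}(T_1,\dots,T_m)$) and since any contraction of an isometric cycle of length at least $6$ is again an isometric cycle (while $4$-cycles are always gated), I may pass to a minor and assume that $G$ is minor-minimal with this property: for every $f \in \Lambda$, the convex hull of $\pi_f(C)$ is gated in $\pi_f(G)$. Proposition~\ref{thm:closure} then applies with $S = C$ and yields an integer $m \ge 3$ together with an isometric full subdivision $H$ of $K_{m+1}$ in $G$, such that $H$ contains a full subdivision $H'$ of $K_m$ no vertex of which is adjacent to all original vertices of $H'$.

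The heart of the argument is to derive a contradiction with $G \in \mathcal{S}_4$ from the existence of this subdivided-$K_{m+1}$ configuration. The key observation is that the full subdivision of $K_4$ already contains $Q_3^-$ as a pc-minor — indeed, contracting a suitable subdivision edge of each of the three "long" paths of $SK_4$ emanating from one chosen original vertex collapses it onto $Q_3^-$, which is exactly one of the forbidden minors witnessing failure of $S_4$ (recall from the introduction that $Q_3^-$ is a pc-minor of every $T_i$ in Figure~\ref{fig:forbidden3}, and that all those $T_i$ arise from $Q_3^-$ by isometric expansion). So for $m \ge 3$ the isometric subgraph $H$ (a full subdivision of $K_{m+1}$, $m+1 \ge 4$) contains a full subdivision of $K_4$ as an isometric subgraph, hence a $Q_3^-$ pc-minor, contradicting $G \in \mathcal{S}_4 \subseteq \mathcal{F}(Q_3^-$-expansions$)$. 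More carefully: I first restrict $G$ to the convex hull of the vertex set of an isometric copy of $SK_4$ sitting inside $H$ (using that restrictions are convex subgraphs and stay in $\mathcal{S}_4$), then apply the appropriate sequence of contractions; the fact that $SK_4$ is isometric guarantees the relevant distances are controlled so that the contractions land on $Q_3^-$ rather than on something smaller.

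The main obstacle I anticipate is the bookkeeping in this last step: verifying that inside an isometric full subdivision of $K_4$ one can genuinely realize $Q_3^-$ (or one of the six forbidden $\mathcal{S}_4$-minors) as a pc-minor, keeping careful track of which $\Theta$-classes survive each contraction and checking that convexity/isometry is preserved at every stage (here Lemmas~\ref{commut_contraction}, \ref{commut_rest_contaction}, \ref{contraction_convex}, and \ref{convex_hull} do the routine work). A clean way to package this, avoiding case analysis on $m$, is: (a) show any isometric full subdivision of $K_{m+1}$ with $m \ge 3$ retracts by contractions to an isometric full subdivision of $K_4$; (b) show an isometric full subdivision of $K_4$ has a $Q_3^-$ pc-minor; (c) conclude the forbidden minor is present, contradicting $G \in \mathcal{S}_4$. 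Since Proposition~\ref{thm:closure}'s other alternative (a coordinate $f$ with $\conv(\pi_f(C))$ non-gated) was excluded by minor-minimality, this contradiction shows $\conv(C)$ must be gated.
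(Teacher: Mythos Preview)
Your plan has a genuine logical error at its core. You claim that finding a $Q_3^-$ pc-minor in $G$ contradicts $G\in\mathcal{S}_4$, reasoning that ``$Q_3^-$ is a pc-minor of every $T_i$'' from the forbidden list. But this implication goes the wrong way: the fact that $Q_3^-$ is a pc-minor of each $T_i$ only yields $\mathcal{F}(Q_3^-)\subseteq\mathcal{S}_4$, not the reverse. The minimal forbidden pc-minors for $\mathcal{S}_4$ are the six graphs $T_1,\ldots,T_6$ in $Q_4$, and $Q_3^-$ itself (which lives in $Q_3$ and is a proper contraction of each $T_i$) is \emph{not} among them. In fact $Q_3^-\in\mathcal{S}_4$, so exhibiting $Q_3^-$ as a pc-minor of $G$ proves nothing. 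To salvage the minor-based route you would have to produce one of the actual $T_i$ as a pc-minor of the convex hull of your $SK_4$ configuration, and you have not indicated how to do that.

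The paper avoids this entirely by arguing directly with the separation property rather than with forbidden minors. From Proposition~\ref{thm:closure} it extracts the concrete $10$-vertex configuration $T=\{v,x_1,x_2,x_3,v_1,v_2,v_3,z_1,z_2,z_3\}$ (all pairwise distances among the four ``original'' vertices equal to $2$), observes that the $6$-cycle $(x_1,z_3,x_2,z_1,x_3,z_2)$ inside $X$ cannot have a $3$-cube as its convex hull (by condition~(ii) of Proposition~\ref{thm:closure} no vertex is adjacent to all of $x_1,x_2,x_3$), and hence some $2$-path, say $(x_1,z_3,x_2)$, is convex. It then checks by hand that the two convex sets $I(x_1,x_2)$ and $I(v,x_3)$ are disjoint but cannot be separated by complementary halfspaces, directly violating $S_4$. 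This is a short explicit computation with the three $\Theta$-classes $E_{g_1},E_{g_2},E_{g_3}$ already identified in the proof of Proposition~\ref{thm:closure}, and it sidesteps any need to identify a specific forbidden minor.
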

\begin{proof} The class $\mathcal{S}_4$ is closed by taking pc-minors~\cite[Theorem 10]{Ch_separation}.
 Therefore we can suppose that $G$ is maximally contracted graph from $\mathcal{S}_4$ containing an isometric cycle $C$ with $X:=\conv(C)$ not gated. By Proposition~\ref{thm:closure}, $X$ contains 3 vertices $x_1,x_2,x_3$ at pairwise distance 2 and a vertex $v$ at distance 2 from each of the vertices $x_1,x_2,x_3$. Let $v_1,v_2,v_3$ be the common neighbors of $v$ and $x_1,x_2,x_3$, respectively. Let also $z_i$ be a common neighbor of $x_j$ and $x_k$ for all $\{i,j,k\}=\{1,2,3\}$. By Proposition~\ref{thm:closure}, the set  $T=\{ v,x_1,x_2,x_3,v_1,v_2,v_3,z_1,z_2,z_3\}$ defines four 6-cycles which are isometric cycles of $G$. The convex hull in $G$ of each of these 6-cycles is a subgraph of a 3-cube. On the other hand, $T$ is contained in each of the three intervals $I(v_i,z_i)$. Since $d_G(v_i,z_i)=4$, the convex hull of $T$ is a subgraph of a 4-cube. The convex hull of the 6-cycle $C_1=(x_1,z_3,x_2,z_1,x_3,z_2)$ cannot be a 3-cube.
 We conclude that one of the 2-paths $(x_1,z_3,x_2),(x_2,z_1,x_3),(x_3,z_2,x_1)$, say $(x_1,z_3,x_2)$, is a convex path of $G$. Consider the convex sets $I(x_1,x_2)$ and $I(v,x_3)$. They are disjoint, otherwise $z_3$ must be adjacent to $v$ and $x_3$, which is impossible. Let $H,H'$ be two complementary halfspaces separating $I(x_1,x_2)$ and $I(v,x_3)$, say $I(x_1,x_2)\subset H$ and $I(v,x_3)\subset H'$. Then necessarily $z_1,z_2\in H'$, otherwise, if say $z_1\in H$, then $x_3\in C_1\subset I(x_1,z_1)\subset H$, a contradiction. But then $x_1\in I(z_2,v)\subset H'$ and $x_2\in I(z_1,v)\subset H'$, which is impossible. This final contradiction shows that $I(x_1,x_2)$ and $I(v,x_3)$ cannot be separated, i.e., $G\notin\mathcal{S}_4$. Thus, the convex hull of any isometric cycle $C$ of a partial cube $G$ from $\mathcal{S}_4$ is gated.
\end{proof}


Analogously to~\cite{BaCh_cellular}, we will compare the Djokovi\'{c}-Winkler relation $\Theta$ to the following relation $\Psi^*$. First say that two edges $xy$ and $x'y'$ of a bipartite graph $G$ are in relation $\Psi$ if they are either equal or are opposite edges of some convex cycle $C$ of $G$. Then let $\Psi^*$ be the transitive closure of $\Psi$. Let ${\mathcal C}(G)$ denote the set of all convex cycles of $G$ and let ${\mathbf C}(G)$ be the 2-dimensional cell complex whose 2-cells are obtained by replacing each convex cycle $C$ of length $2j$ of $G$ by a regular Euclidean polygon $[C]$ with $2j$ sides.

Recall that a cell complex $\bf X$ is {\it simply connected} if it is connected and if every continuous
map of the 1-dimensional sphere $S^1$ into $\bf X$ can
be extended to a continuous mapping of the disk $D^2$ with boundary
$S^1$ into $\bf X$. Note that a connected complex  $\bf X$ is simply connected  if and only if  every continuous map from $S^1$ to the 1-skeleton of $\bf X$ is
null-homotopic.

\begin{lemma} \label{partial-cube-convex-cycle} If $G$ is a partial cube, then the relations $\Theta$ and $\Psi^*$ coincide. In particular, ${\mathbf C}(G)$ is simply connected.
\end{lemma}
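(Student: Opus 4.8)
The plan is to show both inclusions $\Psi^*\subseteq\Theta$ and $\Theta\subseteq\Psi^*$. The first inclusion is immediate: if two edges $xy$ and $x'y'$ are opposite edges of a convex cycle $C$, then $C$ is isometric, so $xy\,\Theta\,x'y'$ (opposite edges of an isometric even cycle are always in relation $\Theta$, since the cycle embeds isometrically in the hypercube and antipodal edges of a cycle use the same coordinate). Since $\Theta$ is transitive on the edges of a partial cube, it follows that $\Psi\subseteq\Theta$ and hence $\Psi^*\subseteq\Theta$.

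For the reverse inclusion $\Theta\subseteq\Psi^*$, I would argue as follows. Suppose $e=uv$ and $e'=u'v'$ are two edges with $e\,\Theta\,e'$; I want to connect them by a chain of $\Psi$-steps. The standard approach is to take a shortest path $P$ in $G$ from an endpoint of $e$ to an endpoint of $e'$ staying inside one of the halfspaces determined by $E_f$ (where $f$ is the common coordinate of $e$ and $e'$), say from $u$ to $u'$ in $H$, and to ``transport'' the edge $e$ along $P$ one step at a time. Concretely, if $P=(u=p_0,p_1,\dots,p_k=u')$ and $q_i$ denotes the $\Theta_f$-partner of $p_i$ (the neighbor of $p_i$ across $E_f$), then each pair $p_ip_{i+1}$ together with $q_iq_{i+1}$ forms a $4$-cycle $p_ip_{i+1}q_{i+1}q_i$; a $4$-cycle is always convex in a bipartite graph, so $p_iq_i\,\Psi\,p_{i+1}q_{i+1}$. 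Chaining these gives $e=p_0q_0\,\Psi^*\,p_kq_k=e'$, provided one checks that $q_0=v$, $q_k=v'$ and that each $q_iq_{i+1}$ is genuinely an edge of $G$. The last point is exactly where one needs the hypercube structure: since $p_i,p_{i+1}$ differ in one coordinate and each differs from its partner only in coordinate $f$, the vertices $q_i,q_{i+1}$ differ in that same single coordinate, and since $p_i\,\Theta_f\,q_i$ means $q_i\in W$-complement, Djokovi\'c's convexity of halfspaces forces $q_i$ to exist in $G$ and $q_iq_{i+1}$ to be an edge. I expect the main obstacle to be precisely this bookkeeping — ensuring the ``shifted'' path lies entirely in $G$ — and the cleanest way to handle it is to invoke that $H=W(v,u)$ is convex (so geodesics stay inside), hence the partner path is also a geodesic inside the opposite halfspace.

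Finally, for the ``in particular'' clause: I would show ${\mathbf C}(G)$ is simply connected by a standard argument combining connectedness with the coincidence $\Theta=\Psi^*$. Take any closed walk $\gamma$ in the $1$-skeleton of ${\mathbf C}(G)$; I want to show it is null-homotopic, equivalently that it can be contracted using the $2$-cells, which are the convex cycles. It suffices to show every closed walk has even length with respect to each $\Theta$-class and can be reduced. One clean route: since $G$ is a partial cube, a closed walk crosses each $\Theta$-class an even number of times; using that $\Theta=\Psi^*$, one can pair up two consecutive crossings of the same class and, by a sequence of convex-cycle ``flips'' (each realizing a single $\Psi$-move as a homotopy across a $2$-cell $[C]$), cancel them, inductively shrinking $\gamma$ to a point. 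The key technical input is again $\Theta=\Psi^*$, which guarantees that any two $\Theta$-equivalent edges are joined by convex cycles, so the homotopies realizing the $\Psi$-moves live in ${\mathbf C}(G)$ by construction. I do not anticipate a serious obstacle here beyond organizing the induction on the length of $\gamma$ (or on the total number of $\Theta$-crossings).
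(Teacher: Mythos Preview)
Your argument for $\Theta\subseteq\Psi^*$ has a genuine gap. You propose to take a shortest path $P=(u=p_0,p_1,\dots,p_k=u')$ inside the halfspace $W(u,v)$ and then, for each $p_i$, let $q_i$ be ``the $\Theta_f$-partner of $p_i$'', asserting that Djokovi\'c's convexity of halfspaces forces $q_i$ to exist in $G$. This is false: in a general partial cube, an interior vertex of a halfspace need not have any neighbor across $E_f$. The simplest counterexample is the $6$-cycle $C_6$: for the $\Theta$-class of the edge $01$ (with opposite edge $34$), the halfspace $W(0,1)=\{0,5,4\}$ contains the vertex $5$, whose only neighbors are $0$ and $4$, both in $W(0,1)$. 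So $q_1$ simply does not exist, and your chain of $4$-cycles breaks. More conceptually, if your transport argument worked it would show that the transitive closure of the \emph{square} relation (opposite edges of $4$-cycles) already coincides with $\Theta$; but this characterizes a very restricted subclass of partial cubes (in $C_6$ there are no $4$-cycles at all), so the argument cannot go through as written.

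The paper does not give a self-contained proof of $\Theta=\Psi^*$ but cites Klav\v{z}ar--Shpectorov; the standard proofs proceed by a minimal-counterexample argument on a pair of $\Theta$-related edges at minimum distance that are not $\Psi^*$-related, producing a convex cycle containing both, rather than by transporting through $4$-cycles. Your plan for the simple connectivity of ${\mathbf C}(G)$ is essentially the same as the paper's: both pick a $\Theta$-class $E_f$ crossing the cycle, use the first assertion to connect the two crossing edges by a chain of convex cycles, split the original cycle into two cycles each crossed by fewer $\Theta$-classes, and induct. That part is fine once the first assertion is secured.
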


\begin{proof} The proof of the first assertion is the content of \cite[Proposition 5.1]{KlSh} (it also follows by adapting the proof of ~\cite[Lemma 1]{BaCh_cellular}). To prove that ${\mathbf C}(G)$ is simply connected it suffices to show that any cycle $C$ of $G$ is contractible in ${\mathbf C}(G)$. Let $k(C)$ denote the number of equivalence classes of $\Theta$ crossing $C$. By induction on $k(C)$, we will prove that any cycle $C$ of $G$ is contractible to any of its vertices $w\in C$. Let $E_f$ be an equivalence class of $\Theta$ crossing $C$ and let $uv$ and $u'v'$ be two edges of $C$ from $\Theta$. By the first assertion, there exists a collection $C_1,C_2,\ldots,C_m$ of convex cycles and a collection of edges $e_0=uv,e_1,\ldots,e_{m-1},e_{m}=u'v'\in E_f$ such that $e_i\in C_i\cap C_{i+1}$ for any $i=1,\ldots,m-1$.  Suppose that $u,u'\in H^+_f$ and $v,v'\in H^-_f$. Let $P^+_i:=C_i\cap H^+_f, P^-_i:=C_i\cap H^-_f$ for $i=1,\ldots, m$. Let $P'$ be the path between $u$ and $u'$ which is the union of the paths $P^+_1,\ldots,P^+_m$. Analogously, let $P''$ be the path between $v$ and $v'$ which is the union of the paths $P^-_1,\ldots,P^-_m$. Finally, let $P^+:=C\cap H^+_f$ and $P^-:=C\cap H^-_f$, and suppose without loss of generality that the vertex $w$ belongs to the path $P^+$. Let $C'$ be the cycle which is the union of the paths $P'$ and $P^+$ and let $C''$ be the cycle which is the union of the paths $P^-$ and $P''$. Since $G$ is a partial cube, any equivalence class of $\Theta$ crossing $P'$
or $P''$ also crosses the paths $P^+$ and $P^-$. On the other hand, $E_f$ does not cross the cycles $C'$ and $C''$. This implies that  $k(C')<k(C)$ and $k(C'')<k(C)$. By induction assumption, $C''$ can be contracted in ${\mathbf C}(G)$ to any of its vertices, in particular to the vertex $v$. On the other hand, the union $\bigcup_{i=1} [C_i]$ can be contracted to the path $P'$ in a such a way that each edge $e_i$ is contracted to its end from $P''$. In particular, $v$ is mapped to $u$. Finally, by induction assumption, $C'$ can be contracted to the vertex $w$. Composing the three contractions ($C''$ to $v$,  $\bigcup_{i=1} [C_i]$ to $P'$, and $C'$ to $w$), we obtain a contraction of $C$ to $w$.
\end{proof}

Let $C$ be an even cycle of length  $2n$. Let $G_1$ be a subgraph of $C$ isomorphic to a path of length $\ell$ at least 2 and at most $n$. Let $\mathrm{Ex}_{\ell}(C)$ be an expansion of $C$ with respect to $G_1$ and $G_2=C$. We will call the graphs $\mathrm{Ex}_{\ell}(C)$ \emph{half-expanded cycles}. 

\begin{proposition} \label{product_of_cycles_expansion} Let $G'$ be a Cartesian product of edges and even cycles and let $G$ be an isometric expansion with respect to the subgraphs $G'_1$ and $G'_2$ of $G'$, such that $G$ contains no convex subgraph isomorphic to a half-expanded cycle. Then either $G$ is a Cartesian product of edges and even cycles or one of $G'_1, G'_2$ coincides with $G'$  while the other  is isomorphic to a subproduct of edges and cycles.  
\end{proposition}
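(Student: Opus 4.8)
The plan is to read off the structure of the expansion from its effect on the convex cycles of the product $G'$. Write $G'=F_1\times\cdots\times F_k$ with each $F_i$ an edge or an even cycle, and identify $G'$ with $\pi_f(G)$, where $E_f$ is the $\Theta$-class of $G$ created by the expansion; then $G[H^+_f]\cong G'_1$ and $G[H^-_f]\cong G'_2$ as induced subgraphs of $G'$, and $\pi_f(E_f)=G'_0:=G'_1\cap G'_2$. By Lemma~\ref{product_cycles_gated}, ``$G'_i$ is isomorphic to a subproduct of edges and cycles'' is equivalent to ``$G'_i$ is a gated subgraph of $G'$''. If $G'_0=G'$ then $G'_1=G'_2=G'$ and $G\cong G'\times K_2$ is a Cartesian product of edges and even cycles, so we assume henceforth $G'_0\subsetneq G'$.

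The main observation is the following. Let $C'$ be any convex cycle of $G'$; by Lemma~\ref{product_cycles_gated} it is either a cyclic-factor layer or a $4$-cycle of the form (an edge of $F_i$)$\times$(an edge of $F_j$). Since $C'$ is convex in $G'$ while $G'_1,G'_2$ are isometric in $G'$, each of the traces $C'\cap G'_1$, $C'\cap G'_2$ is an isometric subgraph of the cycle $C'$ (a geodesic inside $G'_1$ between two of its vertices stays in the interval, hence in $C'$), hence each is either a sub-path of $C'$ of length at most $|C'|/2$ or all of $C'$; moreover their union is $C'$ and $C'$ carries no edge joining $(C'\cap G'_1)\setminus G'_0$ to $(C'\cap G'_2)\setminus G'_0$. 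When $C'\cap G'_0\ne\emptyset$, the subgraph of $G$ spanned by the copies of the vertices of $C'$ is exactly the isometric expansion of the even cycle $C'$ along $C'\cap G'_1$ and $C'\cap G'_2$, and by Lemma~\ref{convex_expansion} it is convex in $G$. Hence, if for some convex cycle $C'$ one trace equals $C'$ while the other is a path of length $\ell$ with $2\le\ell\le|C'|/2$, then $G$ contains a convex half-expanded cycle, a contradiction. We conclude: \emph{for every convex cycle $C'$ of $G'$ with $C'\subseteq G'_1$ or $C'\subseteq G'_2$, the trace $C'\cap G'_0$ is empty, a vertex, an edge, or all of $C'$.}

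Now suppose one of $G'_1,G'_2$ — say $G'_2$ — equals $G'$; then $G'_1=G'_0\subsetneq G'$, so every convex cycle $C'$ satisfies $C'\subseteq G'_2$, and the previous paragraph shows $C'\cap G'_0$ is empty, a vertex, an edge, or all of $C'$. I claim this forces $G'_0$ to be gated. Since $G'_0=G'_1$ is isometric it is connected, so by Lemma~\ref{lem:subcells} it suffices to check that the gated hull in $G'$ of every $2$-path $P\subseteq G'_0$ lies in $G'_0$. But in the product $G'$ this gated hull is a cyclic-factor layer if $P$ runs straight inside one factor, and a $4$-cycle if $P$ bends across two factors; in either case it is a convex cycle $C'$ of $G'$ containing the three vertices of $P$. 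Then $C'\cap G'_0$ has at least three vertices, hence equals $C'$, i.e.\ $\langle\langle P\rangle\rangle=C'\subseteq G'_0$. Thus $G'_0$ is gated, and the second alternative of the proposition holds.

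Finally, suppose $G'_1\ne G'$ and $G'_2\ne G'$. Here I would argue that the expansion is concentrated in a single cyclic factor: using the absence of edges between $G'_1\setminus G'_0$ and $G'_2\setminus G'_0$, together with the trace restrictions above, a propagation argument across factor-layers (an edge joining a layer contained in $G'_1\setminus G'_0$ to an adjacent layer contained in $G'_2\setminus G'_0$ is forbidden) shows that there is an index $j$ for which $F_j$ is an even cycle and $G'_1=P_1\times\prod_{i\ne j}F_i$, $G'_2=P_2\times\prod_{i\ne j}F_i$, where $P_1,P_2$ are complementary isometric sub-paths of $F_j$ of length $|F_j|/2$ meeting exactly in two antipodal vertices; an edge-factor cannot play this role, since its matching edges would at once produce a forbidden private-to-private edge unless one of $G'_1,G'_2$ is all of $G'$. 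The isometric expansion of an even cycle along two such halves is again an even cycle, of length $|F_j|+2$, so $G$ is isomorphic to that cycle times $\prod_{i\ne j}F_i$, i.e.\ a Cartesian product of edges and even cycles. The crux — and where I expect the real work — is precisely this last step: extracting, from the no-private-edge condition and the behaviour of the traces of convex cycles, the rigid ``two halves of one cyclic factor'' shape of $G'_1$ and $G'_2$ (full in every other direction). It is convenient to record the base case $k=1$ separately: for $G'$ a single edge the isometric expansions are $K_2$, $P_3$, and $C_4$; for $G'$ an even cycle $C$ they are $C$ with a pendant vertex or edge, $C\times K_2$, the cycle of length $|C|+2$ obtained by expanding along two halves, and the half-expanded cycles (the last excluded by hypothesis) — in each case matching the statement.
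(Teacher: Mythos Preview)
Your first three steps are correct and the third is a genuinely nice shortcut: once one of the sides equals $G'$, reducing the structure of $G'_0$ to Lemma~\ref{lem:subcells} via the trace restriction on convex cycles is cleaner than the paper's route, which instead establishes the product shape of $G'_0$ directly through Claims~\ref{4-cycle-layer}--\ref{claim_G'_0} (using that every convex cycle of $G$ crossed by $E_f$ is a $4$-cycle, hence $G'_0$ is connected and closed under $4$-cycles that leave a layer).

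The gap is entirely in your fourth step, and it is exactly the part of the argument that carries most of the weight. Two things are missing. First, your trace restriction from step~2 only applies to convex cycles \emph{contained in one of} $G'_1,G'_2$; in the case where both are proper you have not shown that \emph{some} cyclic layer actually has both traces proper (your case ``(d)''). In the paper this is obtained indirectly: under the standing assumption that no cyclic layer is split, Claims~\ref{claim_G'_0} and~\ref{connected_H'} show that $G'_0$ is a subproduct with connected complement, which together with the no-private-edge condition forces one of $G'_1,G'_2$ to be all of $G'$. You have, in effect, taken the contrapositive of this for granted. Second, even once a split cyclic layer is in hand, the propagation to adjacent layers (the content of the paper's Claim~\ref{claim}) is not automatic: one must argue that an adjacent layer cannot lie entirely in one $G'_i$ (otherwise the convex expansion of the prism $L\cup L'$ produces a half-expanded cycle on the side of $G'_i$), and then that the two antipodal points of the adjacent layer are the neighbours of those of $L$ (this uses that $G'_1$ is isometric: if the antipodal pairs are offset, a shortest path between them is forced through $G'_2\setminus G'_1$). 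Your parenthetical about forbidden private-to-private edges between adjacent layers gestures in the right direction but does not supply either of these arguments.

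In short: your reorganisation (case-split on whether one $G'_i=G'$) is sound, and your treatment of the ``one side full'' case is an improvement over the paper; but the ``both sides proper'' case is where the paper does its real work (Claims~\ref{claim}--\ref{connected_H'}), and your sketch does not yet contain that work. A minor aside: in your base case, $K_2$ is not an isometric expansion of $K_2$ (every expansion adds a $\Theta$-class).
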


\begin{proof} Let $G'=F_1\square F_2\square \cdots \square F_m$, 
where each $F_i, i=1,\ldots, m$ is either a $K_2$ or an even cycle $C$. Then $G'$ is a partial cube from $\mathcal{F}(Q_3^-)$.  The graph $G$ is obtained from $G'$ by an isometric expansion with respect to $G'_1$ and $G'_2$, i.e., $G'_1$ and $G'_2$ are two isometric subgraphs of $G'$ such that $G'=G'_1 \cup G'_2$, $G'_0:=G'_1\cap G'_2 \neq \emptyset$, there is no edge between $G'_1\setminus G'_2$ and $G'_2\setminus G'_1$, and $G$ is obtained from $G'$ by expansion along $G'_0$. If $G'_1=G'_2=G'$, then the expansion of $G'$ with respect to $G'_1$ and $G'_2$ is the product $G'\square K_2$ and we are done. Thus we can assume that $G'_0$ is a proper subgraph of $G'$.



\begin{claim} \label{claim} Let $L$ be a layer of $G'$, i.e., $L=\{v_1\}\square  \cdots \square \{v_{i-1}\}\square F_i\square \{v_{i+1}\}\square\cdots \square \{v_{m}\}$ with $v_j\in F_j$ for all $j\neq i$. If $F_i$ is a  cycle and $G'_1\cap L$ or $G'_2\cap L$ is different from $L$ and contains a path of length at least 2, then $G$ is a Cartesian product of edges and cycles. More precisely, $G=F_1\square  \cdots \square F_{i-1}\square F'_i\square F_{i+1}\square\cdots \square F_m$, where $F'_i$ is an isometric expansion of $F_i$ along two opposite vertices of $F_i$.
\end{claim}

\begin{proof} Since we can reorder the factors, suppose without loss of generality that $i=1$ and denote $G''=F_2\square \cdots \square F_m$. We have $L=F_1 \square \{ v\}=C \square \{ v\}$ with $v\in V(G'')$, such that $G'_1\cap L$ includes a path $P_1$ of length at least 2 but differs from $L$. Since $L$ is a convex $2j$-cycle of $G'$, $P_1$ is a shortest path of $G'$. If $L$ is included in $G'_2$, then the expansion of $L$ along $P_1$ is isomorphic to a half-extended cycle and is a convex subgraph of $G$ by Lemma~\ref{convex_expansion}, which is impossible. Thus $L$ is not included in $G'_2$, yielding that $L\cap G'_2$ is a shortest path $P_2$ of $G'$. Since $P_1$ and $P_2$ cover the cycle $L$, the only possibility is that $P_1$ and $P_2$ intersect in two antipodal vertices of the cycle $L$. Thus the image of $L$ in $G$ is a cycle of length $2j+2$. Consider any layer $L'=C \square \{ v'\}$ of $G'$ adjacent to $L$, i.e., $v'v\in E(G'')$. Then $L\cup L'$ is a convex subgraph of $G'$, thus by Lemma~\ref{convex_expansion} the expansion of $L\cup L'$ is a convex subgraph $H$ of $G$. If $L'$ is contained in $G'_1$,  then the intersection of $H$ with the half-space of $G$ corresponding to $G'_1$ is a convex subgraph isomorphic to a half-extended cycle. Thus $L'$ cannot be entirely in $G'_1$, and for the same reason it cannot be entirely in $G'_2$. Again the only possibility is that $G'_1\cap L'$ and $G'_2 \cap L'$ are shortest paths of $L'$ that intersect in two opposite vertices of $L'$.

Let $v_1,v_2 \in L \cap G'_0$ and $u_1,u_2 \in L' \cap G'_0$. We assert that after a possible relabeling, $v_1$ is adjacent to $u_1$ and $v_2$ is adjacent
to $u_2$. Suppose that this is not true. Then the neighbors $v'_1$ and $v'_2$ of  $v_1$ and $v_2$, respectively, in $L'$ are both different from $u_1$ and $u_2$. Analogously,
the neighbors $u'_1$ and $u'_2$ of  $u_1$ and $u_2$, respectively, in $L$ are both different from $v_1$ and $v_2$.  We can assume without loss of generality that $v_1'$ and
 $u'_1$ are not in $G'_1$, otherwise we can exchange $v_1$ and $v_2$ or $u_1$ and $u_2$.  We assert that $G'_1$ is not an isometric subgraph of $G'$.   Indeed, the distance in
 $G'$ between $u_1$ and $v_1$ is at most $j$ and the interval $I(v_1,u_1)$ is contained in the union $Q\cup Q'$, where $Q$ is the subpath between $v_1$ and $u'_1$ of the path
 between $v_1$ and $v_2$ passing via $u'_1$ and $Q'$ is the subpath between $v'_1$ and $u_1$ of the path between $v'_1$ and $v'_2$ passing via $u_1$. Since all vertices of $Q$
 except $v_1$ belong only to $G'_2$  and $v'_1$ does not belong to $G'_1$, we conclude that any shortest path in $G'$ between $v_1$ and $u_1$ contains at least one vertex from
 $G'_2\setminus G'_1$,  showing that  $G'_1$ is not an isometric subgraph of $G'$.  Hence $v_1$ is adjacent to $u_1$ and $v_2$ is adjacent to $u_2$. Notice that then the both layers
 have the same side of the cycles in $G'_1$ and $G'_2$ since there is no edge between $G'_1\setminus G'_2$ and $G'_2\setminus G'_1$. Propagating this argument through the graph, we conclude that all
 layers $L''$ parallel to $L$   have the same vertices in $G'_1$ and $G'_2$. Hence the traces on $F_i=C$ with respect to $G'_1$ and $G'_2$ of $L$ and $L''$  coincide:
 they are two paths $P_1$ and $P_2$ of $C$ covering the cycle and intersecting  in two opposite vertices $x',x''$ of $C$. Therefore, the graph $G'_0$ with respect
 to which we perform the isometric expansion is the subgraph of $G'$ induced by $(\{ x'\}\times V(G''))\cup (\{ x''\}\times V(G''))$, $G'_1$ is the subgraph induced by
 $V(P_1)\times V(G'')$,  and $G'_2$ is the subgraph induced by $V(P_2)\times V(G'')$. Consequently, the expansion of $G'$ with respect
 to $G'_1$ and $G'_2$ produces a graph $G$ isomorphic to $C'\square G''$, where the length of the cycle $C'$ is two more than the length of $C$. This establishes the claim.  
\end{proof}

By Claim~\ref{claim}, we can further assume  that  every layer $L$ of $G'$ coming from a cyclic factor satisfies one of the following two conditions: either both $G'_1$ and $G'_2$ include $L$, or one of $G'_1,G'_2$ includes an edge, a vertex, or nothing while the other  includes the whole layer $L$. Consequently, for each cyclic factor $F_i\cong C$ of $G'$ and each layer $L:=\{ v_1\}\square \cdots \square \{ v_{i-1}\} \square C \square \{ v_{i+1}\}\square \cdots \square \{ v_m\}$, the intersection   $L\cap G'_0$ is the whole layer $L$, an edge, a vertex, or empty.

We will now analyze  the structure of the subgraph $G'_0$ of $G'$ along which we perform the isometric expansion. Suppose that $G'$ is obtained from $G$ by contracting the equivalence class $E_f$.

\begin{claim} \label{4-cycle-layer} If $R'=(u_1,v_1,v_2,u_2)$ is a $4$-cycle in $G'$ such that the edges $u_1v_1$ and $v_1v_2$ do not lie in the same layer and $u_1,v_1,v_2 \in V(G'_0),$  then $u_2$ also belongs to $V(G'_0)$.
\end{claim}

\begin{proof} If this is not the case, then assume without loss of generality that $u_2\in G'_1\setminus G'_2$. Since the 4-cycle $R'$ is a convex subgraph of $G'$, by Lemma~\ref{convex_expansion} the expansion of $R'$ along $G'_0$ is a convex subgraph of $G$ isomorphic to $Q_3^-$, thus is a half-extended cycle, a contradiction. This contradiction shows that $u_2\in V(G'_0)$.
\end{proof}

We continue with an auxiliary assertion:

\begin{claim} \label{4-cycle} Any convex cycle $Z$ of $G$ crossed by $E_f$ is a 4-cycle.
\end{claim}

\begin{proof}
Assume by way of contradiction that $Z$ has length $\ell(Z)\ge 6$. Therefore $Z$ is contracted to a convex cycle $Z'$ of length $\ell(Z)-2$ of $G'$.
The convex sets in a Cartesian product  are products of convex sets of the factors. Thus either $Z'$ is a layer of $G'$ or $Z'$ is a 4-cycle which is a product of two edges from two different factors. In the first case $Z'$ is a layer which has two antipodal vertices in $G'_0$, one path between these vertices in $G'_1\setminus G'_2$, and the other path in $G'_2\setminus G'_1$, and this case was covered by Claim~\ref{claim}.   Thus assume that $Z'$ is a 4-cycle $(v_1,v_2,u_2,u_1)$ that has edges $v_1v_2, u_1u_2$ projected to factor $F_1$ and
edges $v_1u_1,v_2u_2$ projected to factor $F_2$. Moreover, let $v_1,u_2\in V(G'_1)\cap V(G'_2)$, $v_2 \in V(G'_1)\setminus V(G'_2)$, and $u_1 \in V(G'_2)\setminus V(G'_1)$. If both factors $F_1,F_2$ are isomorphic to $K_2$, then they can be treated as a single cyclic factor because $K_2\square K_2$ is a 4-cycle and $Z'$ is a layer. Then the result follows from Claim~\ref{claim}. Thus assume that $F_1$ is a cycle of length at least $6$ -- otherwise we are in the above case.
Let
$L_1=(v_1,v_2,\ldots ,v_{2i},v_1)$ and $L_2=(u_1,u_2,\ldots ,u_{2i},u_1)$ be the two layers of  $F_1$ that include $Z'$. They include vertices $u_1, v_2$ which are not in $G'_0$. Since $i\geq 3$ by isometry of $G'_1$ and $G'_2$, we have $v_3\in G'_1\setminus G'_2$ and $u_3\in G'_2\setminus G'_1$. But $v_3$ and $u_3$ are adjacent, which is impossible.  This establishes that any convex cycle $Z$ of $G$ crossed by $E_f$ has length 4.
\end{proof}
%


\begin{claim} \label{claim_G'_0} $G'_0$ is a subgraph of $G'$ of the form $H_1\square H_2\square\cdots \square H_m$, where each factor satisfies $H_i\subseteq F_i$ and is either a vertex, an edge, or the entire $F_i$. In particular, $G'_0$ is convex in $G'$.
\end{claim}

\begin{proof}  First we prove that $G'_0$ is connected. Let $a_1a_2$ and $b_1b_2$ be any two edges in the equivalence class $E_f$. Edges  $a_1a_2$ and $b_1b_2$ get contracted to  vertices $a',b'$ of $G'$. By Lemma~\ref{partial-cube-convex-cycle}, $a_1a_2$ and $b_1b_2$ can be connected by a sequence ${\mathcal C}=C_1,C_2,\ldots,C_k$  of convex
cycles of $G$ such that $a_1a_2\in C_1, b_1b_2\in C_k,$ and any two consecutive cycles $C_i$ and $C_{i+1}$ intersect in an edge of $E_f$.   Hence the cycles of $\mathcal C$ are contracted in $G'$ to a path $Q'$ between $a'$ and $b'$. Since all cycles $C_i$ of $\mathcal C$ are crossed by $E_f$, by Claim~\ref{4-cycle}  each $C_i,i=1,\ldots,k,$ is a 4-cycle. Thus, additionally to $a',b'$ also all  other vertices of the path $Q'$ belong to $G'_0$. Consequently, $a'$ and $b'$ belong to a common connected
component of $G'_0$.  Since $a_1a_2$ and $b_1b_2$ are arbitrary edges from $E_f$, the graph $G'_0$ is connected.

To prove the second assertion, let $I$ be a maximal subgraph of $G_0'$ of the form $I_1\square I_2\square\cdots \square I_m$, where each $I_i$ is a connected nonempty subgraph of $F_i$.
We claim that $I$  coincides with $G'_0$. If not, since $I$ and $G'_0$ are connected, there exists an edge  $vw$ of
$G'_0$ such that $v \in V(I)$ and $w\in V(G'_0)\setminus V(I)$. Let $L:=\{ v_1\}\square \cdots \square \{ v_{i-1}\} \square F_i \square \{ v_{i+1}\}\square \cdots \square \{ v_m\}$
be the layer of $G'$ that includes the edge $vw$.  Suppose that the $i$th coordinates of $v$ and $w$ are the adjacent vertices $v'_i$ and $v''_i$ of $F_i$, respectively.
Set $I':= I_1\square \cdots \square I_{i-1} \square \{ v'_i\}\square I_{i+1}\square \cdots \square I_m$ and $I'':= I_1\square \cdots \square I_{i-1} \square \{ v''_i\}\square I_{i+1}\square \cdots \square I_m$. Then $v\in V(I')\subseteq V(I)$ and $w\in V(I'')$. Since $w\notin V(I)$, by the definition of $I$, the subgraph $I''$ contains a vertex  not belonging to $G'_0$. 
Let $x$ be a closest to $w$ vertex of $V(I'')\setminus V(G'_0)$. Let $y$ be a neighbor of $x$ in $I(x,w)$. Since $I''$ is convex, $y\in I(x,w)\subset V(I'')$. By the choice of $x$, we deduce
that $y$ is a vertex of $G'_0$. Let $x'$ and $y'$ be the neighbors of respectively $x$ and $y$  in $I'$ (such vertices exist by the definitions of $I'$ and $I''$ and the
fact that $v'_i$ and $v''_i$ are adjacent in $F_i$). Since $V(I')\subset V(I)$, the vertices $x'$ and $y'$ belong to $G'_0$. Since the 4-cycle $(x,y,y',x')$ does not belong
to a single layer and $y,y',x'$ are vertices of $G'_0$, by Claim~\ref{4-cycle-layer} also $x$ is a vertex of $G'_0$, a contradiction with its choice. This establishes that $I$ coincides
with $G'_0$.

Finally, we assert that each $I_i, i=1,\ldots, m,$ is a vertex, an edge, or the whole factor $F_i$.
The assertion obviously holds if $F_i$ is an edge. Now, let $F_i=C$ be an even cycle. Since $I_i\neq\emptyset$ and $G'_0= I_1\square I_2\square\cdots \square I_m$, the assertion follows from the conclusion after Claim~\ref{claim}, that the intersection of $G'_0$ with any layer is the whole layer, an edge, a vertex, or empty.
\end{proof}

Let $H'$ be the subgraph of $G'$ induced by all vertices of $G'$ not belonging to $G'_0$.

\begin{claim} \label{connected_H'}  $H'$ is either empty or is a connected subgraph of $G'$.
\end{claim}

\begin{proof} By Claim~\ref{claim_G'_0}, $G'_0$ is a connected subgraph of $G'$ of the form $H_1\square H_2\square\cdots \square H_m$, where each $H_i$ is a vertex, an edge of $F_i$, or the whole factor $F_i$. Suppose that  $G'_0$ is a proper subgraph of $G'$.  By renumbering the factors in the product $F_1\square F_2\square \cdots \square F_m$ we can suppose that there exists an index $m'\le m$, such that for each  $i\le m'$,  $H_i$ is a proper subgraph of $F_i$ and that  for each $m'<i\le m$, we have $H_i=F_i$. For each $i\le m'$, let $F'_i$ be the (nonempty) connected subgraph of $F_i$ induced by $V(F_i)\setminus V(H_i)$. For any $i\le m$,  let $H'_i$ be the subgraph $F_1\square \cdots \square F_{i-1}\square F'_i\square F_{i+1}\square \cdots \square F_m$ of $G'=F_1\square\cdots \square F_m$. Obviously, each such $H'_i$ is a connected subgraph of $H'$ (and of $G'$). Moreover, $V(H')=\bigcup_{i=1}^m V(H'_i)$ and any two $H'_i$ and $H'_j$ with $i,j\le m$ share a vertex. This shows that $H'$ is a connected subgraph of $G'$.
\end{proof}

Now, we are ready to conclude the proof of the proposition. If both $G'_1$ and $G'_2$ are proper subgraphs of $G'$, then $G'_0$ is also a proper subgraph of  $G'$. By Claim~\ref{connected_H'},
the subgraph $H'$ of $G'$ induced by all vertices not in $G'_0$ is connected. This implies that $G'$ contains edges running between the vertices of $G'_1\setminus G'_2$ and $G'_2\setminus G'_1$, which is impossible. Consequently, we can suppose that $G'_1$ coincides with $G'$ and $G'_2$ coincides with $G'_0$.  By Claim~\ref{claim_G'_0}, $G'_2= G'_0$ has the form
$H_1\square H_2\square\cdots \square H_m$, where each $H_i$ is a vertex or an edge of $F_i$, or the whole factor $F_i$.
\end{proof}

Since each half-extended cycle can be contracted to a $Q_3^-$, we immediately have the following lemma.

\begin{lemma}\label{lemma:half-extended}
If $G\in \mathcal{F}(Q_3^-)$, then $G$ has no convex subgraph isomorphic to a half-extended cycle.
\end{lemma}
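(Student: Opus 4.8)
The plan is to observe that this is an immediate corollary of the definitions already in place, so the ``proof'' amounts to composing two facts: the first is that a half-extended cycle $\mathrm{Ex}_\ell(C)$ admits $Q_3^-$ as a pc-minor, and the second is the standard fact (discussed in Section~\ref{minors}) that $\mathcal{F}(Q_3^-)$ is closed under taking pc-minors, so in particular no convex subgraph of a graph in $\mathcal{F}(Q_3^-)$ can have $Q_3^-$ as a pc-minor.

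First I would spell out why every half-extended cycle contracts onto $Q_3^-$. Recall $\mathrm{Ex}_\ell(C)$ is the isometric expansion of an even cycle $C$ of length $2n$ with respect to a subpath $G_1$ of length $\ell$ with $2\le \ell\le n$ and $G_2=C$. Concretely it has one ``doubled'' $\Theta$-class $E_f$ (the expansion class), plus the $n$ original classes of $C$. The key step is to contract, inside $\mathrm{Ex}_\ell(C)$, all but two of the original classes of $C$: choose to keep one class $E_a$ that crosses the subpath $G_1$ (equivalently, whose two edges of $C$ are both in $G_1$ — possible since $\ell\ge 2$) and one class $E_b$ that does \emph{not} cross $G_1$ but lies ``opposite'' to it (possible since $\ell\le n$, so $G_1$ is a proper subpath and there is a class entirely outside it). Contracting the remaining $n-2$ classes of $C$ collapses $C$ itself to a $4$-cycle on classes $E_a,E_b$; since these contractions fix the expansion class $E_f$ (Lemma~\ref{commut_contraction} guarantees the order is irrelevant, and Lemma~\ref{convex_hull} tracks what happens to vertex sets), the doubled edge of $E_f$ survives over the two vertices of the $G_1$-side of this $4$-cycle — that is, one edge of the $4$-cycle is ``doubled'' into a square, producing precisely $Q_3^-$: three squares around a missing vertex. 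I would include a short explicit check that the contracted graph has the right $7$ vertices and adjacencies, or better, simply note it is a partial cube on three $\Theta$-classes that is not the full cube $Q_3$ (it has $7$ vertices, not $8$) and is an expansion of a $4$-cycle, hence is $Q_3^-$.

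Next I would combine this with closure under minors. Since $\mathcal{F}(Q_3^-)$ is pc-minor-closed (Section~\ref{minors}), for any $G\in\mathcal{F}(Q_3^-)$ every pc-minor of $G$, and in particular every pc-minor of every convex subgraph of $G$, lies in $\mathcal{F}(Q_3^-)$ and hence is $\ne Q_3^-$ as well as $Q_3^-$-minor-free. If $G$ had a convex subgraph $H\cong \mathrm{Ex}_\ell(C)$, then $Q_3^-$ would be a pc-minor of $H$ by the previous paragraph, hence a pc-minor of $G$, contradicting $G\in\mathcal{F}(Q_3^-)$. Therefore no such $H$ exists.

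I do not expect any real obstacle here; the only thing requiring a little care is the explicit minor-extraction in the first paragraph — choosing the two classes $E_a$ (through the expansion subpath) and $E_b$ (disjoint from it) correctly so that after contracting the rest one genuinely lands on $Q_3^-$ rather than on $Q_3$ or on a $4$-cycle. The hypotheses $2\le\ell\le n$ are exactly what make both choices available, so the argument is robust; everything else is bookkeeping with Lemmas~\ref{commut_contraction} and~\ref{convex_hull}. Because this is so short, I would phrase it as a one- or two-sentence proof in the paper (as the authors evidently intend, given the phrase ``we immediately have'') and relegate the explicit contraction picture to the accompanying figure of $\mathrm{Ex}_\ell(C)$ and $Q_3^-$.
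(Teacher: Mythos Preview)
Your overall approach is correct and coincides with the paper's: show that every half-extended cycle can be contracted to $Q_3^-$, then use that convex subgraphs of a graph in $\mathcal{F}(Q_3^-)$ are pc-minors and hence cannot have $Q_3^-$ as a pc-minor. The paper's proof is the single sentence ``since each half-extended cycle can be contracted to $Q_3^-$'', so you are not taking a different route.

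However, the explicit contraction you describe does not work, and the error is worth fixing before you declare the argument ``robust''. A $\Theta$-class of the even cycle $C$ consists of two \emph{opposite} edges, at distance $n$ along $C$; since $G_1$ has length $\ell\le n$, no class can have \emph{both} of its edges in $G_1$, so your description of $E_a$ is impossible (and your justification ``possible since $\ell\ge 2$'' is simply false). Likewise, when $\ell=n$ every class has exactly one edge in $G_1$, so no $E_b$ ``entirely outside $G_1$'' exists. Finally, even when your $E_a,E_b$ do exist (one class crossing $G_1$, one not), contracting the rest collapses the doubled path to a single doubled edge, giving the six-vertex graph $P_3\times K_2$ (two squares sharing an edge), not $Q_3^-$; your own count of seven vertices and ``three squares around a missing vertex'' contradicts ``one edge doubled''.

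The easy fix: keep \emph{two} classes each having an edge in $G_1$ (possible since $\ell\ge 2$ gives at least two distinct classes among the edges of $G_1$). After contracting the rest, $C$ becomes a $4$-cycle and the copy of $G_1$ becomes a path of length~$2$ (three vertices), joined by three rungs of $E_f$; this is $Q_3^-$. Alternatively, observe that the smallest half-extended cycle $\mathrm{Ex}_2(C_4)$ is already isomorphic to $Q_3^-$, and any larger $\mathrm{Ex}_\ell(C_{2n})$ contracts to a smaller one by shrinking either $\ell$ (contract a class with an edge in $G_1$) or $n$ (contract a class with no edge in $G_1$, available when $\ell<n$).
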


Now we are ready to prove the first part of Theorem~\ref{mthm:cells}.

\begin{theorem}\label{product_of_cycles}
The convex closure of any isometric cycle of a graph $G$ in $\mathcal{F}(Q_3^-)$ is a gated subgraph isomorphic to a Cartesian product of edges and even cycles.
\end{theorem}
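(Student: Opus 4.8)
The plan is to combine two of the tools just developed: the structural dichotomy for isometric expansions of products of edges and even cycles (Proposition~\ref{product_of_cycles_expansion} together with Lemma~\ref{lemma:half-extended}), and the obstruction analysis of Proposition~\ref{thm:closure}. We prove both conclusions — gatedness and the product structure — together, by induction, exploiting that $\mathcal{F}(Q_3^-)$ is pc-minor closed and that contractions send isometric cycles of length $\ge 6$ to isometric cycles while fixing $4$-cycles (which are trivially products).

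First I would dispose of gatedness. Suppose some $G\in\mathcal{F}(Q_3^-)$ has an isometric cycle $C$ with $\conv(C)$ not gated, and pick $G$ minimally contracted with this property, so that for every $f\in\Lambda$ the convex hull of $\pi_f(C)$ in $\pi_f(G)$ is gated. By Proposition~\ref{thm:closure} applied to $S=C$, since the ``bad contraction'' alternative is excluded we obtain $m\ge 3$, a full subdivision $H$ of $K_{m+1}$ as an isometric subgraph, and inside it a full subdivision $H'$ of $K_m$ (so $m\ge 3$) whose original vertices have no common neighbor. But a full subdivision of $K_3$ is a $6$-cycle, and a full subdivision of $K_4$ contains three $6$-cycles through a common edge in the configuration of the $3$-cube; unwinding this one finds a convex subgraph that contracts onto $Q_3^-$ — concretely, the $6$-cycles sitting on $H'$ inside $H$ force, after suitable contractions, the three squares of $Q_3^-$. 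Hence $G\notin\mathcal{F}(Q_3^-)$, a contradiction. (Alternatively one may simply invoke Proposition~\ref{convex_hull_cycle}, since $\mathcal{F}(Q_3^-)\subseteq\mathcal{S}_4$; I would state it that way to keep the argument short, and only fall back on the explicit $Q_3^-$ extraction if a self-contained proof is wanted.) So $X:=\conv(C)$ is gated.

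Next, the product structure. By Lemma~\ref{partial-cube-convex-cycle} every partial cube is an iterated isometric expansion of a point, so write $G$ as obtained from a sequence $G^{(0)}=\{pt\}, G^{(1)},\ldots,G^{(N)}=G$ of isometric expansions; since $\mathcal{F}(Q_3^-)$ is pc-minor closed, every $G^{(i)}$ — being obtained from $G$ by contractions — lies in $\mathcal{F}(Q_3^-)$, hence by Lemma~\ref{lemma:half-extended} none of them has a convex half-expanded cycle. Let $C$ be an isometric cycle of $G$; choose the coordinates $\Lambda(C)$ crossing $C$ and contract everything outside a minimal convex subgraph containing $C$, so that we may assume $G=\conv(C)=X$ and every $\Theta$-class of $G$ crosses $C$. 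I claim $X$ is a product of edges and even cycles, by induction on the number of $\Theta$-classes. Contract one class $E_f$: the image is $\pi_f(X)=\conv(\pi_f(C))$ by Lemma~\ref{convex_hull}, again spanned by an isometric cycle of $\mathcal{F}(Q_3^-)$, so by induction $\pi_f(X)$ is a Cartesian product $G'$ of edges and even cycles. Now $X$ is an isometric expansion of $G'$ with no convex half-expanded cycle, so Proposition~\ref{product_of_cycles_expansion} applies: either $X$ is itself a product of edges and even cycles (done), or one of the two expansion sides is all of $G'$ while the other is a proper subproduct $H_1\times\cdots\times H_m$. In the latter case the expansion glues a copy of a proper subproduct onto $G'$, but then one checks the resulting graph has a $\Theta$-class not crossing $C$ (the ``new'' coordinate cuts off vertices outside $C$), contradicting our reduction that every class crosses $C$ — equivalently, $\conv(C)$ would not be all of $X$. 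Hence $X$ is a product of edges and even cycles.

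Finally, gatedness of $X$ has already been shown, so $\conv(C)=X$ is a gated subgraph isomorphic to a Cartesian product of edges and even cycles, completing the proof. The main obstacle I anticipate is the bookkeeping in the induction step: making precise the claim that the ``non-product'' alternative of Proposition~\ref{product_of_cycles_expansion} is incompatible with $\conv(C)=X$ requires tracking which $\Theta$-classes of the expanded graph cross $C$, and verifying that expanding along a proper subproduct necessarily creates a class disjoint from (or merely osculating) $C$; this is where one must use Lemma~\ref{contraction_convex} and the crossing/osculating terminology carefully rather than wave hands.
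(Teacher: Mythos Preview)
Your approach is essentially the paper's: gatedness via Proposition~\ref{convex_hull_cycle} (your ``alternative'' route is the right one and is what the paper uses, since $\mathcal{F}(Q_3^-)\subseteq\mathcal{S}_4$; the direct extraction of a $Q_3^-$-minor from the full-subdivision configuration of Proposition~\ref{thm:closure} is vague and unnecessary), and the product structure by the same induction, reducing to $G=\conv(C)$, contracting a class $E_f$ crossing $C$, and applying Proposition~\ref{product_of_cycles_expansion} with Lemma~\ref{lemma:half-extended}.

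The gap is exactly where you flagged it, but the resolution is different from what you suggest, and your stated reason is actually wrong. In the non-product alternative of Proposition~\ref{product_of_cycles_expansion} the ``new'' class is $E_f$ itself, and $E_f$ \emph{does} cross $C$ --- you chose it that way --- so it is false that it ``cuts off vertices outside $C$''. The correct contradiction does not track which classes cross $C$; it tracks where $C'=\pi_f(C)$ lands in $G'$. The two $E_f$-edges of $C$ are opposite, hence contract to antipodal vertices $a',b'$ of the isometric cycle $C'$, and both lie in $G'_0$ by definition of the expansion. Proposition~\ref{product_of_cycles_expansion} tells you that in the non-product case $G'_0=G'_2$ is a \emph{convex} subproduct of $G'$; since $C'\subseteq I(a',b')\subseteq G'_0$, you get $G'=\conv(C')\subseteq G'_0$, contradicting that $G'_0$ is a proper subgraph. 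No crossing/osculating bookkeeping is needed.
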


\begin{proof}  Let $G$ be a minimal graph  in $\mathcal{F}(Q_3^-)$ for which we have to prove that the convex closure of an isometric cycle $C$ of $G$ is a product of cycles and edges. Since $G$ is minimal and convex subgraphs of graphs in $\mathcal{F}(Q_3^-)$ are also in $\mathcal{F}(Q_3^-)$,  we conclude that $G$ coincides with the convex closure of $C$. If $C$ is a 4-cycle, then $C$ is a convex subgraph of $G$ and we are done. Analogously, if $C$ is a 6-cycle, then since $G\in \mathcal{F}( Q_3^-),$ either $C$ is convex or the convex hull of $C$ is the 3-cube $Q_3$. So, assume that the length of $C$ is at least 8. By minimality of $G$,  any equivalence class of $G$ crosses $C$. Any contraction of $G$  is a graph $G'$ in $\mathcal{F}(Q_3^-)$ and it maps $C$ to an isometric cycle $C'$ of $G'$. By Lemma~\ref{convex_hull}, $G'$ is the convex hull of $C'$, thus by minimality choice of $G$, $G'$ is
a Cartesian product of cycles and edges, say $G'$ is isomorphic to $F_1\square F_2\square \cdots \square F_m$, where each $F_i, i=1,\ldots, m,$ is either a $K_2$ or an even cycle $C$.
The graph $G$ is obtained from $G'$ by an isometric expansion, i.e., there exist isometric subgraphs $G'_1$ and $G'_2$ of $G'$ such that $G'=G'_1 \cup G'_2$, $G'_0:=G'_1\cap G'_2 \neq \emptyset$, there is no edge between $G'_1\setminus G'_2$ and $G'_2\setminus G'_1$, and $G$ is obtained from $G'$ by expansion along $G'_0$.

By Proposition~\ref{product_of_cycles_expansion} and Lemma~\ref{lemma:half-extended},  either $G$ is a Cartesian product of edges and even cycles  or $G'_1$ coincides with $G'$  and $G'_2= G'_0$ is a proper convex subgraph of $G'$ of the form $H_1\square H_2\square\cdots \square H_m$, where each $H_i$ is a vertex, an edge of $F_i$, or the whole factor $F_i$. In the first case we are done, so suppose that the second case holds. Let $G_j$ be the image of $G'_j$ after the expansion, for $j=0,1,2$. Since $G'_2= G'_0$ is convex, $G_0$ is a convex subgraph of $G$ isomorphic to $G'_0\square K_2$. If $G'$ is the $f$-contraction of $G$, then $G_1$ and $G_2$ are the subgraphs induced by the halfspaces $H^+_f$ and $H^-_f$ of $G$.  Let $a_1a_2$ and $b_1b_2$ be two opposite edges of $C$ belonging to $E_f$.
Since $G$ is the convex hull of $C$, the cycle $C$ intersects every equivalence class of the relation $\Theta$ in $G$. In particular, this implies that contracting $E_f$, the edges $a_1a_2$ and $b_1b_2$ are contracted to vertices $a'$ and $b'$ of $G'_0$. Since $G'_0$ is a convex subgraph of $G'$, the image of $C$ under this contraction is an isometric cycle $C'$ of $G'$. Since $a',b'\in C'$,  $C'$ is  contained in $G'_0$. Since  by Lemma~\ref{convex_hull} $G'$ is the convex hull of $C'$, we conclude that $G'_1=G'_0=G'_2$, contrary to the assumption that $G'_0$ is a proper subgraph of $G'$.
\end{proof}

Together with Proposition~\ref{convex_hull_cycle}, the following gives the second part of Theorem~\ref{mthm:cells}.

\begin{proposition}\label{prop:antipodal}
 The antipodal subgraphs of graphs from $\mathcal{S}_4$ are gated and are products of edges and cycles.
\end{proposition}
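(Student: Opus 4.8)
The plan is to reduce the statement to Theorem~\ref{product_of_cycles} by showing that an antipodal subgraph $S$ of a graph $G\in\mathcal{S}_4$ has no $Q_3^-$ pc-minor, so that $S\in\mathcal{F}(Q_3^-)$. Gatedness of $S$ in $G$ is immediate and independent: by Lemma~\ref{antipodal_cycle} we have $S=\conv(C)$ for an isometric cycle $C$ of $G$, so Proposition~\ref{convex_hull_cycle} gives that $S$ is gated in $G$. For the product structure, once $S\in\mathcal{F}(Q_3^-)$ is known, Theorem~\ref{product_of_cycles} applied to that same cycle $C$ shows $S$ is isomorphic to a Cartesian product of edges and even cycles. (When $S$ has at most two vertices the claim is trivial, so assume $C$ has length at least $4$.)

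To prove $S\in\mathcal{F}(Q_3^-)$ it is enough to show that no antipodal partial cube belonging to $\mathcal{S}_4$ has a convex subgraph isomorphic to $Q_3^-$. Indeed, viewing $S$ as a partial cube in its own right, every $\Theta$-class crosses it, so by Lemma~\ref{antipodal_minor} all contractions of $S$ are again antipodal, and they lie in $\mathcal{S}_4$ because this class is pc-minor closed. Since restrictions are exactly the convex subgraphs and restrictions and contractions commute (Lemmas~\ref{restriction} and~\ref{commut_rest_contaction}), every pc-minor of $S$ is a convex subgraph of some contraction of $S$; hence a $Q_3^-$ pc-minor of $S$ would yield a convex $Q_3^-$ inside an antipodal graph from $\mathcal{S}_4$.

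So suppose, for contradiction, that $H\cong Q_3^-$ is convex in an antipodal $P\in\mathcal{S}_4$. Write $H$, on its three $\Theta$-classes, as $\{0,1\}^3$ minus the vertex $(1,1,1)$; convexity forces $H$ to take a single fixed value on each remaining $\Theta$-class of $P$. One of the isometric $6$-cycles of $H$ has $H$ as its convex hull and is isometric in $P$ as well (as $H$ is convex), so $H$ is gated in $P$ by Proposition~\ref{convex_hull_cycle}. Let $u\in H$ be the vertex that equals $(0,0,0)$ on the three classes of $H$ and let $u^{-}$ be its antipode in $P$, so $\conv(u,u^{-})=P$; then every $\Theta$-class of $P$ separates $u$ from $u^{-}$, so $u^{-}$ equals $(1,1,1)$ on the three classes of $H$ and differs from $u$, hence from every vertex of $H$, on each of the other classes. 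Consequently $d_P(u^{-},y)$ equals a fixed constant minus the number of the three classes of $H$ on which $y$ equals $1$; thus the vertices of $H$ nearest to $u^{-}$ are precisely the three weight-two vertices $(1,1,0),(1,0,1),(0,1,1)$, all equidistant from $u^{-}$. A gate of $u^{-}$ in $H$ would have to be the unique nearest vertex of $H$, so none exists, contradicting gatedness. This proves the claim, whence $S\in\mathcal{F}(Q_3^-)$ and the proposition follows.

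I expect the middle step to be the main obstacle. The subtle points are that a convex copy of $Q_3^-$ is coordinate-flat outside its own three classes, which is exactly what forces the antipode $u^{-}$ onto the absent corner $(1,1,1)$ and manufactures three equidistant nearest vertices; and that Proposition~\ref{convex_hull_cycle} applies here only because $Q_3^-$ is the convex hull of one of its isometric $6$-cycles, so that the gatedness it would impose collides with the failure of $u^{-}$ to have a gate in $H$.
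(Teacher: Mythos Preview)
Your proof is correct and takes a genuinely different route from the paper's. Both proofs first reduce to showing that an antipodal graph in $\mathcal{S}_4$ cannot contain a convex $Q_3^-$; the divergence is in how this is derived. The paper contracts maximally so that every further contraction lands in $\mathcal{F}(Q_3^-)$, then uses an auxiliary $\Theta$-class to manufacture two explicit convex sets (a convex $2$-path in the $Q_3^-$ and a disjoint convex $2$-path touching the would-be completing vertex) that cannot be separated by complementary halfspaces, directly contradicting $S_4$. Your argument instead observes that $Q_3^-$ is the convex hull of its outer isometric $6$-cycle, so Proposition~\ref{convex_hull_cycle} forces the convex copy $H$ to be gated in the antipodal host $P$; then the antipode of the central vertex of $H$ agrees with the missing corner $(1,1,1)$ on the three $H$-coordinates and hence has exactly three equidistant nearest points in $H$, precluding a gate. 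Your approach is shorter and more conceptual, reusing Proposition~\ref{convex_hull_cycle} as the sole contact with $\mathcal{S}_4$ and exploiting antipodality in one clean step; the paper's approach, by contrast, produces an explicit $S_4$-violation and so illustrates concretely how the separation axiom fails, at the cost of a longer case analysis.
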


\begin{proof}
Let $G$ be a  antipodal graph in $\mathcal{S}_4$ which is not in $\mathcal{F}(Q_3^-)$. Then $G$ can be contracted to a graph $G'$ that contains a convex subgraph $X$ isomorphic to $Q_3^-$. By Lemma~\ref{antipodal_minor} any contraction of an antipodal graph is an antipodal graph, thus we can assume that $G'$ is maximally contracted, i.e. every contraction of $G'$ is in $\mathcal{F}(Q_3^-)$.
%
Denote the central vertex of $X$ with $x$, the isometric cycle around it with $(v_0,v_1,\ldots ,v_5)$, and assume that $x$ is adjacent to exactly $v_0,v_2$ and $v_4$. Let $x',v_0',\ldots,v_5'\in V(X')$ be the antipodes of vertices in $X$.

First assume that $G'$ has exactly three $\Theta$-classes, namely $E_{xv_0},E_{xv_2},E_{xv_4}$. Then either $G'=X$ or $G'\cong Q_3$. In the first case $G'$ is not antipodal, while in the second case $X$ is not a convex subgraph of $G'$. Thus assume that there exists another $\Theta$-class, say $E_{wz}$. Contracting this class we obtain a graph $G''$ that has no $Q_3^-$ convex subgraphs, thus the convex closure $X'$ of $X$ in $G''$ must be isomorphic to $Q_3$. Let $ y$ be a vertex in $G'$ that gets mapped to the vertex in $X'\setminus X$ in $G''$. Vertex $y$ and $v_1$ are adjacent in $G''$ with edge in $E_{xv_4}$, but since $X$ is convex in $G'$ any path  from $v_1$ to $y$ in $G'$ must be of length 2 and first cross an edge in $E_{wz}$ and then an edge in $E_{xv_4}$. Thus there is only one such path, say $P$, and it is a convex subgraph. On the other hand, the path $(v_3,v_4,v_5)$ is convex in $X$, thus it is convex in $G'$. But then the paths $(v_3,v_4,v_5)$ and $P$ are convex subgraphs that cannot be separated by two complementary halfspaces. The latter holds since there is a path between them consisting of edges in $E_{xv_4},E_{xv_0},E_{xv_2}$, but each of these $\Theta$-classes intersects either one convex set or another. Thus $G'$ is not in $\mathcal{S}_4$. By~\cite[Theorem 10]{Ch_separation}, contracting a graph in $\mathcal{S}_4$ gives a graph in $\mathcal{S}_4$. Thus also $G$ is not in $\mathcal{S}_4$.
\end{proof}

%
%

The example in Figure~\ref{fig:apiculatelopnotpasch} shows that the second condition of Theorem~\ref{mthm:cells} does not characterize bipartite graphs with $S_4$ convexity.

\begin{figure}[ht]
\begin{center}
\includegraphics[width = .4\textwidth]{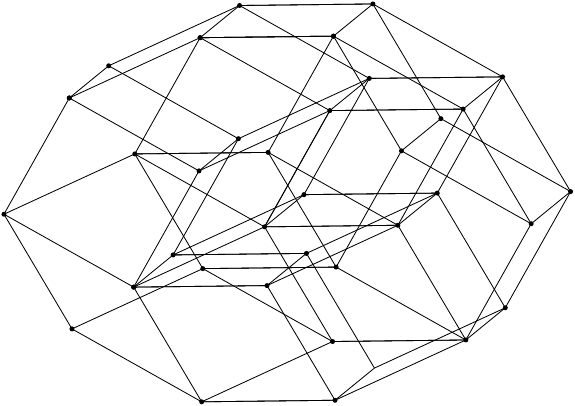}
\caption{An expansion of $Q_4^-$ that is apiculate and lopsided. In particular, the graph is not in $S_4$, the convex hull of any isometric cycle is gated, and its antipodal subgraphs are cubes and thus products of edges and cycles.}
\label{fig:apiculatelopnotpasch}
\end{center}
\end{figure}

\section{Gated amalgamation in hypercellular graphs}\label{sec:amalgam}


This section is devoted to the proof of Theorem~\ref{mthm:amalgam}. 
First, we present the 3CC-condition for partial cubes $G$  in a seemingly stronger but equivalent form:

\medskip\noindent
{\it 3-convex cycles condition} ({\it 3CC-condition}):  for  three convex cycles $C_1,C_2,C_3$ of $G$ such that any two cycles $C_i,C_j$, $1\le i<j\le 3$,
intersect in an edge $e_{ij}$ with $e_{12}\ne e_{23}\ne e_{31}$ and the three cycles intersect in a vertex, the convex hull of $C_1\cup C_2\cup C_3$ is a cell of $G$
isomorphic to $C_i \square K_2$ and $C_j,C_k$ are 4-cycles.

In fact, the above condition is equivalent to the 3CC-condition defined in Section \ref{sec:intro} since in the Cartesian product of cycles and edges $H=F_1\square\cdots\square F_m$
the only convex cycles are layers $\{v_1\}\square  \cdots \square \{v_{i-1}\}\square F_i\square \{v_{i+1}\}\square\cdots \square \{v_{m}\}$ for $F_i$ a cycle, or 4-cycles of the form $\{v_1\}\square  \cdots \square \{v_{i-1}\}\square F'_i\square \{v_{i+1}\}\square\cdots \square \{v_{j-1}\}\square F'_j\square \{v_{j+1}\} \square \cdots \square \{v_{m}\}$ for $F'_i, F_j'$ edges of $F_i, F_j$, respectively. The intersection condition of $C_1,C_2,C_3$ implies that at least two of the cycles are 4-cycles and their convex closure is of the form $\{v_1\}\square  \cdots \square \{v_{i-1}\}\square F_i\square \{v_{i+1}\}\square\cdots \square \{v_{j-1}\}\square F'_j\square \{v_{j+1}\} \square \cdots \square \{v_{m}\}$ for $F_i$ a cycle and $F_j'$ an edge, or $\{v_1\}\square  \cdots \square \{v_{i-1}\}\square F'_i\square \{v_{i+1}\}\square\cdots \square \{v_{j-1}\}\square F'_j\square \{v_{j+1}\} \square \cdots \square \{v_{k-1}\}\square F'_k\square \{v_{k+1}\} \square \cdots \square \{v_{m}\}$ for $F'_i, F_j',F_k'$ edges. In both cases cases  $C_1,C_2,C_3$ are contained in a subcell isomorphic to $C_i \square K_2$.

Any cell $X'$ which is contained in a cell $X$ of a partial cube $G$ is called a {\it face} of $X$. By Lemma~\ref{product_cycles_gated} equivalently, the faces of $X$ are the gated subgraphs of $G$ included in $X$. We denote by $X(G)$ the set of all cells of $G$ and call $X(G)$ the {\it combinatorial complex} of $G$. The {\it dimension} $\mbox{dim}(X)$ of a cell $X$ of $G$ is the number of edge-factors plus two times the number of cyclic factors.
Let us now recall the stronger 3C-condition for partial cubes $G$:

\medskip\noindent
{\it 3-cell condition} ({\it 3C-condition}):  for all $k\geq 0$ and three $(k+2)$-dimensional cells $X_1,X_2,X_3$ of $G$ such that each of the pairwise intersections $X_{12},X_{23},X_{13}$ is a cell of dimension $k+1$ and the intersection $X_{123}$ of all three cells is a cell of dimension $k$, the convex hull of $X_{1}\cup X_{2}\cup X_{3}$ is a $(k+3)$-dimensional cell.

The proof of Theorem~\ref{mthm:amalgam} is organized in the following way. We start by showing that any hypercellular graph satisfies the 3CC-condition. Together with Theorem~\ref{mthm:cells}, this shows that (i)$\Rightarrow$(ii). We then obtain (ii)$\Rightarrow$(iii), while (iii)$\Rightarrow$(ii) holds trivially. To prove (ii)$\Rightarrow$(i), we show that the class of partial cubes satisfying (ii) is closed by taking minors. Since $Q^-_3$ does not satisfies the 3CC-condition, we conclude that all such graphs are hypercellular.
The last and longest part of the section is devoted to the proof of the equivalence (i)$\Leftrightarrow$(iv).


Since by Theorem~\ref{mthm:cells}, hypercellular graphs have gated cells, the following lemma completes the proof of (i)$\Rightarrow$(ii).
\begin{lemma} \label{three_cycles}  Any hypercellular graph $G$ satisfies the 3CC-condition.
\end{lemma}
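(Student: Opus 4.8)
\textbf{Proof plan for Lemma~\ref{three_cycles}.}

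The plan is to take three convex cycles $C_1,C_2,C_3$ of a hypercellular graph $G$ satisfying the preconditions of the 3CC-condition (pairwise intersections are three distinct edges $e_{12},e_{23},e_{31}$, and $C_1\cap C_2\cap C_3$ is a single vertex $v$), and to prove that $\conv(C_1\cup C_2\cup C_3)$ is a cell of the form $C_i\times K_2$ with the other two cycles being $4$-cycles. Since $G\in\mathcal{F}(Q_3^-)$ is pc-minor-closed and convex subgraphs of hypercellular graphs are hypercellular, I would first reduce to the case that $G=\conv(C_1\cup C_2\cup C_3)$ itself. The starting point is a local analysis at the common vertex $v$: the three edges $e_{12},e_{23},e_{31}$ incident to $v$ lie in three distinct $\Theta$-classes, and I would look at the three neighbors of $v$ along these edges together with the second neighbors of $v$ on the three cycles. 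The key obstruction to rule out is the configuration that would directly produce a $Q_3^-$ (three edges at a vertex pairwise spanning $4$-cycles but with no common $8$th vertex); the 3-cube condition and the absence of $Q_3^-$ as a pc-minor must be used to force enough structure.

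Concretely, the first substantive step is to show that at least two of the three cycles, say $C_j$ and $C_k$, are $4$-cycles. I would argue by contracting $\Theta$-classes: if two of the cycles, say $C_2$ and $C_3$, both had length $\ge 6$, one could contract a well-chosen class to reach a smaller hypercellular graph in which the images of the three cycles still satisfy the preconditions, and iterate; alternatively, examining the neighbors of $v$ and using Theorem~\ref{mthm:cells} (each $\conv(C_i)=C_i$ is gated, being convex) together with gatedness one derives that two short cycles must appear, else a forbidden isometric subdivision of the type in Proposition~\ref{thm:closure} arises at $v$. Once $C_j=e_{ij}\cup e_{jk}\cup(\text{two more edges})$ and $C_k$ are $4$-cycles sharing the vertex $v$ (and sharing an edge with $C_i$), the edges of $C_j$ and $C_k$ not at $v$ determine a single $\Theta$-class $E_f$ transverse to $C_i$: indeed $e_{ij}\in C_i$ and the opposite edge of the $4$-cycle $C_j$ are $\Theta$-equivalent, and similarly for $C_k$, forcing these to be the same class $E_f$ and $E_f$ to be disjoint from (not crossing) $C_i$.

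With $E_f$ identified as a class not crossing $C_i$ but crossed by both $4$-cycles, the remaining step is to show $\conv(C_1\cup C_2\cup C_3)=C_i\times K_2$. I would do this by the isometric-expansion viewpoint: contracting $E_f$ collapses $G$ onto a hypercellular graph $G'$ in which the images of $C_1,C_2,C_3$ all live inside $\conv(\pi_f(C_i))=C_i$ (using Lemma~\ref{convex_hull} and the fact that $E_f$ does not cross $C_i$), so $G'$ is exactly the convex cycle $C_i$. Hence $G$ is an isometric expansion of the even cycle $C_i$, and since $G$ is hypercellular it contains no convex half-expanded cycle (Lemma~\ref{lemma:half-extended}); by Proposition~\ref{product_of_cycles_expansion} applied with $G'=C_i$, either $G$ is itself a product of edges and even cycles, or one of the two expansion sides is all of $C_i$ and the other a gated subproduct. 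The presence of the two transverse $4$-cycles $C_j,C_k$ at $v$ (whose transverse edges span all of $E_f$) forces the expansion to be performed along all of $C_i$, so $G\cong C_i\times K_2$, as desired, and then $C_j,C_k$ are indeed $4$-cycles.

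I expect the main obstacle to be the first substantive step — proving that two of the three cycles must be $4$-cycles and that the transverse edges of those two $4$-cycles constitute a single $\Theta$-class disjoint from the third cycle. This is where one genuinely uses the exclusion of $Q_3^-$ as a pc-minor rather than mere bipartiteness or convexity of cells; the bookkeeping of which $\Theta$-classes cross which cycle at $v$, and showing that an ``all three cycles long'' or ``two classes transverse'' configuration would survive a contraction into a $Q_3^-$, is the technical heart. The rest reduces cleanly to Proposition~\ref{product_of_cycles_expansion} and the half-expanded-cycle obstruction.
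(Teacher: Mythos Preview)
Your overall architecture --- reduce to $G=\conv(C_1\cup C_2\cup C_3)$, contract a $\Theta$-class, and control the resulting expansion via Proposition~\ref{product_of_cycles_expansion} together with Lemma~\ref{lemma:half-extended} --- is the same as the paper's. The divergence, and the genuine gap, is your ``first substantive step'': you want to prove \emph{up front} that two of $C_1,C_2,C_3$ are $4$-cycles, and only then contract. Neither of the two arguments you sketch for this actually closes. Iterating contractions until the configuration becomes three $4$-cycles tells you something about a pc-minor of $G$, not about the lengths of the original $C_j,C_k$; and nothing in Proposition~\ref{thm:closure} produces a length restriction on convex cycles meeting at a vertex. (Your description of the transverse class is also garbled: what you need is the class of $e_{jk}$, and the reason it avoids $C_i$ is that the endpoint of $e_{jk}$ other than $v$ is gated in $C_i$ at $v$ --- not anything about ``opposite edges of $C_j$ being $\Theta$-equivalent to $e_{ij}$'', which points to a class that \emph{does} cross $C_i$.)

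The paper sidesteps this entirely by running the induction the other way. It never tries to show two cycles are short before contracting. Instead it inducts on $|V(G)|$: if all three are $4$-cycles the hull is $Q_3$; otherwise some $C_1$ has length $\ge 6$, and one contracts a class $E_f$ coming from an edge of $C_1$ adjacent to $e_{12}$ but different from it. Gatedness of $C_2$ and $C_3$ (Proposition~\ref{convex_hull_cycle}) shows $E_f$ crosses neither, so $C_2,C_3$ survive unchanged and $C'_1$ is two shorter. By induction $G'\cong C\times K_2$; then Proposition~\ref{product_of_cycles_expansion} is applied to the expansion of this product (not of a bare cycle), and the case analysis there yields both the product form of $G$ and, as a consequence, that two of the original cycles are $4$-cycles. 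In short: the fact you are trying to prove first is in the paper a \emph{corollary} of the inductive step, not a prerequisite for it.
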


\begin{proof} Let $C_1,C_2,C_3$ be three convex cycles of a partial cube $G\in\mathcal{F}(Q_3^-)$ such that any two cycles $C_i,C_j$, $1\le i<j\le 3$, intersect in an edge $e_{ij}$ and the three cycles intersect in a vertex $v$. We proceed by induction on the number of vertices of $G$. By induction assumption we can suppose that $G$ is the convex hull of the union $C_1\cup C_2\cup C_3$. If each of the cycles $C_1,C_2,C_3$ is a 4-cycle, then their union is an isometric subgraph $H$ of $G$ isomorphic to $Q^-_3$. Since $G\in \mathcal{F}(Q_3^-)$,  $H$ is not convex. Therefore the convex hull of $H=C_1\cup C_2\cup C_3$ is the 3-cube $Q_3$, and we are done. Thus suppose that one of the cycles, say $C_1$, has length $\ge 6$.  Let the edge $e_{12}$ be of the form $v_1v$. Let $u$ be the neighbor of $v_1$ in $C_1$ different from $v$. Let $E_f$ be the equivalence class of $\Theta$ defined by the edge $uv_1$. We claim that $E_f$ does not cross
$C_2$ and $C_3$, or, equivalently, that $C_2\cup C_3\subset W(v_1,u)$. Since $G\in {\mathcal F}(Q^-_3)$, by Proposition~\ref{convex_hull_cycle}, each of the cycles $C_1,C_2,C_3$ is a gated subgraph of $G$. Since $u$ is adjacent to $v_1\in C_2$, the vertex $v_1$ is the gate of $u$ in $C_2$, whence $C_2\subset W(v_1,u)$. Analogously, since $C_1$ is gated and $v\in C_1\cap C_3$, the gate of $u$ in $C_3$ must belong to $I(u,v)\subset C_1$. Since the length of $C_1$ is at least 6,
this gate cannot be adjacent to $u$ and $v$, thus $v$ is the gate of $u$ in $C_3$. Since $v_1\in I(u,v)$, again we conclude that $C_3\subset W(v_1,u)$. Hence
$E_f$ does not cross the cycles $C_2$ and $C_3$.  Let $G':=\pi_f(G)$ and $C'_i:=\pi_f(C_i)$, for $i=1,2,3$. Since each $C_i, i=1,2,3$ is gated, by Lemma~\ref{contraction_gated} each $C'_i,i=1,2,3$ is a gated subgraph of $G'$ and by Lemma~\ref{convex_hull} $G'$ is the convex hull of $C'_1\cup C'_2\cup C'_3$.
Notice that the three  cycles $C'_1,C'_2,C_3'$ pairwise intersect in the same edges as the cycles $C_1,C_2,C_3$ and all three in the vertex $v$.

Since $G'\in \mathcal{F}(Q_3^-),$ by induction assumption $G'$ is isomorphic to the Cartesian product $C \square K_2$, where $C$ is isomorphic to one of $C_1',C_2',C_3'$.
The graph $G$ is obtained from the graph $G'$ by an isometric expansion with respect to the subgraphs $G'_1$ and $G'_2$ of $G'$. By Proposition~\ref{product_of_cycles_expansion} and Lemma~\ref{lemma:half-extended},
\begin{itemize}
\item[(i)] $G$ is a Cartesian product of edges and even cycles or
\item[(ii)] $G'_1$ coincides with $G'$  and $G'_2$ is isomorphic to a subproduct of edges and cycles.
\end{itemize}
 The only convex cycles of length at least 6 in a product of edges and cycles are layers. Therefore in the case (i) $C_1$ must be a layer $L$ in the product. Each of cycles $C_2$ and $C_3$ shares an edge with $C_1$. The only such cycles are 4-cycles between layer $L$ and any other layer adjacent to $L$. Since also $C_2$ and $C_3$ share an edge, they must both be between $L$ and some layer $L'$. Then $L$ and $L'$ form a cell isomorphic to $L \square K_2\cong C_1 \square K_2$ that is the convex hull of $C_1\cup C_2 \cup C_3$.

Finally, assume (ii) holds. Since $G'\cong C\square K_2$, $G'_2$ is either a vertex, an edge, a 4-cycle, a layer isomorphic to $C$, or the whole $G'$. Thus, $G'_2$ intersects no cyclic layer in just two antipodal vertices, i.e., no convex cycle of $G'$ gets extended. A contradiction, since $C_1'$ should be extended.
\end{proof}

We will now 
establish the implication (ii)$\Rightarrow$(iii), while (iii)$\Rightarrow$(ii) trivially holds.
\begin{proposition} \label{three_cells} If $G$ is a partial cube in which cells are gated and which satisfies the 3CC-condition, then $G$ satisfies the 3C-condition.

\end{proposition}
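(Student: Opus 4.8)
The plan is to prove the implication (ii)$\Rightarrow$(iii) by induction on the dimension $k$ of the common intersection cell $X_{123}$, with the base case $k=0$ being essentially the 3CC-condition applied after reducing to convex cycles inside the three $2$-dimensional cells $X_1,X_2,X_3$. So suppose cells are gated and the 3CC-condition holds, let $X_1,X_2,X_3$ be $(k+2)$-dimensional cells intersecting pairwise in $(k+1)$-dimensional cells $X_{12},X_{23},X_{13}$ and all three in a $k$-dimensional cell $X_{123}$, and set $Y:=\conv(X_1\cup X_2\cup X_3)$. By replacing $G$ with $Y$ (which is convex, hence in $\mathcal{F}(Q_3^-)$ by pc-minor-closure, and still satisfies (ii) since gatedness and the 3CC-condition are inherited by convex subgraphs) I may assume $G=Y$. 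The goal is to show $G$ is a single cell of dimension $k+3$.

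First I would handle the base case $k=0$: here $X_{123}$ is a single vertex $v$, and each $X_i$ is a Cartesian product of edges and even cycles of dimension $2$, i.e., a $4$-cycle, a $C_6$, a $C_8,\dots$, or $K_2\times K_2$. Each $X_{ij}$ is an edge containing $v$, and the three edges $e_{12},e_{23},e_{13}$ are pairwise distinct. Inside $X_i$, the two edges through $v$ (namely $e_{ij}$ and $e_{ik}$) span a convex cycle $C_i$ of $X_i$ — either a $4$-cycle (if $X_i$ has a cyclic factor realized by those two edges, or if $X_i=K_2\times K_2$) or, in general, the convex cycle of $X_i$ through $v$ using those two $\Theta$-classes; since $X_i$ is itself a product of edges and even cycles, $\conv(e_{ij}\cup e_{ik})$ is exactly one cyclic factor or a product of two edge-factors, hence a convex cycle, and moreover $\conv(C_1\cup C_2\cup C_3)=\conv(X_1\cup X_2\cup X_3)=G$ because the $\Theta$-classes crossing $G$ are precisely those crossing $C_1\cup C_2\cup C_3$ (each $X_i$ has only two $\Theta$-classes, shared with the $C_i$). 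Now the 3CC-condition applies to $C_1,C_2,C_3$: it gives that $G$ is a cell isomorphic to $C_i\times K_2$ with $C_j,C_k$ being $4$-cycles for the other two indices. Tracing this back through the products $X_j=C_j$ (forced to be $4$-cycles) and $X_i\supseteq C_i$, one checks $X_i=C_i$ as well, so $G\cong C_i\times K_2$ has dimension $2+1=3=k+3$, as required.

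For the inductive step $k\ge 1$, I would pick an edge-$\Theta$-class $E_f$ that crosses $X_{123}$; such a class exists since $\dim X_{123}=k\ge 1$. Because $X_{123}$ is a face of each $X_i$ and cells are products, $E_f$ crosses all of $X_{12},X_{23},X_{13},X_1,X_2,X_3$, hence crosses $G=Y$. I would then contract: $G':=\pi_f(G)$ lies in $\mathcal{F}(Q_3^-)$ (Lemma~\ref{commut_contraction}), satisfies (ii) — gatedness of cells is preserved under contraction by Lemma~\ref{contraction_gated} applied to each cell, and the 3CC-condition is inherited by a routine check using that contraction sends convex cycles of length $\ge 6$ to convex cycles and that $4$-cycles crossed by $E_f$ collapse to edges — and the images $\pi_f(X_i),\pi_f(X_{ij}),\pi_f(X_{123})$ are cells (Lemma~\ref{contraction_convex} together with the structure of products) of dimensions $k+1,k,k-1$ respectively, still intersecting in the required pattern, with $\conv$-hull $\pi_f(G)$ by Lemma~\ref{convex_hull}. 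By the induction hypothesis $G'$ is a single cell of dimension $k+2$, i.e., a Cartesian product of edges and even cycles. Finally $G$ is an isometric expansion of $G'$ avoiding half-expanded cycles (Lemma~\ref{lemma:half-extended}), so by Proposition~\ref{product_of_cycles_expansion} either $G$ is itself a Cartesian product of edges and even cycles — and then, since it is the convex hull of $X_1\cup X_2\cup X_3$ and is obtained from the $(k+2)$-cell $G'$ by one expansion, a dimension count forces $\dim G=k+3$ and we are done — or one of $G'_1,G'_2$ equals $G'$ while the other is a proper subproduct, i.e., no convex cycle of $G'$ gets extended by the expansion. In the latter case I would derive a contradiction exactly as in the proof of Lemma~\ref{three_cycles}: the expansion direction $E_f$ must, by the way it was chosen (crossing $X_{123}$, hence each $X_i$), extend some cyclic layer of $G'$ in two antipodal vertices, which is impossible when $G'_2$ is a subproduct.

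The main obstacle I anticipate is not the contraction step but the bookkeeping in the base case and in the final expansion case: one must be careful that "$G$ is a Cartesian product of edges and even cycles" plus "$G=\conv(X_1\cup X_2\cup X_3)$ with the $X_i$ of dimension $k+2$ sharing a $k$-cell" genuinely forces $\dim G=k+3$ rather than something larger, which amounts to observing that the $\Theta$-classes of $G$ are exactly the $\Theta$-classes appearing in $X_1\cup X_2\cup X_3$ — there are $(k+2)$ of them in $X_{12}$, one more in $X_1$, one more in $X_2$ (and $X_3$ contributes nothing new, its extra class being shared with the convex hull), totalling $k+3$ — together with the fact that any product structure on $\conv(X_1\cup X_2\cup X_3)$ compatible with these classes has the claimed dimension. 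The other delicate point is verifying that the 3CC-condition is inherited under $f$-contraction when $E_f$ crosses $X_{123}$; this is where one uses that the three convex cycles witnessing an instance of 3CC in $G'$ lift to convex cycles in $G$ (pulling back along the contraction, using that the relevant $\Theta$-classes all survive), reducing to the 3CC-condition in $G$ itself.
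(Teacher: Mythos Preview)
Your inductive step has a genuine gap: the claim that $\pi_f(X_i),\pi_f(X_{ij}),\pi_f(X_{123})$ have dimensions $k+1,k,k-1$ is false in general. The dimension of a cell is the number of edge-factors plus \emph{twice} the number of cyclic factors, so contracting a $\Theta$-class lying in a cyclic factor of length $\ge 6$ leaves the dimension unchanged. Concretely, take $k=1$: then $X_{123}$ is an edge and $E_f$ is its unique $\Theta$-class. Now $X_{12}$ is a $2$-dimensional gated cell containing the edge $X_{123}$; nothing forbids $X_{12}\cong C_6$ with $X_{123}$ one of its edges (this is a legitimate gated subgraph by Lemma~\ref{product_cycles_gated}). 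Contracting $E_f$ sends $X_{123}$ to a vertex (dimension $0$) but sends $X_{12}$ to a $C_4$, which still has dimension $2$, not $1$ --- so the inductive hypothesis does not apply. The same obstruction arises whenever the factor of some $X_i$ containing $E_f$ is a cycle of length $\ge 6$; choosing an ``edge-$\Theta$-class'' in $X_{123}$ does not help, since an edge-factor of $X_{123}$ can sit inside a cyclic factor of $X_{ij}$ or $X_i$. A secondary issue is that you repeatedly invoke $G\in\mathcal{F}(Q_3^-)$ and Lemma~\ref{lemma:half-extended}, but you are only assuming condition (ii); showing that (ii) survives contraction is precisely Proposition~\ref{condition(ii)}, which the paper proves separately (and after the present proposition).

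The paper's argument avoids induction and contraction entirely. It observes that since $\dim X_{ij}-\dim X_{123}=1$, each $v\in X_{123}$ has a unique neighbour $v_{ij}\in X_{ij}\setminus X_{123}$; the $2$-path $(v_{ij},v,v_{ik})$ then lies in a unique convex cycle $C_i(v)$ of the product $X_i$, and the 3CC-condition applied to $C_1(v),C_2(v),C_3(v)$ yields a cell $X_v\cong C_i\times K_2$. The product structure forces the $\Theta$-classes of $C_i(v)$ to be independent of $v$, so the cells $X_v$ are mutually isomorphic, disjoint, and assemble into $X_{123}\times C_i\times K_2$, which is gated and hence equals $\conv(X_1\cup X_2\cup X_3)$. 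This is a direct construction at the level of $G$ itself, sidestepping the bookkeeping of how dimensions behave under contraction.
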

\begin{proof}
 Since the properties of $G$ are closed under restriction, without loss of generality we consider $G=\conv(X_1\cup X_2\cup X_3)$. Since cells are gated in $G$, by Lemma~\ref{product_cycles_gated} $X_{123}$ is a subproduct of $X_{12},X_{23},X_{13}$ and $X_{ij}$ is a subproduct of $X_i$ and $X_j$ for all $i,j\in\{1,2,3\}$, where in all cases the factors of the subproducts are vertices, edges,  or factors of the superproducts.  Indeed by the conditions on the dimensions, the subproducts all have the same factors than their superproducts except that either one edge-factor from the superproduct is a vertex in the subproduct or one cyclic factor from the superproduct is an edge in the subproduct.
 This gives that any $v\in X_{123}$ has exactly one neighbor $v_{ij}\in X_{ij}\setminus X_{123}$ for all $i,j\in\{1,2,3\}$. Now, since the cells $X_{1},X_{2},X_{3}$ are products, a path of the form $(v_{ij},v,v_{ik})\subset X_i$ is contained in the unique convex cycle $C_i$ of $X_i$ accounting for the two supplementary dimensions of $X_i$ compared to $X_{123}$, for all $\{i,j,k\}=\{1,2,3\}$. Since $G$ satisfies the 3CC-condition, $\conv(C_1\cup C_2\cup C_3)$ is a cell $X_v$ of $G$ isomorphic to $C_i\square K_2$ and $C_j\cong C_k$ are 4-cycles, for some $\{i,j,k\}=\{1,2,3\}$.
 Moreover, by the product structure of $X_i$ the $\Theta$-classes of $C_i$ and their order on $C_i$ do not depend on the choice of $v\in X_{123}$, for all $i\in\{1,2,3\}$. Therefore, for all $v,w\in X_{123}$ and some $i\in\{1,2,3\}$ we have $X_v\cong X_w\cong C_i\square K_2$, where corresponding edges are in the same $\Theta$-class of $G$. Since they are separated by $\Theta$-classes crossing $X_{123}$, for different $v,w\in X_{123}$, the cells $X_v$ and $X_w$ are disjoint and by construction the union of all of them covers $X_1\cup X_2\cup X_3$. We obtain that $X_1\cup X_2\cup X_3\subseteq X_{123}\square C_i\square K_2$, which is a $(k+3)$-dimensional cell of $G$, thus gated and thus convex. Since $G=\conv(X_1\cup X_2\cup X_3)$, we get $\conv(X_1\cup X_2\cup X_3)\cong X_{123}\square C_i\square K_2$, which establishes the claim.
\end{proof}

To show (ii)$\Rightarrow$(i), in Proposition~\ref{condition(ii)} we prove that the class of partial cubes satisfying (ii)
is minor-closed.  Since $Q_3^-$ does not satisfy the 3CC-condition, the graphs satisfying (ii)  cannot be contracted to $Q_3^-$, thus they are hypercellular.

%

\begin{proposition} \label{condition(ii)}
The family of partial cubes having gated cells and satisfying the 3CC-condition is a pc-minor-closed family.
\end{proposition}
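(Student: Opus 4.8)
The plan is to show that the family of partial cubes with gated cells satisfying the 3CC-condition is closed under restrictions and under contractions separately; by Lemma~\ref{commut_rest_contaction} these two operations generate all pc-minors, so this suffices. Closure under restrictions is the easy half: if $G'=\rho_A(G)$ is a restriction of such a $G$, then every convex subgraph of $G'$ is convex in $G$ (restrictions of restrictions are restrictions, Lemma~\ref{restriction}), so every cell of $G'$ is a cell of $G$, hence gated in $G$, and by Lemma~\ref{contraction_gated} applied iteratively it remains gated in $G'$. Likewise any three convex cycles of $G'$ meeting in the prescribed pattern are convex cycles of $G$ meeting in the same pattern, so their convex hull in $G$ is a cell of the required form, and this hull lies inside $G'$ because $G'$ is convex in $G$; thus $G'$ inherits the 3CC-condition.

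The substantive half is closure under a single contraction $\pi_f$. First I would check that cells stay gated: if $X$ is a cell of $G'=\pi_f(G)$, I want to produce a cell $\widehat X$ of $G$ with $\pi_f(\widehat X)=X$ and $E_f$ either crossing or disjoint from $\widehat X$, so that Lemma~\ref{contraction_gated} (or Lemma~\ref{contraction_convex}) gives gatedness of $X=\pi_f(\widehat X)$ in $G'$. Concretely, $X$ is the convex hull of an isometric cycle $C'$ of $G'$ (using Theorem~\ref{product_of_cycles} one could invoke this, but in fact we only know $G$ is hypercellular once we've proved the implication — so instead I would argue directly that a Cartesian product of edges and even cycles is the convex hull of an isometric cycle, which is stated in the Cartesian-products subsection); lift $C'$ to a shortest-path structure in $G$. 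The issue is that $C'$ may lift to an isometric cycle $C$ of the same length (when $E_f$ misses the relevant classes) or its preimage may be a cycle of length $|C'|+2$ or a subdivided path; one needs to argue that in all cases the convex hull of the lift is still a cell, which is exactly the content of Proposition~\ref{product_of_cycles_expansion} together with Lemma~\ref{lemma:half-extended}: $G$ near this cell is an isometric expansion of the cell $X$ of $G'$, and such an expansion is again a product of edges and even cycles provided no half-expanded cycle arises. Since $Q_3^-$ is excluded the half-expanded-cycle obstruction is absent, giving that $\widehat X$ is a cell; then gatedness of $X$ in $G'$ follows.

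For the 3CC-condition under contraction, take three convex cycles $C'_1,C'_2,C'_3$ of $G'=\pi_f(G)$ meeting pairwise in distinct edges $e'_{12},e'_{23},e'_{31}$ and all three in a vertex $v'$. Lift $v'$ to a vertex $v$ of $G$ and lift the edges and cycles; each $C'_i$ is the image of a convex cycle $C_i$ of $G$ (or of a cell of $G$ that contracts onto $C'_i$, if $E_f$ crosses $C_i$). The cleanest route is: pick a vertex $v\in\pi_f^{-1}(v')$, let $C_i=\langle\langle e_i, e_i'\rangle\rangle$-type lifts through $v$, and use that the convex hull in $G$ of the union of the three lifts maps under $\pi_f$ onto $\conv(C'_1\cup C'_2\cup C'_3)$ by Lemma~\ref{convex_hull}. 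Apply the 3CC-condition in $G$ to the three lifted cycles — one must first check their intersection pattern is preserved, which holds because $v$ lifts $v'$ and the $e_i$ lift the $e'_{ij}$ to distinct edges (distinctness is maintained since $E_f$ is a single $\Theta$-class and the $e'_{ij}$ are in three different classes) — to conclude $\conv(C_1\cup C_2\cup C_3)$ is a cell of $G$ isomorphic to $C_i\times K_2$ with the other two 4-cycles. Contracting, $\pi_f$ of this cell is a cell of $G'$ by the first part, and it equals $\conv(C'_1\cup C'_2\cup C'_3)$; a short check of which factor shrinks (an edge-factor collapses to a vertex or a cyclic factor to an edge, by the classification in Lemma~\ref{product_cycles_gated}) confirms the contracted object is still $C\times K_2$ with two 4-cycles or degenerates to a single cell, which is what the strong 3CC-condition allows.

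The main obstacle I expect is the case analysis in the contraction step when $E_f$ interacts nontrivially with the three cycles: one has to handle separately whether $E_f$ crosses none, one, two, or three of the $C_i$, and whether $E_f$ equals one of the classes $e'_{ij}$ corresponds to (it cannot, since $e'_{ij}$ are edges of $G'$, hence their $\Theta$-classes are not $f$, but the preimages still need care), and in the crossing case the lift of $C'_i$ is a $2$-dimensional cell $C_i\times K_2$ rather than a cycle, so "$\conv$ of three cycles" must be replaced by "$\conv$ of three cells" and one appeals to Proposition~\ref{three_cells} (the 3C-condition) rather than the 3CC-condition directly — ensuring this is legitimate requires that we already know (ii)$\Rightarrow$(iii), which the excerpt has just established for a single graph and which applies here since $G$ satisfies (ii). Organizing these cases so that each reduces to an already-proven expansion lemma (Proposition~\ref{product_of_cycles_expansion}) or to the 3C-condition is the crux; the rest is bookkeeping with $\Theta$-classes and the product structure of cells.
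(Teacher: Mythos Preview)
Your overall architecture matches the paper's: restrictions are immediate, and for contractions you lift cells via the expansion machinery (Lemma~\ref{convex_expansion} and Proposition~\ref{product_of_cycles_expansion}) to inherit gatedness, then handle the 3CC-condition by lifting the three cycles to $G$. That said, two points need attention.

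First, a localized error: when you invoke Proposition~\ref{product_of_cycles_expansion} for the expanded cell and say ``since $Q_3^-$ is excluded the half-expanded-cycle obstruction is absent'', you are assuming the conclusion. At this point $G$ is only known to have gated cells and satisfy 3CC, not to lie in $\mathcal{F}(Q_3^-)$. The paper fixes this correctly: a half-expanded cycle itself violates the 3CC-condition (at an interior vertex of the doubled path one finds the original cycle together with two adjacent $4$-cycles in the forbidden configuration), so 3CC in the convex subgraph $\psi(X')$ is what rules out the obstruction. This is easy to repair, but the reasoning as written is circular.

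Second, and more substantively, your plan for the 3CC-condition under contraction has a real gap. You propose that when $E_f$ interacts with some $C'_i$ the lift becomes a cell and one should ``appeal to Proposition~\ref{three_cells} (the 3C-condition) rather than the 3CC-condition directly''. This does not work: the 3C-condition requires three $(k{+}2)$-cells whose pairwise intersections are $(k{+}1)$-cells and whose triple intersection is a $k$-cell. The lifts of the $C'_i$ will not in general satisfy these dimension constraints --- for instance one cycle may lift to a cycle of the same length, another to a cycle two longer, and a third to $C'_i\times K_2$, with intersections that are edges or $4$-cycles of mismatched dimensions. The paper does not use 3C here at all. Instead it does an explicit case analysis based on the trichotomy for each $C'_i$ (fully in $G'_1\cap G'_2$; fully in one side with at most an edge in the other; split into two halves by $G'_1,G'_2$). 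In each case it locates, sometimes with the aid of an auxiliary $4$-cycle created by the expansion (your ``bookkeeping'' is the actual content), a triple of convex \emph{cycles} in $G$ meeting in the 3CC pattern, applies 3CC in $G$, and then contracts the resulting cell back down. Your sketch does not supply a mechanism for producing such cycles in the mixed cases, and the proposed shortcut via 3C does not substitute for it.
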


\begin{proof}
If a condition of the proposition is violated for a convex subgraph of a partial cube $G$, then it is also violated for $G$. Therefore the family in question is closed under restrictions.

Let now $G$ be a partial cube satisfying the conditions of the proposition and let $G'$ be a contraction of $G$ along some equivalence class $E_f$. Pick a cell $X'$ in $G'$. By Lemma~\ref{convex_expansion}, the expansion $X$ of $X'$ is a convex subgraph of $G$, thus $X$ also satisfies  the conditions of the proposition. By the 3CC-condition, $X$ has no convex subgraph isomorphic to a half-expanded cycle. Thus Proposition~\ref{product_of_cycles_expansion} provides us with the structure of $X$; in particular, $X$ includes a cell $Y$ such that $\pi_f(Y)=X'$. Since the cells of $G$ are gated, by Lemma~\ref{contraction_gated}, $X'$ is gated. Therefore the cells of $G'$ are gated.

Now, let $C_1',C_2',C_3'$ be three convex cycles of $G'$ such that any two cycles $C_i',C_j'$, $1\le i<j\le 3$, intersect in an edge $e'_{ij}$ and the three cycles intersect in a vertex $v'$. Let $G'_1$ and $G'_2$ be the subgraphs of $G'$ with respect to which we perform  the expansion of $G'$ into $G$. By Proposition~\ref{product_of_cycles_expansion} (or directly using the fact that $C_1',C_2',C_3'$ are convex cycles), for each $C'_i$, $i\in \{1,2,3\}$, we have one of the following three options: (a) either both $G'_1\cap C'_i$ and $G'_2\cap C'_i$ coincide with $C'_i$, or (b) one of $G'_1\cap C'_i$ and $G'_2\cap C'_i$ is the whole cycle $C'_i$ and other is an edge, a vertex, or empty, or (c) both $G'_1\cap C'_i$ and $G'_2\cap C'_i$ are paths corresponding to halves of $C_i'$ with intersection in two antipodal vertices of $C'_i$. Using this trichotomy, we divide the analysis in the following cases.

\medskip\noindent
{\bf Case 1.} For $G'_1$ or $G'_2$, say for $G'_1$, and for at least two of the three cycles $C'_1,C'_2, C'_3,$ say for $C'_1,C'_2$,  we  have $G'_1\cap C'_i=C'_i$ and $G'_1\cap C_j'=C'_j$.

\medskip
Then the edges $e'_{13}$ and $e'_{23}$ are in $G'_1$. Thus either $G'_1\cap C'_3=C'_3$ or $G'_1\cap C'_3$ is a half of $C'_3$ that includes $e'_{13}$ and $e'_{23}$. Then in the expansion we have 3 convex cycles $C_1,C_2,C_3$ pairwise sharing an edge and a vertex in the intersection of all three, such that $\pi_f(C_i)=C_i'$ for $i\in \{1,2,3\}$. Since the 3CC-condition holds in $G$, two of the cycles $C_1,C_2,C_3$ are 4-cycles and all three are included in a cell $X$ isomorphic to $C_k \square K_2$ where $C_k$ is the third cycle. Since a contraction can only shorten the cycles, at least two of the cycles $C_1',C_2',C_3'$ are 4-cycles and the convex hull of all three must be included in a cell $\pi_f(X)$. The only contraction of $C_k \square K_2$ that has at least three convex cycles is isomorphic to $C'_k \square K_2$.

\medskip\noindent
{\bf Case 2.} For $G'_1$ or $G'_2$, say for $G'_1$, among $C'_1,C'_2, C'_3$ there exists a unique cycle, say $C'_1$, such that  $G'_1\cap C_1'=C_1'$.

\medskip
By symmetry and in view of Case 1, for two other cycles $C'_2$ and $C'_3$ we have only one of the following options: (1) either $G'_2\cap C'_j=C'_j$ for exactly one $j\in \{2,3\}$
or (2) for all $j\in \{2,3\}$ and $k\in \{1,2\}$ we have that $G'_k\cap C'_j$ is a half of the cycle $C'_j$.

First consider the option (2). By properties of $C_1'$, in the half-space $G_1$ of $G$ corresponding to $G'_1$ in the expansion there exists a convex cycle $C_1$ of the same length as $C_1'$ such that $\pi_f(C_1)=C_1'$. Moreover $C_2'$ and $C_3'$ get expanded to convex cycles $C_2,C_3$ each sharing exactly one edge with $C_1$ and having one vertex in the intersection of all three. Since the cells of $G$ are gated, the cycles $C_2,C_3$ are gated. The cycles $C_2,C_3$ share at least one edge. If a vertex of $e'_{23}$ is in $G'_1\cap G'_2$, then $C_2$ and $C_3$ share 2 edges, which impossible because $C_2$ and $C_3$ are gated. By the 3CC-condition, two of $C_1,C_2,C_3$ are 4-cycles, which is impossible because in $G'$  one of those 4-cycles $C_i$ will get contracted to an edge and not to the cycle $C'_i$.

Now consider the option (1) that $G'_2\cap C'_2=C'_2$ and both $G'_1\cap C'_3, G'_2\cap C'_3$ are halves of $C_3'$. Since $v\in G'_1\cap G'_2$, the antipode $u$ of $v$ in $C'_3$ also belongs to $G'_1\cap G'_3$. Hence $C'_3$ gets expanded to $v$, its antipode  $u$, and $e_{12}'\in G'_1\cap G'_2$. Therefore to ensure that we do not have $G'_k\cap C_j' =C_j'$ for some $k\in \{1,2\}$ and 
$j\in \{1,2,3\}$, we must have $G'_1\cap C'_2=e_{12}'$ and $G'_2\cap C'_1=e_{12}'$. Now the expansion of $C_1'\cup C_2'\cup C_3'$ has 4 convex cycles: the expansion $C_3$ of $C_3'$, the convex cycles $C_1$ and $C_2$ that get mapped to $C_1'$ and $C_2'$ by the contraction and a 4-cycle $C_4$ between $C_1$ and $C_2$. Cycles $C_1,C_3,C_4$ pairwise intersect in three different edges  and all
have a common vertex, thus their convex  closure $X$ is isomorphic to $C_3\square K_2$ (since the cycle $C_3$ must have length at least 6). This proves that $C_1$ is a 4-cycle. Let $C_5$ be the fourth cycle, sharing edges with $C_3$ and $C_4$ different from $C_1$. Then $C_5$ shares two edges with $C_2$ which is possible only if $C_2=C_5$. Thus we see that again $\pi_f(X)$ is a gated cell including $C_1',C_2',C_3'$.

\medskip\noindent
{\bf Case 3.} For every $i\in \{1,2,3\}$, $G'_1\cap C'_i$ and $G'_2\cap C'_i$ are halves of $C'_i$ intersecting in two antipodal vertices of $C'_i$.

\medskip
If $G'_1\cap G'_2$ contains any vertex of $e_{ij}'$ for $i,j\in \{1,2,3\}$, then there exist convex cycles $C_i$ and $C_j$ in $G$ that share two edges, which is impossible. Thus $C_1',C_2',C_3'$ get extended to cycles $C_1,C_2,C_3$ pairwise sharing an edge and a vertex in common. Then two of them must be 4-cycles, which is not the case because  then two of them get contracted to edges. We have proved that the 3CC-condition also holds for $G'$, thus the class we consider is closed under contractions. This finishes the proof.
\end{proof}

%
%


The remaining part of this section is devoted to the proof of the equivalence (i)$\Leftrightarrow$(iv). For an equivalence class $E_f$ of $\Theta$, we denote by $N(E_f)$ the \emph{carrier} of $f$, i.e., the subgraph of $G$ which is the union of all cells of $G$ which are crossed by $E_f$. The carrier $N(E_f)$ splits into its positive and negative parts $N^+(E_f):=N(E_f)\cap H^+_f$ and $N^-(E_f):=N(E_f)\cap H^-_f$.

\begin{lemma} \label{carrier_projection} Let $G$ be a hypercellular graph and $e,f\in \Lambda, e\ne f$. Then $\pi_e(N(E_f))$ is the carrier of $E_f$ in
$\pi_e(G)$.
\end{lemma}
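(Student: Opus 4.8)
The plan is to show the two inclusions $\pi_e(N(E_f)) \subseteq N'(E_f)$ and $N'(E_f) \subseteq \pi_e(N(E_f))$, where $N'(E_f)$ denotes the carrier of $E_f$ in $G' := \pi_e(G)$. Throughout I will use that $e \ne f$, so $E_f$ survives in $G'$ as a nonempty $\Theta$-class (it is still an edge-cut, as in the proof of Lemma~\ref{commut_rest_contaction}), and that by Theorem~\ref{mthm:cells} (equivalently Theorem~\ref{product_of_cycles}) the cells of a hypercellular graph are exactly the convex hulls of isometric cycles, and are gated. Note also that by Lemma~\ref{commut_contraction} and Proposition~\ref{condition(ii)}, $G'$ is again hypercellular, so it has the same cell structure.

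First I would prove $\pi_e(N(E_f)) \subseteq N'(E_f)$. Let $X$ be a cell of $G$ crossed by $E_f$. By Lemma~\ref{contraction_gated} (or Lemma~\ref{contraction_convex}), $\pi_e(X)$ is a gated subgraph of $G'$; moreover, by Lemma~\ref{product_cycles_gated} and the remarks on Cartesian products, any contraction of a product of edges and even cycles is again such a product, so $\pi_e(X)$ is a cell of $G'$. It remains to see that $E_f$ still crosses $\pi_e(X)$. Since $E_f$ crosses $X$, the cell $X$ contains an isometric cycle $C$ that uses two opposite edges of $E_f$ (take the isometric cycle whose convex hull is the appropriate cyclic-or-edge layer of $X$ on which $E_f$ acts). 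Contracting $e \ne f$ cannot destroy both $E_f$-edges of $C$ and cannot merge the two sides of $E_f$ on $C$, so $\pi_e(C)$ still meets both $H_f^+$ and $H_f^-$; hence $E_f$ crosses $\pi_e(X) \supseteq \pi_e(C)$, and $\pi_e(X) \subseteq N'(E_f)$. Taking the union over all cells $X$ of $G$ crossed by $E_f$ gives the inclusion.

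For the reverse inclusion $N'(E_f) \subseteq \pi_e(N(E_f))$, let $X'$ be a cell of $G'$ crossed by $E_f$; I must show $X' \subseteq \pi_e(N(E_f))$, i.e. every vertex of $X'$ is the $\pi_e$-image of a vertex lying in some cell of $G$ crossed by $E_f$. By Lemma~\ref{convex_expansion}, the expansion $X$ of $X'$ is a convex subgraph of $G$, and since $G$ (hence $X$) has no convex half-expanded cycle by Lemma~\ref{lemma:half-extended}, Proposition~\ref{product_of_cycles_expansion} applies to the expansion $G \supseteq X \to X' = \pi_e(X)$: either $X$ is itself a cell of $G$, or one of the expansion parts is all of $X'$ and the other is a subproduct, in which case $X$ contains a cell $Y$ with $\pi_e(Y) = X'$ (this is exactly the situation extracted in the proof of Proposition~\ref{condition(ii)}). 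In the first case, since $X' = \pi_e(X)$ is crossed by $E_f$, also $X$ is crossed by $E_f$ (an $E_f$-edge of $X'$ lifts to an $E_f$-edge of $X$, as $e \ne f$), so $X' = \pi_e(X) \subseteq \pi_e(N(E_f))$. In the second case, the same lifting argument shows the cell $Y$ with $\pi_e(Y) = X'$ is crossed by $E_f$, so again $X' = \pi_e(Y) \subseteq \pi_e(N(E_f))$. This completes both inclusions and the proof.

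I expect the main obstacle to be the reverse inclusion, specifically verifying carefully that when the expansion $X$ of the cell $X'$ is not itself a cell, the subproduct $Y$ supplied by Proposition~\ref{product_of_cycles_expansion} really does satisfy $\pi_e(Y) = X'$ and really is crossed by $E_f$ — one has to make sure the $\Theta$-class $E_f$ of $G'$ corresponds to an edge-factor or cyclic-factor of $X'$ that is preserved (not the one along which the expansion acts), which is automatic because $e \ne f$ and the expansion is along $E_e$. The forward inclusion is routine given the preservation of cells under contraction and the fact that $E_f$-edges survive contraction along $E_e$.
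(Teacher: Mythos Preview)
Your proposal is correct and follows essentially the same two-inclusion strategy as the paper: for the reverse inclusion you invoke Lemma~\ref{convex_expansion} and Proposition~\ref{product_of_cycles_expansion} exactly as the paper does, splitting into the case where the expansion $X$ is itself a cell versus the case where it contains a cell $Y$ isomorphic to $X'$ on one side of $E_e$. Your forward inclusion is marginally more direct than the paper's (you cite Lemma~\ref{contraction_gated} to get gatedness of $\pi_e(X)$ outright, while the paper embeds $\pi_e(X)$ in $\conv(\pi_e(C))$ and appeals to Lemma~\ref{product_cycles_gated}), but the substance is identical.
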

\begin{proof}
Let $Y\in N(E_f)$ be a cell of $G$. Since contractions of products are products, $\pi_e(Y)$ is a product of edges and even cycles in $\pi_e(G)$ and clearly crosses $E_f$. Furthermore, since $Y=\conv(C)$ for a cycle $C$ in $G$, we have by Lemma~\ref{convex_hull}, that $\pi_e(Y)\subseteq \conv(\pi_e(C))$. Since $\pi_e(G)$ is hypercellular, $\conv(\pi_e(C))$ is a cell by Theorem~\ref{mthm:cells}. Thus, $\pi_e(Y)$ is convex in $\pi_e(G)$ by Lemma~\ref{product_cycles_gated}. Therefore $\pi_e(Y)$ is a cell of $\pi_e(N(E_f))$.

Conversely, let $Y$ be a cell in the carrier of $f$ in $\pi_e(G)$ and $Y'$ be its expansion with respect to $e$. By Lemma~\ref{convex_expansion}, $Y'$ is convex and by Proposition~\ref{product_of_cycles_expansion} $Y'$ is either a product of cycles and thus a cell of $N(E_f)$, or $Y'$ consists of two cells $Y'', Y'''$ separated by $E_e$, where say $Y''$ is isomorphic to $Y$. Since $f$ crosses $Y''$, $Y''$ is in $N(E_f)$ and $Y$ arises as its contraction, so we are done.
\end{proof}

\begin{lemma} \label{two_cycles}  Let $G$ be a hypercellular graph. 
Then any two cells $Y',Y''$ of $X(G)$ either are disjoint or intersect in a cell of $X(G)$.
\end{lemma}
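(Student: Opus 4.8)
The plan is to proceed by induction on the number of vertices of $G$, exploiting that the class $\mathcal{F}(Q_3^-)$ is closed under contractions and that contractions send cells to cells (or to lower-dimensional faces of cells). Let $Y',Y''$ be two cells of $G$ and suppose $Y'\cap Y''\neq\emptyset$; we must show the intersection is again a cell, i.e.\ a convex subgraph isomorphic to a Cartesian product of edges and even cycles. Since cells are convex, $Y'\cap Y''$ is convex, so the only thing at stake is that it has the required product structure. The base case (few vertices, e.g.\ $G$ a single cell) follows from Lemma~\ref{product_cycles_gated}, which describes exactly the convex subgraphs of a product of edges and even cycles and shows their intersections are again such products.

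For the inductive step I would pick an equivalence class $E_f$ of $\Theta$ and pass to $G':=\pi_f(G)$, which is again hypercellular and smaller. The first case to handle is when $E_f$ is disjoint from both $Y'$ and $Y''$: then $\pi_f$ maps $Y',Y''$ isomorphically onto cells of $G'$ and maps $Y'\cap Y''$ isomorphically onto $\pi_f(Y')\cap\pi_f(Y'')$, and we are done by induction. If $E_f$ crosses at least one of the two cells, say $Y'$, then by Lemma~\ref{contraction_gated} and the fact that contractions of products are products, $\pi_f(Y')$ is a cell of $G'$ of dimension $\dim(Y')$ or $\dim(Y')-1$, and similarly for $Y''$; moreover $\pi_f(Y'\cap Y'')=\pi_f(Y')\cap\pi_f(Y'')$ provided $E_f$ also crosses or is disjoint from $Y'\cap Y''$, using Lemma~\ref{convex_hull}. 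Applying the induction hypothesis to $G'$ gives that $\pi_f(Y')\cap\pi_f(Y'')$ is a cell; it then remains to lift this back through the expansion $G=\psi(G')$ using Proposition~\ref{product_of_cycles_expansion} and Lemma~\ref{lemma:half-extended}, which constrains $Y'\cap Y''$ to be either a product of edges and cycles directly, or a subproduct, in either case a cell.

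The delicate configuration — and what I expect to be the main obstacle — is when $E_f$ crosses $Y'$ but only \emph{osculates} $Y'\cap Y''$, or crosses $Y'$ but is disjoint from $Y''$ in a way that makes the projections overlap in a spuriously larger set; here one cannot simply commute $\pi_f$ with intersection. The way around this is to choose $f$ more carefully: since $\langle\langle Y'\cup Y''\rangle\rangle$ (or just $\conv(Y'\cup Y'')$) is a convex subgraph of $G$ lying in $\mathcal{F}(Q_3^-)$, we may first restrict to it and assume $G=\conv(Y'\cup Y'')$, and then argue that every $\Theta$-class of $G$ crosses $Y'$ or $Y''$. Picking $f$ crossing $Y'$, one shows $\pi_f(Y'\cap Y'')$ is still exactly $\pi_f(Y')\cap\pi_f(Y'')$ by analyzing the two $\Theta$-classes of $Y'$ that could degenerate, using gatedness of cells (Theorem~\ref{mthm:cells}) to rule out two cells sharing "too much" — precisely the kind of argument already used in the proof of Lemma~\ref{carrier_projection} and in Case~2 of Proposition~\ref{condition(ii)}. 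Once the projection identity holds, induction plus the expansion analysis of Proposition~\ref{product_of_cycles_expansion} closes the proof, noting that the half-expanded cycle option is excluded by Lemma~\ref{lemma:half-extended} since $G\in\mathcal{F}(Q_3^-)$.
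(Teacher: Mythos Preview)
Your plan is far more intricate than necessary and leaves its hardest step --- the projection identity $\pi_f(Y'\cap Y'')=\pi_f(Y')\cap\pi_f(Y'')$ in the osculating case --- essentially unworked; you correctly flag it as ``the main obstacle'' but the gesture toward Lemma~\ref{carrier_projection} and Proposition~\ref{condition(ii)} does not close it, and the subsequent lifting through Proposition~\ref{product_of_cycles_expansion} is also only sketched.

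The paper's proof, by contrast, is three lines and rests on a fact you cite only as a side tool: by Theorem~\ref{mthm:cells} every cell of a hypercellular graph is \emph{gated}, not merely convex. The intersection of gated subgraphs is gated, so $Y_0:=Y'\cap Y''$ is a gated subgraph of $G$ and in particular a gated subgraph of the cell $Y'$. Lemma~\ref{product_cycles_gated} then says that a gated subgraph of a product $F_1\times\cdots\times F_m$ of edges and even cycles is a subproduct $F'_1\times\cdots\times F'_m$ with each $F'_i$ a vertex, an edge, or all of $F_i$. Hence $Y_0$ is itself a cell. No induction, no contractions, no expansion analysis.

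The conceptual gap in your approach is that you work with convexity where gatedness does the job: convex subgraphs of a cell may have path-factors of length $\ge 2$ inside a cyclic factor (and so fail to be cells), whereas gated subgraphs cannot. Your inductive machinery is an attempt to exclude those long path-factors by other means, and even if it could be completed it would be a substantial detour around a one-paragraph argument.
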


\begin{proof}
Let $Y',Y''$ be two arbitrary intersecting cells of $G$. Let $Y_0:=Y'\cap Y''$. Since $Y'$ and $Y''$ are gated subgraphs of $G$, $Y_0$ is also gated. In particular, $Y_0$
 is a gated subgraph of $Y'$. Since $Y'$ is a product of edges and cycles $F_1\square\cdots \square F_m$, by Lemma~\ref{product_cycles_gated}, $Y_0$ is a product $F'_1\square \cdots\square F'_m$, where each $F'_i$ is a  vertex, an edge, or the whole factor $F_i$. Hence $Y_0$ is a cell and we are done. Now suppose that some $F'_i\cong P=(x, \ldots, y, \ldots, z)$ is a path of length $\ge 2$ within the cyclic factor $F_i$. Since $P$ is convex, the length of $P$ must be less than half of the length of $F_i$. Thus the antipodal vertex of $y$ in the $F_i$, say $y'$, is not in $P$. Now, $y'$ cannot have a gate $w$ in $P$, since if $w$ is between $x$ and $y$ there is no shortest path from $y'$ through $w$ to $z$.  Symmetrically, if $w$ is between $y$ and $z$ there is no shortest path from $y'$ through $w$ to $x$.
\end{proof}
%

\begin{lemma} \label{two_cells_carrier} Let $G$ be a hypercellular graph  and $f\in \Lambda$. If two cells $Y',Y''$ of $N(E_f)$ intersect, then they share an edge of $E_f$. 
\end{lemma}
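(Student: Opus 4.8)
The plan is to induct on $|V(G)|$ by contracting a $\Theta$-class different from $f$, reducing everything to one small fact about gated sets and the compatibility of contraction with intersection. Write $Y_0:=Y'\cap Y''$, which is nonempty by hypothesis and, by Lemma~\ref{two_cycles}, a cell; in particular it is convex and connected. What we actually want to show is that $E_f$ \emph{crosses} $Y_0$, for then, $Y_0$ being convex and connected, it must contain an edge of $E_f$, and this edge lies in $Y'\cap Y''$ as required.

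The first ingredient I would isolate is purely about gatedness: if $A,B$ are gated subgraphs of a partial cube with $A\cap B\ne\emptyset$, then no edge joins $A\setminus B$ to $B\setminus A$. Indeed, suppose $ab$ is such an edge with $a\in A\setminus B$, $b\in B\setminus A$. Since $A$ is convex and bipartite, $b$ has a unique neighbour in $A$ (two neighbours $a_1\ne a_2$ would give $d(a_1,a_2)=2$ and $b\in I(a_1,a_2)\subseteq A$, a contradiction), so $a$ is the gate of $b$ in $A$; symmetrically $b$ is the gate of $a$ in $B$. Picking any $w\in A\cap B$ and using the gate property on each side gives $d(b,w)=1+d(a,w)$ and $d(a,w)=1+d(b,w)$, which is absurd. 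Applying this to $A=Y'$, $B=Y''$ — which are gated by Theorem~\ref{mthm:cells} — yields, for every $g\in\Lambda$, the equality $\pi_g(Y')\cap\pi_g(Y'')=\pi_g(Y_0)$: the inclusion $\supseteq$ is trivial, and for $\subseteq$ note that two distinct vertices of $Y'$ and $Y''$ with the same $\pi_g$-image span an edge of $E_g$, which by the above fact has an endpoint in $Y_0$.

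For the induction, if $E_f$ is the only $\Theta$-class of $G$ then $G=K_2$ and $Y'=Y''=G$ already contains the edge of $E_f$. Otherwise I would choose $g\ne f$ and pass to $\pi_g(G)$, which lies in $\mathcal{F}(Q_3^-)$ (Lemma~\ref{commut_contraction}) and has fewer vertices. By Lemma~\ref{contraction_gated} the images $\pi_g(Y'),\pi_g(Y'')$ are gated, and being contractions of products of edges and even cycles they are again such products, hence cells of $\pi_g(G)$; since $g\ne f$, an edge of $E_f$ inside $Y'$ (resp. $Y''$) descends to an edge of $E_f$ in $\pi_g(Y')$ (resp. $\pi_g(Y'')$), so $E_f$ crosses both, and $\pi_g(Y')\cap\pi_g(Y'')\supseteq\pi_g(Y_0)\ne\emptyset$. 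The induction hypothesis produces an edge $\bar e\in E_f$ inside $\pi_g(Y')\cap\pi_g(Y'')=\pi_g(Y_0)$. To conclude, both endpoints of $\bar e$ lie in $\pi_g(Y_0)$, hence have preimages in $Y_0$; since contracting $E_g$ with $g\ne f$ does not change on which side of $E_f$ a vertex lies, $Y_0$ meets both $H_f^-$ and $H_f^+$, and being connected it contains an edge of $E_f$.

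The one genuinely delicate point is the commutation $\pi_g(Y')\cap\pi_g(Y'')=\pi_g(Y_0)$: a priori contracting $E_g$ could merge a vertex of $Y'$ with a vertex of $Y''$ that belongs to neither cell, which would break the pull-back argument. This is precisely what the gated-sets fact rules out; the remaining steps are routine bookkeeping about $\Theta$-classes and halfspaces surviving contractions.
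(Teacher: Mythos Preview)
Your argument is essentially correct, but it takes a genuinely different route from the paper. One small caveat: your induction is on $|V(G)|$, which tacitly assumes $G$ is finite. Since cells are finite and convex hulls of finite sets in partial cubes are finite, you can first replace $G$ by $\conv(Y'\cup Y'')$ (still hypercellular, still containing $Y',Y''$ as cells crossed by $E_f$) and then run your induction; but you should say this.

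The paper's proof is a short direct argument with no induction. It picks $y\in Y'\cap Y''$, say $y\in H^-_f$, takes any edge $u'v'\in E_f$ inside $Y'$ with $v'\in H^+_f$, and if $v'\notin Y''$ replaces it by its gate $v$ in $Y''$; convexity of $H^+_f$ forces $v\in H^+_f$, and then any shortest $(v,y)$-path crosses $E_f$ at an edge which, by convexity of $Y''$ (since $v,y\in Y''$) and of $Y'$ (since $v\in I(v',y)\subset Y'$), lies in both cells. So the paper uses one gate computation and two applications of convexity, and is done in a paragraph. Your approach instead develops two auxiliary facts --- the ``no edge between $A\setminus B$ and $B\setminus A$'' property of intersecting gated sets, and the resulting commutation $\pi_g(Y')\cap\pi_g(Y'')=\pi_g(Y'\cap Y'')$ --- and reduces via contraction. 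This machinery is correct and the commutation identity is a nice observation in its own right, but for this particular lemma it is heavier than needed: the paper avoids induction, avoids Lemma~\ref{two_cycles}, and never needs to verify that contracted cells are again cells of the contracted carrier.
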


\begin{proof}  Let $y\in Y'\cap Y''$ and suppose without loss of generality that $y\in H^-_f$. Since $Y'\in N(E_f)$, there exists an edge $u'v'\in E_f$ with $u',v'\in Y'$. Suppose $u'\in H^-_f$ and $v'\in H^+_f$. If $v'\in Y''$, then $u'\in I(v',y)\subset Y''$ by convexity of $Y''$, thus the edge $u'v'$ belongs to $Y'\cap Y''$ and we are done. So, suppose $v'\notin Y''$. Let $v$ be the gate
of $v'$ in $Y''$ and let $x$ be a vertex of $Y''\cap H^+_f$ (such a vertex exists because $Y''\in N(E_f)$). Since $v\in I(v',x)$ and $H^+_f$ is convex, we conclude that $v\in H^+_f$. Since $y\in H^-_f$, on any shortest path $P$ from $v$ to $y$ we will meet an edge $v''u''$ of $E_f$. Since $v'',u''\in I(v,y),$ $v,y\in Y''$, and $Y''$ is convex, the edge $v''u''$ belongs to $Y''$. On the other hand, since $v\in I(v',y)$ and $Y'$ is  convex, we conclude that the edge $v''u''$ also belongs to $Y'$.
\end{proof}

%


\begin{proposition} \label{carrier} For any equivalence class $E_f$ of a hypercellular graph $G$, the carrier $N(E_f)$ is a gated subgraph of $G$. Therefore, $N^+(E_f)$ is gated in the halfspace $H^+_f$, $N^-(E_f)$ is gated in $H^-_f$, and the extended halfspaces $H^+_f\cup N(E_f)$ and $H^-_f\cup N(E_f)$ are gated in $G$.
\end{proposition}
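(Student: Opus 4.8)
The plan is to prove that $N(E_f)$ is gated by induction on $|V(G)|$, and then to deduce the remaining assertions. In the inductive step I would first record, via Lemma~\ref{carrier_projection}, that for every $e\neq f$ the contraction $\pi_e(N(E_f))$ is the carrier of $E_f$ in the hypercellular graph $\pi_e(G)$, hence gated by the induction hypothesis. I would also observe that $N(E_f)\subseteq\conv(\bigcup E_f)$, where $\bigcup E_f$ denotes the set of endpoints of edges in $E_f$: indeed, since any Cartesian product of edges and even cycles is antipodal, any cell $Y$ crossed by $E_f$ is the convex hull of a suitably chosen pair of opposite edges of $Y$ lying in $E_f$, so $Y\subseteq\conv(\bigcup E_f)$. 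As $\bigcup E_f\subseteq N(E_f)$ trivially, the equality $N(E_f)=\conv(\bigcup E_f)$ is thus equivalent to the convexity of $N(E_f)$.

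The heart of the argument is to show that $N(E_f)$ is convex. I would use that a connected induced subgraph of a partial cube is convex once it is closed under $2$-intervals, reducing to: if $u,v\in N(E_f)$ with $d(u,v)=2$ and $w\in I(u,v)$, then $w\in N(E_f)$. If $u,v$ lie in a common cell this is immediate from convexity of cells. Otherwise $u,v$ lie in cells $Y_u,Y_v$ crossed by $E_f$, and, distinguishing whether $u$ and $v$ lie on the same side or on opposite sides of $E_f$, I would combine Lemma~\ref{two_cycles} (two cells meet in a cell), Lemma~\ref{two_cells_carrier} (two intersecting cells of $N(E_f)$ share an $E_f$-edge) and the $3$C-condition, which is available since the implications (i)$\Rightarrow$(ii)$\Rightarrow$(iii) are already established, to merge $Y_u$ and $Y_v$ into a single cell still crossed by $E_f$ that contains $w$. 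I expect this cell-merging step to be the main obstacle.

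With $N(E_f)=\conv(\bigcup E_f)$ in hand, suppose for contradiction that $N(E_f)$ is not gated and apply Proposition~\ref{thm:closure} to $S:=\bigcup E_f$. If some contraction $\pi_e$ makes $\conv(\pi_e(S))$ non-gated, then when $E_e$ crosses $N(E_f)$ Lemma~\ref{convex_hull} gives $\conv(\pi_e(S))=\pi_e(\conv(S))=\pi_e(N(E_f))$, which is gated by induction and Lemma~\ref{carrier_projection} --- a contradiction; the cases where $E_e$ does not cross $N(E_f)$ reduce similarly to the smaller graph by working inside the restriction $\rho_{e^{\pm}}(\pi_e(G))=\pi_e(\rho_{e^{\pm}}(G))$ (Lemma~\ref{commut_rest_contaction}). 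Otherwise Proposition~\ref{thm:closure} produces original vertices $x_1,x_2,\dots\in\conv(S)=N(E_f)$ at pairwise distance $2$ and a vertex $v\notin N(E_f)$ at distance $2$ from each, with no vertex of $G$ adjacent to all the $x_i$. But applying the cell-merging step to $x_1,x_2\in N(E_f)$ at distance $2$ yields a cell crossed by $E_f$ --- hence a vertex of $N(E_f)$ --- adjacent to both $x_1$ and $x_2$; for $m=2$ this already contradicts condition (ii) of Proposition~\ref{thm:closure}, while for $m\geq 3$ one additionally notes that the resulting isometric full subdivision of $K_4$ contracts to $Q_3^-$, contradicting $G\in\mathcal{F}(Q_3^-)$. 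This completes the induction and shows $N(E_f)$ is gated.

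Finally, the remaining claims follow routinely. Since $H^+_f$ is convex and $N^+(E_f)=N(E_f)\cap H^+_f\neq\emptyset$, the gate in $N(E_f)$ of any $x\in H^+_f$ lies on a geodesic from $x$ to any vertex of $N^+(E_f)$, hence in $H^+_f$, hence in $N^+(E_f)$; thus $N^+(E_f)$ is gated in $H^+_f$, and symmetrically $N^-(E_f)$ is gated in $H^-_f$. For $H^+_f\cup N(E_f)=H^+_f\cup N^-(E_f)$, take $x\in H^-_f\setminus N(E_f)$ and let $x'$ be its gate in $N(E_f)$; as above $x'\in N^-(E_f)$, and for any $y\in H^+_f$ a geodesic from $x$ to $y$ crosses $E_f$ at an edge $ab$ with $a\in N^-(E_f)\subseteq N(E_f)$, so $x'\in I(x,a)$ and, since $a\in I(x,y)$, the concatenation of geodesics $x\to x'\to a\to y$ shows $x'\in I(x,y)$; hence $x'$ is a gate of $x$ in $H^+_f\cup N(E_f)$.
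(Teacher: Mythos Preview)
The central gap is the assertion that a connected induced subgraph of a partial cube is convex once it is closed under $2$-intervals. This is false already in $C_6$: the path on four consecutive vertices $v_0v_1v_2v_3$ is connected and locally convex (the only pairs at distance~$2$ are $\{v_0,v_2\}$ and $\{v_1,v_3\}$, each with a unique midpoint on the path), yet $v_0$ and $v_3$ are antipodal, so $I(v_0,v_3)=V(C_6)$ and the path is not convex. Local convexity upgrades to convexity in median graphs, but not in general partial cubes, and hypercellular graphs contain $C_6$. Hence the reduction to $2$-intervals does not prove convexity of $N(E_f)$.

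The paper replaces this reduction by a global argument. One picks $y,z\in N^{+}(E_f)$ with $d_{N^{+}(E_f)}(y,z)$ minimal among pairs admitting a $G$-geodesic $R$ outside $N^{+}(E_f)$, and a shortest $(y,z)$-path $P$ inside $N^{+}(E_f)$; minimality forces $P$ to be a $G$-geodesic, and then $C:=P\cup R$ is shown to be first an isometric cycle (Claim~\ref{claim:isom}) and then a convex cycle (Claim~\ref{claim:conv}). Only at this stage does the cell-merging enter, and it is formulated for a whole convex cycle (Claim~\ref{claim:cycles}): if two consecutive edges of a convex cycle lie in two different cells of $N(E_f)$ sharing the middle vertex, then some cell of $N(E_f)$ contains the entire cycle. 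Your $3$C-based merging idea is exactly the right ingredient and is essentially Claim~\ref{claim:cycles}, but it has to be applied to such a convex cycle, not to a bare $2$-path; moreover, the argument for Claim~\ref{claim:cycles} is more delicate than your sketch suggests (one must distinguish whether the shared edge of $Y_u\cap Y_v$ lies in the same layer as the cycle edge and, if not, propagate along an entire cyclic layer). Note also that connectedness of $N^{+}(E_f)$ and $N(E_f)$ is not automatic; the paper derives it from $\Theta=\Psi^{*}$ (Lemma~\ref{partial-cube-convex-cycle}), and you would need to do the same before invoking any local-to-global convexity criterion. Your use of Proposition~\ref{thm:closure} together with Lemma~\ref{carrier_projection} for the passage from convexity to gatedness, and the final derivation of the statements about $N^{\pm}(E_f)$ and the extended halfspaces, are essentially correct and match the paper's approach.
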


\begin{proof} First, since by Lemma~\ref{partial-cube-convex-cycle} the relations $\Theta$ and $\Psi^*$ coincide, $N^+(E_f)$, $N^-(E_f)$ and consequently $N(E_f)$ are connected subgraphs of $G$.

Through Claims~\ref{claim:isom},~\ref{claim:conv}, and ~\ref{claim:cycles} we will prove that $N^+(E_f)$ is convex. 
Suppose that $N^+(E_f)$ is not convex. Choose two vertices $y,z\in N^+(E_f)$ with minimal distance $k(y,z):=d_{N^+(E_f)}(y,z)$ that can be connected by a shortest path $R$ of $G$ outside $N^+(E_f)$. Let $P$ be a shortest $y,z$-path in $N^+(E_f)$. Let us prove that $P$ is a shortest path of $G$. If this was not the case, we could replace $y$ by its neighbor $y'$ in $P$. But from the minimality in the choice of $y,z$, we conclude that $I(y',z)\subseteq N^+(E_f)$. Thus, the subpath of $P$ between $y'$ and $z$ is a shortest path of $G$. Now, since  $y\notin I(y',z)$, we have $z\in W(y',y)$, yielding that $P$ is a shortest path of $G$. Again by the choice of $y,z$, we conclude that $P$ and $R$ intersect only in their common endvertices $y,z$.

\begin{claim}\label{claim:isom}
 Any shortest path between a vertex of $P$ and a vertex of $R$ passes via $y$ or $z$. In particular, $C:=P\cup R$ is an isometric cycle of $G$.
\end{claim}
\begin{proof}
We claim that if $Q$ is a shortest path connecting two interior vertices $p$ of $P$ and $r$ of $R$, then $Q$ passes via $y$ or $z$. Suppose that this is not the case. Then we can find a shortest path $Q=(p:=q_0,q_1,\ldots,q_{k-1},q_k:=r)$ between  two interior vertices $p$ of $P$ and $r$ of $R$ such that $Q\cap C=\{ p,r\}$.
Since $p,r\in I(y,z)$ and $Q\subset I(p,r)$, we conclude that $Q\subset I(y,z)$, because intervals of partial cubes are convex. This yields $q_1\in I(p,y)\cup I(p,z)$, since otherwise $y,z\in W(p,q_1)$ and $q_1\in I(y,z)$ contradict the convexity of $W(p,q_1)$. Since $k(p,y)<k(z,y)$ and $k(p,z)<k(z,y)$, we conclude that $I(p,y)\cup I(p,z)\subset N^+(E_f)$, giving $q_1\in N^+(E_f)$. We can iterate this argument by first replacing $p$ by $q_1$ and $q_1$ by $q_2$, etc., and obtain that all vertices $q_1,q_2,\ldots, q_{k-1},q_k=r$ belong to $N^+(E_f)$ and $k(q_i,y)<k(y,z), k(q_i,z)<k(y,z)$. In particular, $r\in N^+(E_f)$ and $k(r,y)<k(y,z), k(r,z)<k(y,z)$, thus by our assumption $R\subset I(r,y)\cup I(r,z)\subset N^+(E_f)$. This contradiction shows that the path $Q$ does not exist, i.e., any shortest path between a vertex of $P$ and a vertex of $R$ passes via $y$ or $z$. In particular, this implies that $C=P\cup R$ is an isometric cycle of $G$. 
\end{proof}

\begin{claim}\label{claim:conv}
$C=P\cup R$ is a convex cycle of $G$.
\end{claim}
\begin{proof} We proceed as in the proof of Lemma~\ref{partial-cube-convex-cycle}. If $C$ is not convex, then by Claim~\ref{claim:isom} there exist
two vertices $p,p'\in P$ connected by a shortest path $P'$ which intersects $P$ only in $p,p'$ or there exist two vertices $r,r'\in R$ connected by a shortest path $R'$ which intersects
$R$ only in $r,r'$.  Let $P''$ be the subpath of $P$ between $p$ and $p'$ in the first case and let $R''$ be the subpath of $R$ between $r$ and $r'$ in the second case.
Let $C'$ be the cycle obtained from $C$ by replacing the path $P''$ by $P'$ in the first case and let $C''$ be the cycle obtained from $C$ by replacing the path $R''$ by $R'$ in the second case.
If the first case occurs and $\{ p,p'\}\ne \{y,z\},$ then $k(p,p')<k(y,z)$, whence $P'\subset N^+(E_f)$. Therefore applying Claim~\ref{claim:isom} to the cycle $C'$ instead of $C$, we conclude that $C'$ is an isometric cycle of $G$. Analogously, if $\{ r,r'\}\ne \{ y,z\}$, then no vertex of $R'$ belongs to $N^+(E_f)$. Indeed, if say $w\in R'\cap N(E_f)$, then $k(w,y)<k(y,z), k(w,z)<k(y,z)$ and by minimality the vertices $r$ and $r'$ belong to $N(E_f)$, contrary to the assumption that $R\cap N(E_f)=\{ y,z\}$. Again applying Claim~\ref{claim:isom} to the cycle $C''$ instead of $C$, we conclude that $C''$ is an isometric cycle of $G$. Finally, if  $\{ p,p'\}=\{ y,z\}$, then $P''=P$ and one can see that either
$P'\subseteq N^+(E_f)$ and $C'$ is an isometric cycle of $G$ or $P'\cap N^+(E_f)=\{ y,z\}$ and we redefine $C'$ as
the cycle formed by $P$ and $P'$, which is isometric by Claim~\ref{claim:isom}. Similarly, if  $\{ r,r'\}=\{ y,z\}$, then $R''=R$ and we can suppose that $C''$ is an isometric cycle of $G$ sharing
with $C$ either the path $P$ or the path $R$.  Consequently, in all cases we derive a new isometric cycle of $G$ ($C'$ or $C''$) obtained by replacing either a subpath $P''$ of $P$ by $P'$ or replacing a subpath $R''$ of $R$ by $R'$. Suppose without loss of generality that we are in the first case, i.e., the new isometric cycle is $C'$.

Let $x''$ be a vertex of $P''$ different from $p,p'$. Let $x$ be the opposite of $x''$ in the cycle $C$ and let $x'$ be the opposite of $x$ in the cycle $C'$. Since $C$ and $C'$ are isometric cycles of the same length of $G$,  $x'$
is a vertex of $P'$ different from $p,p'$. Then $p,p'\in I(x,x')$, thus by convexity of $I(x,x')$ we obtain that $x''\in I(x,x')$. But this is impossible because $x'$ and $x''$ have the
same distance to $x$ because they are opposite to $x$ in $C$ and $C'$, respectively, and $C$ and $C'$ have the same length. This proves that  $C$ is convex.
\end{proof}

Since the cells of $N(E_f)$ are convex, the path $P\subset N^+(E_f)$ of $C$ cannot be contained in a single cell. Thus there exist two consecutive edges $e'=uv$ and $e''=vw$ of $P$ and two cells $Y',Y''$ of $N(E_f)$ such that $u\in Y'\setminus Y'', w\in Y''\setminus Y',$ and $v\in Y'\cap Y''$. By the following claim this is impossible.


\begin{claim}\label{claim:cycles}
If there exist two cells $Y',Y''$ of $N(E_f)$ and a convex cycle $C$ with edges $e'=uv,e''=vw$ on it, such that $u\in Y'\setminus Y'', w\in Y''\setminus Y',$ and $v\in Y'\cap Y''$, then  there exists a cell in $N(E_f)$ that includes $C$.
\end{claim}

\begin{proof} By Lemma~\ref{two_cycles}, the intersection $Y' \cap Y''$ is a  product of edges and cycles. Since $Y',Y'' \in N(E_f)$,  Lemma~\ref{two_cells_carrier} yields that $Y' \cap Y''$ contains at least one edge from $E_f$. Let $F$ be a factor of $Y' \cap Y''$ and $L$ be a corresponding layer that is crossed by $E_f$. We will establish the claim by proving that $e',e''$ lie in a cell of $N(E_f)$, that is isomorphic to $L \, \square \, C$. Pick any edge $e=vz$ in $Y' \cap Y''$ that lies in the layer $L$. 

First assume that $e$ and $e'$ lie in the same layer of $Y'$. The factor $F$ is an edge or an even cycle, but it must be a strict subset of a factor of $Y'$ since $e'\notin Y' \cap Y''$. Thus $L$ is isomorphic to an edge and this edge must be in $E_f$. In particular, $e\in E_f$. Since $Y'$ and $Y''$ are products of edges and cycles, there exists a convex cycle $C'$ of $Y'$ passing via the edges $e'$ and $e$ and there exists a convex cycle $C''$ of $Y''$ passing via the edges $e$ and $e''$. By Lemma~\ref{three_cycles}, $C$, $C'$, and $C''$ are contained in a cell $Y$ of $G$. Since $e\in E_f$, this cell is in $N(E_f)$.

By symmetry, we are left with the case that $e$ and $e'$ as well as $e$ and $e''$ lie in different layers of $Y'$ and $Y''$, respectively. Consequently, there exists a 4-cycle $C'=(u,v,z,u')$ of $Y'$ passing via the edges $e'$ and $e$ and a 4-cycle $C''=(w,v,z,w')$ of $Y''$ passing via the edges $e$ and $e''$. By Lemma~\ref{three_cycles}, $C$, $C'$, and $C''$ are contained in a cell $Y\cong C  \, \square \, K_2$ of $G$, where $e$ lies in a layer corresponding to the factor $K_2$. If $e\in E_f$, then $Y$ is a cell in $N(E_f)$ and we are done. If $e \notin E_f$, then $L$ is isomorphic to an even cycle. Consider $z$, and let $z'$ be its neighbor, different from $v$, that lies in $Y' \cap Y''$ in $L$. Since $z$ is in $Y$, $z$ is incident to a cycle $C'$ isomorphic to $C$ and lies on the path $(u',z,w')$ of $C'$. Considering $C'$ and edges $u'z$, $zw'$, and $zz'$, we can as before with $C$, $e'$, $e''$, and $e$, obtain a cell $Z$ isomorphic to $C  \, \square \, K_2$. The union of $Y$ and $Z$ is isomorphic to $C  \, \square \, P_3$, where $P_3$ is the path on 3 vertices. Inductively picking neighbors in the layer $L$ we obtain a graph isomorphic to $C \, \square \, F$, that contains $C$.
\end{proof}

We have sown that $N^+(E_f)$ and symmetrically $N^-(E_f)$ are convex. To see that $N(E_f)$ is convex, pick two vertices $x\in N^+(E_f), y\in N^-(E_f)$, a shortest $(x,y)$-path $R$ and a vertex $z$ of $R$. Since $R$ connects a vertex of $H^+_f$ with a vertex of $H^-_f$, necessarily $R$ contains an edge $x'y'$ in $E_f$, say $x'\in H^+_f$ and $y'\in H^-_f$. The vertex $z$ belongs to one of the two subpaths of $R$ between $x$ and $x'$ or $y'$ and $y$, say the first. Then $z\in I(x,x')$. Since $x,x'\in N^+(E_f)$ and $N^+(E_f)$ is convex, we conclude that $z\in N^+(E_f)\subset N(E_f)$, showing that the carrier $N(E_f)$ is convex.

Now, suppose that $G$ is a minimal graph in $\mathcal{F}(Q_3^-)$ containing a non-gated carrier $N(E_f)$. Since $N(E_f)$ is convex and by Lemma~\ref{carrier_projection} any contraction
$\pi_e(N(E_f))$ for $e\in \Lambda, e\ne f,$ is the carrier of $E_f$ in $G'=\pi_e(G)$ and thus is gated in $G'$, by Proposition~\ref{thm:closure} there exist two vertices $x_1,x_2\in N(E_f)$ with $d_G(x_1,x_2)=2$ and a vertex $v\notin N(E_f)$ at distance 2 from $x_1,x_2$, such that the vertices $v,x_1,x_2$ do not contain a common neighbor. Since $G\in \mathcal{F}(Q_3^-)$, the last condition
implies that the convex hull of $v,x_1,x_2$ is a 6-cycle $C_1$. Let $u$ be the unique common neighbor of $x_1$ and $x_2$ in $C$. Since $N(E_f)$ is convex, $u$ also belongs to $N(E_f)$. Namely, if say $v\in H^+_f$, then $x_1,u,x_2\in N^+(E_f)$. Since by Proposition~\ref{convex_hull_cycle} each cell of $G$ is gated, the vertices $x_1$ and $x_2$ cannot belong to a common cell. Thus there exist
two cells $Y',Y''$ of $N(E_f)$ such that the edge $x_1u$ belongs to $Y'$ and $ux_2$ belongs to $Y''$. By Claim~\ref{claim:cycles}, there exists a cell $Y$  of the carrier $N(E_f)$ that includes $C$, contrary to the assumption that the vertex $v$ of $C$ does not belong to $N(E_f)$. This establishes that $N(E_f)$ is gated. By Lemma~\ref{contraction_gated} also $N^+(E_f)$ is gated in $H^+_f$ and $N^-(E_f)$ is gated in $H^-_f$. Consequently, the extended halfspaces $H^+_f \cup N(E_f)$ and $H^-_f \cup N(E_f)$ are gated in $G$.
\end{proof}

Now, we are ready to prove the following result (the equivalence (i)$\Leftrightarrow$(iv) of Theorem~\ref{mthm:amalgam}):

\begin{theorem}\label{thm:amalgam} A partial cube $G$ is hypercellular if and only if each finite convex subgraph of $G$
can be obtained by gated amalgams from Cartesian products of edges and even cycles.
\end{theorem}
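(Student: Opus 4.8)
The plan is to prove both implications, freely using the equivalence (i)$\Leftrightarrow$(ii) of Theorem~\ref{mthm:amalgam} already established above (Lemma~\ref{three_cycles}, Proposition~\ref{condition(ii)}), the gatedness of carriers and extended halfspaces from Proposition~\ref{carrier}, and the description of isometric expansions of cells in Proposition~\ref{product_of_cycles_expansion} together with Lemma~\ref{lemma:half-extended}. For the implication from the amalgam property to hypercellularity, by (ii)$\Rightarrow$(i) it suffices to check that $G$ satisfies condition (ii); since ``every cell is gated'' and the 3CC-condition only involve finite subconfigurations, it is enough to verify (ii) inside an arbitrary finite convex subgraph $H$, and we do so by induction on the number of pieces of a gated-amalgam decomposition of $H$. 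If $H$ is a single cell, its gated subgraphs are precisely its subproducts (Lemma~\ref{product_cycles_gated}), so all sub-cells are gated, and a direct inspection of the convex cycles of a Cartesian product of edges and even cycles yields the 3CC-condition (three convex cycles through a common vertex pairwise sharing edges must, for dimension reasons, be three $4$-cycles, or one cyclic layer together with two $4$-cycles, with convex hull $C_i\times K_2$). For the inductive step, write $H=H_1\cup_{H_0}H_2$ with $H_1,H_2$ satisfying (ii); since a gated amalgam has no edge between $H_1\setminus H_2$ and $H_2\setminus H_1$, and $H_0$ is convex, any connected convex subgraph of $H$ meeting both $H_1\setminus H_2$ and $H_2\setminus H_1$ splits as a nontrivial gated amalgam of its traces on $H_1,H_2$. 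As a Cartesian product of edges and even cycles is never a nontrivial gated amalgam (a short counting argument via Lemma~\ref{product_cycles_gated}), every cell of $H$ lies in $H_1$ or in $H_2$ and so is gated in $H$; likewise, convexity of $H_0$ forces every convex cycle of $H$ into one of $H_1,H_2$, so the 3CC-condition for $H$ follows by the inductive hypothesis.

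For the converse, let $H$ be a finite convex subgraph of the hypercellular $G$; then $H$ is hypercellular by Lemma~\ref{restriction}, and we induct on $|V(H)|$. If $H$ is a cell we are done, so suppose not. It suffices to produce a nontrivial gated amalgam $H=H_1\cup_{H_0}H_2$ with $H_1,H_2\ne H$: these pieces are then convex, hence hypercellular and smaller, so by induction each is obtained by gated amalgams from cells, and hence so is $H$. First, if every $\Theta$-class of $H$ had carrier equal to $H$, then, starting from a maximal cell $X$ of $H$ and using Proposition~\ref{carrier} with Lemmas~\ref{two_cycles} and~\ref{two_cells_carrier} on how the cells of a carrier intersect, one derives $X=H$, a contradiction; so fix a $\Theta$-class $E_f$ with $N(E_f)\ne H$, chosen carefully (for instance with $N(E_f)$ minimal among proper carriers, or so that $N(E_f)$ is a maximal cell). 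Put $H_1:=N(E_f)$, gated by Proposition~\ref{carrier}, and $H_2:=\langle\langle V(H)\setminus V(N(E_f))\rangle\rangle$, gated by construction; clearly $H_1\cup H_2=H$ and $H_1\ne H$. The substance is to verify, again via Proposition~\ref{carrier} and Lemmas~\ref{two_cycles}--\ref{two_cells_carrier}, that $H_1\cap H_2\ne\emptyset$ (the vertices of $N(E_f)$ adjacent to $V(H)\setminus V(N(E_f))$ are forced into $H_2$, for otherwise such a vertex would have no gate) and that $H_2\ne H$, the latter amounting to the claim that the union of the cells of $H$ not contained in $N(E_f)$ is gated and, for a well-chosen $E_f$, a proper subgraph. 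A cleaner parallel line handles the case where some contraction $\pi_f(H)$ is a single cell: then $H$ is an isometric expansion of a cell, so Proposition~\ref{product_of_cycles_expansion} and Lemma~\ref{lemma:half-extended} give that either $H$ is a cell, or $H$ is the gated amalgam of a cell isomorphic to $\pi_f(H)$ with the cell $N(E_f)\cong Y\times K_2$ for a subproduct $Y$.

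The main obstacle is exactly the construction of the gated amalgam in the converse direction. The naive choice -- the complementary extended halfspaces $N(E_f)\cup H^+_f$ and $N(E_f)\cup H^-_f$ of a single $\Theta$-class -- need not be a \emph{nontrivial} amalgam even when $H$ visibly is one: for two hexagons glued along an edge, or for $k$ squares sharing an edge, every $\Theta$-class has one of its two halfspaces absorbed into its carrier, so these extended-halfspace pairs are always trivial. One must therefore choose $E_f$ and the two pieces with care and track precisely which cells lie entirely inside a given carrier; this is where Proposition~\ref{carrier} and the carrier lemmas do the real work. By comparison, the base-case inspection of the 3CC-condition in Cartesian products and the fact that such products admit no nontrivial gated amalgam are routine.
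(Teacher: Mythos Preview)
Your forward direction (gated amalgams $\Rightarrow$ hypercellular) via condition~(ii) is a legitimate alternative to the paper's direct minor argument; the inductive step---pushing each cell and each convex cycle entirely into one gated piece---goes through once one checks that an even cycle cannot be covered by two proper convex arcs.

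The converse direction has a genuine gap: your choice $H_2:=\langle\langle V(H)\setminus V(N(E_f))\rangle\rangle$ does not in general yield a gated amalgam. Take $H$ to be the $3\times 3$ grid (median, hence hypercellular). Its four maximal cells are the four unit squares, all containing the centre; for any $\Theta$-class $E_f$ the carrier $N(E_f)$ is a $2\times 3$ rectangle, and $V(H)\setminus V(N(E_f))$ is the opposite row of three vertices. That row is already gated, so $H_2$ is just that row, $H_1=N(E_f)$ is the opposite two rows, and $H_1\cap H_2=\varnothing$: the edges between them lie in neither piece, so you do not get an amalgam at all. Your suggested refinements do not help---by symmetry every proper carrier looks the same, and none is a single maximal cell. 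You also conflate two different objects: $H_2$ as defined, and ``the union of the cells of $H$ not contained in $N(E_f)$''; in the grid these are the bottom row and the bottom $2\times 3$ rectangle respectively, and only the latter is the right piece.

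The paper avoids this by a case split. If two maximal cells are disjoint, a $\Theta$-class $E_f$ on a geodesic between them has both $H^{\pm}_f\setminus N(E_f)$ nonempty, and the extended halfspaces $H^{\pm}_f\cup N(E_f)$ give the amalgam directly. If all maximal cells pairwise intersect, Helly gives a nonempty common cell $X_0$; one picks an edge $uv$ with $v\in X_0$, $u\notin X_0$, so that $N(E_f)$ is exactly the union of maximal cells containing $uv$ (Claim~\ref{proper_carrier}). The second piece is then not a gated hull of a complement but $Y\cup Z^*$, where $Y$ is the union of the remaining maximal cells and $Z^*\subseteq N(E_f)$ is built explicitly: for each cell $X_j$ of the carrier one takes the face $Z^*_j$ of vertices whose gate in the layer through $uv$ is $v$ (or the neighbour $w_j$ of $v$ in that layer when the layer is a cycle). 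Claims~\ref{Z*-face2}--\ref{S-gated} then show $Z^*$ meets every maximal cell in a face and is therefore gated, so $G$ is the amalgam of $N(E_f)$ and $Y\cup Z^*$ along $Z^*$. In the $3\times 3$ grid this yields exactly the two overlapping $2\times 3$ rectangles glued along the middle row---precisely what your construction misses.
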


\begin{proof} First suppose that a finite graph $G$ is obtained by gated amalgam from two graphs $G_1,G_2\in \mathcal{F}(Q_3^-)$. Suppose by way of contradiction that $G\notin \mathcal{F}(Q_3^-)$
and suppose that $G$ is a minimal such graph. Then any proper convex subgraph $H$ of $G$ is either contained in one of the graphs $G_1,G_2$ or is the gated amalgam of $H\cap G_1$ and $H\cap G_2$, thus $H\in \mathcal{F}(Q_3^-)$ by minimality of $G$. Thus there exists a sequence of contractions of $G$ to the graph $Q^-_3$. Let $E_f$ be the first such contraction, i.e., the graph $G':=\pi_f(G)$ does not belong to $\mathcal{F}(Q_3^-)$. On the other hand, by Lemma~\ref{contraction_gated}, $G'_1:=\pi_f(G_1)$ and $G'_2:=\pi_f(G_2)$ are gated subgraphs of $G'$. Moreover, $G'_1$ and $G'_2$ belong to $\mathcal{F}(Q_3^-)$ because $G_1$ and $G_2$ belong to $\mathcal{F}(Q_3^-)$ and $\mathcal{F}(Q_3^-)$ is closed by contractions. As a result we obtain that the graph $G'\notin \mathcal{F}(Q_3^-)$ is the gated amalgam 
of the graphs $G'_1,G'_2\in \mathcal{F}(Q_3^-)$, contrary to the minimality of $G$. This establishes that the subclass of $\mathcal{F}(Q_3^-)$ consisting of finite graphs from $\mathcal{F}(Q_3^-)$ is closed by gated amalgams.

Conversely, suppose that $G$ is an arbitrary finite convex subgraph of a graph from $\mathcal{F}(Q_3^-)$. We follow the schema of proof of implication (3)$\Rightarrow$(4) of~\cite[Theorem 1]{BaCh_cellular}. If $G$ is a single cell, then we are done. Otherwise, we claim that $G$ is a gated amalgamation of two proper gated subgraphs $G_1$ and $G_2$.

First suppose that there exist two disjoint maximal cells $Y'$ and $Y''$. Let $y'\in Y'$ and $y''\in Y''$ be two vertices realizing the distance $d(Y',Y'')=\min \{ d(x,z): x\in Y', z\in Y''\}$. Since $Y'\cap Y''=\emptyset$, necessarily $y'\ne y''$. Since $Y'$ and $Y''$ are gated, from the  choice of $y',y''$  it follows that $y'$ is the gate of $y''$ in $Y'$ and $y''$ is the gate of $y'$ in $Y''$. Let $y$ be a neighbor of $y'$ on a shortest path between $y'$ and $y''$. Suppose that the edge $y'y$ belongs to the equivalence class $E_f$. Notice that $y''$ is also the gate of $y$ in $Y''$ and $y'$ is the gate of $y$ in $Y'$. Therefore $Y'\subseteq W(y',y)=H^+_f$ and $Y''\subseteq W(y,y')=H^-_f$. Consequently, $Y'$ and $Y''$ are not contained in the carrier $N(E_f)$, thus $H^+_f\setminus N(E_f)$ and $H^-_f\setminus N(E_f)$ are nonempty. By Proposition~\ref{carrier}, $N(E_f),$ $H^+_f\cup N(E_f),$ and $H^-_f\cup N(E_f)$ are gated subgraphs of $G$, thus $G$ is the gated amalgam of $H^+_f\cup N(E_f)$ and $H^-_f\cup N(E_f)$ along the common gated subgraph $N(E_f)$.

Thus further we may suppose that all maximal cells of $G$ pairwise intersect. Since they are gated and $G$ is finite, by the Helly theorem for gated sets~\cite[Proposition 5.12 (2)]{VdV}, the maximal cells of $G$ intersect in a non-empty cell $X_0$.

\begin{claim} \label{proper_carrier} There exists an equivalence class $E_f$ of $G$ such that the carrier $N(E_f)$ of $E_f$ does not contain all maximal cells of $G$ and $E_f$ contains an edge $uv$ with $v\in X_0$ and $u\notin X_0$. Moreover, all maximal cells of the carrier $N(E_f)$ contain the edge $uv$.
\end{claim}

\begin{proof} By definition, $X_0$ is a proper face of each maximal cell $X$ of $G$. Therefore, there exists an edge $uv$ with $v\in X_0$ and $u\in X\setminus X_0$. Suppose that $uv$ belongs to the equivalence class $E_f$ of $G$. Then $X_0\subseteq W(v,u)$. Notice that $X$ belongs to the carrier $N(E_f)$ of $E_f$. Since $u\notin X_0$, there exists a maximal cell $X'$ such that $u\notin X'$. Since $v\in X'$, we assert that $X'$ does not belong to $N(E_f)$. Indeed, suppose $E_f$ contains an edge $u''v''$ with both ends in $X'$. Assume without loss of generality that $W(u,v)=W(u'',v'')$ and $W(v,u)=W(v'',u'')$. Since $u\in I(v,u'')$ and $v,u''\in X'$, by the convexity of $X'$ we conclude that $u\in X'$, a contradiction.  This shows that $N(E_f)$ consists of all maximal cells containing the edge $uv$.
\end{proof}

Let $E_f$ be an equivalence class of $G$ as in Claim~\ref{proper_carrier}, in particular, $uv$ is an edge of $E_f$ with $v\in X_0$ and $u\notin X_0$. Let $X_1,\ldots, X_k$ be the maximal cells of $G$ containing the edge $uv$. By the second assertion of Claim~\ref{proper_carrier}, $N(E_f)$ coincides with the union $\bigcup_{j=1}^k X_j$. Let $X_{k+1},\ldots, X_m$ be the remaining  maximal cells of $G$, i.e., the maximal cells not containing the vertex $u$ (such cells exist by the choice of $E_f$). Set $Y:=\bigcup_{i=k+1}^m X_i$ and notice that by the choice of $X_0$ we have $Y\subseteq W(v,u)$.

Let $Z$ be the subgraph of $G$ induced by the intersection of $N(E_f)$ with $Y$, i.e., $Z=\bigcup_{i=k+1}^m Z_i,$ where $Z_i:=N(E_f)\cap X_i$, $i=k+1,\ldots,m$.   By Proposition~\ref{carrier},  $N(E_f)$ is gated. Since by Proposition~\ref{convex_hull_cycle} each cell $X_i$ of $Y$  is also gated, each $Z_i$ is gated, and thus is a face of $X_i$, $i=k+1,\ldots,m$, by Lemma~\ref{product_cycles_gated}.


Now we define a gated subgraph $Z^*$ of $G$, which extends $Z$ and \emph{separates} $N(E_f)$ from $Y$, i.e., it contains their intersection and there is no edge from $N(E_f)\setminus Z^*\neq\emptyset$ to $Y\setminus Z^*\neq\emptyset$.  Each maximal cell $X_j$ in $N(E_f)$ is a Cartesian product of edges and even cycles, say $X_j=F_1\square \cdots \square F_{p}$. Let $L_j$ be the layer of $X_j$ containing the edge $uv$.  Suppose that $L_j=\{ v_1\}\square \cdots \{ v_{l-1}\}\square F_{l}\square \{ v_{l+1}\}\square \cdots\square \{ v_{p}\}$, where $F_{l}$ is the $l$th factor of $X_j$ and $v_s$ is a vertex of the factor $F_s, s\ne l$.  If $L_j=uv$, i.e. $L_j$ comes from an edge-factor $F_{l}=u'_jv'_j$, then set
$Z^*_j:=F_1\square\cdots \square F_{l-1}\square \{ v'_j\}\square F_{l+1}\square \cdots \square F_p.$
Since $u\notin Z^*_j$, $Z^*_j$ is a proper gated subgraph of $X_j$.  
Now, suppose that $L_j$ comes from a cyclic factor $F_l$ of $X_j$.
Let $vw_j$ be the  edge of $L_j$ incident to $v$ and different from $uv$. Suppose that the edges $uv$ and $vw_j$ of $L$
come from the edges $u'_jv'_j$ and $v'_jw'_j$ of $F_l$, respectively. Set
$Z^*_j:=F_1\square\cdots \square F_{l-1}\square \{ v'_j,w'_j\}\square F_{l+1}\square \cdots \square F_p.$
Again, since $u\notin Z^*_j$, $Z^*_j$ is a proper gated subgraph of $X_j$. Equivalently, $Z^*_j$ is the subgraph of $X_j$ induced by all vertices of $X_j$
whose gates in the gated cycle $L_j$ is either $v$ or $w_j$.  Notice also that in both cases $Z^*_j$ is a proper face of $X_j$ included in $W(v,u)$.
Finally, set $Z^*:=\bigcup_{j=1}^k Z^*_j$.

\begin{claim} \label{Z*-face2} For each $j=1,\ldots,k$, we have $Z^*\cap X_j=Z^*_j$.
\end{claim}

\begin{proof} By definition, $Z^*_j\subseteq Z^*\cap X_j$. To prove the converse inclusion, it suffices to show that for any $j'\in \{ 1,\ldots, k\}$, $j'\ne j$, we have $Z^*_{j'}\cap X_j\subseteq Z^*_j$.  Consider the layers $L_j$ of $X_j$ and $L_{j'}$ of $X_{j'}$ containing the edge $uv$. Each of them consists either of the edge $uv$ or is a gated cycle of $G$. If $L_j$ is $uv$, then $Z^*_j$ coincides with
$X_j\cap W(v,u)$. Since $Z^*_{j'}\subseteq W(v,u),$ necessarily $Z^*_{j'}\subseteq X_j\subseteq Z^*_j$. Now suppose that $L_j$ is an even cycle.  Suppose by way of contradiction that $Z^*_{j'}\cap X_j$ contains a vertex $x$ not included in $Z^*_j$. Since $x\in W(v,u)$, the gate of $x$ in $L_j$ is a vertex $x'$ of $L_j\cap W(v,u)$ different from $v$. Since $X_{j'}$ is convex, $x',v\in I(x,u)\subset X_{j'}$. Since $X_{j'}$ is gated and contains three different vertices $u,v,x'$ of the gated cycle $L_j$, necessarily $X_{j'}$ contains the entire cycle $L_j$. This implies $L_{j'}=L_j$ and $w_j=w_{j'}$. By definition of $Z^*_{j'}$, we also conclude that $x'=w_{j'}$. Since $x\in X_j$, by definition of $Z^*_j$ we must have $x\in Z^*_j$, contrary to the choice of $x$.
\end{proof}
%

\begin{claim} \label{Z*-face1} For each $i=k+1,\ldots,m$, we have  $Z^*\cap X_i=Z_i$. In particular, $Z^*\cap Y=Z$.
\end{claim}

\begin{proof}

For each maximal cell $X_j,i=1,\ldots,k,$ of $N(E_f)$, consider the intersection $Z_{ji}$ of $X_j$ with each  cell $X_i, i=k+1,\ldots,m,$ of $Y$. From the definition of $Z$ it follows that each $Z_i, i=k+1,\ldots,m,$ can be viewed as the union of all $Z_{ji}$, $j=1,\ldots, k$, thus $Z$ can be viewed as the union of all $Z_{ji}$, $j=1,\ldots, k, i=k+1,\ldots, m$.

Now, let $k+1\le i\le m$. First we prove that  for any $1\le j\le k$ the set $Z_{ji}$
is included in $X_j\cap Z^*$ (which coincides with $Z^*_j$ by Claim~\ref{Z*-face2}).  This is obviously so if the layer $L_j$ is the edge $uv$: in this case, since $X_i\subset W(v,u)$, $Z_{ji}=X_j\cap X_i$ is a subset of $W(v,u)\cap X_j=Z^*_j$. Now, suppose that $L_j$ is an even cycle. Suppose by way of contradiction that $Z_{ji}=X_j\cap X_i$ contains a vertex $x$ whose gate $x'$ in $L_j$ is different from $v$ and $w_j$. Since $x\in W(v,u)$, necessarily $w_j$ and $x'$ belong to the interval $I(x,v)$. Since $x,v\in Z_{ji}$ and $Z_{ji}$ is convex, $w_j,x'\in Z_{ji}$. Since $Z_{ji}$ and $L_j$ are gated and $Z_{ji}\cap L_j$ contains the vertices $u,v,x'$, necessarily $L_j$ must be included in $Z_{ji}$. Since $u\in L_j\setminus Z_{ji}$, we obtained a contradiction. This establishes the inclusion $Z_{ji}\subseteq Z^*\cap X_i\subseteq Z^*$.

We have  $Z^*\cap X_i= (\bigcup_{j=1}^k Z^*_j) \cap X_i=\bigcup_{j=1}^k (Z^*_j \cap X_i)$. By Claim~\ref{Z*-face2}, the latter equals to $\bigcup_{j=1}^k (Z^* \cap X_j \cap X_i)=\bigcup_{j=1}^k (Z^* \cap Z_{ji})=\bigcup_{j=1}^k Z_{ji}$, where the last equation holds by the inclusion established above. Finally, by the definition, $\bigcup_{j=1}^k Z_{ji}=Z_i$.
\end{proof}

\begin{claim} \label{S-gated} Let $S$ be a subgraph of $G$ such that 
the intersection of $S$ with any maximal cell of $G$ is non-empty and gated (i.e., a face by Lemma~\ref{product_cycles_gated}). Then $S$ is a gated subgraph of $G$.
\end{claim}

\begin{proof} Let $X$ be a maximal cell of $G$, $x\in X$ a vertex, and $S^*:=S\cap X$. By our assumptions, $S^*$ is a nonempty face of $X$, thus a gated subgraph of $G$. Let $x^*$ be the gate of $x$ in $S^*$. We assert that $x^*$ is also the gate of $x$ in the set $S$, i.e., for any
vertex $y\in S$, we have $x^*\in I(x,y)$. Suppose that $y$ belongs to a maximal by inclusion cell $R$ in $S$. Let $R_0:=X\cap R$ and let $x_0$ be the gate of $x$ in $R_0$. Since $R\subseteq S$,
necessarily $R_0\subseteq S^*$, whence $x^*\in I(x,x_0)$. Therefore, to prove that $x^*\in I(x,y)$ it suffices to show that $x_0\in I(x,y)$. For this it is enough to prove that
$x_0$ is the gate of $x$ in $R$. Suppose by way of contradiction
that the gate of $x$ in $R$ is a vertex $x'$ different from $x_0$. Then $x'\in I(x,x_0)\subset X$ because $X$ is convex.
Since $x'\in R$, we conclude that $x'\in X\cap R=R_0$. This contradicts the assumption that $x_0$ is the gate of $x$ in $R_0$. Hence $x^*$ is the gate of $x$ in $S$, establishing that $S$ is gated.
\end{proof}

By Claims~\ref{Z*-face2} and~\ref{Z*-face1}, the intersection of $Z^*$ with each cell $X_i,i=1,\ldots,m,$ of $G$ is a proper face of $X_i$ (and thus a gated subgraph of $G$). Hence $Z^*$ satisfies the conditions of Claim~\ref{S-gated}, thus $Z^*$ is a gated  subgraph of $G$. Since $Z^*\subseteq N(E_f)\cap W(v,u)$ and $u\in N(E_f)\setminus Z^*$, $Z^*$ is a proper subgraph of $N(E_f)$. Since by Claim~\ref{Z*-face1} $Z^*\cap Y=Z$ and $Z$ is a proper subgraph of $Y$, the gated subgraph $Z^*$ separates any vertex of $N(E_f)\setminus Z^*\ne \emptyset$ from any vertex of $Y\setminus Z^*=Y\setminus Z\ne\emptyset$.  Consequently, $G$ is the gated amalgam  of $N(E_f)$ and $Y\cup Z^*$ along $Z^*$, 
concluding the proof of the theorem.
\end{proof}

\section{The median cell property}\label{sec:median}
Three (not necessarily distinct) vertices $x,y,z$ of a graph  $G$
are said to form a {\it metric triangle} $xyz$ if the intervals
$I(x,y),I(y,z),$ and $I(z,x)$ pairwise intersect only in the
common end vertices.  A (degenerate) equilateral metric triangle of
size 0 is simply a
single vertex. We say that a  metric triangle $xyz$ is a {\it
quasi-median} of the triplet $u,v,w$ if
$$d(u,v)=d(u,x)+d(x,y)+d(y,v),$$
$$d(v,w)=d(v,y)+d(y,z)+d(z,w),$$
$$d(w,u)=d(w,z)+d(z,x)+d(x,u).$$
Observe that, for every triplet $u,v,w,$ a
quasi-median $xyz$ can be constructed in the following way: first
select any vertex $x$ from $I(u,v)\cap I(u,w)$ at maximal distance
to $u,$ then select a vertex $y$ from $I(v,x)\cap I(v,w)$ at
maximal distance to $v,$ and finally select any vertex $z$ from
$I(w,x)\cap I(w,y)$ at maximal distance to $w.$ In the case that
the quasi-median is degenerate $(x=y=z),$ it is a median of the
triplet $u,v,w.$

We continue with the following characterization of metric triangles in hypercellular graphs:

\begin{proposition}\label{metric_triangle} If $G$ is a hypercellular graph and $xyz$ is a metric triangle of $G$, then $x,y,z$
belong to a common cell of $G$. In particular, the gated hull $\langle\langle x,y,z\rangle\rangle $ coincides with the convex hull \conv$(x,y,z)$
and is a cell of $G$.
\end{proposition}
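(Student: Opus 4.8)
The plan is to argue by minimality: suppose $G$ is a minimal hypercellular graph containing a metric triangle $xyz$ whose vertices do not lie in a common cell. Since convex subgraphs of hypercellular graphs are hypercellular and $\conv(x,y,z)$ is convex, minimality forces $G=\conv(x,y,z)$. If the triangle is degenerate, i.e.\ $x=y=z$, the single vertex is a (trivial) cell and we are done; likewise if the triangle has size $1$, then $x,y,z$ pairwise adjacent is impossible in a bipartite graph unless two coincide, so we may assume the triangle has size $\ge 2$ and in particular $x,y,z$ are pairwise distinct with pairwise distances $\ge 2$. The first real step is to contract. For any $f\in\Lambda$, the image $\pi_f(x)\pi_f(y)\pi_f(z)$ is a metric triangle (or degenerate) of $\pi_f(G)$, which is again hypercellular; by minimality its vertices lie in a common cell $X'_f$ of $\pi_f(G)$. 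The expansion of $X'_f$ is a convex subgraph of $G$ by Lemma~\ref{convex_expansion}, and Proposition~\ref{product_of_cycles_expansion} together with Lemma~\ref{lemma:half-extended} describes it: it is either a cell, or two cells glued along a face. I would exploit this to pin down the structure of $G$ itself as an isometric expansion of a product of edges and even cycles, mimicking the proof of Theorem~\ref{product_of_cycles} (the isometric-cycle case) and of Lemma~\ref{three_cycles}.

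The key combinatorial input is that a metric triangle $xyz$ with all sides $\ge 2$, spanning the whole graph $G=\conv(x,y,z)$, forces each $\Theta$-class of $G$ to separate exactly one of the three pairs — this is the standard ``three halfspaces'' behaviour of quasi-medians in partial cubes. More precisely, writing $A_{xy},A_{yz},A_{zx}$ for the sets of $\Theta$-classes crossing the $(x,y)$-, $(y,z)$-, $(z,x)$-geodesics, the metric-triangle condition gives $A_{xy}\cap A_{yz}=A_{yz}\cap A_{zx}=A_{zx}\cap A_{xy}=\emptyset$ (each $\Theta$-class crosses at most two of the three pairwise intervals by convexity of halfspaces, and crossing exactly two would contradict that the common endpoints are the only intersection points of the intervals), and $\Lambda=A_{xy}\cup A_{yz}\cup A_{zx}$ since $G=\conv(x,y,z)$. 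Now contract all classes in $A_{zx}$ to obtain $\pi_{A_{zx}}(G)$, in which $x$ and $z$ collapse to a single vertex, so the triangle degenerates to a $2$-geodesic (a single isometric path), whose convex hull in the hypercellular graph $\pi_{A_{zx}}(G)$ is a cell, hence a product of edges and even cycles. The same holds after contracting $A_{xy}$ and after contracting $A_{yz}$. Since $G$ has exactly these three ``directions'' of $\Theta$-classes, one then checks that $G$ is a product of (at most) three factors glued from these three contractions; more carefully, one applies Lemma~\ref{lem:subcells}-style reasoning, or repeats the expansion analysis of Proposition~\ref{product_of_cycles_expansion} across the (few) expansions reconstructing $G$ from $\pi_{A_{zx}}(G)$, to conclude that $G$ is itself a cell. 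Then $x,y,z$ trivially lie in the common cell $G$, a contradiction.

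For the final ``in particular'' clause: once $x,y,z$ lie in a common cell $X$, apply Theorem~\ref{mthm:cells} (cells of hypercellular graphs are gated) to see that $X$ is a gated subgraph containing $\{x,y,z\}$, so $\langle\langle x,y,z\rangle\rangle\subseteq X$. On the other hand $\langle\langle x,y,z\rangle\rangle\supseteq\conv(x,y,z)$ always, and $\conv(x,y,z)$ is a convex subgraph of the cell $X$, hence by Lemma~\ref{product_cycles_gated} it is a subproduct $F'_1\times\cdots\times F'_k$ in which each factor is either a whole cyclic factor, or a convex subpath. To finish one must show no factor is a proper path of length $\ge 2$: such a factor would mean some vertex of $X$ has no gate in $\conv(x,y,z)$ (the argument is exactly the one in the proof of Lemma~\ref{two_cycles}: the antipode in that cyclic factor of an interior vertex of the path has no gate in the path), so $\conv(x,y,z)$ would not be gated; but it is easy to see from the metric-triangle condition that $\conv(x,y,z)=\conv(I(x,y)\cup I(y,z)\cup I(z,x))$ is gated when $x,y,z$ live in a product of edges and even cycles, because there the convex hull of a metric triangle is obtained coordinatewise and in each cyclic factor a metric triangle spans either a vertex, an edge, or the whole cycle. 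Hence $\conv(x,y,z)$ is itself a cell, equals its gated hull, and the proposition follows.

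The main obstacle I expect is the middle step: reconstructing $G$ as a single cell from the three contractions $\pi_{A_{xy}}(G),\pi_{A_{yz}}(G),\pi_{A_{zx}}(G)$ each being a cell. The subtlety is that an isometric expansion of a product of even cycles and edges need not be a product (Proposition~\ref{product_of_cycles_expansion} explicitly allows the ``half-extended cycle'' degeneracy, excluded here only because $G\in\mathcal{F}(Q_3^-)$), so one must carefully track, for each expansion performed in building $G$ up from a vertex, that the expansion subgraph $G'_0$ is a full subproduct and that $G'_1=G'$ (the second alternative of Proposition~\ref{product_of_cycles_expansion}) is incompatible with $G=\conv(x,y,z)$ having all three of its metric-triangle ``legs'' survive — just as in the final paragraph of the proof of Theorem~\ref{product_of_cycles}, where $G'_1=G'_0=G'_2$ was forced, contradicting properness. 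Getting this bookkeeping right, for a metric triangle rather than a single isometric cycle, is the technical heart of the argument.
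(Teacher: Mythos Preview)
Your proposal takes a genuinely different route from the paper, and it has a concrete error at its core.

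The paper's proof is short and uses Theorem~\ref{mthm:amalgam} directly: pass to the finite convex hull $G':=\conv(x,y,z)$, which is a finite hypercellular graph; by (i)$\Leftrightarrow$(iv), either $G'$ is a single cell (done) or $G'$ is a gated amalgam of proper gated subgraphs $G_1,G_2$. In the latter case two of $x,y,z$ lie in one part, say $y,z\in G_1$; if $x\notin G_1$ then the gate $x'$ of $x$ in $G_1$ satisfies $x'\in I(x,y)\cap I(x,z)$ with $x'\ne x$, contradicting the metric-triangle condition. Hence all three lie in $G_1$, and induction finishes. The ``in particular'' clause follows because cells are gated (Theorem~\ref{mthm:cells}) and $\conv(x,y,z)$ is itself the cell.

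Your approach tries to avoid the amalgamation theorem and instead reconstruct $G=\conv(x,y,z)$ as a cell via contractions, but the key combinatorial step is wrong. You assert that the sets $A_{xy},A_{yz},A_{zx}$ of $\Theta$-classes separating the respective pairs are pairwise disjoint. In fact the opposite holds: every $\Theta$-class of $G=\conv(x,y,z)$ separates \emph{exactly two} of the three pairs. If $E_f$ separates $x$ from $y$ (say $x\in H^+_f$, $y\in H^-_f$) then $z$ is on one side, so $E_f$ also separates $z$ from exactly one of $x,y$. Already in $C_6$ with $x=0,y=2,z=4$ each of the three $\Theta$-classes lies in exactly two of $A_{xy},A_{yz},A_{zx}$. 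Your justification (``crossing exactly two would contradict that the common endpoints are the only intersection points of the intervals'') does not hold: nothing forces a second common vertex of $I(x,y)$ and $I(y,z)$ just because some $\Theta$-class crosses both.

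Even with the correct tripartition $\Lambda=B_x\sqcup B_y\sqcup B_z$ (where $B_v$ is the set of classes isolating $v$ from the other two), the next step fails: contracting $A_{zx}=B_x\cup B_z$ collapses the triangle to the interval $I(\pi(x),\pi(y))$, but an interval in a hypercellular graph need not be a cell (the path $P_3$ is hypercellular, is the interval between its endpoints, and is not a Cartesian product of edges and even cycles). So you cannot conclude the contracted graph is a cell without already knowing the proposition. The reconstruction-from-three-cells step you flag as the ``main obstacle'' is thus moot: the argument breaks earlier.

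Your treatment of the ``in particular'' clause (projecting the metric triangle to each factor of the cell and observing that in a cycle a nondegenerate metric triangle spans the whole cycle) is essentially correct and is a nice coordinatewise argument, but the paper gets this for free once $G'=\conv(x,y,z)$ is shown to be a cell.
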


\begin{proof}  First we prove the result for an arbitrary finite hypercellular graph $G$. By  Theorem~\ref{mthm:amalgam} 
either $G$ is a single cell and we are done, or $G$ is a gated amalgam of two proper gated subgraphs $G_1$ and $G_2$. Suppose without loss of generality that $y,z\in V(G_1)$. If $x\in V(G_1)$, then we can apply induction hypothesis to $G_1$ and conclude that $x,y,z$ belong to a common cell of $G_1$, and thus to a common cell of $G$. Now suppose that $x\in V(G_2)\setminus V(G_1)$. Let $x'$ be the gate of $x$ in $G_1$. Since $x'$ belongs to $G_1$ and $x$ not, $x'\ne x$. Since $x'\in I(x,y)\cap I(x,z)$, we obtain a contradiction with the assumption that $xyz$ is a metric triangle of $G$. Thus $x,y,z$ belong to a common cell of $G$. Since each cell of $G$ is gated and $xyz$ is a metric triangle, the gated hull of $x,y,z$ coincides with  the convex hull$\conv(x,y,z)$ and is a cell.

Now, suppose that $G$ is an arbitrary hypercellular graph. Let $G'$ be the subgraph induced by the convex hull of $x,y,$ and $z$. Then $G'$ is a finite hypercellular graph. By the above result for finite graphs, we have that $G'$ is a convex Cartesian product of edges and even cycles. Therefore, $G'$ is the convex hull of an isometric cycle of $G$. By Theorem~\ref{mthm:cells}
, $G'$ is a gated cell of $G$.
\end{proof}


For a triple of vertices $u,v,w$ of a graph $G$, a $u$-{\it apex}
relative to $v$ and $w$ is a vertex $x:=(uvw)\in
I(u,v)\cap I(u,w)$ such that $I(u,x)$ is maximal with respect to inclusion. A graph $G$ is
{\it apiculate}~\cite{BaCh_wma1} if and only if  for any vertex $u$ the vertex set of $G$
is a meet-semilattice with respect to the base-point order
$\preceq_u$ defined by $v\preceq_u v'$ $\Leftrightarrow$ $v\in I(u,v'),$ that is,
$I(u,v)\cap I(u,w)=I(u,(uvw))$ for any vertices $v,w$. Note that many partial cubes are not apiculate, see~\cite{bjedzi-90} for this discussion with respect to tope graphs of oriented matroids. For any triplet
$u,v,w$ of vertices of an apiculate graph $G$, the vertices $u,v,w$ admit unique apices $x:=(uvw),y:=(vuw),$ and $z:=(wuv)$ and
admit a unique quasi-median defined by the metric triangle $xyz$.

\begin{lemma} \label{Pasch-apiculate}~\cite[Proposition 2]{BaCh_wma1} Every Pasch graph $G$ is apiculate.
Consequently, every hypercellular graph is apiculate.
\end{lemma}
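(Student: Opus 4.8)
The plan is the following. The first sentence of the lemma is \cite[Proposition~2]{BaCh_wma1}, and I would invoke it, but it also admits a short self-contained proof from the Pasch axiom stated above, which I sketch. Fix a vertex $u$ and vertices $v,w$, and put $M:=I(u,v)\cap I(u,w)$. Since $G$ is a bipartite partial cube (Pasch graphs are, by \cite{Ch_thesis,Ch_separation}), intervals are convex and finite, so $M$ is a finite convex subgraph containing $u$; and $M$ is a $\preceq_u$-down-set, because if $z\in M$ and $z'\in I(u,z)$ then $z'\in I(u,z)\subseteq I(u,v)\cap I(u,w)=M$. Recall also that $\preceq_u$ is a partial order with least element $u$. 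Hence, to show that $\preceq_u$ is a meet-semilattice order (i.e.\ that $G$ is apiculate, with $I(u,v)\cap I(u,w)=I(u,(uvw))$), it suffices to prove that $M=I(u,v)\cap I(u,w)$ always has a $\preceq_u$-greatest element: that element is then the meet $(uvw)$ of $v$ and $w$, and by down-closedness $M=I(u,(uvw))$.

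The key step is to show that any two vertices $x,x'\in M$ have a common $\preceq_u$-upper bound inside $M$. Apply the Pasch axiom to the triple $u,v,w$ with $x\in I(u,v)$ and $x'\in I(u,w)$: there is a vertex $y\in I(v,x')\cap I(w,x)$. From $y\in I(v,x')$ and $x'\in I(u,v)$ we get $d(u,v)=d(u,x')+d(x',v)=d(u,x')+d(x',y)+d(y,v)$; combined with the triangle inequalities $d(u,y)\le d(u,x')+d(x',y)$ and $d(u,v)\le d(u,y)+d(y,v)$, this forces $d(u,y)=d(u,x')+d(x',y)$ and $d(u,v)=d(u,y)+d(y,v)$, i.e.\ $x'\preceq_u y$ and $y\in I(u,v)$. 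Symmetrically, using $y\in I(w,x)$ and $x\in I(u,w)$, one obtains $x\preceq_u y$ and $y\in I(u,w)$. Thus $y\in M$ is a common $\preceq_u$-upper bound of $x$ and $x'$. Since $M$ is finite and $\preceq_u$ is transitive, iterating this construction over the elements of $M$ yields an element of $M$ that is $\succeq_u$ every element of $M$, which is the required greatest element. This completes the proof that $G$ is apiculate.

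The second sentence is then immediate bookkeeping from earlier in the paper: every hypercellular graph $G\in\mathcal{F}(Q_3^-)$ lies in $\mathcal{S}_4$, the class of bipartite graphs with the $S_4$ separation property, and $S_4$ is equivalent to the Pasch axiom by \cite{Ch_thesis} and Theorem~7 of \cite{Ch_separation}; hence $G$ is a Pasch graph and the first part applies. I expect no real obstacle here: the only nontrivial point is the Pasch application above, and it genuinely uses the Pasch hypothesis rather than just partial-cubeness — for instance $C_6$ is a partial cube in which $I(u,v)\cap I(u,w)$ can have two incomparable $\preceq_u$-maximal elements, so convexity and down-closedness of $M$ alone would not suffice.
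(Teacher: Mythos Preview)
Your argument is correct and in fact goes beyond what the paper does: in the paper this lemma carries no proof at all, it is simply quoted from \cite[Proposition~2]{BaCh_wma1}, and the ``consequently'' clause is justified later (in the proof of Theorem~\ref{median_cell}) by combining that citation with Corollary~\ref{Pasch-Peano}. Your self-contained derivation from the Pasch axiom is sound: the key computation that the Pasch witness $y\in I(v,x')\cap I(w,x)$ lies in $M=I(u,v)\cap I(u,w)$ and dominates both $x$ and $x'$ under $\preceq_u$ is exactly right, and the finiteness of $M$ (which you need to pass from ``directed'' to ``has a greatest element'') holds in any partial cube since $I(u,v)$ sits inside a $d(u,v)$-dimensional subcube. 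Your route to the second sentence via $\mathcal{F}(Q_3^-)\subseteq\mathcal{S}_4$ (established in the introduction and Section~\ref{sec:cells}) is equivalent to the paper's use of Corollary~\ref{Pasch-Peano}.

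One small correction: your illustrative remark that $C_6$ is a partial cube where $I(u,v)\cap I(u,w)$ can have two incomparable $\preceq_u$-maximal elements is false. The cycle $C_6$ is itself a Pasch graph (it lies in $\mathcal{S}_4$), so by the very lemma you are proving it must be apiculate; and one can check directly that for $u=0$ every intersection $I(0,v)\cap I(0,w)$ is a path from $0$ and hence has a greatest element. If you want an example showing that partial-cubeness alone does not suffice, you need a partial cube outside $\mathcal{S}_4$ --- for instance one of the graphs in Figure~\ref{fig:forbidden3}, or more generally a tope graph of an oriented matroid as alluded to in the paper via \cite{bjedzi-90}. This does not affect the validity of your proof, only the motivating aside.
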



We say that a triplet $u,v,w$ of vertices in an apiculate graph $G$ admits a {\it median cell} (respectively, a {\it median cycle}) if the gated
hull $\langle\langle x,y,z\rangle\rangle $ of the unique quasi-median $xyz$ of $u,v,w$ is a Cartesian product of vertices, edges, and cycles
(respectively, a cycle or a single vertex). Notice that any median-cell is either a vertex or is a Cartesian product of even cycles of length $\ge 6$.
A graph $G$ is called {\it cell-median} (respectively, {\it cycle-median}) if $G$ is apiculate and any triplet $u,v,w$ of $G$
admits a unique median cell (respectively, unique median cycle or vertex). By \cite[Proposition~3]{BaCh_cellular}, bipartite cellular
graphs are cycle-median. This result has been extended in~\cite{Po3} by showing that all graphs which are gated amalgams of even cycles
and hypercubes are cycle-median, and those are exactly the netlike cycle-median partial cubes. Now, we are ready to prove Theorem~\ref{mthm:median}.

%

\begin{theorem} \label{median_cell} A partial cube  $G=(V,E)$ is cell-median if and only if $G$ is hypercellular.
\end{theorem}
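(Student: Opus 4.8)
The plan is to prove both implications by combining the amalgamation structure of hypercellular graphs (Theorem~\ref{mthm:amalgam}) with the metric-triangle characterization just established in Proposition~\ref{metric_triangle}. For the forward direction, suppose $G$ is hypercellular. By Lemma~\ref{Pasch-apiculate}, $G$ is apiculate, so every triple $u,v,w$ has a well-defined quasi-median $xyz$. Since $xyz$ is a metric triangle, Proposition~\ref{metric_triangle} gives that $x,y,z$ lie in a common cell and that their gated hull $\langle\langle x,y,z\rangle\rangle$ equals $\mathrm{conv}(x,y,z)$ and is a cell of $G$. This is exactly a median cell for $u,v,w$; uniqueness follows from the uniqueness of the quasi-median in an apiculate graph (the apices $x=(uvw)$, $y=(vuw)$, $z=(wuv)$ are uniquely determined). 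Hence $G$ is cell-median.

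For the converse, suppose $G$ is cell-median; in particular $G$ is a partial cube and apiculate. I would argue by contradiction, taking a minimal convex subgraph $H$ of $G$ (equivalently, work within a convex subgraph, which is again cell-median as the property passes to convex subgraphs since intervals and hence quasi-medians are inherited) that is contractible to $Q_3^-$. The strategy is to show that $Q_3^-$ itself cannot be a pc-minor: one computes directly that some triple of vertices of $Q_3^-$ has a quasi-median whose gated hull is not a product of vertices, edges, and even cycles — indeed the gated hull of the three "tips" of $Q_3^-$ is all of $Q_3^-$, which is not such a product. To lift this to $G$ one must check that the cell-median property is preserved under the operations needed to exhibit the minor, namely under restrictions (immediate) and under contractions. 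For contractions one uses that $\pi_f$ maps shortest paths to shortest paths (Lemma~\ref{contraction_gated}-style reasoning) so that the image of a quasi-median of $u,v,w$ relates to a quasi-median of $\pi_f(u),\pi_f(v),\pi_f(w)$, and that contractions of cells are cells; if the contracted graph failed to have a median cell for some triple, one pulls this failure back through a single expansion step, invoking Lemma~\ref{convex_expansion}, Proposition~\ref{product_of_cycles_expansion}, and Lemma~\ref{lemma:half-extended} to control how cells behave under expansion.

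The main obstacle I expect is precisely this pullback step: showing that cell-medianness is closed under contraction (equivalently, that an isometric expansion of a non-cell-median graph is non-cell-median, or conversely that an expansion of a cell-median graph which introduces no half-expanded cycle stays cell-median). The delicate point is that a quasi-median $xyz$ in the contracted graph $G'$ may lift to several incomparable quasi-medians in $G$, and one must track how the gated hull $\langle\langle x,y,z\rangle\rangle$ — a product of cycles in $G'$ — expands: by Proposition~\ref{product_of_cycles_expansion} it either stays a product of cycles and edges, or it decomposes along the expansion into two cells sharing a face, and in the latter case one must verify that the quasi-median of the lifted triple still sees a single gated cell and is not broken up. Once this closure under pc-minors is in hand, together with the explicit verification that $Q_3^-$ is not cell-median, the equivalence $\mathrm{(cell\text{-}median)}\Leftrightarrow\mathrm{(hypercellular)}$ follows, since hypercellular means exactly $G\in\mathcal{F}(Q_3^-)$. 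Alternatively, and perhaps more cleanly, one can avoid the minor argument entirely on the converse side by showing directly that a cell-median graph satisfies condition~(iv) of Theorem~\ref{mthm:amalgam}: if $G$ is not a single cell, use a quasi-median lying in a proper cell together with the carrier-gatedness machinery of Proposition~\ref{carrier} to exhibit a gated amalgam decomposition, mimicking the proof of Theorem~\ref{thm:amalgam}. I would pursue whichever of these two routes makes the induction cleanest; the contraction-closure route is conceptually shorter but technically the heavier of the two.
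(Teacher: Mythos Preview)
Your forward direction is correct and matches the paper: hypercellular $\Rightarrow$ apiculate (Lemma~\ref{Pasch-apiculate}), and Proposition~\ref{metric_triangle} furnishes the median cell for every triple.

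For the converse, both routes you outline have genuine gaps, and the paper takes a third route you did not consider. In Route~1 you correctly identify contraction-closure of the cell-median property as the obstacle, but your sketch does not overcome it: you invoke Lemma~\ref{lemma:half-extended} and the expansion analysis of Proposition~\ref{product_of_cycles_expansion}, yet Lemma~\ref{lemma:half-extended} is proved only for graphs already in $\mathcal{F}(Q_3^-)$, and you have no independent argument that a cell-median graph contains no convex half-expanded cycle. (Deducing it from ``$Q_3^-$ is not cell-median'' plus pc-minor-closure is exactly what you are trying to prove.) Even the claim that cell-medianness passes to convex subgraphs is not immediate: the gated hull $\langle\langle x,y,z\rangle\rangle$ is taken in $G$, not in the convex subgraph, and you have not argued it stays inside. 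Route~2 is circular as stated: Proposition~\ref{carrier} (carriers are gated) is proved for hypercellular graphs, and its proof uses Theorem~\ref{mthm:cells} and Lemma~\ref{three_cycles}, both of which already assume $G\in\mathcal{F}(Q_3^-)$; nothing in your outline re-derives carrier gatedness from the cell-median hypothesis alone.

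The paper instead verifies condition~(ii) of Theorem~\ref{mthm:amalgam} directly from cell-medianness. First, every cell $X$ is gated: if some nearest $v$ had no gate in $X$, one builds a metric triangle on $v$ and two vertices of $X$, applies the cell-median hypothesis to obtain a gated cell $Y$ containing all three, and then the convex-but-not-gated intersection $X\cap Y$ forces a gated cycle and a convex cycle to share a path of length $\ge 2$, contradicting an elementary claim (a gated cycle meets any other convex cycle in at most an edge). Second, the 3CC-condition is established by an explicit argument using antipodes and gates in the three cycles, repeatedly invoking the cell-median hypothesis to rule out convex copies of $Q_3^-$ and to force the required $C\times K_2$ structure. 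Neither step borrows any result that presupposes hypercellularity.
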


\begin{proof}  First we prove that hypercellular graphs are cell-median. By Corollary~\ref{Pasch-Peano} and Lemma~\ref{Pasch-apiculate} it follows that any graph $G$ from $\mathcal{F}(Q_3^-)$ is apiculate. Therefore, to show that $G$ is cell-median it suffices to show that if $xyz$ is a metric triangle of $G$, then the gated hull $\langle\langle x,y,z\rangle\rangle $ of $x,y,z$ is a cell; this is  Proposition~\ref{metric_triangle}.

Conversely, to prove that cell-median partial cubes are hypercellular graphs we will use Theorem~\ref{mthm:amalgam}(ii). Namely, we have to prove that a cell-median partial cube $G$ satisfies the 3CC-condition  and that any cell $X$ of $G$ is gated. Suppose by way of contradiction, that $G$ contains a cell $X$ and a vertex not having a gate in $X$. Let $v$ be such a vertex closest to $X$. Since $v$ does not have a gate, we can find two vertices $x,y\in X$ such that $I(x,v)\cap X=\{ x\}, I(y,v)\cap X=\{ y\}$,  and $x$ is closest to $v$ in $X$. From the choice of $v$, we conclude that $I(v,x)\cap I(v,y)=\{ v\}$. Hence, the vertices $v,x,$ and $y$ define a metric triangle of $G$. By the median-cell property, the convex hull of $v,x,y$ is a gated cell $Y$ of $G$. Let $Z:=X\cap Y$. Notice that $x,y\in Z$ and $v\notin Z$. Notice also that $Z$ is convex but not gated, otherwise we will get a contradiction with the choice of $v$. Since $Z$ is convex, $Z$ is a subproduct of $X$ and $Y$ and is a Cartesian product of convex paths and cycles.   Let $Z=Z_1\square Z_2\square\cdots \square Z_m$. Suppose also that $X=X_1\square X_2\square\cdots \square X_m$ and $Y=Y_1\square Y_2\square\cdots \square Y_m$, where each $X_i, i=1,\ldots, m,$ and each $Y_j,j=1,\ldots, m,$ is an even cycle, an edge, or a vertex, and each $Z_i$ is a convex subgraph of each $X_i$ and $Y_i$, $i=1,\ldots, m$.  Since $Z$ is not gated, at least one factor, say $Z_1$, is a convex
path of length at least 2, and $X_1$ and $Y_1$ are even cycles.

Let $z$ be a vertex of $Z=Z_1\square Z_2\square\cdots \square Z_m$ of the form  $z=(z_1, z_2, \ldots, z_m)$. Then the layers $X_1\square \{z_2\} \square \cdots \square \{z_m\}$ of $X$ and $Y_1\square \{z_2\} \square \cdots \square \{z_m\}$ of $Y$ are respectively a convex and a gated cycle of $G$. These two cycles  intersect in a path of length at least two, namely in $Z_1\square \{z_2\} \square \cdots \square \{z_m\}$. By the following Claim~\ref{claim:convex_gated}, this is impossible. This contradiction establishes that the cell $X$ is gated.

\begin{claim}\label{claim:convex_gated} Let $C_1,C_2$ be two distinct convex cycles of a partial cube $G$. If  $C_2$ is gated, then $C_1\cap C_2$ is empty, a vertex, or an edge of $G$.
\end{claim}

\begin{proof} Suppose by way of contradiction that $C_1\cap C_2$ contains a path $(v_1,v,v_2)$ of length 2. Let $u$ be the antipodal to $v$ vertex of $C_1$. If $u\in C_2$, then $u,v\in C_2$ and by convexity of $C_2$ we deduce that $C_1=I(u,v)\subseteq C_2$, thus $C_1=C_2$, a contradiction. Consequently, $u\notin C_2$. Let $x$ be the gate of $u$ in $C_2$. Since $v_1,v_2\in C_2$, $x \in I(u,v_1)\cap I(u,v_2)$. From these inclusions we conclude that either $x=v$ or $x$ is the antipodal to $v$ vertex of $C_2$. Since $v_1,v_2\in I(u,v)$, necessarily $x\ne v$. But if
$x$ is the antipode of $v$ in $C_2$, then $C_2\subset I(v_1,u)\cup I(v_2,u)$, which is only possible if $C_1=C_2$.
\end{proof}



To establish the 3CC-condition, let  $C_1,C_2,C_3$ be three convex cycles of $G$ such that any two cycles $C_i,C_j$, $1\le i<j\le 3$,
intersect in an edge $e_{ij}$ and the three cycles intersect in a vertex $x$.  Since the cells of $G$ are gated, $C_1,C_2,C_3$ are gated cycles of $G$.
Let $e_{12}=xx_2,e_{23}=xx_0,$ and $e_{13}=xx_3$. Let $v_1,v_2,$ and $v_3$ be the vertices of respectively $C_1,C_2$, and $C_3$ antipodal to $x$. If $v_1,v_2$, and $v_3$ define
a metric triangle, then the gated hull of $v_1,v_2,v_3$ is a Cartesian product of vertices, edges, and even cycles containing $C_1,C_2$, and $C_3$, and we are done. So suppose without loss of generality
that there exists a vertex $u_1\in I(v_1,v_2)\cap I(v_1,v_3)$ adjacent to $v_1$.
Notice that $x_2$ and $x_3$ are the gates of $v_1$ in the cycles $C_2$ and $C_3$, respectively. In fact this is true since the gate of $v_1$ in $C_2$ must be in $I(v_1,x_2)\cap C_2=\{x_2\}$ and the gate of $v_1$ in $C_3$ must be in $I(v_1,x_3)\cap C_3=\{x_3\}$.
%

Since $u_1$ is adjacent to $v_1$,  one can easily show that the gates $y_2$ and $y_3$ of $u_1$ in $C_2$ and $C_3$ are two vertices adjacent to $x_2$ and $x_3$, respectively. If $y_2$ or $y_3$ coincides with $x$, then $u_1\in I(v_1,x)$, contrary to the assumption that the cycle $C_1$ is convex. Thus $y_2$
is the second  neighbor of $x_2$ in $C_2$ and $y_3$ is the second neighbor of $x_3$ in $C_3$. Since $y_2,y_3\in W(u_1,v_1), x_2,x,x_3\in W(v_1,u_1)$, and $W(u_1,v_1)$ is convex, we deduce that $d(y_2,y_3)=2$. Consequently, $y_2$ and
$y_3$ have a common neighbor $z_0$. First suppose that $z_0\ne x_0$, i.e., $x_0$ is not adjacent to one of the vertices $y_2,y_3$, say $x_0$ and $y_2$ are not adjacent. Since $C_2$ and $C_3$ are convex, $z_0$ cannot be adjacent to $x$. Thus $d(z_0,x)=3$,
whence the 6-cycle $C_0:=(z_0,y_2,x_2,x,x_3,y_3)$ is isometric. Since $C_0$ intersects $C_2$ and $C_3$ along paths of length 2, by Claim~\ref{claim:convex_gated}, this cycle cannot be gated and thus cannot be convex. Since $G$ is cell-median, the convex hull of $C_0$ cannot
be a $Q^-_3$, thus its convex hull is a 3-cube $Q_3$. Therefore the intervals $I(y_2,x)$ and $I(x,y_3)$ are squares of $G$ which necessarily must coincide with $C_2$ and $C_3$. Consequently, $x_0$ is adjacent to $y_2$ and $y_3$, contrary to the assumption that $x_0$ and $y_2$ are not adjacent. Now, suppose that $z_0=x_0$, i.e., $C_2=(x,x_2,y_2,x_0)$ and $C_3=(x,x_0,y_3,x_3)$. In this case, $y_2=v_2$ and $y_3=v_3$. If $C_1$ is also a 4-cycle, then we get an isometric $Q^-_3$, which must be completed to a 3-cube, otherwise $v_1,y_2,$ and $y_3$ define a metric triangle whose gated hull is not a cell.

So, $C_1$ is a cycle of length at least 6. We assert that the gated hull of $v_1,y_2,$ and $y_3$ is a cell isomorphic to $C_{1}\square K_2$. For the sake of contradiction, assume that this is not the case and assume that $C_1$ has minimal length among all convex cycles with two 4-cycles attached to them such that they pairwise intersect in three different edges, all three in a vertex, and their convex hull is not a cell.  If the vertices $y_2,y_3$ have a second common neighbor $p$, then we get an isometric $Q^-_3$ which must be completed to a $Q_3$. Consequently, $x_2$ and $x_3$ have a common neighbor different from $x$, which is impossible because $C_1$ is convex. Thus $x_0$ is the unique common neighbor of $y_2$ and $y_3$. Let $u^*_1$ be the apex of $u_1$ with respect to the pair $y_2,y_3$. We assert that $u_1=u^*_1$. Suppose not and let $u'_1$ be a neighbor of $u_1$ in $I(u_1,u^*_1)$. Consider the gate of $u'_1$ in $C_1$. If this gate is not the vertex $v_1$, then it must be one of the neighbors of $v_1$ in $C_1$ and $u'_1$ must be adjacent to this vertex. But if this is say the neighbor $v'_1$ of $v_1$ in the path $I(v_1,x_2)$, then $v'_1$ cannot belong to a shortest path between $u'_1$ and $x_3$, whence $v'_1$ cannot serve as a gate of $u'_1$. Thus $v_1$ must be the gate of $u'_1$ in $C_1$. In this case, $d(u'_1,x_2)=2+d(v_1,x_2)=d(u'_2,y_2)+1$. Since $d(u'_1,y_2)=d(u_1,y_2)-1$ and $d(u_1,y_2)+1=d(u_1,x_2)=1+d(v_1,x_2)$, we will obtain a contradiction. This shows that $u^*_1=u_1$, i.e., $I(u_1,y_2)\cap I(u_1,y_3)=\{ u_1\}$. Since $y_2$ and $y_3$ are closer to $u_1$ than $x_0$ and $x_0$ is the unique common neighbor of $y_2$ and $y_3$, we conclude that the triplet $u_1,y_2,y_3$ defines a metric triangle. Hence $\langle\langle u_1,y_2,y_3 \rangle\rangle$ is a gated cell $U$ of $G$.

Since $(y_2,x_0,y_3)$ is a convex path of length 2 of the cell $U=U_1\square\cdots \square U_m$, necessarily $(y_2,x_0,y_3)$ is contained in a layer of $U$ which is a gated cycle $C'_1$ of $G$, say  $C'_1=U_1\square \{ u_2\} \square \cdots \square \{u_m\}$ for a cyclic factor $U_1$ of length $\ge 6$. First suppose that $u_1\notin C'_1$. Then the length of $C'_1$ is smaller than  the length of $C_1$. From the choice of $C_1$ and since $C'_1$ pairwise intersects  the cycles $C_2$ and $C_3$, we conclude that the gated hull of $C'_1\cup C_2\cup C_3$ is a cell $U'$ isomorphic to $C'_1\square K_2$. But then in $U'$ we can find a gated cycle $C''_1$ isomorphic to $C'_1$ and  containing the convex path $(x_2,x,x_3)$. Since $C''_1$ is shorter than $C_1$ and $x_1,x,x_3\in C''_1\cap C_1$, we obtain a contradiction with Claim~\ref{claim:convex_gated}. Now, let $u_1\in C'_1$. Then obviously the cell $U$ coincides with $C'_1$. Since $C'_1$ and $C_1$ are gated cycles of the same length and we have the edges $v_1u_1,x_2y_2,xx_0$, and $x_3y_3$, one can easily show that any vertex $z'$ of $C'_1$ is adjacent to a unique vertex $z$ of $C_1$ such that the subgraph $H$ of $G$ induced by $C_1\cup C'_1$ is isomorphic to $C_1\square K_2$. To conclude the proof of the 3CC-condition, it remains to show that $H$ is a convex subgraph of $G$. For this it suffices to show that for any vertex $q\notin V(H)$ adjacent to a vertex $p$ of $H$, $q$ does not belong to a shortest path between $p$ and some vertex $q'$ of $H$. Suppose without loss of generality that $p\in C_1$ and let $p'$ be the unique neighbor of $p$ in $C'_1$. Then obviously $p$ is the gate of $q$ in $C_1$, thus $p\in I(q,r_1)$ for every $r_1 \in C_1$. Analogously, $p'$ must be the gate of $q$ in $C'_1$, otherwise since $d(q,p')=2$, the gate of $q$ must be one of the neighbors of $p'$ in $C'_1$ and we obtain a $K_{2,3}$, which is forbidden in partial cubes. Therefore $p'\in I(q,r_2)$ for any vertex $r_2\in C'_1$. Since $p\in I(q,p')$, we conclude that $p\in I(q,r_2)$. This implies that $p\in I(q,q')$, thus $q$ cannot lie in $I(p,q')$. This establishes the 3CC-condition and concludes the proof of the theorem.
\end{proof}

\section{Properties of hypercellular graphs}\label{sec:properties}
We continue with several properties of hypercellular graphs, in particular we prove Theorems~\ref{mthm:COM},~\ref{mthm:stars}, and~\ref{mthm:fixed}.
First, we show how hypercellular graphs are related with other known classes of partial cubes.
We also establish some basic properties of geodesic convexity  in hypercellular graphs and establish a
fixed-cell property. Some of these results directly follow from Theorem~\ref{mthm:amalgam}.

\subsection{Relations with other classes of partial cubes}\label{subsec:otherclasses}
Recall that  bipartite cellular graphs are the bipartite graphs in which all
isometric cycles are gated. It is shown in~\cite{BaCh_cellular} that bipartite cellular graphs are partial cubes and that any finite bipartite graph is a bipartite cellular
graph if it can be can be obtained by successive gated amalgamations from its isometric cycles. In~\cite{Po3}, Polat investigated a class of netlike partial cubes in which each finite convex subgraph
is a gated amalgam of even cycles -  let us call them \emph{Polat graphs} for now. They are exactly the netlike partial cubes satisfying the median cycle property and generalize bipartite cellular graphs as well as median graphs. 
 Theorem~\ref{mthm:amalgam} and Theorem~\ref{mthm:median} have the following corollary:

%

\begin{corollary}
Bipartite cellular graphs are precisely the graphs in  ${\mathcal F}(Q_3^-,Q_3)$, while median graphs are precisely the graphs in ${\mathcal F}(Q_3^-,C_6)$ and Polat graphs are ${\mathcal F}(Q_3^-,C_6\square K_2)$. In particular, the latter class contains the first two and all three classes are contained in the class of hypercellular graphs.
\end{corollary}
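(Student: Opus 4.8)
The plan is to reduce all three identities to one combinatorial fact about cells, feeding it into Theorems~\ref{mthm:amalgam} and~\ref{mthm:median} and the classical descriptions of median graphs \cite{Ba_median,Is,VdV1}, bipartite cellular graphs \cite{BaCh_cellular} and Polat graphs \cite{Po3} as the partial cubes whose finite convex subgraphs are obtained by gated amalgams from, respectively, hypercubes; edges and even cycles; hypercubes and even cycles. For a family $\mathcal{D}$ of cells let $\mathcal{H}_{\mathcal{D}}$ denote the class of hypercellular graphs all of whose cells lie in $\mathcal{D}$. Using Theorem~\ref{mthm:amalgam}(iv) and Theorem~\ref{mthm:cells} (which identifies every cell of a hypercellular graph with the convex hull of an isometric cycle), together with the above descriptions — and, for Polat graphs, Theorem~\ref{mthm:median}, as they are exactly the netlike cycle-median partial cubes — one identifies $\mathcal{H}_{\{\text{hypercubes}\}}$, $\mathcal{H}_{\{\text{edges, even cycles}\}}$ and $\mathcal{H}_{\{\text{hypercubes, even cycles}\}}$ with the classes of median graphs, bipartite cellular graphs and Polat graphs; the passage from finite convex subgraphs to $G$ itself is harmless because all three defining properties are tested on finite convex subgraphs. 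Hence it suffices to prove $\mathcal{H}_{\mathcal{D}}=\mathcal{F}(Q_3^-,T)$ for the pairs $(\mathcal{D},T)\in\{(\{\text{hypercubes}\},C_6),(\{\text{edges, even cycles}\},Q_3),(\{\text{hypercubes, even cycles}\},C_6\times K_2)\}$.

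The combinatorial heart is to decide, for a cell $X=F_1\times\cdots\times F_k$ with each $F_i$ an edge or an even cycle, when $X$ has $C_6$, $Q_3$ or $C_6\times K_2$ as a pc-minor. I would first record the elementary facts that a contraction of an even cycle is an even cycle or an edge, a contraction of a hypercube is a hypercube, a pc-minor of a single even cycle is a path or an even cycle, and that both $C_6$ and $Q_3$ are pc-minors of $C_6\times K_2$ (contract the $K_2$-factor, respectively a $\Theta$-class of the $C_6$-factor). Contracting each cyclic factor of $X$ down to a $4$-cycle and then, if needed, restricting to an appropriate subproduct (Lemmas~\ref{restriction} and~\ref{product_cycles_gated}) yields the equivalences: $X$ has a $C_6$-minor iff some $F_i$ has length $\geq 6$, i.e.\ iff $X$ is not a hypercube; $X$ has a $Q_3$-minor iff $X$ is neither a single edge nor a single even cycle; and $X$ has a $C_6\times K_2$-minor iff $X$ is neither a hypercube nor a single even cycle. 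Since every cell of a partial cube $G$ is a convex, hence isometric, subgraph of $G$ and therefore a restriction of $G$ (Lemma~\ref{restriction}), any pc-minor of a cell of $G$ is a pc-minor of $G$; together with these equivalences this already gives $\mathcal{F}(Q_3^-,T)\subseteq\mathcal{H}_{\mathcal{D}}$ in each of the three cases.

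For the reverse inclusion it suffices to show that each $\mathcal{H}_{\mathcal{D}}$ is closed under pc-minors: indeed $\mathcal{H}_{\mathcal{D}}\subseteq\mathcal{F}(Q_3^-)$ by definition, while each of $C_6$, $Q_3$, $C_6\times K_2$ is itself a cell not lying in the corresponding $\mathcal{D}$, so no graph having one of them as a pc-minor lies in $\mathcal{H}_{\mathcal{D}}$. Closure under restriction is clear: a face of a cell in $\mathcal{D}$ again lies in $\mathcal{D}$ (Lemma~\ref{product_cycles_gated}), and hypercellularity is preserved (Proposition~\ref{condition(ii)} with Theorem~\ref{mthm:amalgam}). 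For closure under contraction, take $G\in\mathcal{H}_{\mathcal{D}}$, $f\in\Lambda$, and a cell $X'$ of $\pi_f(G)$. Its expansion $\psi(X')$ in $G$ is convex (Lemma~\ref{convex_expansion}), hence contains no half-expanded cycle (Lemma~\ref{lemma:half-extended}), so Proposition~\ref{product_of_cycles_expansion} applies: either $\psi(X')$ is a cell of $G$, in which case $X'=\pi_f(\psi(X'))$ is a contraction of a cell in $\mathcal{D}$, or $\psi(X')$ contains a convex subgraph isomorphic to $X'$, in which case $X'$ is isomorphic to a cell of $G$. By the elementary contraction facts above, $X'\in\mathcal{D}$ in either case, so $\pi_f(G)\in\mathcal{H}_{\mathcal{D}}$. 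This proves the three equalities. Finally, the ``in particular'' statement is immediate: $C_6$ and $Q_3$ are pc-minors of $C_6\times K_2$, so $\mathcal{F}(Q_3^-,C_6)\cup\mathcal{F}(Q_3^-,Q_3)\subseteq\mathcal{F}(Q_3^-,C_6\times K_2)$, i.e.\ median and bipartite cellular graphs are Polat graphs, and $\mathcal{F}(Q_3^-,T)\subseteq\mathcal{F}(Q_3^-)$ for every $T$, so all three classes are hypercellular.

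The step I expect to be the real obstacle is the contraction-closure argument, specifically the use of Proposition~\ref{product_of_cycles_expansion} to match each cell of a contraction $\pi_f(G)$ with a cell — or a contraction of a cell — of $G$; once this ``lifting of cells'' is in hand, the remaining cell-minor computation and the bookkeeping against the descriptions of the three classes are routine. (For median graphs one may bypass the expansion argument altogether by citing that the class of median graphs is closed under pc-minors and that $C_6$ is not a median graph.)
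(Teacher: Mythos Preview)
Your argument is correct, and for the inclusion $\mathcal{F}(Q_3^-,T)\subseteq\mathcal{H}_{\mathcal{D}}$ it coincides with the paper's: use Theorem~\ref{mthm:amalgam} to reduce to cells, then observe that a cell avoiding $Q_3$, $C_6$, or $C_6\times K_2$ as a pc-minor must be, respectively, an edge or even cycle, a hypercube, or a hypercube or even cycle.

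The difference lies in the reverse inclusion. The paper simply cites that median graphs, bipartite cellular graphs and Polat graphs are \emph{already known} to be pc-minor closed (from \cite{Ba_median,BaCh_cellular,Po3}); since $Q_3^-$ and the respective $T$ fail to belong to the class, the inclusion in $\mathcal{F}(Q_3^-,T)$ is immediate. You instead reprove this closure internally: you show each $\mathcal{H}_{\mathcal{D}}$ is pc-minor closed by lifting cells of a contraction through Proposition~\ref{product_of_cycles_expansion} and Lemma~\ref{convex_expansion}. This is more work but buys self-containment --- you never need to know a priori that the three classical classes are minor-closed, only that cells in $\mathcal{D}$ stay in $\mathcal{D}$ under contraction and under passing to subcells, which is elementary. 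Your route also makes transparent \emph{why} the excluded minor $T$ is the right one: it is the minimal cell outside $\mathcal{D}$. Two minor remarks: your phrasing ``a face of a cell in $\mathcal{D}$ again lies in $\mathcal{D}$'' for restriction closure is slightly oblique --- what is actually used is that a cell of a convex subgraph is a cell of $G$; and the appeal to Theorem~\ref{mthm:median} for Polat graphs is unnecessary, since their very definition in \cite{Po3} (gated amalgams of cubes and even cycles) already matches $\mathcal{H}_{\{\text{hypercubes, even cycles}\}}$ via Theorem~\ref{mthm:amalgam}(iv).
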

\begin{proof}
Since the hypercellular graphs are exactly the graphs from ${\mathcal F}(Q_3^-)$, the last assertion follows from the first ones. Median graphs, bipartite cellular graphs, and Polat graphs are pc-minor closed families. Since $Q_3^-$ and $Q_3$ are not cellular, $Q_3^-$ and $C_6$ are not median, and $Q_3^-$ and $C_6\square K_2$ are not Polat graphs, this settles the inclusion of all three families in ${\mathcal F}(Q_3^-,Q_3)$, ${\mathcal F}(Q_3^-,C_6)$, and ${\mathcal F}(Q_3^-,C_6\square K_2)$, respectively.

Conversely, let $G$ be a graph from ${\mathcal F}(Q_3^-,Q_3)$. Since $G$ is hypercellular, by Theorem~\ref{mthm:amalgam} any finite convex subgraph of $G$  can be obtained by successive gated amalgamations from cells.
Since $Q_3$ is a forbidden pc-minor, all cells of $G$ are edges or even cycles. Thus $G$ is a bipartite cellular graph.

Analogously, let $G$ be a graph from ${\mathcal F}(Q_3^-,C_6)$. Then $G$ does not contain convex
cycles of length $\ge 6$. Hence any cell of $G$ is a cube. Consequently, any finite convex subgraph of $G$  can be obtained by successive gated amalgamations from cubes, i.e., $G$ is median. Alternatively, by Theorem~\ref{mthm:median}, $G$ satisfies the median cell property. Since, any cell of $G$ is a cube,  all median cells of $G$ are vertices and therefore $G$ is a median graph.

Finally, let $G$ be a graph from ${\mathcal F}(Q_3^-,C_6\square K_2)$. Since $G$ is hypercellular, by Theorem~\ref{mthm:amalgam} any finite convex subgraph of $G$  can be obtained by successive gated amalgamations from cells.
Since $C_6\square K_2$ is a forbidden pc-minor, all cells of $G$ are even cycles or cubes. Thus, $G$ is a Polat graph.
\end{proof}

With a cell $X=F_1\square \cdots \square F_m$ of $G$ we associate a convex polyhedron $[X]$ obtained as a Cartesian product of segments and regular polygons, where each face $F_i$ which is a $K_2$ is replaced by a unit segment and any face $F_i$ which is an even cycle $C$ of length $2n$ is replaced by a regular polygon with $2n$ sides. Hence $\mbox{dim}(X)$ can be viewed as the (topological) dimension of $[X]$. Since by Lemma~\ref{two_cycles}, in a hypercellular graph $G$ the intersection of any two cells is also a cell, the union of all convex polyhedra $[X], X\in {\mathbf X}(G),$ can be viewed as a polyhedral cell complex, which we denote by ${\mathbf X}(G)$. The {\it dimension} $\mbox{dim}(G)$ of a graph $G$ from $\mathcal{F}(Q_3^-)$ is the dimension of this cell complex, i.e., the maximum dimension of a cell of $G$. Notice that the 1-skeleton of ${\mathbf X}(G)$ coincides with $G$ and the 2-skeleton of ${\mathbf X}(G)$ coincides with ${\mathbf C}(G)$.

The following was announced as Theorem~\ref{mthm:COM} in the introduction:
\begin{corollary}  \label{thoremC} Any finite hypercellular  graph $G$ is the tope graph of a COM, more precisely,
$G$ is a tope graph of a zonotopal COM. Consequently, the zonotopal cell complex ${\mathbf X}(G)$ of any
locally-finite hypercellular graph $G$ is contractible.
\end{corollary}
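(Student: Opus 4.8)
The plan is to deduce both statements from the gated-amalgam decomposition of Theorem~\ref{mthm:amalgam}(iv) together with known closure properties of COMs. For the first assertion, I would proceed by induction on the number of vertices of a finite hypercellular graph $G$. The base case is that $G$ is a single cell, i.e., a Cartesian product of edges and even cycles; such a graph is the tope graph of a zonotopal oriented matroid (a product of rank-one OMs and the OMs of even polygons, whose tope graphs are exactly the even cycles), hence in particular a zonotopal COM. For the inductive step, by Theorem~\ref{mthm:amalgam} we may write $G$ as a gated amalgam of two proper gated subgraphs $G_1$ and $G_2$ along $G_0=G_1\cap G_2$; by induction both $G_1$ and $G_2$ are tope graphs of zonotopal COMs. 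Here I would invoke the fact that the class of tope graphs of COMs is closed under gated amalgams --- this is precisely the amalgamation operation built into the definition of COMs in~\cite{BaChKn}, and one checks that the amalgam of two zonotopal COMs along a gated (hence itself zonotopal) subgraph is again zonotopal, since the zonotopality of all cells is preserved and no new cells are created by the amalgamation. This yields that $G$ is the tope graph of a zonotopal COM.

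For the second assertion, let $G$ be locally finite and hypercellular, and consider the cell complex ${\mathbf X}(G)$. I would show contractibility by showing that ${\mathbf X}(G)$ deformation retracts onto a point (or onto an increasing union of contractible finite subcomplexes). The key point is that each finite convex subgraph $H$ of $G$ gives a finite subcomplex ${\mathbf X}(H)$, and by Theorem~\ref{mthm:amalgam} this is obtained by successive gated amalgams from cells. Since each $[X]$ is a convex polytope (product of segments and regular polygons), it is contractible; and gluing two contractible complexes along a contractible gated subcomplex (the polytope $[X_0]$, again a product of segments, edges, and polygons, hence contractible) yields, by the gluing lemma / van Kampen-type argument for CW-complexes, a contractible complex. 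By induction on the number of cells, ${\mathbf X}(H)$ is contractible for every finite convex $H$. Since $G$ is locally finite, ${\mathbf X}(G)$ is the directed union of the ${\mathbf X}(H)$ over finite convex subgraphs $H$ (each compact subset, in particular each sphere $S^n$, lands in some finite ${\mathbf X}(H)$), and a directed union of contractible complexes along contractible (in fact convex, gated) inclusions is contractible. Hence ${\mathbf X}(G)$ is contractible.

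The main obstacle I anticipate is making the ``gated amalgam of zonotopal COMs is a zonotopal COM'' step fully rigorous: one must verify that the COM axioms are preserved (this is essentially Proposition/Lemma on amalgams in~\cite{BaChKn}), and additionally that the cell structure of the amalgam consists exactly of the cells of $G_1$ and $G_2$ --- so that zonotopality of cells is inherited and the topological complex ${\mathbf X}(G)$ is literally the union ${\mathbf X}(G_1)\cup{\mathbf X}(G_2)$ glued along ${\mathbf X}(G_0)$. Here one uses that a cell of $G$, being a convex subgraph isomorphic to a product of edges and even cycles (Theorem~\ref{mthm:cells}) and lying in the amalgam, must be entirely contained in $G_1$ or in $G_2$: indeed, a cell is connected and has no ``cut vertex'' behaviour, so if it met both $G_1\setminus G_2$ and $G_2\setminus G_1$ it would contain an edge across the amalgam, contradicting that $G_0=G_1\cap G_2$ separates. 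For the contractibility half, the subtlety is the passage to the infinite locally finite case, which is handled cleanly by the colimit argument once one observes that every isometric cycle, hence every cell, hence every compact subset of ${\mathbf X}(G)$, is contained in a finite convex subgraph.
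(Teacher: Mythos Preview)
Your approach is essentially the same as the paper's: both parts rest on Theorem~\ref{mthm:amalgam}(iv) together with the amalgamation/contractibility results of~\cite{BaChKn}. The paper cites~\cite[Proposition~3]{BaChKn} (COMs are obtained from their OM faces by COM amalgamations) and~\cite[Proposition~14]{BaChKn} (cell complexes of COMs are contractible) as black boxes, whereas you unfold the latter into an explicit gluing-lemma induction; either route is fine, and the directed-union-plus-Whitehead argument for the locally finite case is identical.

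One genuine slip: in the contractibility induction you describe the gated intersection $G_0=G_1\cap G_2$ as ``the polytope $[X_0]$, again a product of segments, edges, and polygons''. This is not correct---$G_0$ is a gated (hence convex, hence hypercellular) subgraph, but there is no reason it should be a single cell. The fix is immediate: since $G_0$ is a proper convex subgraph it has fewer vertices than $G$, so by your induction hypothesis ${\mathbf X}(G_0)$ is contractible, and the gluing lemma applies as stated. With this correction your argument goes through.
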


\begin{proof} By~\cite[Proposition 3]{BaChKn}, each COM can be obtained from its maximal faces (which are all oriented
matroids) using  COM amalgamations.
Since a gated amalgamation is a stronger version of a COM amalgamation and each Cartesian product of edges and
even cycles is the tope graph of a realizable oriented matroid, Theorem~\ref{mthm:amalgam} 
implies that each finite
graph $G$ from $\mathcal{F}(Q_3^-)$ is the tope graph of a zonotopal COM. From the contractibility
of the cell complexes of all COMs established in~\cite[Proposition 14]{BaChKn}, it follows that for any finite
hypercellular graph $G$ its zonotopal  complex ${\mathbf X}(G)$ is contractible.

Now, we will prove the contractibility of  ${\mathbf X}(G)$ for any locally-finite hypercellular graph $G$. For this, we will represent
$G$ as a directed union  of finite convex subgraphs $G_i$ of $G$. Let $v_0$ be an arbitrary fixed vertex and let $B_i(v_0)$ be the ball
of radius $i$ centered at $v_0$. Since $G$ is locally-finite, each such ball $B_i(v_0)$ is finite. Moreover, since $G$ is a partial cube,
the convex hull$\conv(A)$ of any finite set $A$ of $G$ is finite (because$\conv(A)$ coincides with the intersection of $V(G)$ with
the smallest hypercube $H$ of $H(\Lambda)$ hosting $A$ and $H$ is finite-dimensional). Hence the subgraph $G_i$ of $G$ induced by
conv$(B_i(v_0))$ is a finite convex subgraph of $G$, and thus hypercellular. Therefore, by the first part,  each of the zonotopal
complexes ${\mathbf X}(G_i)$, $i\ge 0$, is contractible. Consequently, ${\mathbf X}(G)$ is the direct union $\bigcup_{i\ge 0} {\mathbf X}(G_i)$
of  contractible complexes, thus ${\mathbf X}(G)$ is contractible by Whitehead's theorem.
\end{proof}



\subsection{Convexity properties}\label{subsec:convex}
The geodesic convexity of a graph $G=(V,E)$ satisfies the {\it join-hull commutativity property} (JHC) if for any convex set $A$ and any vertex $x\notin A$,$\conv(x\cup A)=\bigcup\{ I(x,v): v\in A\}$~\cite{VdV} holds. It is well-know and easy to prove that JHC property is equivalent to the Peano axiom: if $u,v,w$ is an arbitrary triplet of vertices, $x\in I(u,w)$ and $y\in I(v,x)$, then there exists a vertex $z\in I(v,w)$ such that $y\in I(u,z)$. A graph $G$ is called a {\it Pasch-Peano graph}~\cite{BaChVdV,VdV} if the geodesic convexity of $G$ satisfies the Pasch and Peano axioms. In particular, such a graph is in $\mathcal{S}_4$.

\begin{corollary} \label{Pasch-Peano} Any hypercellular graph $G$  is a Pasch-Peano graph.
\end{corollary}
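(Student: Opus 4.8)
The plan is to establish the Pasch and Peano axioms separately, reducing both to the decomposition theory already in hand. By Theorem~\ref{mthm:amalgam}, it suffices to understand how these convexity properties behave under gated amalgamation and on the building blocks themselves.

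First I would observe that the Pasch axiom is immediate: since $Q_3^-$ can be obtained by an isometric expansion from each of the six minimal forbidden graphs of $\mathcal{S}_4$ (as recalled in the introduction and in Section~\ref{sec:cells}), every hypercellular graph lies in $\mathcal{S}_4$, i.e.\ satisfies $S_4$, which by Theorem~7 of~\cite{Ch_separation} is equivalent to the Pasch axiom. So the content of the corollary is the Peano axiom. For the Peano axiom I would use the join-hull commutativity reformulation and argue by induction on the size of a finite convex subgraph containing the relevant vertices; note that it suffices to verify JHC for finite convex subgraphs, since intervals and convex hulls of finite sets are finite in a partial cube. By Theorem~\ref{mthm:amalgam}, such a finite convex subgraph $G$ is either a single cell or a gated amalgam $G = G_1 \cup G_2$ along a gated subgraph $G_0 = G_1 \cap G_2$.

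The base case is that $G$ is a cell, i.e.\ a Cartesian product of edges and even cycles. Here JHC follows because even cycles and edges satisfy JHC (an edge trivially, and for an even cycle $C_{2n}$ one checks directly that $\conv(\{x\}\cup A) = \bigcup_{v\in A} I(x,v)$ for convex $A$ — the convex subsets of $C_{2n}$ are subpaths, and this is a routine verification), and JHC is preserved under Cartesian products since intervals, convexity, and convex hulls in a product are the coordinatewise products of the corresponding objects in the factors (Lemma~\ref{product_cycles_gated} and the standard product facts cited there). For the inductive step, let $A$ be convex in $G$ and $x\notin A$; I want $\conv(\{x\}\cup A) = \bigcup_{v\in A} I(x,v)$. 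Without loss of generality $x\in G_1$. The key tool is that the gate map onto a gated subgraph is non-expansive and that gated amalgams interact well with intervals: for $u\in G_1$, $w\in G_2$, every $(u,w)$-geodesic passes through the gate of $w$ in $G_1$ (equivalently the gate of $u$ in $G_0$). Using this, one decomposes $A$ as $A_1 := A\cap G_1$ and $A_2 := A\cap G_2$, notes that $\conv(\{x\}\cup A) = \conv(\conv(\{x\}\cup A_1)\cup \conv(\{x\}\cup A_2'))$ where $A_2'$ is the set of gates in $G_0$ of vertices of $A_2$, and then applies the induction hypothesis inside $G_1$ (which is a smaller hypercellular graph, hence Pasch-Peano by induction) together with the gate-factoring of geodesics to reassemble the join as a union of intervals through $x$.

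The main obstacle I anticipate is the bookkeeping in the amalgam step: one must carefully show that a vertex $y\in\conv(\{x\}\cup A)$ that lies in $G_2\setminus G_1$ actually sits on some geodesic $I(x,v)$ with $v\in A$, and this requires tracking how geodesics cross the gated separator $G_0$ and invoking JHC both in $G_1$ (by induction, to reach the appropriate gate) and in $G_2$ (by induction, from that gate into $G_2$). Once the gate-factorization $I(x,w) = I(x,g)\cup I(g,w)$ for $g$ the gate of $w$ in $G_1$ is in place, the reassembly is formal, but getting the right intermediate vertex $z\in I(v,w)$ demanded by the Peano axiom — rather than just the JHC inclusion — is the delicate point and is where I would spend the most care; alternatively, since $G$ is already known to be in $\mathcal{S}_4$ hence to satisfy Pasch, one may instead cite a known equivalence or derivation showing that for graphs in $\mathcal{S}_4$ whose cells satisfy JHC, the amalgam of two Peano subgraphs along a gated subgraph is again Peano, reducing the whole corollary to the cell case plus a clean amalgamation lemma.
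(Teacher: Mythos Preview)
Your approach is essentially the same as the paper's: reduce to the finite case, invoke Theorem~\ref{mthm:amalgam} to decompose into cells via gated amalgams, and verify the base case on edges and even cycles. The paper is more economical in two ways: it treats Pasch and Peano uniformly (rather than deriving Pasch separately via the inclusion $\mathcal{F}(Q_3^-)\subseteq\mathcal{S}_4$), and it simply cites~\cite{BaChVdV,VdV} for the fact that both axioms are preserved under Cartesian products and gated amalgams, so the hand-verification of the amalgam step you sketch is unnecessary. (One minor slip: the direction is that the six forbidden graphs of $\mathcal{S}_4$ are isometric expansions of $Q_3^-$, not the other way around; your conclusion is still correct.)
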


\begin{proof} Both the Pasch and the Peano axioms concern triplets of vertices $u,v,w$ and vertices included in the convex hull of $u,v,w$. Since the convex hull of any finite set of vertices in a partial cube is finite, to prove that a hypercellular graph is Pasch-Peano, it suffices to prove that each finite hypercellular graph is Pasch-Peano. Since each of the Pasch and Peano axioms are preserved by gated amalgams and Cartesian products~\cite{BaChVdV,VdV}, now the result directly follows Theorem~\ref{mthm:amalgam} 
and the fact that cycles and edges are Pasch-Peano graphs.
\end{proof}

The {\it Helly number} $h(G)$ of a graph $G$ is the smallest number $h\ge 2$ such that every finite family of (geodesically) convex sets meeting $h$ by $h$ has a nonempty intersection.
The {\it Caratheodory number} $c(G)$ is the smallest number $c\ge 2$ such that for any set $A\subset V$ the convex hull of $A$ is equal to the union of the convex hulls of all subsets of $A$ of size $c$. The {\it Radon number} $r(G)$ of a graph $G$ is the smallest number $r\ge 2$ such that any set of vertices $A$ of $G$ containing at least $r+1$ vertices can be partitioned into two sets
$A_1$ and $A_2$ such that $\conv(A_1)\cap \conv(A_2)\ne \emptyset$. More generally, the {\it $m$th partition number (Tverberg number)} is the smallest integer $p_m\ge 2$ such that any set of vertices $A$ of $G$ containing at least $p_m+1$ vertices can be partitioned  into $m$ sets $A_1,\ldots, A_m$ such that $\cap_{i=1}^m\conv(A_i)\ne \emptyset$. For a detailed treatment of
all these fundamental parameters of abstract and graph convexities, see~\cite{VdV}.

The following result is straightforward:

\begin{lemma} \label{helly_cycle} For $G\cong K_2$, $h(G)=r(G)=2$ and $c(G)=1$.   If $G\cong C$, then $h(G)=r(G)\le 3$ and $c(G)=2$ ($h(G)=r(G)=3$ if $C$ is of length at least 6).
\end{lemma}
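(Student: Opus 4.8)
The plan is to handle $K_2$ trivially and to reduce all three parameters for a cycle $C\cong C_{2n}$ to elementary facts about arcs, using (the relevant special case of Lemma~\ref{product_cycles_gated}) that the proper convex subsets of $C$ are exactly the arcs with at most $n$ vertices, the only other convex sets being $\emptyset$ and $V(C)$; in particular $\conv(\{u,v\})$ is the shorter $u$--$v$ arc when $u,v$ are not antipodal and is $V(C)$ when they are. For $G\cong K_2$: every subset of $V(G)$ is convex, so $\conv(A)=A$ and $c(K_2)=1$; two distinct singletons are disjoint convex sets, so any pairwise-intersecting family of convex sets has a common vertex and $h(K_2)=2$; and since $|V(K_2)|=2$ the Radon requirement involves no set of $r+1=3$ vertices, so $r(K_2)=2$ holds vacuously.

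For $c(C)=2$ I will prove $\conv(A)=\bigcup_{u,v\in A}\conv(\{u,v\})$ for all $A\subseteq V(C)$, the inclusion ``$\supseteq$'' being clear. If $\conv(A)$ is a proper arc, then its two extreme elements $a',a''$ of $A$ already give $\conv(\{a',a''\})=\conv(A)$. If $\conv(A)=V(C)$, then every gap between cyclically consecutive elements of $A$ has length at most $n$ (a longer gap would confine $A$ to an arc with $\le n$ vertices), so each gap-arc equals the convex hull of its two endpoints, or these are antipodal and that hull is all of $C$; in either case the gap-arcs cover $C$. Since some vertex lies at distance $2$ from another, $c(C)\ge 2$, hence $c(C)=2$.

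For the upper bounds: for $h(C)\le 3$ take a finite family $\mathcal F$ of convex sets in which every $\le 3$ members meet; discard copies of $V(C)$ and fix a member $A_1$, an arc with $\le n$ vertices. Each $A\in\mathcal F$ meets $A_1$ in a single subarc (a disconnected trace would force $|A|>n$), and, viewing $A_1$ as an interval on a line, the subintervals $\{A\cap A_1:A\in\mathcal F\}$ pairwise intersect because $A\cap B\cap A_1\ne\emptyset$ is exactly the hypothesis applied to $\{A,B,A_1\}$; by Helly's theorem on the line there is a vertex $v\in\bigcap_{A}(A\cap A_1)\subseteq\bigcap\mathcal F$. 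For $r(C)\le 3$ it is enough to Radon-partition every $4$-set, as any such partition extends to a superset by adjoining the new points to one side; for $a,b,c,d$ in cyclic order the ``crossing chords'' $\{a,c\}$ and $\{b,d\}$ work, since one of the two $a$--$c$ arcs contains $b$ and the other $d$, so $\conv(\{a,c\})$ contains $b$ or $d$, symmetrically $\conv(\{b,d\})$ contains $a$ or $c$, and a short case check shows these two hulls always share a vertex.

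For $2n\ge 6$ the reverse bounds $h(C),r(C)\ge 3$ follow from explicit witnesses: three convex arcs each with roughly $2n/3$ vertices (such a length is $<n$ precisely because $n\ge 3$), placed consecutively so as to cover $C$, pairwise meet yet have empty total intersection, so $h(C)\ne 2$; and any three vertices whose three cyclic gaps are all $<n$ satisfy that none of them lies in the convex hull of the other two, so such a triple admits no Radon partition and $r(C)\ne 2$. For $2n=4$ a direct check (using $C_4\cong K_2\times K_2$) gives $h(C)=r(C)=2$. Together with the upper bounds this yields $h(C)=r(C)\le 3$ in all cases, with equality precisely when $2n\ge 6$. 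I do not expect a genuinely hard step; the only point needing care is keeping the cyclic and linear pictures apart — concretely the fact that an arc with $\le n$ vertices meets any other such arc in a connected piece, which is what licenses the reduction to classical Helly on the line.
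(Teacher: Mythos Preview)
Your argument is correct in all essentials; each of the six parameters is handled by a valid elementary argument. The only cosmetic blemishes are (a) in the Helly lower bound you write that the three covering arcs have length ``$<n$ precisely because $n\ge 3$'', whereas for $C_6$ the arcs have exactly $n=3$ vertices (still convex, being halfspaces), and (b) the Radon argument for four cyclically ordered points can be shortened: once $\conv(\{a,c\})$ contains $b$ or $d$, that vertex already lies in both hulls, so no further case check is needed.

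As for comparison with the paper: there is nothing to compare. The paper states the lemma as ``straightforward'' and gives no proof at all, so your write-up is strictly more detailed than what the authors provide.
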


\begin{corollary} Let $G$ be a hypercellular graph. Then $h(G)\leq 3$, $c(G)\le 2\mbox{dim}(G),$ and $r(G)\le 10\mbox{dim}(G)+1$. More generally, $p_m\le (6m-2)\mbox{dim}(G)+1$.
\end{corollary}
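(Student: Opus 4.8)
The plan is to reduce all four bounds to the base cases recorded in Lemma~\ref{helly_cycle} by means of the decomposition in Theorem~\ref{mthm:amalgam}, together with the standard behaviour of convexity invariants under Cartesian products and gated amalgamation (see~\cite{VdV}). First I would pass to finite graphs: each of $h(G),c(G),r(G),p_m(G)$ is witnessed by a finite configuration (a finite family of convex sets for $h$, and a finite vertex set $A$ together with a point of $\mathrm{conv}(A)$, resp.\ of an intersection of convex hulls of parts of $A$, for $c,r,p_m$); since in a partial cube the convex hull of a finite vertex set is finite and is a convex --- hence hypercellular --- subgraph $G'$ with $\dim(G')\le\dim(G)$, it suffices to prove the bounds for finite hypercellular $G$. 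By Theorem~\ref{mthm:amalgam} such a $G$ is obtained by successive gated amalgamations from cells, each a Cartesian product of edges and even cycles.

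For the \emph{Helly} and \emph{Caratheodory} numbers I would first handle a single cell. The Helly number of a Cartesian product is the maximum (and at least $2$) of those of the factors, while the Caratheodory number of a Cartesian product is the \emph{sum} of those of the factors; by Lemma~\ref{helly_cycle} an edge-factor contributes $h=2$, $c=1$ and a cyclic factor contributes $h\le 3$, $c=2$, which match exactly their contributions to the dimension. Hence every cell $X$ satisfies $h(X)\le 3$ and $c(X)\le\dim(X)$. Since the Helly number of a gated amalgam $G_1\ast G_2$ is $\max(h(G_1),h(G_2),2)$, the bound $h\le 3$ survives all the amalgamations, so $h(G)\le 3$; granting (see the last paragraph) that gated amalgamation does not inflate the Caratheodory number beyond what the weaker bound $c\le 2\dim$ allows, we get $c(G)\le 2\dim(G)$.

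For the \emph{Radon} and \emph{partition} numbers I would then invoke the universal inequalities valid in any convexity space that bound these parameters linearly in the Caratheodory number (see~\cite{VdV}): a bound of the form $r\le 5c+1$, and, inductively, $p_2\le 5c+1$ together with $p_m\le p_{m-1}+3c$, hence $p_m\le (3m-1)c+1$. Substituting $c(G)\le 2\dim(G)$ yields $r(G)\le 10\dim(G)+1$ and $p_m(G)\le (6m-2)\dim(G)+1$, as claimed (note $r=p_2$ makes the two bounds consistent). Alternatively, since hypercellular graphs enjoy join-hull commutativity by Corollary~\ref{Pasch-Peano}, one may use the sharper Caratheodory/Radon inequalities available for JHC-convexities; either way the argument is bookkeeping once the Helly and Caratheodory bounds are in place.

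The main obstacle --- the only step that is not routine --- is the behaviour of the Caratheodory number (and hence of $r$ and $p_m$) under gated amalgamation. The cleanest route is to exploit that the gate retraction onto a gated subgraph $G_i$ commutes with taking convex hulls: for $A\subseteq V(G)$ one has $\mathrm{conv}(A)\cap G_i=\mathrm{conv}(B_i)$, where $B_i$ is the trace of $A$ on $G_i$ together with the gates in $G_i$ of the remaining vertices of $A$; a point of $\mathrm{conv}(A)$ lying in $G_i$ then admits a Caratheodory representation of size $\le c(G_i)$ relative to $B_i$, which one re-expands back to original vertices of $A$ (possibly at the cost of the factor $2$, which the stated bound absorbs). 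Verifying this retraction property, tracking it through an arbitrary tree of gated amalgamations, and pinning down the constants in the universal $r$- and $p_m$-inequalities are the parts that require care; the Helly bound needs none of this.
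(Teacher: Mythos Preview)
Your approach is essentially the paper's, and the overall strategy is correct; the two places where you are vague or improvise are handled in the paper by direct citations rather than by ad hoc arguments.

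First, the behaviour of the Caratheodory number under gated amalgamation, which you flag as ``the main obstacle'', is in fact a known identity: if $G$ is the gated amalgam of $G_1$ and $G_2$, then $c(G)=\max\{c(G_1),c(G_2)\}$ (see \cite{BaChVdV} or \cite[Chapter~II,~\S3]{VdV}). No factor of~$2$ is lost at the amalgamation step, and your retraction sketch is unnecessary. Combined with the product inequality $c(G_1\times G_2)\le c(G_1)+c(G_2)$ and Lemma~\ref{helly_cycle}, this yields $c(G)\le 2\dim(G)$ immediately. (Your sharper per-cell bound $c(X)\le\dim(X)$ is correct; the paper records a looser estimate but either suffices.)

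Second, the inequalities you invoke for Radon and partition numbers are \emph{not} ``universal inequalities valid in any convexity space'': a bound of the form $r\le 5c+1$ fails without a Helly-type assumption. The paper instead cites the Doignon--Reay--Sierksma inequality \cite{DoReSi} (also \cite[5.15.1]{VdV}), valid in every convexity:
\[
p_m(G)\le c(G)\bigl(m\cdot h(G)-1\bigr)+1.
\]
Substituting $h(G)\le 3$ and $c(G)\le 2\dim(G)$ into this single formula gives both $r(G)=p_2(G)\le 10\dim(G)+1$ and $p_m(G)\le(6m-2)\dim(G)+1$ at once. Your $5$ and your recursion increment $3c$ are secretly $h\cdot c$ with $h=3$ already inserted; stating them as universal is the only genuine inaccuracy in the proposal.
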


\begin{proof} We will use the results of~\cite[Chapter II, \S2]{VdV} for Cartesian products and of~\cite{BaChVdV} or~\cite[Chapter II, \S3]{VdV} for gated amalgams of convexity structures. Notice also that since in partial cubes convex hulls of finite sets are finite, it suffices to establish our results for finite hypercellular graphs $G$.
By these results, $h(G_1\square G_2)=\max\{ h(G_1),h(G_2)\}$ and if $G$ is the gated amalgam of $G_1$ and $G_2$, then $h(G)=\max\{ h(G_1),h(G_2)\}$. By these formulas,
Lemma~\ref{helly_cycle}, and Theorem~\ref{mthm:amalgam} 
, we conclude that $h(G)\leq 3$ for any hypercellular graph $G$. In case of the Caratheodory number, we have
$c(G_1\square G_2)\le c(G_1)+c(G_2)$ and $c(G)=\max\{ c(G_1),c(G_2)\}$ if $G$ is a gated amalgam of $G_1$ and $G_2$. By  the first formula and Lemma~\ref{helly_cycle},
we conclude that if $X$ is a Cartesian product of $k'$ cyclic factors and $k''$ edges, then $c(X)\le 3k'+2k''$ and $\mbox{dim}(X)=2k'+k''$, yielding $c(X)\le 2\mbox{dim}(X)$. To deduce the upper bounds for Radon and partitions numbers, we will use the following inequality of~\cite{DoReSi} (see also~\cite[5.15.1]{VdV}) for all convexities: $p_m(G)\le c(G)(m\cdot h(G)-1)+1$. Replacing in this formula $h(G)=3$ and $c(G)\le  2\mbox{dim}(G),$ we obtain the required inequalities.
\end{proof}

For cellular graphs, an exact bound $p_m\le 3m$ for partition number was obtained in \cite{ChTo}.

We conclude this subsection with a local-to-global characterization of convex and gated sets in hypercellular graphs. A similar characterization of gated sets was obtained for bipartite cellular graphs~\cite[Proposition 1]{BaCh_cellular} and netlike graphs~\cite[Theorem 6.2]{Po1}. Notice also that other local characterizations of convex and gated sets are known for weakly modular graphs~\cite{Ch_triangle}. The following result is similar to the content of Claim~\ref{S-gated}.

\begin{proposition}\label{cell_intersect} A connected subgraph $H$ of a hypercellular graph  $G$ is convex (respectively, gated) if and only if the intersection of $H$ with each cell of $G$ is convex (respectively, gated).
\end{proposition}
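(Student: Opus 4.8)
The statement is local-to-global: a connected subgraph $H$ of a hypercellular graph $G$ is convex (resp.\ gated) iff $H\cap X$ is convex (resp.\ gated) for every cell $X$ of $G$. One direction is immediate: intersections of convex (resp.\ gated) subgraphs are convex (resp.\ gated), so if $H$ is convex (gated) then $H\cap X$ is too for every cell $X$. The substance is the converse, and the two cases (convex, gated) should be handled separately, with the convex case first since it feeds into the gated case.

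\textbf{Convex case.} Assume $H$ is connected and $H\cap X$ is convex in $X$ (equivalently in $G$) for every cell $X$. I would argue by contradiction: suppose $H$ is not convex, so there are $u,v\in H$ and a vertex $z\in I(u,v)\setminus H$. Among all such bad pairs choose one with $d(u,v)$ minimal; as in the proof of Proposition~\ref{carrier} (Claim~\ref{claim:isom} and Claim~\ref{claim:conv}), minimality forces a geodesic $P$ of $G$ from $u$ to $v$ lying in $H$ and a geodesic $R$ from $u$ to $v$ avoiding $H$ except at the endpoints, and the cycle $C=P\cup R$ is a convex cycle of $G$. But then $C$ is contained in a cell $X=\conv(C)$ by Theorem~\ref{product_of_cycles}, and $H\cap X$ contains all of $P$ and the endpoints $u,v$; since $H\cap X$ is convex in $X$ and $R$ is a geodesic of $X$ between $u$ and $v$, we get $R\subseteq H\cap X\subseteq H$, a contradiction. (A small subtlety: one must first reduce to the case $d(u,v)=2$ or otherwise directly invoke that $z$ lies on a shortest $u$--$v$ path inside a convex cycle; the machinery of Claims~\ref{claim:isom}--\ref{claim:conv} already does exactly this, so I would cite it rather than redo it.)

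\textbf{Gated case.} Now assume additionally that $H\cap X$ is gated for every cell $X$; by the convex case $H$ is already convex. To show $H$ is gated, fix $x\notin H$ and produce a gate. Since $G$ is hypercellular and $H$ is a convex subgraph, the interval argument: let $x'$ be a vertex of $H$ nearest to $x$; I want $x'$ to be the gate, i.e.\ $x'\in I(x,y)$ for all $y\in H$. This is precisely the situation of Claim~\ref{S-gated}: the argument there, which shows that a subgraph meeting every maximal cell in a face is gated, adapts essentially verbatim once we know $H$ is convex and $H\cap X$ is gated for every (maximal) cell $X$. Concretely, given $y\in H$, pick a maximal cell $R$ with $y\in R$ inside the convex hull of $x,x'$ suitably — more robustly, I would restrict to the finite convex subgraph $\conv(\{x\}\cup H')$ for a suitable finite $H'\subseteq H$, apply Theorem~\ref{thm:amalgam} to express it as gated amalgams of cells, and induct on the amalgamation decomposition exactly as in Proposition~\ref{metric_triangle}: if $x$ lies on the "$G_2$-side" it has a gate $x''$ in the amalgamating subgraph, and one checks $x''\in H$ using convexity of $H$ and that $H$ meets each cell in a gated set, reducing to a smaller graph; if everything is on one side, recurse into that side.

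\textbf{Main obstacle.} The delicate point is the gated direction when $G$ is infinite: gatedness is not a purely local/finite condition, so one must be careful that the nearest vertex $x'$ exists and is independent of which finite piece we look at. I expect the cleanest route is: (i) reduce the verification of "$x'$ is the gate of $x$ in $H$" to checking $x'\in I(x,y)$ for a single arbitrary $y\in H$; (ii) replace $G$ by the finite convex subgraph $G^*=\conv(\{x,x',y\})$, which is hypercellular, and note $H\cap G^*$ still meets every cell of $G^*$ in a gated set (cells of $G^*$ are cells of $G$ by convexity, Lemma~\ref{contraction_convex}-style reasoning and Theorem~\ref{product_of_cycles}); (iii) run the finite argument of Claim~\ref{S-gated} / Proposition~\ref{metric_triangle} inside $G^*$ to conclude $x'\in I(x,y)$. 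The bookkeeping that $H\cap X$ remains gated after passing to a convex subgraph, and that cells of a convex subgraph are cells of the ambient graph, is routine given the lemmas in Section~\ref{sec:prel}, but it is where care is needed.
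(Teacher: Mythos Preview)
Your proposal misses the key tool that makes the paper's proof short and uniform: Proposition~\ref{metric_triangle}, which says that every metric triangle of a hypercellular graph lies in a single cell. Both the convex and the gated direction are handled by producing a metric triangle and then reading off a contradiction from the hypothesis on $H\cap X$.

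For the convex direction the paper does not build a convex cycle. It picks $v,x\in H$ minimizing $d_H(v,x)$ with $I(v,x)\not\subseteq H$, shows a shortest $H$-path $P$ is a $G$-geodesic, and then looks only at the neighbors $u\in P$ and $w\in Q$ of $v$. Either $u,w$ have a second common neighbor (and a $4$-cycle gives the contradiction), or $I(u,w)=\{u,v,w\}$ and one checks $x,u,w$ form a metric triangle; by Proposition~\ref{metric_triangle} they lie in a cell $X$, and then $v,x\in H\cap X$ while $w\in I(v,x)\setminus H$ contradicts convexity of $H\cap X$. Your cycle approach can be made to work, but Claims~\ref{claim:isom} and~\ref{claim:conv} are proved only for $N^+(E_f)$, so you cannot just cite them; you would have to reprove them for a general connected $H$ with the correct minimality (namely in $d_H$, not $d_G$ --- with $d_G$-minimality there need not be any $G$-geodesic inside $H$). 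This is doable, but it is more work than the metric-triangle route.

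For the gated direction your argument has a genuine gap. Claim~\ref{S-gated} requires that $S$ meet \emph{every} maximal cell in a nonempty face, which is not assumed of $H$, so the proof does not adapt ``essentially verbatim''. Your proposed fix via gated amalgams is not substantiated: when $x\in G_2\setminus G_1$ has gate $x''$ in the amalgamating subgraph $G_0$, you assert ``one checks $x''\in H$ using convexity of $H$'', but convexity of $H$ gives no control over $x''$ since $x\notin H$. The paper again uses Proposition~\ref{metric_triangle}: with $z$ closest to $H$ having no gate, $x\in H$ nearest to $z$, and $y\in H$ with $x\notin I(z,y)$ and $d(x,y)$ minimal, the triplet $x,y,z$ is a metric triangle, hence contained in a cell $X$; then $z$ witnesses that $H\cap X$ is not gated. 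No amalgam induction or finiteness reduction is needed.
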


\begin{proof} We closely follow the proof of \cite[Proposition 1]{BaCh_cellular}. Necessity is evident: any cell $X$ of $G$ is convex and gated, therefore $X$ intersect each convex (respectively, gated) subgraph in a convex (respectively, gated)
subgraph.

As to the converse, in both cases we will first show that $H$ is convex. For two vertices $y,z$ we denote by  $k(y,z):=d_H(y,z)$ the distance between $y$ and $z$ in $H$.
Suppose the contrary and let $v,x$ be two vertices of $H$ minimizing $k(y,z)$ such that $I(v,x)$ is not included in $H$. Then there exists a shortest $(v,x)$-path $Q$ whose
inner vertices do not belong to $H$. Let $P$ be any path of minimal length joining $v$ and $x$ inside $H$. We assert that $P$ is a shortest path of $G$. By the
choice of $v,x$, the paths $P$ and $Q$ intersect only in $v$ and $x$. Let $u$ be a neighbor of $v$ in $P$. If $P$ were longer than $Q$, then $u\notin I(v,x)$, whence
$v\in I(u,x)$.  Since $k(u,x)<k(v,x)$, by the minimality in the choice of the pair $v,x$ we conclude that $Q\subset I(u,x)\subset H$, a contradiction. Thus $P$ is a shortest
path of $G$. Let $w$ be the neighbor of $v$ in $Q$. If the vertices $w$ and $u$ have a common neighbor $y$ different from $v$, since $u,w\in I(v,x)$ and $y\in I(w,u)$,
from the convexity of the interval $I(v,x)$ we conclude that $y\in I(v,x)$. This implies that $y\in I(w,x)\cap I(u,x)$. Since $I(u,x)\subset H$, we conclude that
$y\in H$. Since $w\notin H$, the intersection of $H$ with the square $(v,u,y,w)$ (which is a cell of $G$) is not convex. This contradiction shows that $I(u,w)=\{ u,v,w\}$.
Hence $I(u,w)\cap I(u,x)=\{ u\}$ and $I(w,u)\cap I(w,x)=\{ w\}$. On the other hand, the minimality in the choice of the pair $v,x$ implies that $I(x,u)\cap I(x,w)=\{ x\}$.
Consequently, the triplet $x,u,w$ defines a metric triangle of $G$. By Proposition~\ref{metric_triangle}, this metric triangle $xuw$ is included in a cell $X$ of
$G$. Since $X$ is convex and $v\in I(u,w)$, we obtain $v\in X$. But then $X\cap H$ is not convex because $v,x\in X\cap H$ and $w\in I(v,x)\setminus H$. This contradiction shows
that $H$ is convex.

Now suppose that the intersection of $H$ with each cell of $G$ is gated. Suppose that $H$ is not gated. Choose a vertex $z$ at minimum distance to $H$ having no gate in
$H$. Let $x$ be a vertex of $H$ closest to $z$, and let $y$ be a vertex of $H$ such that the interval $I(z,y)$ does not contain $x$, where $d(x,y)$ is as small as possible.
Then the intervals $I(x,y),I(y,z),$ and $I(z,x)$ intersect each other only in the common end vertices. Hence $x,y,$ and $z$ define a metric triangle $xyz$. By Proposition
\ref{metric_triangle}, $xyz$ is contained in a cell $X$. Since $z\notin H$ and $x,y\in H$, the choice of the vertices $z$ and $x,y$ implies that $X\cap H$ is not gated.
This contradiction establishes that $H$ is gated and concludes the proof.
\end{proof}

\subsection{Stars and thickening}\label{subsec:stars}
A \emph{star} $\St(v)$ of a vertex $v$ (or a star $\St(X)$ of a cell $X$) is the union of all cells of $G$ containing $v$ (respectively, $X$).

\begin{proposition}\label{stars}
For any cell $X$ of a hypercellular graph $G$ in which all cells are of finite dimension, the star $\St(X)$ is gated.
\end{proposition}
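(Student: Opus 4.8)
The plan is to prove the statement first for \emph{finite} $G$ by induction on $|V(G)|$ using the gated‑amalgam decomposition of Theorem~\ref{thm:amalgam} (that is, Theorem~\ref{mthm:amalgam}(iv)), and then to deduce the general case from Proposition~\ref{cell_intersect}. The hypothesis that all cells of $G$ have finite dimension is used exactly to make the induction available and, in the general case, to reduce to a finite convex subgraph.

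\textbf{Finite case.} If $G$ is a single cell, then $G$ itself is a cell containing $X$, so $\St(X)=G$ is gated. Otherwise $G$ is the gated amalgam of two \emph{proper} gated subgraphs $G_1,G_2$ along $G_0:=G_1\cap G_2\neq\emptyset$. By Lemma~\ref{product_cycles_gated} a proper gated subgraph of a cell is a subproduct in which some factor is an edge or a vertex, so it has at most half as many vertices as the cell; hence no cell is a nontrivial gated amalgam, and since cells are gated (Theorem~\ref{mthm:cells}) the cell $X$ is contained in $G_1$ or in $G_2$ — say $X\subseteq G_1$ — and likewise each cell of $G$ lies in $G_1$ or in $G_2$. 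Also a subgraph of $G_i$ is a cell of $G_i$ iff it is a cell of $G$ contained in $G_i$ (as $G_i$ is convex in $G$). If $X\not\subseteq G_0$, then every cell containing $X$ lies in $G_1$, so $\St_G(X)=\St_{G_1}(X)$, which is gated in $G_1$ by induction and therefore gated in $G$ (gatedness is transitive and $G_1$ is gated in $G$). If $X\subseteq G_0$, put $A_i:=\St_{G_i}(X)$ for $i\in\{0,1,2\}$. Using that for a cell $Z$ the intersection $Z\cap G_0$ is a gated subgraph of $Z$, hence again a cell, one checks that $A_1\cap G_0=A_2\cap G_0=A_0=A_1\cap A_2$ and that no edge joins $A_1\setminus A_0$ to $A_2\setminus A_0$ (these sets lie in $G_1\setminus G_0$ and $G_2\setminus G_0$). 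By induction $A_0,A_1,A_2$ are gated in $G_0,G_1,G_2$ respectively, hence gated in $G$; and since a gated amalgam of gated subgraphs placed compatibly with the amalgam is again gated (verify the gate of each external vertex, splitting according to which of $G_1,G_2,G_0$ contains it), $\St_G(X)=A_1\cup A_2$ is gated in $G$.

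\textbf{General case.} Here $\St(X)$ is connected, being a union of cells all of which contain the nonempty connected subgraph $X$. By Proposition~\ref{cell_intersect} it therefore suffices to show that $\St(X)\cap Y$ is gated for every cell $Y$ of $G$; since $Y$ is gated, this is equivalent to $\St(X)\cap Y$ being a face of $Y$ (Lemma~\ref{product_cycles_gated}). Fix $Y$. As $Y$ is finite it has only finitely many faces, and $\St(X)\cap Y=\bigcup\{Z\cap Y:\ Z\text{ a cell of }G,\ X\subseteq Z\}$ is a union of faces of $Y$; so we may choose finitely many cells $Z_1,\dots,Z_N$ of $G$ with $X\subseteq Z_j$ realizing all these faces, giving $\St(X)\cap Y=\bigcup_{j=1}^{N}(Z_j\cap Y)$. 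Let $G':=\conv\bigl(X\cup Y\cup Z_1\cup\dots\cup Z_N\bigr)$; being the convex hull of a finite set it is a finite convex, hence hypercellular, subgraph of $G$, and $X,Y,Z_1,\dots,Z_N$ are cells of $G'$. Then each $Z_j\subseteq\St_{G'}(X)$ and, since cells of $G'$ are cells of $G$, we get $\St_{G'}(X)\cap Y=\St_G(X)\cap Y$. By the finite case $\St_{G'}(X)$ is gated in $G'$, so $\St_{G'}(X)\cap Y$ is gated in $G'$, hence a face of $Y$, hence gated in $G$. By Proposition~\ref{cell_intersect}, $\St(X)$ is gated.

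\textbf{Main obstacle.} The technical heart is the case $X\subseteq G_0$ in the finite induction: one has to keep track of how the three stars $\St_{G_0}(X)$, $\St_{G_1}(X)$, $\St_{G_2}(X)$ glue across the amalgam, and to invoke (or reprove in a few lines) that a gated amalgam of compatibly placed gated subgraphs is gated. Everything else — the ``no cell is a nontrivial amalgam'' observation, and the reduction of the general case to a finite convex subgraph via the finiteness of the set of faces of $Y$ — is routine.
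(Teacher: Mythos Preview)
Your argument is correct and takes a genuinely different route from the paper's. The paper works directly inside each cell $Y$: it shows $\St(X)\cap Y$ is connected via the Helly property for gated sets, and then applies Lemma~\ref{lem:subcells} by proving that the gated hull (in $Y$) of any $2$-path of $\St(X)\cap Y$ stays in $\St(X)$. The nontrivial case is when the two edges of the $2$-path lie in different maximal cells $X_i,X_j$ of $\St(X)$; there the paper builds, using the $3$C-condition (Theorem~\ref{mthm:amalgam}(iii)) iteratively, a single cell $Z=\langle\langle C\cup X_{ij}\rangle\rangle$ containing both $C$ and $X$. So the paper's proof is uniform (no finite/infinite split) and rests on the $3$C-condition plus the $2$-path criterion of Lemma~\ref{lem:subcells}.

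Your approach instead leverages the amalgam decomposition (Theorem~\ref{mthm:amalgam}(iv)) as a black box for the finite case, and then reduces the general case to the finite one by restricting to $G'=\conv(X\cup Y\cup Z_1\cup\dots\cup Z_N)$; the key observation that $\St_{G'}(X)\cap Y=\St_G(X)\cap Y$ (because $Y$ has only finitely many faces) is clean. The step you flag as the main obstacle --- that $A_1\cup A_2$ is gated in $G$ when $A_i=\St_{G_i}(X)$ and $A_1\cap G_0=A_2\cap G_0=A_0$ --- does go through: for $v\in G_1$ one checks that the gate of $v$ in $A_2$ lies in $A_0$ (using that $A_0=A_2\cap G_0$ and that the gate of $v$ in $G_0$ lies on every geodesic from $v$ into $G_2$), whence the gate of $v$ in $A_1$ serves as the gate of $v$ in $A_1\cup A_2$. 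Your argument that no cell is a nontrivial gated amalgam (via the ``at most half the vertices'' bound) is also fine. What your approach buys is conceptual economy: once the amalgam theorem is available, the induction is structurally transparent and avoids the somewhat delicate cell-building with the $3$C-condition. What the paper's approach buys is directness and no need for the finite/infinite reduction; it also makes explicit why the $3$C-condition is the right local obstruction.
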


\begin{proof} Since $\St(X)$ is a connected subgraph of $G$, by Proposition~\ref{cell_intersect} it is enough to prove that the intersection of $\St(X)$ with any cell $Y$ of $G$ is gated. We apply Lemma~\ref{lem:subcells} to the cell $Y$ to show that $\St(X)\cap Y$ is gated. First, notice that $\St(X)\cap Y$ is connected. Indeed, let $X_1, X_2 \ldots$ be the maximal cells  of $\St(X)$ intersecting $Y$. Since $X_1, X_2 \ldots$ intersect in $X$ and each of these cells intersects $Y$, by the Helly property for gated sets, $Y\cap X_1\cap X_2 \ldots$ is non-empty and gated. Thus,
any two vertices of $\St(X)\cap Y$ can be connected with a path in $\St(X)\cap Y$ passing via this intersection, whence $\St(X)\cap Y$ is connected.

%
%
%
%

Let  $P=(y',x,y'')$ be any 2-path in $\St(X)\cap Y$ and let $C=\langle\langle P\rangle\rangle$ be its gated hull in $Y$. We will prove that $C$ is included in $\St(X)$. This is obviously so if the path $P$ is included in a single cell $X_i$ of $\St(X)$. Indeed, in this case $C$ is included in the gated subgraph $X_i\cap Y$. So, assume that the edges $y'x$ and $xy''$ do not belong to a common cell of  $\St(X)$. Notice that each of these edges belong to a cell of $\St(X)$: for example, the edge $xy'$ belongs to all cells of $\St(X)$ that contain a furthest from $X$ vertex of the pair $\{x,y'\}$.  Let $X_i$ be a cell of $\St(X)$ including the edge $xy'$. Analogously, let $X_j$ be a cell of $\St(X)$  including the edge $xy''$. By what was assumed above, $X_i$ and $X_j$ are not included in each other, in particular $X_i\ne X_j$. Let $X_{ij}:=X_i\cap X_j$. Then $X_{ij}$ is a cell of $\St(X)$ containing $x$ but not containing $y'$ and $y''$.

Let $Z$ be a maximal cell of the form $Z=\langle\langle C \cup Z'\rangle\rangle$ for some subcell $Z'$ of $X_{ij}$ containing $x$. Note that such a cell exists since $Z'$ can be chosen to be $x$ and $C$ is a cell of $Y$ containing $x$.  Since the intersection of cells is a cell, we can further assume that $Z':=X_{ij}\cap Z$. We assert that $Z=\langle\langle C\cup X_{ij}\rangle\rangle$. Suppose that this is not the case. Then there exists an edge $zw\in X_{ij}$ with $z\in Z'=X_{ij}\cap Z$ and $w\notin Z$. Let $k$ be the dimension of $Z'$. We will use the following property of cells of hypercellular graphs, which is a direct consequence of Lemma~\ref{product_cycles_gated}:

\begin{claim} If $D'$ is a subcell of dimension $\ell$ of a  cell $D$ of $G$ and $v'v$ is an edge with $v'\in D'$ and $v\in D\setminus D'$, then $\dim(\langle\langle D'\cup v'v\rangle\rangle)=\ell+1$.
\end{claim}

Since $Z'\cup xy'\subset X_i$ with $x\in Z', y'\notin  Z'\subset X_{ij}$, by the previous claim the dimension of  $\langle\langle Z'  \cup xy' \rangle\rangle$ is $k+1$. Similarly, the dimension of
$\langle\langle Z'\cup xy'' \rangle\rangle$ is $k+1$. Moreover, $Z'\cup zw\subset X_{ij}$ with $z\in Z',w\notin Z'$, thus  $\langle\langle Z'  \cup zw \rangle\rangle$ has also dimension
$k+1$. Now we have $xy',\langle\langle Z'  \cup zw \rangle\rangle \subset X_i$ with $x\in \langle\langle Z'  \cup zw \rangle\rangle$ since $x\in Z'$ and $y'\notin \langle\langle Z'  \cup zw \rangle\rangle$
since $y'$ is not in $X_{ij}$, thus $\langle\langle Z'  \cup zw \cup xy' \rangle\rangle$ has dimension $k+2$. Analogously, $\langle\langle Z'  \cup zw \cup xy'' \rangle\rangle$ has dimension $k+2$. Finally,
since $xy'', \langle\langle Z'  \cup xy' \rangle\rangle \subset Z$ with $x \in \langle\langle Z'  \cup xy' \rangle\rangle$ and $y''\notin \langle\langle Z'  \cup xy' \rangle\rangle$, the dimension
of $\langle\langle Z'  \cup xy' \cup xy'' \rangle\rangle$ is also $k+2$.

Consequently, we have proved that $\langle\langle Z'  \cup zw \cup xy' \rangle\rangle$, $\langle\langle Z'  \cup zw \cup xy'' \rangle\rangle$, and $\langle\langle Z'  \cup xy' \cup xy'' \rangle\rangle$ are three cells of dimension $k+2$ that pairwise intersect in the cells $\langle\langle Z'  \cup zw  \rangle\rangle$, $\langle\langle Z'   \cup xy' \rangle\rangle$, and $\langle\langle Z' \cup xy'' \rangle\rangle$  of dimension $k+1$ and the intersection of all three cells is the cell $Z'$ of dimension $k$.
%
By Theorem~\ref{mthm:amalgam}(iii), there is a $(k+3)$-dimensional cell $W$ that includes all of them. In particular, $W=\langle\langle C\cup Z' \cup zw\rangle\rangle$. Since  the gated hull of $Z' \cup wz$ is a subcell $Z''$ of $X_{ij}$ properly containing $Z'$ and  since $W=\langle\langle C \cup Z''\rangle\rangle$,  we obtain a contradiction  to the maximality of $Z$.

Thus, $Z=\langle\langle C\cup X_{ij}\rangle\rangle$ is a cell of $\St(X)\cap Y$ including $C$ and $X\subset X_{ij}$, whence $C\subset \St(X)$. Lemma~\ref{lem:subcells} implies that $\St(X)\cap Y$ is gated.
\end{proof}

The {\it thickening} $G^{\Delta}$ of a hypercellular graph $G$ is a graph having the same set of vertices as $G$ and two vertices
$u,v$ are adjacent in $G^{\Delta}$ if and only if $u$ and $v$ belong to a common cell of $G$. A graph $H$ is called a {\it Helly graph} if any  collection
of pairwise intersecting balls of $G$ has a nonempty intersection~\cite{BaCh_survey}. Analogously, $H$ is called a {\it 1-Helly graph} (respectively, {\it clique-Helly graph}) if any collection
of pairwise intersecting 1-balls (balls of radius 1) of $G$ (respectively, of maximal cliques) has a nonempty intersection.

\begin{proposition} \label{thickening}  The thickening $G^{\Delta}$ of a locally-finite hypercellular graph $G$ is a  Helly graph.
\end{proposition}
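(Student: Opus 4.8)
The plan is to deduce the statement from the local-to-global characterization of Helly graphs: it suffices, by~\cite{ChChHiOs} (see also~\cite{BaCh_survey}), to show that $G^{\Delta}$ is clique-Helly and that its clique (flag) complex is simply connected. One could alternatively hope to invoke~\cite[Theorem~6.13]{ChChHiOs} directly, since by Theorem~\ref{mthm:COM} the graph $G$ is a tope graph of a COM, but I will follow the self-contained route and verify the two conditions for $H=G^{\Delta}$.

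The first preliminary step is a \emph{conformality} observation: every clique $K$ of $G^{\Delta}$ is contained in the vertex set of a single cell of $G$. I would prove this by induction on $|\conv(K)|$, using Theorem~\ref{mthm:amalgam}: the graph induced by $\conv(K)$ is a finite hypercellular graph, hence either a single cell (and then we are done) or a gated amalgam of two proper gated subgraphs $G_1,G_2$; since any two vertices of $K$ lie in a common cell, which is convex and therefore lies inside $G_1$ or inside $G_2$, one checks that $K$ is contained in $V(G_1)$ or in $V(G_2)$ and recurses. Consequently the maximal cliques of $G^{\Delta}$ are exactly the vertex sets of the maximal cells of $G$, and the clique complex of $G^{\Delta}$ is the union $\bigcup_X\Delta(V(X))$ of full simplices over maximal cells $X$, glued along $\Delta(V(X)\cap V(Y))=\Delta(V(X\cap Y))$, where $X\cap Y$ is again a cell by Lemma~\ref{two_cycles}. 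For clique-Hellyness, recall that cells of $G$ are gated by Theorem~\ref{mthm:cells}, so a finite pairwise intersecting family of maximal cells has a common vertex by the finite Helly property for gated sets~\cite[Proposition~5.12~(2)]{VdV}; local finiteness makes every maximal cell have a finite vertex set, and a routine argument promotes this to arbitrary families: if $\bigcap_i V(X_i)=\emptyset$, then picking one $X_1$ together with, for each of its finitely many vertices $v$, some $X_i$ avoiding $v$, produces a finite subfamily with empty intersection, a contradiction. Hence $G^{\Delta}$ is clique-Helly.

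For simple connectivity I would compare the clique complex of $G^{\Delta}$ with the cell complex $\mathbf{X}(G)=\bigcup_X[X]$. Both are locally finite CW complexes covered by the contractible subcomplexes $\Delta(V(X))$, respectively $[X]$, indexed by the maximal cells $X$; for every subfamily the intersection of the corresponding pieces is empty or contractible -- a face on $\bigcap_X V(X)$ in one case, the polytope of the cell $\bigcap_X X$ in the other -- with the same pattern of (non)emptiness, namely nonempty exactly when $\bigcap_X V(X)\neq\emptyset$ (intersections of finitely many cells are cells by Lemma~\ref{two_cycles} and Helly for gated sets, and by local finiteness only finitely many maximal cells meet a given one). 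Thus the two complexes have the same nerve, so by the Nerve Lemma the clique complex of $G^{\Delta}$ is homotopy equivalent to $\mathbf{X}(G)$, which is contractible by Corollary~\ref{thoremC}; in particular it is simply connected. Combining this with clique-Hellyness and the cited characterization yields that $G^{\Delta}$ is Helly.

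The main obstacle I anticipate is making the topological comparison fully rigorous: verifying the good-cover hypotheses of the Nerve Lemma for these locally finite, possibly infinite CW complexes, which all reduces to the fact that finite intersections of cells are cells, so that the associated polytopes and simplices are contractible or empty and the two nerves literally coincide. The conformality lemma also requires some care in tracking how a clique distributes across a gated amalgam (in particular that a convex subgraph of a gated amalgam lies in one side or is itself an amalgam of its traces), whereas the clique-Helly verification is routine once gatedness of cells is in hand.
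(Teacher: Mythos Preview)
Your overall strategy---verify that $G^{\Delta}$ is clique-Helly and has a simply connected clique complex, then invoke the local-to-global characterization from~\cite{ChChHiOs}---is exactly the paper's route. The execution, however, diverges at the clique-Helly step, and there the proposal has a genuine gap.

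The problem is the conformality induction. In the inductive step you assert that ``any two vertices of $K$ lie in a common cell, which is convex and therefore lies inside $G_1$ or inside $G_2$''. But the common cell is a cell of the ambient graph $G$, not of $\conv(K)$; it need not be contained in $\conv(K)$, let alone in one side of an amalgam decomposition of $\conv(K)$. Concretely, take $G=C_6\times K_2$ (a single cell) and $K=\{(v_0,0),(v_2,0),(v_0,1)\}$. Any two of these lie in the common cell $G$ itself, so $K$ is a clique of $G^{\Delta}$; yet $\conv(K)\cong P_3\times K_2$ is not a cell and decomposes as a gated amalgam of two $4$-cycles along an edge, with $(v_0,0),(v_0,1)$ on one side and $(v_2,0)$ strictly on the other. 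Thus $K$ lies in neither $G_1$ nor $G_2$, and the recursion halts. (Conformality is still true here---$K\subset G$---but your argument does not establish it.) Since both your clique-Helly argument and your Nerve-Lemma comparison rest on identifying maximal cliques with maximal cells, the gap propagates to the whole proof.

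The paper circumvents conformality entirely. It observes that the $1$-ball of $v$ in $G^{\Delta}$ coincides with $\St(v)$ in $G$, which is gated by Proposition~\ref{stars}; the finite Helly property for gated sets then makes $G^{\Delta}$ a $1$-Helly graph, and clique-Hellyness follows from~\cite[Remark~3.6]{ChChHiOs} once one notes that every maximal clique of $G^{\Delta}$ is the intersection of the $1$-balls centered at its vertices. The key input you are missing is thus precisely Proposition~\ref{stars} on gatedness of stars---incidentally, this is also what one would need to repair your conformality claim: once a maximal clique $K=\bigcap_{v\in K}\St(v)$ is known to be gated in $G$, its intersection with any cell $X$ is a gated subgraph of $X$, hence by Lemma~\ref{product_cycles_gated} again a cell, and then the amalgam argument does go through.
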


\begin{proof} Pick any vertex $v$ of $G$ and let $B_1(v)$ denote the ball of radius 1 of $G^{\Delta}$ centered at $v$. From the definition of $G^{\Delta}$ it immediately follows that $B_1(v)$ is isomorphic to the star $\St(v)$ of $v$ in $G$. Since  $G$ is locally-finite, $\St(v)$ is finite. By Proposition~\ref{stars},  $\St(v)$ is a gated subgraph of $G$. By the Helly property of finite gated sets, we conclude that $G^{\Delta}$ is a  1-Helly graph.  Any maximal clique $K$ of $G^{\Delta}$ is the intersection of all 1-balls centered at the vertices of $K$, therefore the family of maximal cliques of $G^{\Delta}$ can be obtained as the intersections of 1-balls of $G^{\Delta}$. By~\cite[Remark 3.6]{ChChHiOs}, $G^{\Delta}$ is a clique-Helly graph. By Theorem~\ref{mthm:COM} the zonotopal cell complex ${\mathbf X}(G)$ of $G$ is contractible and therefore simply connected. This easily implies that the clique complex of $G^{\Delta}$ is simply connected. Consequently, $G^{\Delta}$ is a clique-Helly graph with a simply connected clique complex. By~\cite[Theorem 3.7]{ChChHiOs}, $G^{\Delta}$ is a Helly graph.
\end{proof}

Propositions~\ref{stars} and~\ref{thickening} together with Proposition~\ref{carrier} conclude the proof of Theorem~\ref{mthm:stars}.

\subsection{Fixed cells}\label{subsec:fixedcells}
In this subsection we prove Theorem~\ref{mthm:fixed}. First, we follow  ideas of Tardif~\cite{Ta} to generalize fixed box theorems for median graphs to hypercellular graphs. We will prove that the fixed box in the case of hypercellular graphs is a cell. We obtain this cell verbatim  as in the case of median graphs.
Set
$$F(G):=\{W: W \textrm{ is an inclusion maximal proper halfspace of } G \textrm{ and } V(G)\setminus W \textrm{ is not}\}.$$
Let $Z(G)=\bigcap_{W\in F(G)} W$ (if $F(G)=\emptyset$, then set $Z(G):=G$). Now we recursively define $Z^{\infty}(G)$. Set $Z^{0}(G):=G$ and for every ordinal $\alpha$, let $Z^{\alpha+1}(G):=Z(Z^{\alpha}(G))$ if  $Z^{\alpha}(G)$ has been defined and let $Z^{\alpha}(G):=\cap_{\beta<\alpha}Z^{\beta}(G)$ if $\alpha$ is a limit ordinal. For every graph $G$ there exists a minimal ordinal $\gamma$ such that $Z^{\gamma}(G)=Z^{\gamma+1}(G)$. Finally, define $Z^{\infty}(G):=\cap_{\gamma} Z^{\gamma}(G)$.

\begin{lemma}\label{lem:fixed_cell}
Let $G$ be a hypercellular graph not containing infinite isometric rays. Then $Z^{\infty}(G)$ is a finite cell of $G$.
\end{lemma}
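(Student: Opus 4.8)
The plan is to analyze the operator $Z$ and show both that it is well-defined (i.e. $F(G)$ determines a nonempty cell-like intersection) and that iterating it terminates at a single cell. First I would observe that the definition of $F(G)$ singles out exactly those $\Theta$-classes $E_f$ one of whose halfspaces $W$ is inclusion-maximal among proper halfspaces while its complement $V(G)\setminus W$ is not maximal; the idea (following Tardif's treatment of median graphs) is that such a halfspace $W$ is ``peripheral,'' meaning that its complement lies entirely inside the carrier $N(E_f)$, so that removing it amounts to peeling off a boundary layer. The key structural input here is Proposition~\ref{carrier}: carriers of hypercellular graphs are gated, hence convex, so $Z(G)$ is an intersection of convex (indeed gated) halfspaces and therefore itself a gated subgraph of $G$, which is again hypercellular.

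Next I would argue that the halfspaces in $F(G)$ pairwise intersect, so that by the Helly property for gated (or convex) sets in partial cubes, $Z(G)$ is nonempty provided $F(G)$ is finite — and one shows $F(G)$ is finite using the hypothesis that $G$ contains no infinite isometric ray, which bounds the number of pairwise ``nested-incomparable'' peripheral halfspaces (an infinite antichain of peripheral halfspaces would let one build an infinite isometric ray crossing all of them). For the transfinite recursion, I would show that $Z^{\alpha}(G)$ is a decreasing sequence of gated subgraphs, each strictly smaller than the previous unless $Z^{\alpha}(G)$ has no peripheral halfspace at all; since $G$ has no infinite isometric ray, the ``diameter'' (or the number of $\Theta$-classes) is finite on each finitely-generated piece, so the recursion stabilizes at some finite ordinal $\gamma$ and $Z^{\gamma}(G)=Z^{\infty}(G)$ is a nonempty gated subgraph with $F(Z^{\infty}(G))=\emptyset$.

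It then remains to show that a nonempty hypercellular graph $H$ with $F(H)=\emptyset$ and no infinite isometric ray is a single cell. Here I would use Theorem~\ref{thm:amalgam}: if $H$ is finite (which follows since no infinite isometric ray plus the fact that a hypercellular graph with finitely many $\Theta$-classes is finite — and $F(H)=\emptyset$ forces the $\Theta$-classes to be finite in number, again via a ray argument), then $H$ is built by gated amalgams from cells; but a nontrivial gated amalgam $H=H_1\cup H_2$ along $H_0=H_1\cap H_2$ produces, by Proposition~\ref{carrier} and the amalgam structure, a proper halfspace whose complement is not maximal — namely a halfspace $W$ with $V(H)\setminus W$ sitting inside a carrier — contradicting $F(H)=\emptyset$. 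Hence $H$ is a single cell, and since $Z^{\infty}(G)$ is gated in $G$ and has no infinite isometric ray, it is a finite cell of $G$.

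The main obstacle I expect is the precise verification of the two ``peripherality'' claims: (a) that a halfspace $W\in F(G)$ really has its complement contained in a carrier (so that $Z$ genuinely peels a layer and stays convex/gated), and (b) that $F(H)=\emptyset$ forces $H$ to be a single cell rather than merely cell-free of peripheral faces — this is where the gated amalgam decomposition of Theorem~\ref{thm:amalgam} must be invoked carefully, together with the observation that in a proper gated amalgam at least one side contributes a maximal halfspace with non-maximal complement. Controlling the transfinite iteration under the no-infinite-ray hypothesis (ensuring finiteness rather than merely stabilization) is the secondary technical point, handled by translating an infinite strictly-decreasing chain of $Z^{\alpha}(G)$ into an infinite isometric ray.
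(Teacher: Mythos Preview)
Your broad strategy is correct, but the paper's proof is more direct and sidesteps most of the complications you anticipate. Instead of analyzing $F(G)$ at each stage (its finiteness, pairwise intersection, or whether $Z(G)$ is gated), the paper simply notes that $Z^\infty(G)$ is convex as an intersection of halfspaces, hence hypercellular, and then characterizes the fixed points of $Z$ directly: under the no-infinite-ray hypothesis, $Z(H)=H$ is equivalent to \emph{every} proper halfspace of $H$ being inclusion-maximal, which in turn means every carrier $N(E_{ab})$ equals all of $H$. From there the paper invokes the specific internal claims from the proof of Theorem~\ref{thm:amalgam} rather than the amalgam statement itself: two disjoint maximal cells would give an edge on a shortest path between them with proper carrier; if all maximal cells pairwise intersect, their (nonempty, by Helly) common intersection $X$ is a cell, and Claim~\ref{proper_carrier} produces an edge with proper carrier unless $H=X$. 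This bypasses your obstacle (a), the nonemptiness and pairwise-intersection discussion, and the transfinite bookkeeping entirely.

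Two concrete issues in your outline. First, halfspaces in $F(G)$ need not be gated---halfspaces of partial cubes are only convex in general---so your appeal to Proposition~\ref{carrier} for gatedness of $Z(G)$ is misplaced; fortunately convexity suffices, since convex subgraphs of hypercellular graphs are hypercellular and the recursion remains well-defined. Second, your finiteness arguments are underspecified: an infinite $F(G)$ does not obviously yield an infinite antichain of halfspaces, and your claim that ``$F(H)=\emptyset$ forces finitely many $\Theta$-classes'' really requires the intermediate observation (which the paper makes explicit) that $F(H)=\emptyset$ together with no infinite rays forces every halfspace of $H$ to be maximal---since any non-maximal halfspace would sit inside a maximal one whose complement is then non-maximal, contradicting $F(H)=\emptyset$. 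Once every halfspace is maximal, every $\Theta$-class crosses every maximal cell, and finiteness of $H$ follows from finiteness of cells.
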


\begin{proof} First notice that since $G$ does not contain infinite isometric rays and all cells of $G$ are Cartesian products of cycles and edges, all cells of $G$ are finite.
Since $Z^{\infty}(G)$ is an intersection of convex subgraphs, $Z^{\infty}(G)$  is convex.
Since every cell $X$ of $G$ is a Cartesian product of edges and even cycles, any proper halfspace of $X$ is maximal by inclusion, hence $F(X)=\emptyset$.  Therefore, $Z(X)=X$. Suppose by way of contradiction that there exists a convex subgraph $H$ of $G$ which is not a cell and such that $Z(H)=H$. Since $H$ is convex, $H$ is hypercellular. Since there are no infinite isometric  rays, $Z(H)=H$ if and only if for every edge $ab$ of $H$ the halfspaces $W(a,b)$ and $W(b,a)$ are maximal, which is equivalent to  the condition that for every edge $ab$ the carrier $N(E_{ab})$ is the whole graph $H$. Let $X$ be the intersection of all maximal cells of $H$; consequently, $X$ is a cell of $H$. As in the proof of Theorem~\ref{thm:amalgam}, if there exist two disjoint maximal cells of $H$, then for every edge $f$ on a shortest path between them, the carrier $N(E_{f})$ is not the whole graph $H$. Hence the maximal cells of $H$ pairwise intersect. Since they are finite and gated, by the Helly property for gated sets, $X$ is nonempty. If $X\neq H$, by Claim~\ref{proper_carrier}, there exists an edge of $H$ whose carrier does not include all maximal cells,  a contradiction. Hence $H=X$, i.e., $H$ is a finite cell.  Consequently, $Z^{\infty}(G)$ is a finite cell of $G$.
\end{proof}

We continue with the proof of assertion (i) of Theorem~\ref{mthm:fixed}.

\begin{proposition}\label{automorphism_box}
If $G$ is  a hypercellular graph not containing infinite isometric rays, then  there exists a finite cell $X$ in $G$ fixed by every automorphism of $G$.
\end{proposition}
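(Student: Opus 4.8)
The plan is to show that the construction $Z^{\infty}(G)$ is canonical — i.e., depends only on $G$, not on any choices — and hence must be preserved by every automorphism; since by Lemma~\ref{lem:fixed_cell} it is a finite cell, this gives the claim. First I would observe that the set $F(G)$ is defined purely in terms of the halfspace structure of $G$: a halfspace $W$ belongs to $F(G)$ precisely when $W$ is an inclusion-maximal proper halfspace while its complement $V(G)\setminus W$ is not inclusion-maximal. Any automorphism $\varphi$ of $G$ permutes the $\Theta$-classes and hence permutes the collection of halfspaces of $G$, preserving inclusion between them and preserving complementation; therefore $\varphi$ maps $F(G)$ bijectively onto itself. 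Consequently $\varphi(Z(G)) = \varphi\bigl(\bigcap_{W\in F(G)} W\bigr) = \bigcap_{W\in F(G)} \varphi(W) = \bigcap_{W\in F(G)} W = Z(G)$, so $Z(G)$ is $\varphi$-invariant.

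Next I would push this through the transfinite recursion. Since $Z(G)$ is a convex subgraph of $G$ fixed setwise by $\varphi$, the restriction $\varphi|_{Z(G)}$ is an automorphism of the partial cube $Z(G)$; moreover the halfspaces of the convex subgraph $Z(G)$ are exactly the traces on $Z(G)$ of halfspaces of $G$ (by Lemma~\ref{restriction}, convex subgraphs are restrictions), so the same argument applies verbatim to conclude $\varphi(Z^{2}(G)) = Z^{2}(G)$. By transfinite induction on the ordinal $\alpha$ — with the limit step handled by $\varphi\bigl(\bigcap_{\beta<\alpha} Z^{\beta}(G)\bigr) = \bigcap_{\beta<\alpha}\varphi(Z^{\beta}(G)) = \bigcap_{\beta<\alpha} Z^{\beta}(G)$ — every $Z^{\alpha}(G)$ is $\varphi$-invariant, hence so is $Z^{\infty}(G)$. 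By Lemma~\ref{lem:fixed_cell}, $Z^{\infty}(G)$ is a finite cell of $G$, and it is fixed (as a set) by every automorphism of $G$, which is exactly assertion (i) of Theorem~\ref{mthm:fixed}.

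The only genuinely delicate point — and the step I would write out most carefully — is the claim that each iterate $Z^{\alpha}(G)$ is itself a partial cube to which the reasoning reapplies, and that "halfspace", "inclusion-maximal", and "complementary" behave correctly under passing to a convex subgraph. This is where one invokes that convex subgraphs of partial cubes are partial cubes (Lemma~\ref{restriction}) and that an automorphism of $G$ stabilizing a convex subgraph $H$ restricts to an automorphism of $H$ carrying the $\Theta$-classes of $H$ to $\Theta$-classes of $H$. A subtlety worth a sentence is that the property "$W$ is maximal and $V\setminus W$ is not" is not symmetric in $W$ and its complement, so $F(H)$ could a priori be empty even when $H$ has nontrivial halfspaces; but this only makes the recursion stabilize, which is fine, and Lemma~\ref{lem:fixed_cell} already accounts for it by showing the fixed point of the recursion is a cell. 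I do not expect any real obstacle beyond bookkeeping: the heart of the matter is the trivial-but-crucial remark that $Z$ is a functorial, choice-free operation, so it commutes with automorphisms.
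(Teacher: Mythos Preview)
Your proposal is correct and follows essentially the same approach as the paper: both arguments exploit that $Z^{\infty}(G)$ is a canonical construction preserved by automorphisms, then invoke Lemma~\ref{lem:fixed_cell} to conclude it is a finite cell. The paper's proof is in fact only two sentences (``Every automorphism $\varphi$ of $G$ maps maximal halfspaces to maximal halfspaces, thus $\varphi(Z^{\infty}(G))=Z^{\infty}(G)$''), whereas you have carefully spelled out the transfinite induction and the behavior of halfspaces under restriction to convex subgraphs; your version is a strictly more detailed elaboration of the same idea.
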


\begin{proof}
Every automorphism $\varphi$ of $G$ maps maximal halfspaces to maximal halfspaces, thus $\varphi(Z^{\infty}(G))=Z^{\infty}(G)$. By Lemma~\ref{lem:fixed_cell}, $Z^{\infty}(G)$ is a finite cell, thus every automorphism of $G$ fixes the cell $Z^{\infty}(G)$.
\end{proof}

A \emph{non-expansive map} from a graph $G$ to a graph $H$ is a map $f: V(G)\rightarrow V(H)$ such that for any $x,y\in V(G)$ it holds $d_H(f(x),f(y))\leq d_G(x,y)$.

\begin{lemma}\label{lem:fix_for_nonexp}
Let $G$ be a hypercellular graph and $f$ be a non-expansive map from $G$ to itself. Let $u,v,w$ be any three vertices of $G$ and let $X=\langle\langle u',v',w' \rangle\rangle $ be their median-cell. If $f(u)=u,f(v)=v,f(w)=w$, then $f$ fixes each of the apices $u'=(uvw),v'=(vwu),w'=(wuv)$ and $f(X)=X$.
\end{lemma}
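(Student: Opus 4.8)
The plan is to show that $f$ fixes each apex individually by a distance-counting argument using the metric-triangle structure of the quasi-median, and then conclude $f(X)=X$ from Proposition~\ref{metric_triangle}. Recall that $u'=(uvw)\in I(u,v)\cap I(u,w)$ with $I(u,u')$ maximal, and similarly for $v',w'$, and that $u'v'w'$ is a metric triangle with $d(u,v)=d(u,u')+d(u',v')+d(v',v)$ and the two analogous identities. First I would set $a:=f(u')$; since $f$ is non-expansive and fixes $u,v,w$, we have $d(u,a)\le d(u,u')$, $d(a,v)\le d(u',v)$ and $d(a,w)\le d(u',w)$. Combined with the triangle inequality $d(u,v)\le d(u,a)+d(a,v)$ and the defining equality $d(u,v)=d(u,u')+d(u',v)$, this forces $d(u,a)=d(u,u')$, $d(a,v)=d(u',v)$, and likewise $d(a,w)=d(u',w)$; in particular $a\in I(u,v)\cap I(u,w)$ and $d(u,a)=d(u,u')$.

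The next step is to upgrade "$a$ realizes the same distances as $u'$" to "$a=u'$". Here I would use that $G$ is apiculate (Lemma~\ref{Pasch-apiculate}): for the basepoint $u$, the interval-intersection $I(u,v)\cap I(u,w)$ equals $I(u,(uvw))=I(u,u')$, and $u'$ is its unique $\preceq_u$-maximal element. Since $a\in I(u,v)\cap I(u,w)=I(u,u')$ and $d(u,a)=d(u,u')$, the vertex $a$ must be the maximal element itself, i.e.\ $a=u'$. Thus $f(u')=u'$, and by the symmetric argument (permuting the roles of $u,v,w$ and the apices) $f(v')=v'$ and $f(w')=w'$.

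Finally, to get $f(X)=X$: by Proposition~\ref{metric_triangle} applied to the metric triangle $u'v'w'$, the gated hull $X=\langle\langle u',v',w'\rangle\rangle$ coincides with $\conv(u',v',w')$ and is a gated cell. Since $f$ fixes $u',v',w'$ and is non-expansive, $f$ maps $\conv(u',v',w')$ into $\conv(u',v',w')=X$ (a non-expansive map sends any vertex on a geodesic between images of fixed vertices into the convex hull of those fixed vertices, because convex hulls in partial cubes are intersections of halfspaces and $f$ cannot increase distances); hence $f(X)\subseteq X$. As $X$ is finite — it is a product of cycles and edges on the finitely many vertices of $\conv(u,v,w)$ — an injective... more carefully: $f|_X$ need not be injective a priori, so instead I would argue that every vertex of $X$ lies on some geodesic between two of $u',v',w'$ only up to the cell structure; the cleanest route is to note $X=\conv(u',v',w')$ is a cell, $f$ fixes the three "corners" whose convex hull is all of $X$, and a non-expansive self-map of $G$ fixing a set $S$ also fixes $\conv(S)$ pointwise when $\conv(S)$ is gated — this last fact follows because for $p\in\conv(S)$ the gate of $f(p)$ argument together with the distance equalities to the fixed points of $S$ pins down $f(p)=p$, exactly as in the apex argument above.

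The main obstacle I expect is the very last point: promoting $f(X)\subseteq X$ to $f|_X=\mathrm{id}$, or at least to $f(X)=X$. The apex argument handles the three corners $u',v',w'$ directly; for a general $p\in X$ one needs that $p$ is determined among vertices of $G$ by its distances to $u',v',w'$ (or to a suitable generating set of the cell $X$), which should follow from the product structure $X=F_1\times\cdots\times F_m$ and the fact that in each even-cycle or edge factor a vertex is pinned down by its distances to a few well-chosen vertices. Alternatively one can simply invoke that $f$ restricted to the finite gated cell $X$ is a non-expansive self-map fixing a generating metric triangle and deduce it is the identity from the rigidity of Cartesian products of cycles and edges. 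I would present the corner-fixing part in full and the $f(X)=X$ conclusion via the gatedness of $X$ and the distance-determinacy of its vertices.
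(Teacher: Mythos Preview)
Your argument that $f$ fixes the three apices $u',v',w'$ is correct and in fact more carefully written than the paper's (which simply says $f$ ``maps shortest paths to shortest paths'' and hence fixes the metric triangle). You also correctly obtain $f(X)\subseteq X$ from $X=\conv(u',v',w')$.

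The genuine gap is the step $f(X)=X$. All three approaches you sketch go through the claim that $f|_X$ is the \emph{identity}, and this claim is false. Take $X=C_6\times C_6$ with $u'=(0,0)$, $v'=(2,2)$, $w'=(4,4)$; these form a metric triangle since in each $C_6$ factor the three arcs have length $2<3$. The coordinate swap $\sigma(x,y)=(y,x)$ is an isometry of $X$ fixing $u',v',w'$, yet $\sigma(1,0)=(0,1)\neq(1,0)$. In particular $(1,0)$ and $(0,1)$ have identical distances to $u',v',w'$, so your ``distance-determinacy'' idea cannot work, and neither can the assertion that a non-expansive map fixing $S$ fixes $\conv(S)$ pointwise when $\conv(S)$ is gated. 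The lemma only claims $f(X)=X$ as a set, not pointwise fixing, and $\sigma$ shows this distinction is essential.

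What is actually needed is that $f|_X$ is surjective (equivalently injective, since $X$ is finite), and this is where the real content of the lemma lies. The paper's proof is considerably more elaborate at this point: writing $X\cong C_1\times\cdots\times C_k$ and $u',v',w'$ in explicit coordinates, it locates an auxiliary pair $y\in I(u',v')$, $z\in I(u',w')$ realizing a uniquely minimal distance, shows that (after possibly composing with a coordinate-permuting automorphism of $X$ that fixes $u',v',w'$) $f$ must fix $y$ and $z$ and hence an entire layer $C_1\times\{0\}\times\cdots\times\{0\}$, and then proves by induction that $f$ sends each cyclic $C_1$-layer to another $C_1$-layer. Passing to the quotient $C_2\times\cdots\times C_k$ and inducting on the number of factors yields that $f$ acts as an automorphism on $X$, whence $f(X)=X$. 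None of this structure is visible from the distances to the three corners alone.
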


\begin{proof}
Denote by $u'v'w'$ the unique quasi-median of the triplet $u,v,w$. The map $f$ fixes each of the vertices $u,v,w$ and maps shortest paths between them to shortest paths. This implies that $f$ fixes the vertices of the metric triangle $u'v'w'$. Let $X=\langle\langle u',v',w'\rangle\rangle$ be the gated cell induced by this triplet. Since $u'v'w'$ is a metric triangle, we conclude that $X\cong C_1\square \cdots \square C_k$, where each $C_i$ is an even cycle of length $n_i$ at least 6. Moreover, we can suppose without loss of generality that  $u',v',w'$ are embedded in this product as $u'=(0,0,\ldots,0)$, $v'=(i_1,i_2,\ldots,i_k)$, and $w'=(j_1,j_2,\ldots,j_k)$, where $i_m,j_m-i_m,n_m-j_m<n_m/2$ for all $1\leq m\leq k$. Since $f(u')=u',f(v')=v',f(w')=w'$ and any vertex of $X$ lies on a shortest path between one of the pairs of $u',v',w'$, we conclude that $f(X)\subset X$. It remains to prove that $f(X)=X$.

Without loss of generality assume that among all $j_m-i_m$ with $1\leq m\leq k$, the difference $j_1-i_1$ is minimal. The vertex $y=(i_1,0,0,\ldots,0)$ belongs to $I(u',v')$ and is located at  distance $j_1-i_1$ from $z=(j_1,0,0,\ldots, 0)\in I(w',u')$. The vertices of $I(u',v')$ at distance $i_1$ from $u'$ have the form $(i_1-y_1,y_2,\ldots,y_k)$, for $0\leq y_i \leq i_m, 1\leq m\leq k$, with $y_2+\ldots+ y_k=y_1$. On the other hand, the  vertices of  $I(w',u')$ at distance $n_1-j_1$ from $u'$  have the form $(j_1+z_1,n_2-z_2,\ldots,n_k-z_k)$ for $0\leq z_m \leq n_m-j_m, 1\leq m\leq k$ with $z_2+\ldots+ z_k=z_1$, where the $m$-th coordinate is computed in $\mathbb{Z}_{n_m}$. We will now find  all pairs $(y',z')$ where $y'\in I(u',v'), z'\in I(w',v')$ and $y'$ and $z'$ are at distance  $j_1-i_1$.

We distinguish two cases. On one hand assume that for a chosen  pair $y',z'$ there exists a coordinate $m, 1\leq m \leq k$ such that the projections of $(y',z')$-shortest paths to the $m$-th  coordinate belong to the interval between $i_m$ and $j_m$. Then the distance between $y'$ and $z'$ is at least $j_m-i_m$, and since $j_m-i_m\geq j_1-i_1$, we have $y'=(0,\ldots,0,i_m,0,\ldots, 0)$ and $z'=(0,\ldots,0,j_m,0,\ldots, 0)$ with $j_m-i_m=j_1-i_1$ and $i_m=i_1, n_m-j_m=n_1-j_1$. This implies $j_m=j_1$  and $n_m=n_1$. Assume now that $f$ maps $y,z$ to $y',z'$. There exists an automorphism $\varphi$ of $X$ that swaps coordinates $1$ and $m$ of $X$ and fixes $u',v',w'$. Since proving that $f(X)=X$ is the same as proving that $\varphi(f(X))=X$, we can, in this case, assume that $f$ fixes $y,z$.

On the other hand, if for a pair $y',z'$ and every coordinate $m, 1\leq m \leq k$, the projection of $(y',z')$-shortest paths to the $m$-th coordinate does not belong to the interval between $i_m$ and $j_m$, then the distance between $y'=(i_1-y_1,y_2,\ldots,y_k)$ and $z'=(j_1+z_1,n_2-z_2,\ldots,n_k-z_k)$ is $((i_1-y_1)+(n_1-j_1-z_1))+(y_2+z_2)+\ldots+(y_k+z_k)=n_1-(j_1-i_1)>n/2>j_1-i_1.$ Since this is impossible, we can by the previous paragraph assume that $f$ fixes $y$ and $z$. Then $f$ fixes $(0,0,\ldots,0),(i_1,0,\ldots,0),(j_1,0,\ldots,0)$, thus it must fix every $(x,0,\ldots,0)$, $0\leq x < n_1$.

Now we will prove that every cyclic layer of the form $(C_1,x_2,\ldots,x_k)$ is mapped by $f$ to  a cyclic layer of the form $(C_1,x_2',x_3',\ldots,x_k')$. We proceed by induction on $x_2+x_3+\ldots +x_k$. It holds for $(C_1,0,0,\ldots,0)$. Without loss of generality consider only $f(C_1,x_2+1,x_3,\ldots,x_k)$, assuming that  $f(C_1,x_2,\ldots,x_k)=(f_1(C_1),y_2,y_3,\ldots,y_k)$ for some automorphism $f_1$ of $C_1$.

For every $x_1 \in C_1$, the vertex $y=f(x_1,x_2+1,x_3,\ldots,x_k)$ must be equal or adjacent to $(f_1(x_1),y_2,y_3,\ldots, y_k)$, thus it must be of the form $(f_1(x)+s,y_2,y_3,\ldots, y_k)$ or $(f_1(x),y_2,\ldots,y_m+s,\ldots, y_k)$ for some $2\leq m \leq k$ and $s\in \{-1,0,1\}$. Now we analyze the options for $a=f(x+n_1/2-1,x_2+1,x_3,\ldots,x_k)$ and $b=f(x+n_1/2+1,x_2+1,x_3,\ldots,x_k)$. Both must be at distance at most two from each other, at distance at most $n_1/2-1$ from $y$, and $a$ must be adjacent or equal to $(f_1(x+n_1-1),y_2,y_3,\ldots, y_k)$ while $b$ must be adjacent or equal to $(f_1(x+n_1+1),y_2,y_3,\ldots, y_k)$. If $y=(f_1(x)+s,y_2,y_3,\ldots, y_k)$, this implies that $a=(f_1(x+n_1/2-1)+s,y_2,y_3,\ldots, y_k)$ and $b=(f_1(x+n_1/2+1)+s,y_2,y_3,\ldots, y_k)$. If $y=(f_1(x),y_2,\ldots,y_m+s,\ldots, y_k)$, then $a=(f_1(x+n_1/2-1),y_2,\ldots, y_m+s,\ldots, y_k)$ and $b=(f_1(x+n_1/2+1),y_2,\ldots,y_m+s,\ldots, y_k)$. In each case $y,a,b$ spans a cycle, since the length of $C_1$ is at least six and $f$ is a non-expansive map. Thus $f(C_1,x_2+1,x_3,\ldots,x_k)$ is a cycle of the form $(f_1(C)+s,y_2,y_3,\ldots, y_k)$ or $(f_1(C),y_2,\ldots,y_m+s,\ldots, y_k)$ for some $2\leq m \leq k$ and $s\in \{-1,0,1\}$, proving the assertion.

We have proved that $f$ acting on $X$ has blocks of imprimitivity of the form $(C_1,x_2,\ldots,x_k)$ and it holds $f(C_1,0,\ldots,0)=(C_1,0,\ldots,0)$, $f(C_1,i_2,\ldots,i_k)=(C_1,i_2,\ldots,i_k)$ and $f(C_1,j_2,\ldots,j_k)=(C_1,j_2,\ldots,j_k)$.
By the induction on the number of factors of $X$, $f$ acts as an automorphism on the quotient graph, thus $f$ acts as an automorphism on $X$.
\end{proof}

An endomorphism $r$ of $G$ with $r(G)=H$ and $r(v)=v$ for all vertices $v$ in  $H$ is called a \emph{retraction} of $G$ and  $H$ is called a \emph{retract} of $G$. Moreover, if $r$ is just a non-expansive map from $G$ to itself, the map is called a \emph{weak retraction} and $H$ a \emph{weak retract} of $G$.

\begin{corollary}
A weak retract $H$ of a hypercellular graph $G$ is a hypercellular graph.
\end{corollary}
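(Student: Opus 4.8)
The plan is to verify that $H$ is \emph{cell-median} and then invoke Theorem~\ref{median_cell} (equivalently Theorem~\ref{mthm:median}). As a preliminary, I would note that $H$ is a partial cube: since $r$ is non-expansive and fixes $H$ pointwise, $H$ is an isometric subgraph of $G$ (any shortest $x$--$y$ path with $x,y\in V(H)$ is mapped by $r$ to an $x$--$y$ walk of no greater length in $H$), hence bipartite, and for any edge $uv$ of $H$ one has $W_H(u,v)=W_G(u,v)\cap V(H)$; as $W_G(u,v)$ is convex in $G$ and $I_H(a,b)=I_G(a,b)\cap V(H)$ for all $a,b\in V(H)$, this set is convex in $H$, so Djokovi\'{c}'s theorem applies. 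The identity $I_H(a,b)=I_G(a,b)\cap V(H)$ will be used repeatedly: it implies that any convex (resp.\ gated) subgraph of $G$ that is contained in $V(H)$ is convex (resp.\ gated) in $H$.

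Now I would fix a triplet $u,v,w\in V(H)$. Since $r$ fixes $u,v,w$ and $G$ is hypercellular, Lemma~\ref{lem:fix_for_nonexp} applies to the non-expansive self-map $r$: it fixes each apex $u'=(uvw)$, $v'=(vwu)$, $w'=(wuv)$ of the quasi-median of $u,v,w$ in $G$, and $r(X)=X$, where $X=\langle\langle u',v',w'\rangle\rangle$ is the median cell of $u,v,w$ in $G$. Hence $u',v',w'\in V(H)$ and $X=r(X)\subseteq r(G)=V(H)$. By Proposition~\ref{metric_triangle}, $X$ is a cell of $G$, i.e.\ a convex subgraph isomorphic to a Cartesian product of edges and even cycles; being contained in $V(H)$, it is, by the remark above, a cell of $H$ that is moreover gated in $H$. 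Running the same argument over all triplets of $V(H)$ shows that $(abc)_G\in V(H)$ for all $a,b,c\in V(H)$, and then $I_H(a,b)\cap I_H(a,c)=\bigl(I_G(a,b)\cap I_G(a,c)\bigr)\cap V(H)=I_G(a,(abc)_G)\cap V(H)=I_H(a,(abc)_G)$; thus $H$ is apiculate and the unique quasi-median of $u,v,w$ in $H$ is again the metric triangle $u'v'w'$. Finally, the gated hull of $\{u',v',w'\}$ in $H$ is contained in the $H$-gated cell $X$, hence by Lemma~\ref{product_cycles_gated} it is a Cartesian product of vertices, edges, and whole cyclic factors of $X$ --- in particular a Cartesian product of vertices, edges, and cycles. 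Therefore every triplet of $H$ admits a unique median cell, so $H$ is cell-median, and Theorem~\ref{median_cell} yields that $H$ is hypercellular.

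I do not expect a serious obstacle: the content is almost entirely "transfer along the isometric subgraph $H$", together with the nontrivial input of Lemma~\ref{lem:fix_for_nonexp}, whose hypotheses are met because $G$ is hypercellular and $r$ fixes the three chosen vertices ($r|_H=\mathrm{id}$). The single place where the cell structure is genuinely exploited is the last step: shrinking $X$ to the (possibly smaller) gated hull of $\{u',v',w'\}$ inside $H$ cannot destroy the "product of vertices, edges, and cycles" property, precisely because of the description of gated subgraphs of a cell in Lemma~\ref{product_cycles_gated}.
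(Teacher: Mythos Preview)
Your proof is correct and follows essentially the same route as the paper: apply Lemma~\ref{lem:fix_for_nonexp} to the retraction $r$ to get the median cell $X$ of any triplet inside $H$, and conclude via Theorem~\ref{median_cell}. You have simply filled in details the paper leaves implicit (that $H$ is an isometric partial cube, that $H$ is apiculate with the same apices, and that the gated hull in $H$ of the quasi-median is still a product of vertices, edges, and cycles via Lemma~\ref{product_cycles_gated}).
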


\begin{proof}
Let $r$ be a weak retraction of $G$ to $H$. For arbitrary vertices $u,v,w$ of $H$ it holds $r(u)=u,r(v)=v,r(w)=w$, thus by Lemma~\ref{lem:fix_for_nonexp} $X=r(X)\subseteq r(G)=H$, where $X$ is the median cell of $u,v,w$. This proves that $H$ satisfies the median-cell property and by Theorem~\ref{mthm:median}, $H$ is hypercellular.
\end{proof}

We continue with the proof of assertion (ii) of Theorem~\ref{mthm:fixed}.

\begin{proposition}\label{nonexpansive}
Let $G$ be a hypercellular graph and let $f$ be a non-expansive map from $G$ to itself such that $f(S)=S$ for some finite set $S$ of vertices of $G$. Then there exists a finite cell $X$ of $G$ that is fixed by $f$. In particular, if $G$ is a finite hypercellular graph, then it has a fixed cell.
\end{proposition}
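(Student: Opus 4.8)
The strategy is to reduce, in a finite number of steps, to the situation of Lemma~\ref{lem:fix_for_nonexp}, where a non-expansive map that fixes three vertices is shown to fix their median cell. The key observation is that a non-expansive self-map $f$ with $f(S)=S$ for a finite set $S$ must permute $S$, hence some power $f^N$ fixes every vertex of $S$ pointwise; since a cell fixed by $f^N$ together with a control on how $f$ moves cells will yield a cell fixed by $f$ itself, it is harmless to first analyze $f^N$. Concretely, I would start by letting $S=\{s_1,\dots,s_k\}$ and picking $N$ so that $g:=f^N$ fixes each $s_i$. Then for any triple $s_i,s_j,s_\ell$ of these vertices Lemma~\ref{lem:fix_for_nonexp} gives that $g$ fixes their median cell $X_{ij\ell}=\langle\langle (s_is_js_\ell),(s_js_\ell s_i),(s_\ell s_is_j)\rangle\rangle$ and acts on it as an automorphism.

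The next step is to locate a single canonical finite cell attached to the whole set $S$ that is fixed by $g$. The natural candidate is obtained by imitating the fixed-box construction: take the gated hull $\langle\langle S\rangle\rangle$, which is a finite convex subgraph of $G$ (convex hull of a finite set in a partial cube is finite, as used repeatedly in the paper), hence itself hypercellular; then apply the operator $Z^{\infty}(\cdot)$ of Lemma~\ref{lem:fixed_cell} inside $\langle\langle S\rangle\rangle$ to obtain a finite cell $X:=Z^{\infty}(\langle\langle S\rangle\rangle)$. Since $g$ maps $\langle\langle S\rangle\rangle$ to itself (it fixes $S$ pointwise, hence fixes its gated hull setwise) and since $Z^{\infty}$ is defined purely in terms of maximal halfspaces, $g$ fixes $X$ setwise. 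So we have a finite cell $X$ of $G$ with $g(X)=X$.

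It remains to pass from $g=f^N$ back to $f$. Here I would use that $f$, being non-expansive and fixing $S$ setwise, maps $\langle\langle S\rangle\rangle$ to itself, and then that $f$ permutes the finitely many cells arising in the $Z^{\infty}$-construction; hence $f$ maps $X=Z^{\infty}(\langle\langle S\rangle\rangle)$ to another finite cell of the same type, and since $Z^{\infty}$ is canonical (it is the unique minimal such cell produced by iterating $Z$) we get $f(X)=X$. Finally, $f$ restricted to the finite cell $X$ is a non-expansive self-map of a Cartesian product of even cycles and edges, so by a standard averaging/fixed-point argument on a finite median-like structure (or directly: a non-expansive self-map of a finite cell fixes a subcell — apply the finite case of the statement to the cell, where cells are gated and one can use the retraction structure) it fixes some subcell, giving the desired finite fixed cell; the last sentence of the proposition, the purely finite case, is the instance $S=V(G)$. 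The main obstacle I anticipate is the passage from $f^N$ to $f$: one must check carefully that the $Z^{\infty}$-cell is genuinely canonical and not merely fixed up to automorphism, so that an automorphism-type argument (as in the last paragraph of the proof of Lemma~\ref{lem:fix_for_nonexp}, where a coordinate-swapping automorphism $\varphi$ of $X$ is composed with $f$) can be invoked to straighten $f$ into an honest automorphism of $X$ and then conclude via the fixed-box argument that $f$ actually fixes $X$ setwise, and finally fixes a subcell.
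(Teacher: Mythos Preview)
There is a genuine gap in your third paragraph. You claim that because $g=f^N$ maps $\langle\langle S\rangle\rangle$ into itself and $Z^{\infty}$ is defined via maximal halfspaces, $g$ must fix $X=Z^{\infty}(\langle\langle S\rangle\rangle)$ setwise. But $g$ is only a non-expansive self-map of this hull, not an automorphism, and non-expansive maps need not preserve the halfspace structure. For a concrete failure, take $G=C_6$ with vertices $0,\dots,5$, let $S=\{0,3\}$, and let $g$ be the fold fixing $0,1,2,3$ and sending $5\mapsto 1$, $4\mapsto 2$. Then $g$ is non-expansive, fixes $S$ pointwise, and $\conv(S)=C_6=Z^{\infty}(C_6)$, yet $g(C_6)=\{0,1,2,3\}\neq C_6$. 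So your canonicity argument for $Z^{\infty}$ does not apply, and the same obstruction blocks your passage from $f^N$ to $f$ in the fourth paragraph.

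The paper circumvents this precisely by first cutting down to the set $H$ of \emph{periodic} points of $f$ inside $\conv(S)$: one checks $f(H)=H$, so $f$ is a non-expansive bijection of the finite set $H$ and hence an isometry. Only then does the halfspace-based machinery become available. The role of Lemma~\ref{lem:fix_for_nonexp} in the paper is not to fix cells directly but to show that $H$ is closed under taking median cells of triples, whence $H$ is hypercellular by Theorem~\ref{mthm:median}; Proposition~\ref{automorphism_box} then applies to the automorphism $f|_H$. Your outline has the right ingredients (powers of $f$, median cells, $Z^{\infty}$) but assembles them in an order that skips the crucial reduction to an automorphism. Also, your final step~5 is circular as stated: ``a non-expansive self-map of a finite cell fixes a subcell'' is exactly the proposition you are proving.
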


\begin{proof}
 Let $H$ be the subgraph of $G$ induced by the set of all vertices $v$ in $\conv(S)$ for which there exists an integer $n_v>0$ such that $f^{n_v}(v)=v$. Since $S$ is finite, also $\conv(S)$ is finite, therefore $H$ is finite and nonempty. Notice that $f(\conv(S))\subseteq \conv(S)$, thus $H\supseteq f(H) \supseteq f(f(H)) \supseteq \ldots$, but since for every $v\in V(H)$ there exists $n_v$ such that $f^{n_v}(v)=v$, the inclusions cannot be strict. Thus $f(H)=H$ and $f$ acts as an automorphism on $H$.

Let $u,v,w\in V(H)$ be arbitrary vertices of $H$. Let $n$ be the least common multiple of $n_u,n_v,n_w$. Then $f^n$ fixes each of the vertices $u,v,w$. By Lemma~\ref{lem:fix_for_nonexp}, $f^n(X)=X$ where $X$ is the median cell of $u,v,w$. Since $X$ is finite, this proves that also $X\subset H$. Therefore $H$ satisfies the  median-cell property and,  by Theorem~\ref{mthm:median}, $H$ is hypercellular. Applying Proposition~\ref{automorphism_box} to $H$, we deduce that there exists a finite fixed cell.
\end{proof}

The above proposition follows the ideas from~\cite{Ta}, but the main difficulty   is to prove Lemma~\ref{lem:fix_for_nonexp}. In the case of median graphs this lemma is not needed since $X$ is a single vertex. The next proposition uses ideas from~\cite{ImKl2} to generalize yet another classical result on median graphs and to prove assertion (iii) of Theorem~\ref{mthm:fixed}.

\begin{proposition}\label{regular}
If $G$ is a finite regular hypercellular graph, then $G$ is a single cell, i.e., $G$ is isomorphic to a Cartesian product of edges and even cycles.
\end{proposition}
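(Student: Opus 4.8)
The plan is to argue by induction on the number of vertices of $G$, using the decomposition theorem (Theorem~\ref{thm:amalgam}): a finite hypercellular graph is either a single cell or a gated amalgam of two proper gated subgraphs $G_1,G_2$ along a common gated subgraph $G_0=G_1\cap G_2$. The base case, when $G$ is a single cell, is exactly the conclusion, so I would assume $G$ is a nontrivial gated amalgam and derive a contradiction with regularity. First I would choose the amalgam so that $G_0$ is as small as possible (or, following~\cite{ImKl2}, pick a $\Theta$-class $E_f$ realizing a proper carrier $N(E_f)\subsetneq G$ and split along the gated sets $H^+_f\cup N(E_f)$ and $H^-_f\cup N(E_f)$ guaranteed by Proposition~\ref{carrier}). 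The key point is that a vertex $x\in G_1\setminus G_0$ has all its neighbors inside $G_1$ (since there are no edges from $G_1\setminus G_0$ to $G_2\setminus G_1$), whereas a vertex $z$ in the ``boundary'' $G_0$ has neighbors on both sides; I would compare degrees to show that the interior of a side cannot have the same degree as a boundary vertex unless $G_0$ is itself ``full'' in a sense that forces $G$ to be a product after all.

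More concretely, the main step I expect to carry the argument: since $G_0$ is gated and proper in $G_1$, there is an edge $uv$ with $v\in G_0$, $u\in G_1\setminus G_0$, lying in some $\Theta$-class $E_f$; by Proposition~\ref{carrier} the carrier $N(E_f)$ is gated, and Claim~\ref{proper_carrier}-type reasoning (from the proof of Theorem~\ref{thm:amalgam}) shows $N(E_f)$ is a proper subgraph. Now consider a vertex $z\in H^+_f\setminus N(E_f)$: no edge of $E_f$ is incident to $z$, so the edge-directions available at $z$ are among the other $\Theta$-classes; but a vertex $w\in N^+(E_f)$ incident to an edge of $E_f$ has one extra available direction. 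Using that $G$ is finite, I would push $z$ toward $N(E_f)$ along a geodesic: the last vertex before entering $N(E_f)$ has an edge-class (namely $E_f$) ``newly available,'' and matching this against $k$-regularity along the whole geodesic forces a telescoping contradiction — exactly the mechanism by which a finite regular median graph is shown to be a cube in~\cite{ImKl2}. The output of this step is that no proper carrier can exist, i.e.\ $N(E_f)=G$ for every $f$, whence (as in Lemma~\ref{lem:fixed_cell}) the maximal cells of $G$ pairwise intersect and, by the Helly property for gated sets together with Claim~\ref{proper_carrier}, $G$ coincides with the intersection of its maximal cells, i.e.\ $G$ is a single cell.

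An alternative, perhaps cleaner route I would keep in reserve: apply Proposition~\ref{automorphism_box}. Since $G$ is finite it contains no infinite isometric rays, so $Z^\infty(G)$ is a finite cell fixed by every automorphism. If $G$ is vertex-transitive (which regular plus the strong structure of hypercellular graphs may yield, or which one can try to establish directly using that the automorphism group acts on the gated cells), then the fixed cell $Z^\infty(G)$ must be all of $G$, giving the result immediately; regularity would enter to rule out the possibility that $Z^\infty(G)$ is a proper convex non-cell, via the degree comparison above. The hard part, in either approach, is the degree bookkeeping at the interface $G_0$ of the amalgam: one must show that a regular graph cannot be glued nontrivially, and this requires carefully tracking which $\Theta$-classes are ``active'' at interior versus boundary vertices and ruling out the degenerate gluings (e.g.\ $G_0$ a single vertex, or $G_0=C_6\times\{pt\}$ inside $C_6\times K_2$) that superficially look degree-balanced — here the global finiteness and the carrier-gatedness of Proposition~\ref{carrier} are what close the gap.
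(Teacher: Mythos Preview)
Your overall target is right and matches the paper: show that $N(E_f)=G$ for every $\Theta$-class $E_f$, and then invoke Lemma~\ref{lem:fixed_cell} to conclude that $G=Z^\infty(G)$ is a single cell. The gap is in how you get there. Your ``telescoping along a geodesic'' step rests on the claim that a vertex entering $N^+(E_f)$ acquires $E_f$ as a newly available edge-direction; this is true in median graphs (where carriers look like $K_2\times\text{something}$) but fails here. If $E_f$ sits inside a cyclic factor of a cell $X$, then only those vertices of $X\cap H^+_f$ whose projection to that factor is an endpoint of the relevant edge are incident to $E_f$; most vertices of $N^+(E_f)$ have no $E_f$-edge at all. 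So the degree comparison between an interior vertex $z\notin N(E_f)$ and a carrier vertex does not produce the contradiction you want, and the amalgam/boundary bookkeeping you sketch does not close this hole. (Your alternative route via vertex-transitivity is also a genuine gap: regularity does not give transitivity, even for hypercellular graphs.)

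The paper sidesteps this obstacle with two ideas you are missing. First, it does not pick an arbitrary $\Theta$-class but an edge $ab$ for which $W(a,b)$ is an inclusion-\emph{minimal} halfspace. Second, it uses the reflection automorphism $\varphi$ of the carrier $N(E_{ab})$ that swaps $N^+(E_{ab})$ and $N^-(E_{ab})$ cell by cell (each maximal cell of the carrier is a product of edges and even cycles, and the factor carrying $E_{ab}$ has an obvious reflection). Now suppose $W(b,a)$ were not minimal; then some edge $cd$ leaves the carrier with $c\in N(E_{ab})\cap W(b,a)$. The mirror vertex $c'=\varphi(c)\in W(a,b)$ has the same degree \emph{inside} $N(E_{ab})$ as $c$, so by regularity of $G$ it must also have an edge $c'd'$ leaving the carrier. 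Since $N(E_{ab})$ is gated, $E_{c'd'}$ does not cross it, and one gets $W(d',c')\subsetneq W(a,b)$, contradicting minimality. Hence every halfspace is minimal, so $F(G)=\emptyset$ and $Z(G)=G$, and Lemma~\ref{lem:fixed_cell} finishes. The reflection plus minimality is exactly what replaces the naive degree count and survives the presence of cyclic factors.
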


\begin{proof}
Pick an arbitrary edge $ab$ in $G$ such that $W(a,b)$ is an inclusion minimal  halfspace. We will prove that also $W(b,a)$ is minimal. The carrier $N(E_{ab})$ is the union  of maximal cells of $G$ crossed by $E_{ab}$. For each such maximal cell $X$ of $N(E_{ab})$ there exists a unique automorphism of $X$ that fixes edges of $E_{ab}\cap X$ and maps $X\cap W(a,b)$ to $X\cap W(b,a)$ and vice versa. This automorphisms extends to an automorphism $\varphi$ of $N(E_{ab})$ that maps $N(E_{ab})\cap W(a,b)$ to $N(E_{ab})\cap W(b,a)$ and vice versa. For the sake of contradiction assume now that $W(b,a)$ is not minimal and that there exists an edge $cd$ with $c \in N(E_{ab})\cap W(b,a)$ and $d\notin N(E_{ab})$. The vertex $c'=\varphi(c) \in W(a,b)$ has the same degree in $N(E_{ab})$ as $c$. Since $G$ is regular, there must exist an edge $c'd'$ with $d'\notin N(E_{ab})$. Since $N(E_{ab})$ is gated, $E_{c'd'}$ does not cross the carrier $N(E_{ab})$, thus $W(d',c')\subsetneq W(a,b)$. This contradicts the choice of $ab$.

Since $G$ is finite, we have proved that for every edge $ab$, $W(a,b)$ and $W(b,a)$ are  minimal halfspaces. Thus for every edge $ab$, we have $N(E_{ab}) = G$. This implies that $Z(G)=G$, thus $Z^{\infty}(G)=G$. By Lemma~\ref{lem:fixed_cell}, $G$ is  a single cell.
\end{proof}

\section{Conclusions and open questions}\label{sec:concl}

In the present paper we have established a rich cell-structure for hypercellular graphs. In particular, we have obtained that they generalize bipartite cellular and median graphs in a natural way. On the other hand, we expect that other properties and characterizations of median graphs or cellular graphs extend naturally to hypercellular graphs. Some of those questions concern the metric structure of hypercellular graphs, while other questions ask for replacing metric conditions by topological or algebraic conditions. Namely, in all our results we characterized hypercellular graphs among partial cubes. One can ask to what extent we can characterize hypercellular graphs and their cell complexes among all graphs and complexes.

For example, as we noticed already, by a result of Gromov~\cite{Gr}, CAT(0) cube complexes are exactly the simply connected cube complexes satisfying the cube condition, i.e., the
cubical version  of the 3C-condition. As proved in~\cite{Ch_CAT}, median graphs are exactly the graphs whose associated cube complexes are CAT(0). In fact, it is shown in~\cite{Ch_CAT}
(see also~\cite{BrChChGoOs} for other similar results) that median graphs are exactly the graphs of square complexes which are simply connected and satisfy the square version of the
3CC-condition: any three squares pairwise intersecting in three edges and all three intersecting in a vertex belong to a 3-cube. We believe that a similar result holds for hypercellular graphs.
Namely, let ${\bf X}$ be a hyperprism complex, i.e., a polyhedral cell complex whose cells are Cartesian products of segments and regular polygons with an even number of sides and glued in a such a way that the intersection of any two cells is a cell. The 1-skeleton of ${\bf X}$ is the graph $G({\bf X})$ having the 0-cells of ${\bf X}$ as vertices and 1-cells as edges. Finally, we call
a cell complex of a hypercellular graph a {\it hypercellular complex}. We conjecture that hypercellular graphs can be characterized in the following way:

\begin{conjecture} \label{complex} For a graph $G$, the following conditions are equivalent:
\begin{itemize}
\item[(i)] $G$ is hypercellular;
\item[(ii)] $G$ is the 1-skeleton of a simply connected hyperprism complex ${\bf X}$ satisfying the 3C-condition;
\item[(iii)] $G$ is the 1-skeleton of a simply connected polygonal complex ${\bf X}$ (whose 2-cells are regular polygons
with an even number of sides) satisfying the 3CC-condition.
\end{itemize}
Moreover, all hypercellular cell complexes are CAT(0) spaces.
\end{conjecture}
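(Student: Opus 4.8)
We outline a possible approach to Conjecture~\ref{complex}, separating the equivalences (i)$\Leftrightarrow$(ii)$\Leftrightarrow$(iii) from the CAT(0) assertion and reusing as much of the machinery already developed as possible.

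The implications (i)$\Rightarrow$(ii) and (i)$\Rightarrow$(iii) are essentially free. If $G$ is hypercellular and locally finite, then by Theorem~\ref{mthm:COM} the complex $\mathbf{X}(G)$ is a contractible zonotopal complex, hence simply connected; by Lemma~\ref{two_cycles} the intersection of any two cells of $G$ is again a cell, so $\mathbf{X}(G)$ is a genuine hyperprism complex with $1$-skeleton $G$; and by Theorem~\ref{mthm:amalgam}(ii)--(iii) it satisfies the $3$CC- and $3$C-conditions. Restricting to the $2$-skeleton, which is simply connected by Lemma~\ref{partial-cube-convex-cycle}, and using that in a hypercellular graph the convex cycles are exactly the cyclic layers of cells (Lemma~\ref{product_cycles_gated}), gives (iii). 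Thus the content is concentrated in proving that (ii) or (iii) implies (i); since (iii) assumes the least about the complex, it suffices to prove (iii)$\Rightarrow$(i): if $G$ is the $1$-skeleton of a simply connected polygonal complex $\mathbf{X}$ with regular even $2$-cells satisfying the $3$CC-condition, then $G\in\mathcal{F}(Q_3^-)$.

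For this I would follow the template of the CAT(0) cube complex theorem and its generalizations~\cite{Ch_CAT,BrChChGoOs}. Put on the edges of $G$ the equivalence relation generated by ``being opposite edges of a common $2$-cell''; its classes are the candidate $\Theta$-classes, with associated \emph{walls} obtained from the midcells of the cells they cross (the objects dual to carriers). The key step is to prove that each wall is two-sided and separates $\mathbf{X}$ into exactly two halfspaces, equivalently that no edge class crosses a cycle of $G$ an odd number of times; this forces $G$ to embed isometrically into the hypercube on the set of walls, so $G$ is a partial cube. Simple connectivity enters through van Kampen / disk diagrams: a closed walk in $G$ bounds a singular disk in $\mathbf{X}$, and one runs a minimal-complexity argument in which the $3$CC-condition is used to remove ``bad corners'' -- two $2$-cells of the diagram meeting along two consecutive edges without lying in a common cell are replaced, using the higher cell supplied by $3$CC, by a diagram of smaller complexity. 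Once $G$ is known to be a partial cube, it remains to check condition (ii) of Theorem~\ref{mthm:amalgam}: that convex cycles coincide with the $2$-cells (whence the graph-theoretic $3$CC-condition holds) and that every cell is gated. Gatedness is the analogue of gatedness of cubes in CAT(0) cube complexes; it can be established by another disk-diagram argument, or by peeling off a carrier $N(E_f)$ exactly as in the proof of Theorem~\ref{thm:amalgam}, proving it gated from $3$CC and inducting on the number of $2$-cells to realize $G$ by gated amalgams of hyperprisms, i.e., condition (iv) of Theorem~\ref{mthm:amalgam}.

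For the CAT(0) statement, give each cell $[X]=[F_1]\times\cdots\times[F_m]$ of $\mathbf{X}(G)$ the Euclidean product metric of regular polygons and unit segments, so that $\mathbf{X}(G)$ becomes a complete geodesic piecewise-Euclidean complex with finitely many isometry types of cells. By Gromov's link condition together with the Cartan--Hadamard theorem, and since $\mathbf{X}(G)$ is simply connected, it is enough to verify that $\mathbf{X}(G)$ is locally CAT(0), i.e., that the link of every cell is CAT(1). The link of a vertex $v$ is assembled from the spherical links of the cells through $v$, glued along the gated star $\mathrm{St}(v)$ (Theorem~\ref{mthm:stars}); the interior angle of a regular $2n$-gon equals $\pi-\pi/n\ge 2\pi/3$. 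One then shows that such a link has no closed \emph{local} geodesic of length $<2\pi$: any such loop would exhibit a family of cells pairwise meeting in lower-dimensional cells with no common higher cell, contradicting the $3$C-condition. (As in the cube case, boundaries of spherical octants have perimeter $3\pi/2$ but turn by right angles and hence are not local geodesics, so only the loops genuinely forbidden by $3$C are dangerous.) The two steps I expect to be the real obstacles are the minimal-complexity disk-diagram surgery in the partial-cube part and the link analysis in the CAT(0) part: unlike for cube complexes the $2$-cells here are polygons of unbounded size, so a wall meets a $2$-cell in a chord joining two generally nonadjacent edges and two distinct walls may cross a single polygon, and making the ``push a wall across a bad corner'' reduction work requires a careful case analysis of how walls enter and leave polygons and of the local picture at a vertex of a minimal-complexity disk diagram; likewise, in the link condition the polygon angles $\pi-\pi/n$ are mixed with right angles coming from product structure and with combinatorics constrained by $3$C, and isolating exactly which short closed local geodesics can occur -- and reading off a $3$C-violation from each -- is the delicate point.
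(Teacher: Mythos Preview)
This statement is a \emph{conjecture} in the paper, not a theorem: it appears in the concluding section among open problems, and the paper gives no proof. So there is no ``paper's own proof'' to compare your proposal against.

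That said, your outline is aligned with what the paper itself suggests. Immediately before stating the conjecture, the authors point to~\cite{Ch_CAT} and~\cite{BrChChGoOs} as the models to imitate, and your proposal follows exactly that template: disk-diagram/wall arguments to get from simple connectivity plus the local $3$CC/$3$C-condition to the partial-cube property and then to Theorem~\ref{mthm:amalgam}(ii), and Gromov's link condition plus Cartan--Hadamard for the CAT(0) part. You are also honest that these are sketches: the two places you flag as the real obstacles --- the minimal-complexity surgery on disk diagrams when $2$-cells are polygons of unbounded size and walls are chords rather than midcubes, and the link analysis mixing polygon angles $\pi-\pi/n$ with right angles from products --- are precisely where a genuine proof would need new work beyond the cube-complex case. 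Until those steps are actually carried out, this remains an approach rather than a proof, which is consistent with the conjectural status the paper assigns to the statement.
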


Since CAT(0) spaces obey the fixed point property~\cite{BrHa}, the fact that hypercellular cell complexes are CAT(0) spaces would immediately imply Proposition~\ref{nonexpansive}.

Median graphs are exactly the discrete median algebras (for this and other related results see the paper~\cite{BaHe} and the survey~\cite{BaCh_survey}). In~\cite{BaCh_wma1}, the apex algebras of weakly median graphs have been characterized. The {\it apex algebra} of an apiculate graph $G$ associates to each triplet of vertices $u,v,w$,  the apex $(uvw)$ of $u$ with respect to $v$ and $w$.

\begin{problem} Characterize the apex ternary algebras of hypercellular graphs.
\end{problem}

In view of the fact that median graphs are precisely retracts of hypercubes~\cite{Ba_retracts}, we believe that the following is true:

\begin{conjecture} \label{retract} A partial cube $G$ is hypercellular if and only if $G$ is a (weak) retract of a Cartesian product of bipartite cellular graphs.
\end{conjecture}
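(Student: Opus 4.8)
The plan is to separate the two implications. For the reverse implication, suppose $G$ is a retract of a Cartesian product $\Pi=F_1\times\cdots\times F_k$ of bipartite cellular graphs. Bipartite cellular graphs lie in $\mathcal{F}(Q_3^-)$ (they are exactly $\mathcal{F}(Q_3^-,Q_3)$), and retracts of hypercellular graphs are hypercellular by the fact recorded right after Lemma~\ref{lem:fix_for_nonexp}, so it suffices to check that $\mathcal{F}(Q_3^-)$ is closed under Cartesian products. A convex subgraph of $F_1\times F_2$ is a product of convex subgraphs, and every $\Theta$-class of a product lives in a single factor, so a $pc$-minor of $F_1\times F_2$ has the form $M_1\times M_2$ with each $M_i$ a $pc$-minor of $F_i$ (possibly a single vertex); since $Q_3^-$ has $7$ vertices it is prime with respect to the Cartesian product, hence $Q_3^-$ is a $pc$-minor of $F_1\times F_2$ only if it is a $pc$-minor of $F_1$ or of $F_2$. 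Iterating gives $\Pi\in\mathcal{F}(Q_3^-)$, whence $G$ is hypercellular.

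For the forward implication the plan is an induction along a gated-amalgam decomposition. By Theorem~\ref{mthm:amalgam}, every finite convex subgraph of $G$ is built by successive gated amalgams from cells, and a cell $F_1\times\cdots\times F_m$ with each $F_i$ an edge or an even cycle is itself a Cartesian product of bipartite cellular graphs, hence trivially a retract of such a product. The induction step would rely on the closure lemma: \emph{the class of retracts of Cartesian products of bipartite cellular graphs is closed under gated amalgamation}. To handle infinite $G$ one would embed $G$ diagonally into $\prod_i G_i$, where $G_i:=\pi_{\Lambda\setminus\Lambda_i}(G)$ and the $\Lambda_i$ are the classes of the equivalence on $\Theta$-classes generated by ``$E_e$ and $E_f$ belong to the same cyclic factor of length $\ge 6$ of some cell''. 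One should check, via Theorem~\ref{mthm:cells}, that the cells of each $G_i$ are single even cycles, so that $G_i$ is bipartite cellular, and that $\iota\colon v\mapsto(\pi_{\Lambda\setminus\Lambda_i}(v))_i$ is an isometric embedding (distances in partial cubes count separating $\Theta$-classes, and the $\Lambda_i$ partition $\Lambda$); one then represents $G$ as the directed union of the finite convex subgraphs $\mathrm{conv}(B_i(v_0))$ as in Corollary~\ref{thoremC} and passes to the limit.

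The main obstacle is the closure lemma, equivalently, a direct construction of a retraction $r\colon\prod_i G_i\to\iota(G)$. For a gated amalgam $H=H_1\cup_{H_0}H_2$ with retractions $\rho_j\colon P_j\to H_j$ the natural attempt is to embed $H$ into $P_1\times P_2$ and glue $\rho_1$ and $\rho_2$ along $H_0$; making the glued map non-expansive forces one to reconcile the two coordinate systems $P_1$ and $P_2$ over their common gated part $H_0$, and this is exactly the point that does not go through routinely — which is why the statement is only conjectured. An alternative would be to imitate Bandelt's proof that median graphs are retracts of hypercubes: since hypercellular graphs are apiculate (Lemma~\ref{Pasch-apiculate}), for $x\in\prod_i G_i$ one could try to define $r(x)$ as a canonical nearest vertex of $\iota(G)$, selected by a base-point meet-semilattice order, and verify directly that $r$ is non-expansive; there the delicate point is controlling the nearest-point sets in the presence of large cyclic cells, where these sets are no longer the singletons or subcubes that make the median case work.
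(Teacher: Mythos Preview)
This statement is presented in the paper as an open \emph{conjecture} (in the concluding section), not as a theorem; the paper offers no proof and no partial argument for either implication. So there is no ``paper's own proof'' to compare against.

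That said, your analysis is accurate and in one respect goes further than the paper. Your argument for the reverse implication is essentially complete and correct: bipartite cellular graphs lie in $\mathcal{F}(Q_3^-)$; convex subgraphs of a Cartesian product are subproducts and each $\Theta$-class of a product lives in a single factor, so pc-minors of a product are products of pc-minors; $Q_3^-$ has a prime number of vertices and is therefore Cartesian-prime, whence $\mathcal{F}(Q_3^-)$ is closed under finite Cartesian products; and retracts of hypercellular graphs are hypercellular by the corollary following Lemma~\ref{lem:fix_for_nonexp}. This direction is thus settled by results already in the paper, even though the paper does not spell it out.

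For the forward implication you correctly isolate the missing ingredient: closure of the class ``retracts of Cartesian products of bipartite cellular graphs'' under gated amalgamation (equivalently, a direct construction of a retraction onto the diagonal image of $G$ in $\prod_i G_i$). You also correctly note that the median-graph template (Bandelt's nearest-point retraction) does not transfer immediately because nearest-point sets in the presence of long cyclic factors are no longer single vertices. This is precisely why the paper leaves the statement as a conjecture; your proposal identifies the obstacle rather than resolving it, which is the honest state of affairs.
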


Note that it is not true that hypercellular graphs are retracts of Cartesian products of their cells or, more generally, that they are exactly the retracts of Cartesian products of edges and even cycles. Indeed, let $H$ be the hypercellular 
graph which is the 6-cycle with an attached pendant edge. Then $H$ has $C_6$ and $K_2$ as cells.  It can be directly checked
that $H$ is not a retract of $C_6\square K_2$. Moreover, suppose that $H$ is an isometric subgraph of the Cartesian product $G:=F_1\square\cdots\square F_k$, where $F_1,\ldots,F_k$ are edges or even cycles. Then it can be shown that $H$ is included in a subproduct $G':=F'_1\square\cdots\square F'_k$ of $G$ which is either isomorphic to $C_6\square K_2$ or to $Q_4=K_2\square K_2\square K_2\square K_2$.  If $H$ is a retract of $G$, then necessarily $H$ is a retract of $G'$. But we noticed above that $H$ cannot be a retract of $C_6\square K_2$. Since $H$ is not a median graph, also $H$ cannot be a retract of the 4-cube $Q_4$. This shows that the hypercellular graph $H$ is not a retract of any Cartesian product of even cycles and edges.

A group $F$ \emph{acts by automorphisms} on a cell complex $\bf X$ if there is an injective
homomorphism $F\to \mbox{Aut}({\bf X})$ called an \emph{action of $F$}. The action is
\emph{geometric} (or \emph{$F$ acts geometrically}) if it is proper (i.e., cells
stabilizers are finite) and cocompact (i.e., the quotient ${\bf X}/F$ is compact). A group $F$ is called
a {\it Helly group}~\cite{ChChHiOs2} if $F$ acts geometrically on the clique complex of a Helly graph.
Analogously, we will say that a group $F$ is {\it hypercellular} if $F$ acts geometrically on a cell complex
${\bf X}(G)$ of a hypercellular graph $G$ (in this case, $G$ is locally-finite).  Analogously
to~\cite[Proposition 6.32]{ChChHiOs}
one can show that $F$ acts geometrically  on the clique complex of the thickening $G^{\Delta}$ of $G$. By Proposition
\ref{thickening},  $G^{\Delta}$ is a  Helly graph, thus any hypercellular group is a Helly group. Since all Helly groups
are biautomatic~\cite{ChChHiOs2}, we obtain the following corollary:

\begin{corollary} Any hypercellular group is a Helly group and thus is biautomatic.
\end{corollary}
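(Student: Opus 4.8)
The plan is to unwind the definitions and chain together three ingredients that are already available: Proposition~\ref{thickening} (the thickening of a locally-finite hypercellular graph is a Helly graph), a transfer of the geometric action from the cell complex ${\bf X}(G)$ to the clique complex of the thickening $G^\Delta$, and the result of~\cite{ChChHiOs2} that Helly groups are biautomatic. So fix a hypercellular group $F$, i.e., a group acting geometrically by automorphisms on the cell complex ${\bf X}(G)$ of some hypercellular graph $G$; as remarked in the text, this already forces $G$ to be locally finite, and consequently every cell of $G$ is finite (a $d$-cube has vertices of degree at least $d$, and each cyclic or edge factor is finite, so local finiteness bounds both the dimension and the size of every cell). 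Since cells are convex hulls of isometric cycles by Theorem~\ref{mthm:cells}, they are preserved by every automorphism of $G$, so each automorphism of $G$ induces an automorphism of $G^\Delta$ and hence of its clique complex; this yields an action of $F$ on the clique complex of $G^\Delta$.

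Next I would verify that this induced action is geometric, following~\cite[Proposition 6.32]{ChChHiOs} almost verbatim. For properness: a clique $K$ of $G^\Delta$ is a set of vertices pairwise contained in a common cell of $G$, hence, by the finite Helly property for gated sets and the gatedness of cells (Theorem~\ref{mthm:amalgam}), all of $K$ lies in a single cell $X$ of $G$; therefore the $F$-stabilizer of $K$ permutes finitely many cells and, being contained up to finite index in the stabilizer of some cell of ${\bf X}(G)$, is finite. For cocompactness: cocompactness of the $F$-action on ${\bf X}(G)$ means there are only finitely many $F$-orbits of cells; since every clique of $G^\Delta$ lies in a cell and each (finite) cell contains only finitely many cliques, there are finitely many $F$-orbits of cliques of $G^\Delta$, so the quotient of the clique complex by $F$ is compact.

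Finally, by Proposition~\ref{thickening} the graph $G^\Delta$ is a Helly graph, so $F$ acts geometrically on the clique complex of a Helly graph and is by definition a Helly group; invoking~\cite{ChChHiOs2}, $F$ is biautomatic. The only genuine point requiring care — the main obstacle — is checking that properness and cocompactness do survive the passage from ${\bf X}(G)$ to the clique complex of $G^\Delta$, and this is precisely where one uses the gatedness of cells from Theorem~\ref{mthm:amalgam} together with the finiteness of cells forced by local finiteness, exactly as in~\cite[Proposition 6.32]{ChChHiOs}.
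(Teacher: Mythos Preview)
Your proposal is correct and follows essentially the same route as the paper: both transfer the geometric action from ${\bf X}(G)$ to the clique complex of $G^\Delta$ by invoking the argument of \cite[Proposition~6.32]{ChChHiOs}, use Proposition~\ref{thickening} to conclude that $G^\Delta$ is Helly, and then cite \cite{ChChHiOs2} for biautomaticity. You simply spell out more of the details (local finiteness forcing finite cells, properness and cocompactness via gatedness of cells) than the paper, which compresses the whole argument into the paragraph preceding the corollary.
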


In Theorem~\ref{mthm:COM} we have shown that finite hypercellular graphs are tope graphs of zonotopal COMs. In~\cite{BaChKn} the question is raised whether zonotopal COMs are fibers of realizable COMs. In our case this specializes to solving the following:
\begin{problem}
 Is every finite hypercellular graph a convex subgraph of the tope graph of a realizable oriented matroid?
\end{problem}

In the first part of the paper we have obtained a few results for $\mathcal{S}_4$ similar to those for hypercellular graphs.
%
We believe, that it is possible to use analogous amalgamation techniques as we did for Theorem~\ref{mthm:amalgam} and Theorem~\ref{mthm:COM} in order to prove:
\begin{conjecture}
 Every finite graph in $\mathcal{S}_4$ is the tope graph of a COM.
\end{conjecture}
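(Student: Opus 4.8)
The plan is to mirror the passage from Theorem~\ref{mthm:amalgam} to Theorem~\ref{mthm:COM}: first obtain an amalgamation/decomposition theorem for finite graphs in $\mathcal{S}_4$, and then reassemble the oriented matroid data of the pieces through a COM amalgamation, invoking \cite[Proposition 3]{BaChKn} (every COM is an iterated amalgam of its oriented matroid faces) in reverse. The inputs that carry over unchanged are that $\mathcal{S}_4$ is pc-minor-closed \cite[Theorem 10]{Ch_separation}, that convex hulls of isometric cycles are gated (Proposition~\ref{convex_hull_cycle}), and that antipodal subgraphs of $\mathcal{S}_4$ graphs are gated and --- by the proof of Proposition~\ref{prop:antipodal} combined with Theorem~\ref{product_of_cycles} --- are in fact Cartesian products of edges and even cycles; this last point already tells us that every oriented matroid face of a prospective COM structure on a finite $\mathcal{S}_4$ graph must be a cell, so it is the assembly rather than the faces that is delicate.

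I would first extend Proposition~\ref{carrier} (the carriers $N(E_f)$ and extended halfspaces $H^\pm_f\cup N(E_f)$ are gated) from $\mathcal{F}(Q_3^-)$ to $\mathcal{S}_4$. The induction on contractions together with Proposition~\ref{thm:closure} should still drive this, since $\mathcal{S}_4$ is minor-closed and convex cycles take over the role of cells in the local analysis; the point needing care is redoing the claim-style arguments about how two gated pieces intersect when those pieces are no longer products of cycles. Granting gated carriers, the decomposition then runs as in Theorem~\ref{thm:amalgam}: if the graph has a proper carrier $N(E_f)$, it is the gated amalgam of $H^+_f\cup N(E_f)$ and $H^-_f\cup N(E_f)$; otherwise every halfspace is maximal, every carrier is the whole graph, and one is reduced to the \emph{prime} case, where one must produce the separator $Z^*$ and the core $X_0$ along the lines of Claims~\ref{Z*-face2}, \ref{Z*-face1} and~\ref{S-gated} using whatever maximal gated pieces are available.

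Once the prime pieces are equipped with a COM structure whose faces are cells, the reassembly is the same as in Corollary~\ref{thoremC}: a gated amalgam is in particular a COM amalgam, carriers of COMs are COMs \cite[Proposition 5]{BaChKn}, and repeated application of \cite[Proposition 3]{BaChKn} yields a COM whose tope graph is the given finite $\mathcal{S}_4$ graph. Since the Pasch axiom is preserved by gated amalgams \cite{BaChVdV}, one also checks painlessly that the class being decomposed is closed under the amalgamations used.

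The main obstacle is the prime case, and it is genuinely harder than in the hypercellular setting. There the gated-amalgam-prime pieces were the cells themselves, tope graphs of \emph{realizable} oriented matroids, so their COM structure came for free; here the prime pieces need not even be antipodal --- $Q_3^-$ itself is a gated-amalgam-prime member of $\mathcal{S}_4$ (all its halfspaces are maximal, yet no $4$-face together with the remaining vertices forms a gated amalgam), and it is a (lopsided) COM but not an oriented matroid. So one cannot reduce the prime case to oriented matroids: one must exhibit the sign-vector system of each prime $\mathcal{S}_4$ graph directly --- presumably by assembling it from the cells occurring as its maximal gated pieces --- and verify face symmetry and strong elimination, or else prove a pc-minor characterization of tope graphs of COMs and check that the six minors forbidden in $\mathcal{S}_4$ dominate the forbidden minors of that class. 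Settling the prime case, in whichever of these two forms, is where I expect essentially all of the work to lie.
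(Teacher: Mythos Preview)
This statement appears in the paper as an open \emph{conjecture} (Section~\ref{sec:concl}), not a theorem; the paper gives no proof. The authors only remark that they ``believe that it is possible to use analogous amalgamation techniques as we did for Theorem~\ref{mthm:amalgam} and Theorem~\ref{mthm:COM}'' --- precisely the strategy you outline. So your proposal matches the paper's suggested approach, but neither actually carries it through.

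Your own diagnosis of the gap is accurate and honest: the prime case is unresolved, and you say so. Two further cautions on the steps before it. Extending Proposition~\ref{carrier} to $\mathcal{S}_4$ is not a matter of rerunning the induction: the convexity argument for $N^+(E_f)$ (in particular Claim~\ref{claim:cycles}) uses that two cells of the carrier sharing a vertex share an edge of $E_f$ (Lemma~\ref{two_cells_carrier}) and that the 3CC-condition packages three incident convex cycles into a cell --- both facts rest on the product structure of cells, which is lost in $\mathcal{S}_4$ once $Q_3^-$ is permitted. Likewise, the separator $Z^*$ in the proof of Theorem~\ref{thm:amalgam} is built layer by layer from the product decomposition of each maximal cell; with arbitrary gated prime pieces in place of cells, its construction has no obvious analogue. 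Your example of $Q_3^-$ as a prime $\mathcal{S}_4$ graph that is a (lopsided) COM but not an oriented matroid is exactly the right one, and it already shows why the endgame cannot be delegated to Proposition~\ref{prop:antipodal}.
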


\bibliographystyle{amsalpha}

\end{document}